\numberwithin{equation}{section}
\newtheorem{definition}{\hspace{2em}Definition}[section]
\newtheorem{theorem}[definition]{\hspace{2em}Theorem}
\newtheorem{lemma}[definition]{\hspace{2em}Lemma}
\newtheorem{proposition}[definition]{\hspace{2em}Proposition}
\newtheorem{corollary}[definition]{\hspace{2em}Corollary}
\newtheorem{remark}{\hspace{2em}Remark}[section]
\newcommand{\curl}{\operatorname{curl}}
\newcommand{\Div}{\operatorname{div}}
\renewcommand{\epsilon}{\varepsilon}
\title{Local Well-posedness of the three dimensional compressible Euler--Poisson equations with physical vacuum}
\author{Xumin Gu\footnote{School of Mathematical Sciences, Fudan University, Shanghai 200433, P.R.China.
{\it Email:xumingu11@fudan.edu.cn}}\ \  and Zhen Lei\footnote{School of Mathematical Sciences; LMNS and Shanghai
Key Laboratory for Contemporary Applied Mathematics, Fudan University, Shanghai 200433, P. R.China. {\it Email: zlei@outlook.com, zlei@fudan.edu.cn}}}
\date{}
\begin{document}
\maketitle

\begin{abstract}
This paper is concerned with the three dimensional compressible Euler--Poisson
equations with moving physical vacuum boundary condition. This fluid system is
usually used to describe the motion of a self-gravitating inviscid gaseous star. The local
existence of classical solutions for initial data in certain weighted Sobolev spaces is established
in the case that the adiabatic index satisfies $1 < \gamma < 3$. 
\end{abstract}

\section{Introduction}
The motion of a self-gravitating inviscid gaseous star in the universe can be described by the
following free boundary problem for the compressible Euler equations coupled with the Poisson equation:
\begin{subequations}
\label{ep3d}
\begin{align}
  \label{euler-poisson-3d}
  \rho_t + \nabla_{\eta} \cdot (\rho u) &= 0&\text{in}\ \ \Omega(t),\\
  \rho [u_t + u \cdot \nabla_{\eta} u] + \nabla P &= \rho \nabla_{\eta} \phi &\text{in}\ \  \Omega(t),\label{f3d}\\
  - \Delta_{\eta\eta}\phi &= 4 \pi  g\rho  &\text{in}\ \  \Omega(t),\label{9999999999999}\\
  \nu (\Gamma(t)) &= u\cdot n(t) &\text{on}\ \  \Gamma(t),\\
  (\rho, u) &= (\rho_0, u_0) &\text{in}\ \  \Omega(0).\label{5353}
\end{align}
\end{subequations}
The open, bounded domain $\Omega(t) \subset \mathbb{R}^3$ denotes the changing domain
occupied by the gas. $\Gamma(t) := \partial \Omega(t)$ denotes the moving vacuum boundary, $\nu (\Gamma(t))$
denotes the velocity of $\Gamma(t)$, and $n(t)$ denotes the exterior unit normal vector to $\Gamma(t)$.
The density of gas $\rho > 0$ in $\Omega(t)$ and $\rho = 0$ in $\mathbb{R}^3\setminus \Omega(t)$. $u$
denotes the Eulerian velocity field, $P$ denotes the scalar pressure, $\phi$ is the potential function
of the self-gravitational force, and $g$ is the gravitational constant. We consider a polytropic gas star,
then the equation of state is given by:
\begin{equation}
P= C_{\gamma}\rho^{\gamma} \ \ \ \text{for}\ \ \gamma > 1,
\label{poly}
\end{equation}
where $C_{\gamma}$ is the adiabatic constant. We set both $g$ and $C_{\gamma}$ to be unity. We refer the readers
 to \cite{C,Cox_68} for more details of the related background on this system.

The sound speed of equations \eqref{ep3d} is given by $c:=\sqrt{\partial P / \partial \rho}$, and $N$ denotes the outward unit normal to the initial boundary $\Gamma := \partial \Omega(0)$, then the condition
\begin{equation}
 - \infty < \dfrac{\partial c_0^2}{\partial N} <0 \ \ \text{on}\ \ \  \Gamma
 \label{vacuum}
\end{equation}
defines a ``physical vacuum'' boundary, where $c_0(\cdot) =c(\cdot,0)$. This definition of physical vacuum was motivated by the case
of the Euler equations with damping studied in \cite{L_1996, LT_2000}. For more
details and the physical background of this concept, please see \cite{MJ_11,MJ_12, L_1996, LT_1997,LT_2000,Yang_06}.

The physical vacuum condition \eqref{vacuum} is equivalent to the requirement that
\begin{equation}
\label{va}
\dfrac{\partial \rho_0^{\gamma-1}}{\partial N} <0 \,\,\text{on}\,\,\Gamma.
\end{equation}
This condition is necessary for the gas particles on the boundary to accelerate.
Since $\rho_0>0$ in $\Omega$, \eqref{va} implies that for some positive constant $C$, when $x\in \Omega$ is close enough to the vacuum boundary $\Gamma$, then
\begin{equation}
\label{vacuum2}
\rho_0^{\gamma-1}(x)\geq C \text{dist}(x,\Gamma).
\end{equation}

When the physical vacuum boundary condition is assumed, the compressible Euler equations become a degenerate and
characteristic hyperbolic system, then the classical theory of hyperbolic systems can not be
directly applied. The local existence theory of classical solutions featuring the physical vacuum boundary was only established recently
(cf. \cite{DS_2009, DS_2010, MJ_2009, MJ_2010}). In \cite{DS_2010}, Coutand and Shkoller constructed $H^2$-type solutions in the Lagrangian coordinates based on Hardy's inequalities and the degenerate parabolic approximation. Their solutions can be regarded as degenerate viscosity solutions in some sense. Independently, in \cite{MJ_2010},
Jang and Masmoudi proved a local well-posed result by using a different approach. They treated the Euler equations in the Lagrangian coordinates as a hyperbolic type equation and obtained an energy type a prior estimate in weighted Sobolev spaces in a natural and clear way. The key
observation of these works is that, in the Lagrangian coordinates, the Euler equations have a natural weight and this weight can be used to overcome the possible singularity near the boundary. But this weight will be lost after applying standard spatial derivatives on the equations.
However, after applying the tangential derivatives (with respect to the boundary) on the equations, the the same weight will be kept. This factor is guaranteed by the Hardy inequality. Consequently, one may perform an $L^2$ type high order energy estimate with respect to the tangential derivatives and time derivative. Another observation is that the higher-order energy estimates show that the highest derivative terms contain three main parts: the weighted $L^2$ norm of the full Eulerian gradient, the Eulerian divergence and the Eulerian vorticity (see \cite[Section 4]{MJ_2010}). The first two terms have a positive sign and the Eulerian vorticity has a good estimate due to the transport-type structure of the vorticity equation. Combining these observations and the weighted Sobolev embedding inequality, the estimates for high order time and tangential derivatives can be established. Then by using the Hodge-type elliptic estimates (cf. \cite{DS_2010}) or adding weight on the normal derivatives (cf. \cite{
MJ_2010}), the estimates of the normal derivatives and hence a closed a prior estimates can be obtained.

Vacuum states also arise in coupled systems, such as Euler--Poisson system, the magnetohydrodynamics (MHD), etc. In fact, the physical
vacuum behavior appears naturally in the stationary solutions or the spherically symmetric stationary solutions for
the Euler--Poisson system \eqref{ep3d} (cf.  \cite{Xin_2013,J_09}), and the vacuum states are even richer in the magnetohydrodynamics due to the interplay between the scalar pressure and the anisotropic magnetic stress (cf.
\cite{MJ_2010, MJ_11}). However, the rigorous
study on those coupled system seems to be widely open. In this paper, we study the local well-posedness of Euler--Poisson system in multidimensional case, a
coupled system describing astrophysical flows where gravity plays an important role. This is a continuation of our earlier work on the one-dimensional case (see \cite{Gu_2011}).
Below we will focus on the three-dimensional case. The two-dimensional case is similar.
Our main result is the following theorem, where the definition of $E$ and $M_0$ can be found at the beginning of Section \ref{ee}:
\begin{theorem}\label{theorem}(Local well-posedness)
Let $1<\gamma<3$ and $M_0$ be two positive constants. Assume that $\rho_0 >0$ in $\Omega$ and the physical vacuum condition \eqref{vacuum} holds. Then
there exists a unique solution to \eqref{eq:ep} (and hence \eqref{ep3d}) on $[0,T]$ for some sufficiently small $T >0$  such that
\begin{equation}
\sup_{t \in [0,T]} E(t) \leq 2M_0.
\end{equation}
\end{theorem}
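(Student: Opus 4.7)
The plan is to adapt the Lagrangian framework of Jang--Masmoudi \cite{MJ_2010} to the coupled Euler--Poisson system, treating the self-gravitating term as a nonlocal but lower-order perturbation whose effect can be absorbed through elliptic estimates for the potential. First I would fix the domain by introducing the flow map $\eta:\Omega\to\Omega(t)$ with $\eta_t=u\circ\eta$ and $\eta(\cdot,0)=\mathrm{id}$, and set $v:=u\circ\eta$, $A:=(\nabla\eta)^{-1}$, $J:=\det\nabla\eta$. Continuity yields $\rho\circ\eta=\rho_0 J^{-1}$, so the momentum equation becomes
\begin{equation*}
\rho_0\,\partial_t v_i + A^k_i\,\partial_k\bigl(\rho_0^{\gamma}J^{1-\gamma}\bigr) = \rho_0\, A^k_i\,\partial_k \Phi,\qquad \Phi:=\phi\circ\eta,
\end{equation*}
posed on the fixed reference domain $\Omega$ with the degenerate weight $\rho_0$, which by \eqref{vacuum2} satisfies $\rho_0^{\gamma-1}\sim\mathrm{dist}(\cdot,\Gamma)$. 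The pulled-back potential $\Phi$ satisfies the variable-coefficient elliptic equation $\partial_k\bigl(J A^k_\ell A^j_\ell\,\partial_j\Phi\bigr)=-4\pi\rho_0$ inside $\Omega$, complemented by the Newtonian representation globally.

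Next I would derive the a priori bound $E(t)\le M_0+\int_0^t P(E(s))\,ds$ for some continuous $P$. For each admissible multi-index $\partial_t^{a}\bar\partial^{b}$ permitted by $E$, I differentiate the momentum equation, test against the corresponding derivative of $v$, and integrate by parts against the $\rho_0^\gamma$ structure. The structural cancellation central to Jang--Masmoudi produces, at the highest order, three nonnegative contributions: the weighted $L^2$ norms of $\sqrt{\rho_0}\,\nabla_\eta v$, of $\Div_\eta v$, and of $\curl_\eta v$. Since $\nabla\times\nabla\phi=0$ in Eulerian variables, the vorticity equation retains its transport structure and $\curl_\eta v$ is estimated exactly as in the pure Euler case. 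Commutator errors and geometric terms from $A$ and $J$ are absorbed via Hardy's inequality and the weighted Sobolev embeddings, and the normal derivatives are recovered from the Hodge-type elliptic estimates of Coutand--Shkoller \cite{DS_2010} applied to $\Div_\eta v$ and $\curl_\eta v$.

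The genuinely new ingredient, and the main obstacle of the proof, is the Poisson forcing. I would first obtain global $C^{1,\alpha}$ bounds on $\phi$ from the Newtonian potential representation of the compactly supported density $\rho$, and then bootstrap estimates on the time- and tangential-derivative norms of $\nabla\Phi$ by differentiating the elliptic equation for $\Phi$ and applying standard regularity on the moving domain, whose geometry is controlled by lower-order norms of $\eta$. The nonlocality of $\phi$ gains one derivative precisely where the coupling costs one, so that $\nabla\Phi$ enters the energy identity only as a bounded forcing, controlled by $E$. Most of the technical work concentrates here: one must ensure that the gravitational term respects the vacuum weight $\rho_0$ near $\Gamma$ and does not spoil the sharp loss-free weighted estimate that underlies the pure Euler analysis.

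Once the closed a priori bound is in hand, existence of a solution with $E(T)\le 2M_0$ for $T$ small follows from a standard approximation scheme, either a regularization of the initial density away from vacuum together with a Picard iteration on the linearized wave-type system, or the degenerate parabolic regularization of \cite{DS_2010}; at each stage the Poisson problem is solved separately by the elliptic theory of the previous step, and the a priori estimate is uniform in the regularization parameter. Uniqueness is obtained by running the same weighted energy inequality on the difference of two solutions, where the difference of the Poisson forces is controlled by the differences of the densities and flow maps through the same elliptic bounds. Together these steps yield the unique solution asserted by the theorem.
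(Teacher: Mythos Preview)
Your outline matches the paper's architecture in its broad strokes: Lagrangian reduction, weighted tangential/temporal energy identities producing the gradient--divergence--curl trichotomy, transport control of the Lagrangian vorticity, Hodge recovery of normal derivatives, and existence via a Coutand--Shkoller-type degenerate parabolic regularization. The substantive divergence is in how you propose to estimate the gravitational forcing.

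You write that you would control $\nabla\Phi$ by ``differentiating the elliptic equation for $\Phi$ and applying standard regularity on the moving domain.'' This is where your plan is thinner than the paper and potentially leaks. The Poisson equation for $\phi$ is posed on all of $\mathbb{R}^3$, not on $\Omega(t)$; restricted to $\Omega$, the pulled-back equation $\partial_k(JA^k_\ell A^j_\ell\partial_j\Phi)=-4\pi\rho_0$ is underdetermined without boundary data, and the correct data on $\Gamma$ are nonlocal (they encode the exterior harmonic extension). Standard interior or up-to-boundary elliptic estimates therefore do not apply directly, and it is not clear what ``standard regularity'' you intend to invoke. The paper sidesteps this entirely by working with the explicit Newtonian representation $G^i(x,t)=\int_\Omega |\eta(x,t)-\eta(z,t)|^{-1}\,[{F^*}^k_i\partial_k(\rho_0J^{-1})](z,t)\,dz$ and estimating the convolution directly. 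The central difficulty, which your proposal does not name, is that each tangential or time derivative landing on the kernel raises its singularity; the paper tames this via Taylor expansion of $\eta(x)-\eta(z)$ around $x=z$ together with Sobolev $C^\alpha$ embeddings for the highest-order increments (e.g.\ bounding $|\bar\partial v(x)-\bar\partial v(z)|\le C\|v\|_3|x-z|^{1/2}$), so that after cancellation the effective kernel stays in $L^1(\Omega)$. This is also what forces the restriction $1<\gamma<3$ in the general case, a constraint your sketch does not account for.

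One further point: if you do use the parabolic $\kappa$-approximation of \cite{DS_2010}, the paper shows that the artificial viscosity must be augmented by $\kappa\rho_0\,\partial_t G$ on the right-hand side; otherwise the Lagrangian curl equation for the approximants loses the structure that yields $\kappa$-independent vorticity bounds. Your proposal mentions the regularization only generically and would need this modification to close.
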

We explain the main steps and the main difficulties to prove the local well-posedness result slightly below. Firstly,  motivated by Coutand,
Shkoller \cite{DS_2010} and Jang, Masmoudi \cite{MJ_2010}, we use the Lagrangian coordinates to reduce the original free boundary problem to the system in a fixed domain.
Since we deal with the coupled system now, the main difference arising in our system from the Euler equations is the presence of the potential force term $\nabla_{\eta}\phi$ in \eqref{f3d}.
In order to handle this term, we will give an explicit formula for it
in a similarly way as we did in \cite{Gu_2011} for 1D case, and then we use this formula to reformulate the system as the Euler equations with external forcing term.
However, in the three dimensional case, the potential force term is non-local and brings many technical difficulties, in particular,
for finding solutions to the parabolic approximations and
the estimates of the approximated solutions.

More precisely, the force term can be written as
\begin{equation*}
\int_{\Omega}\dfrac{[{F^{*}}_i^k\partial_k(\rho_0J^{-1})](z,t)\,dz}{|\eta(x,t)-\eta(z,t)|}
\end{equation*}
in the Lagrangian coordinates.
When we act derivative (for instance, a time derivative $\partial_t$) on it, the singularity of the kernel may increase:
\begin{equation*}
\int_{\Omega}\dfrac{(\eta(x,t)-\eta(z,t))\cdot(v(x,t)-v(z,t))}{|\eta(x,t)-\eta(z,t)|^3}[{F^{*}}_i^k\partial_k(\rho_0J^{-1})](z,t)\,dz+\cdots
\end{equation*}
 However, after a careful calculation, we can find that the kernel can still be integrable in $\Omega$. In fact,
 we will make use of the Sobolev embedding inequality ($C^{\alpha}$ estimate) and Taylor's formula to reduce the
 increased singularity and overcome the difficulty. For instance,
 \begin{align*}
 &|\dfrac{(\eta(x,t)-\eta(z,t))\cdot(v(x,t)-v(z,t))}{|\eta(x,t)-\eta(z,t)|^3}|\\\lesssim&|\dfrac{(\eta(x,t)-\eta(z,t))\cdot(Dv(x^*)(x-z))}{|x-z|^3}|\\\lesssim&\|Dv\|_{L^{\infty}(\Omega)}\dfrac{1}{|x-z|}.
 \end{align*}
The detail of this calculation is carried out in Section \ref{sg1} and Section \ref{sg2}. In this way, we can get a good estimate for $\nabla_{\eta}\phi$.

Secondly, motivated by \cite{DS_2010}, we introduce a degenerate parabolic approximation for the construction of approximated solutions by adding an artificial viscosity, which is different from the one in \cite{DS_2010} due to the presence of the potential force. The viscosity parameter is $\kappa$ and we call the parabolic approximation as $\kappa$-equations for convenience. We will use a fixed-point scheme to get solutions for $\kappa$-equations and construct the contract map in two steps. Suppose we have an approximate solution $v^{(n)}$, then in the first step, we derive a degenerate heat-type
equation by acting the Eulerian divergence on the $\kappa$-equations and introducing an intermediate variable $X$, which is corresponding to $\rho_0\Div_{\eta}v$ in some sense. We call this equation as $X$-equation and use the Galerkin scheme to find the solution $X^{(n)}$ to the linearized problem (with respect to $v^{(n)}$) of $X$-equation. We can also get an energy type estimate of $X^{(n)}$, which will be used to construct contract map in the second step. In this process, we will make the
fundamental use of the higher-order Hardy-type inequality introduced in \cite{DS_2010}. We mention that our intermediate variable $X$ is different from the one used in \cite{DS_2010}. By using our intermediate variable, the improvement
of the space regularity for $X^{(n)}$ will be easier and clearer with less computations. However,
due to our choice of the intermediate variable, the symmetric structure of $X$-equation is different from that in \cite{DS_2010}, we will need to make use of $\kappa$ to construct the contract map. In the second step, we derive a new approximate velocity field $v^{(n+1)}$ by a linear elliptic system of equations, which is constructed by defining the divergence, curl and vertical component on the boundary of $v^{(n+1)}$ by using $v^{(n)}$, $X^{(n)}$ and their derivatives directly. The idea is that we will add a proper small perturbation to $v^{(n)}$'s divergence, curl and vertical component on the boundary to define $v^{(n+1)}$'s divergence, curl and vertical component on the boundary. To achieve this goal, we will make use of the evolution equations for $\Div_{\eta}v$, $\curl_{\eta}v$ in the domain and the evolution equation for normal component $v^3$ on the boundary. The first equation is derived from our definition of intermediate variable $X$, the other two equations are derived from $\kappa$-equations. For $v^{(n)}$ and $X^{(n)}$, these equations do not hold, but they can be regarded as small perturbations from zero. Then combining with the energy type estimate we get in the first step, we can construct the contract map $\Theta:v^{(n)}\mapsto v^{(n+1)}$ and use the fixed-point scheme to get solutions to the $\kappa$-equations.

Lastly, we will derive $\kappa$-independent a prior estimates for the approximated solutions. This kind a prior estimates were introduced in \cite{DS_09, DS_2010} for the Euler equations. We combine some ideas from \cite{MJ_2010} with \cite{DS_2010} to
 carry out a little simpler proof in Section \ref{ee} than that in \cite[Section  9]{DS_2010}. The main strategy of a prior estimates is that: we first get the curl estimate and then perform the $L^2$ type high order energy estimates with respect to time derivative and tangential derivatives. In this way, we can control the $L_t^{\infty}L^2_x$ norms of $\sqrt{\rho_0}\partial_t^{2s}\bar\partial^{4-s} v$ and $\rho_0\partial_t^{2s}\bar\partial^{4-s} D\eta$, and hence the $L_t^{\infty}L^2_x$ norms of $\partial_t^{2s}\bar\partial^{4-s} \eta$ by using the fundamental theorem of calculus and the weighted Sobolev embedding inequalities.  Next, we use Hodge-type elliptic estimates to control the normal derivative of $\eta$ and use bootstrap arguments to get a closed a prior estimates. Then the local existence of the Euler--Poisson equations is followed by taking weak limit $\kappa\rightarrow0$.
Below we will first focus on the case for $\gamma=2$. The general case for $1<\gamma<3$ is treated in Section \ref{ga3}.

Now we briefly review some related theories and results from various aspects.
For the Euler--Poisson equations, the linearized stability of spherically symmetric stationary solutions was studied in \cite{Lin_97}, and the existence theory for the stationary solutions has been proved by Deng,
Liu, Yang, and Yao in \cite{TY_2002}. Jang studied nonlinear instability of the spherically symmetric Euler--Poisson system for polytropic gases
with adiabatic exponents $\gamma=6/5$ and $6/5 < \gamma< 4/3$ around the Lane--Emden equilibrium star configurations in \cite{J_08, J_13}. Especially, in \cite{J_13}, the
boundary behavior of compactly supported Lane--Emden solutions was characterized
by the physical vacuum condition \eqref{vacuum}. Luo, Xin and Zeng considered the spherically symmetric motions with physical vacuum in \cite{Xin_2013}
and gave a much simpler proof for its local existence without imposing the compatibility condition of the first derivative being zero at
the center of symmetry. A novelty uniqueness result is also obtained for $1 < \gamma \leq 2$. For the Navier--Stokes--Poisson equations,
Jang \cite{J_09} studied spherically symmetric and isentropic motion and captured the physical boundary behavior, and in \cite{JI_2013}, Jang and Tice also established both linear and nonlinear instability for this spherically symmetric system.  Li, Matsumura and
Zhang \cite{HL_2010} studied optimal decay rate for the Navier--Stokes--Poisson equations.

For compressible fluids, some of the early developments in the theory of vacuum states for compressible gas dynamics can be found in \cite{Lin_87,Liu_80}. In \cite{Makino_1986}, Makino proved the local-in-time existence of solutions with boundary condition $\rho=0$ for some non-physical restrictions on the initial data. He regarded this problem as a Cauchy problem with compact support initial data in the whole space. Similar methodology was used by DiPerna \cite{Di_83} and Lions, Perthame, Souganidis \cite{Lions_96}. However, this kind method can not track the position of vacuum boundary. Lindblad \cite{Lind_2005} considered the compressible liquid for general case of initial data. He transformed the equations in the Lagrangian coordinates and proved the local-in-time existence with the vacuum boundary condition $P=0$ and Taylor sign condition $\nabla P_0\cdot n |_{\Gamma}<0$. Trakhinin \cite{Tra_09} proved Lindblad's result in a different way. He transformed the equations into a  symmetric hyperbolic system in a fixed domain while he still used the Eulerian coordinates. Then he used Alinac's good unknown to derive a prior estimates with the manifest loss of regularity for the linearized problem, and by using Nash-Moser iteration, he proved the local well-posedness. In \cite{Xu_05}, Xu, Yang proved a local-in-time existence result for the small perturbation of a planar wave to the one dimensional Euler equations with damping. We also refer the reader to H. Li, J. Li,
Xin \cite{HL_2008}, Luo, Xin, Yang \cite{XinL}, and Xin \cite{Xin} for extended results, such as the compressible Navier--Stokes equations with vacuum.

For incompressible flows, Wu solved the local well-posedness for the irrotational problem, with no surface tension in all dimensions in \cite{Wu_1997} and \cite{Wu_1999}. Wu also proved the global well-posedness in three dimension and almost global well-posedness in two dimension in \cite{Wu_11, Wu_09}. In \cite{Lindblad_2005}, assuming the Rayleigh-Taylor sign condition,
Lindblad proved the local existence of solutions for rotational flows with no surface tension. In \cite{Masmoudi_2012}, Masmoudi and Rousset studied the inviscid limit of the free boundary Navier--Stokes equations and hence obtained a local well-posedness for both the Navier--Stokes equations and the Euler equations. For the problem with surface tension, Schweizer proved the existence for the general three-dimensional irrotational problem in \cite{Schweizer_2005}. And we also mention the works by Ambrose and Masmoudi \cite{AM}, Coutand and Shkoller \cite{DS}, Lannes \cite{Lannes} and P. Zhang and Z. Zhang \cite{ZZ}.
Very recently, the global well-posedness results of two dimensional gravity water waves system for small localized data were independently obtained by Alazard, Delort \cite{AD_13}, and Ionescu, Pusateri \cite{Ion_13}. We refer the reader to their papers and the references therein for a more extensive bibliography.

The rest of the paper is organized as follows. First, we focus on the case for $\gamma=2$. In Section \ref{s2}, we formulate the problem in the Lagrangian coordinates.
In Section \ref{s3}, we introduce our notations used in the paper.
In Section \ref{s4}, we recall some preliminary analysis lemmas which will be used frequently throughout the paper. In Section \ref{sec5}, we recall some useful identities which will be used for constructing approximated solutions and getting a prior estimates. In Section \ref{sec6}--Section \ref{s5}, we introduce a degenerate parabolic approximation and solve it by a fixed-point method. In Section \ref{ee}--\ref{s7}, we derive the uniform a priori estimates for approximated solutions and prove the local well-posedness. Then, in Section \ref{ga3}, we discuss the general case for $1 < \gamma <3$. 

\section{Lagrangian formulation}\label{s2}
In this section, motivated by \cite{DS_2010,MJ_2010}, we use Lagrangian coordinates to transform the free boundary problem to a fixed domain problem.
We denote $\eta$ as Eulerian coordinates and denote $x$ as Lagrangian coordinates, which means that $\eta(x,t)$ denotes the ``position'' of the gas particle $x$ at time $t$. And we have that
\begin{equation}
  \partial_t \eta = u \circ \eta \ \ \text{for}\ \  t > 0 \ \ \text{and}\ \  \eta(x,0)=x,
  \label{hhhhh}
\end{equation}
where $\circ$ denotes the composition $[u\circ\eta](x,t)=u(\eta(x,t),t)$. We also use the following notations:
\begin{equation}
  \begin{split}
    v &= u\circ \eta\quad(\text{Lagrangian velocity}),\\
    f &= \rho \circ \eta\quad(\text{Lagrangian density}),\\
    \Phi &= \phi \circ \eta\quad(\text{Lagrangian potential field}),\\
    F &= [D\eta]\quad(\text{Deformation tensor}),\\
    J &= \text{det} F\,\,\,\,(\text{Jacobian determinant}),\\
    F^{*} &= JF^{-1} \,\,\,(\text{Cofactor matrix}).
  \end{split}
  \label{lagrang}
\end{equation}
Then equation \eqref{hhhhh} can be rewritten in the Lagragian coordinates as
\begin{equation*}
\label{etav}
v(x,t)=\eta_t(x,t).
\end{equation*}
This equation holds throughout the paper.

\subsection{The Lagrangian version of the system}
Noticing (\ref{hhhhh}) and (\ref{lagrang}), the Lagrangian version of system (\ref{euler-poisson-3d})--(\ref{5353}) can be written in the fixed reference domain $\Omega$ as
\begin{subequations}
\label{eq:ep}
\begin{align}
  \label{euler-poisson-lag}
  f_t + f {F^{-1}}_i^k\partial_k (v^i) &= 0 &\text{in}&\ \ \Omega \times (0,T],\\
  f v_t^i + {F^{-1}}_i^k\partial_k (f^{2}) &= f {F^{-1}}_i^k\partial_k (\Phi) &\text{in}&\ \  \Omega \times (0,T],\\
  - ({F^{-1}}_i^k\partial_k)^2\Phi &=  f &\text{in}&\ \  \Omega \times (0,T],\\
  (f, v, \eta) &= (\rho_0, u_0, e) &\text{in}&\ \ \Omega \times \{t=0\},\label{5r}
\end{align}
\end{subequations}
where $e$ denotes the identity map on $\Omega: e(x)=x$. We also used Einstein's summation convention to simplify the description.

To avoid the use of local coordinate charts which are necessary for arbitrary geometries, and to simplify our expression, we will assume that the initial domain at time $t=0$ is given by
\begin{equation}
\label{domain}
\Omega=\mathbb{T}^2\times(0,1),
\end{equation}
where $\mathbb{T}^2$ denotes the 2-torus. Then, the reference vacuum boundary is comprised of the bottom and top of the domain $\Omega$ so that
\begin{equation*}
\Gamma =\mathbb{T}^2 \times ({0}\cup {1}).
\end{equation*}

By the conservation law of mass \eqref{euler-poisson-lag}, we have
\begin{equation}
  f = \rho \circ \eta = \rho_0 J^{-1}.
  \label{lag_mass}
\end{equation}
Then we can rewrite the compressible Euler--Poisson system as
\begin{subequations}
\label{eq:lag}
\begin{align}
  \label{lag}
  \rho_0 v_t + {F^{*}}_i^k\partial_k(\rho_0^{2}J^{-2})  &= \rho_0 {F^{-1}}_i^k\partial_k(\Phi) &\text{in}&\ \  \Omega\times (0,T],\\ \label{cffffff}
  - {F^{*}}_i^j\partial_j({F^{-1}}_i^k\partial_k(\Phi))  &=  \rho_0 &\text{in}&\ \  \Omega \times (0,T], \\ \label{field}
(v, \eta) &= (u_0, e) &\text{in}&\ \ \Omega\times \{t=0\},\\
  \rho_0 &= 0 &\text{on}&\ \  \Gamma,
\end{align}
\end{subequations}
where $\rho_0(x)$ also satisfies that $\rho_0(x) \geq C \text{dist}(x,\Gamma)$ for $x\in\Omega$ close to $\Gamma$.

\subsection{The formula for the potential force}
\label{gfo}
In this subsection, we will give an explicit formula for the potential force $\nabla_{\eta}\phi$ in (\ref{f3d}) and the corresponding term ${F^{-1}}_i^k\partial_k \Phi$ in (\ref{lag}).

Firstly, since the density of gas $\rho > 0$ in $\Omega(t)$ and $\rho = 0$ in $\mathbb{R}^3\setminus \Omega(t)$, then from equation (\ref{9999999999999}), by the Newtonian potential, we have
\begin{equation}
  \phi(\eta,t) = \int_{\mathbb{R}^3}\dfrac{ \rho(y,t)}{|\eta-y|}\,dy=\int_{\mathbb{R}^3}\dfrac{\rho(\eta-y,t)}{|y|}\,dy,
  \label{fieldfor}
\end{equation}
and
\begin{equation}
\label{id:phi}
\Phi(x,t)=\int_{\mathbb{R}^3}\dfrac{1}{|y|}\rho(\eta(x,t)-y,t)\,dy,
\end{equation}
under the assumption that $|\phi(x)|\rightarrow 0$ when $|x|\rightarrow +\infty$. Therefore,
\begin{equation}
\label{g}
g=\nabla_{\eta}\phi=\int_{\mathbb{R}^3}\dfrac{1}{|y|}\nabla_{\eta}\rho(\eta-y,t)\,dy
=\int_{\mathbb{R}^3}\dfrac{1}{|\eta-y|}\nabla_y\rho(y,t)dy.
\end{equation}
Due to $\rho = 0$ in $\mathbb{R}^3\setminus \Omega(t)$, we have that $\nabla_y\rho(y,t)=0, y\in\mathbb{R}^3\setminus \bar\Omega(t)$ and
$$g=\int_{\Omega(t)}\dfrac{1}{|\eta-y|}\nabla_y\rho(y,t)dy.$$
Then we can transform \eqref{g} to the Lagrangian version: $\forall y\in\Omega(t)$, there exists $z$ such that $y=\eta(z,t)$, and we have the following formula
\begin{equation}
\begin{split}
  G^i(x,t):={F^{-1}}_i^k\partial_k \Phi= \int_{\Omega}\dfrac{[{F^{*}}_i^k\partial_k(\rho_0J^{-1})](z,t)\,dz}{|\eta(x,t)-\eta(z,t)|} .
\end{split}
  \label{forceF}
\end{equation}
Particularly, when $t=0$, we have
\begin{align*}
\phi_0(x)&:=\phi(x,0)=\int_{\Omega}\dfrac{\rho_0(z)\,dz}{|x-z|},
\\
G(x,0)&=\nabla\phi_0=\int_{\Omega}\dfrac{\nabla{\rho_0}(z)\,dz}{|x-z|} .
\end{align*}
Now, we can rewrite the system \eqref{lag}--\eqref{cffffff} as the following Euler equation with external forcing term:
\begin{align}
\label{polp}
  \rho_0 v_t^i + {F^{*}}_i^k\partial_k(\rho_0^{2}J^{-2}) = \rho_0 G^i \quad\text{in}\,\,\ \  \Omega\times (0,T],
\end{align}

\begin{remark}
If $\rho \in C^{\alpha}$, then $g \in C^{1,\alpha}$, we will see that this regularity is important for the case $\gamma \neq 2$ in Section \ref{ga3}.
\end{remark}

\section{Notation}
\label{s3}
In this section, we introduce the notations which will be used throughout the paper. We will use Einstein's summation convention and use the notation $w_{,k}=\frac{\partial w}{\partial x_k}$ to denote $k$-th partial derivative of $w$.
\subsection{Derivatives and operators}
The reference domain $\Omega$ is defined in \eqref{domain}. We use $D$ to denote the three-dimensional gradient vector
\begin{equation*}
D=(\dfrac{\partial}{\partial x_1},\dfrac{\partial}{\partial x_2},\dfrac{\partial}{\partial x_3}),
\end{equation*}
and use $\bar{\partial}$ to denote the tangential derivatives $\bar{\partial}=(\dfrac{\partial}{\partial x_1},\dfrac{\partial}{\partial x_2})$. We also denote the gradient vector in the Eulerian coordinates as $D_{\eta^i}={F^{-1}}_i^k\partial_k$, $i=1,2,3$.

We use $\Div V$ to denote the divergence of a vector field $V$ on $\Omega$ as $\Div V={V^k}_{,k}$, and use $\curl V$ to denote the curl of a vector $V$ on $\Omega$ as $[\curl V]^i=\epsilon_{ijk}{V^k}_{,j}$. Here we make use of the permutation symbol
\begin{equation}
\epsilon_{ijk}=\left\{\begin{aligned}
1 &,\,\,\text{even permutation of}\,\, \{1,2,3\},\\
-1 &,\,\,\text{odd permutation of}\,\, \{1,2,3\},\\
0 &,\,\,\text{otherwise.} \\
\end{aligned}\right.
\end{equation}

Finally, we define the two-dimensional divergence operator $\Div_{\Gamma}$ for vector fields $\mathfrak{V}$ on the two-dimensional boundary $\Gamma$ as $\Div_{\Gamma}\mathfrak{V}={\mathfrak{V}^1}_{,1}+{\mathfrak{V}^2}_{,2}$, and we define the divergence, curl operators and curl matrix in the Eulerian coordinates as follows
\begin{align*}
\Div_{\eta} U&={F^{-1}}_i^j {U^i}_{,j},\\
[\curl_{\eta} U]^i &= \epsilon_{ijk}{F^{-1}}_j^r {U^k}_{,r},\\
[\text{Curl}_{\eta} U]_j^i &= {F^{-1}}_j^s {U^i}_{,s}-{F^{-1}}_i^s {U^j}_{,s}.
\end{align*}

\subsection{Sobolev spaces}
For integers $k\geq0$, we define the Sobolev space $H^k(\Omega)$ to be the completion of the functions in $C^{\infty}(\bar{\Omega}):=C^{\infty}(\mathbb{T}^2\times[0,1])$ in the norm
\begin{equation*}
\|u\|_k:=(\sum_{|\alpha|\leq k}\int_{\Omega}|D^{\alpha}u(x)|^2\,dx)^{1/2}
\end{equation*}
for a multi-index $\alpha\in \mathbb{Z}_{+}^3$. For real numbers $s\geq0$, the Sobolev spaces $H^s(\Omega)$ are defined by interpolation. We will also make use of the following subspace of $H^1(\Omega)$:
\begin{equation*}
H^1_0=\{u\in H^1(\Omega): u=0\,\,\text{ on}\,\, \Gamma\},
\end{equation*}
where the vanishing of $u$ on $\Gamma$ is understood in the sense of trace.

\noindent
On the boundary $\Gamma$, for functions $w\in H^k(\Gamma)$, $k\geq0$, we set
\begin{equation*}
|w|_k:=(\sum_{|\beta|\leq k}\int_{\Gamma}|\bar{\partial}^{\beta}w(y)|^2\,dy)^{1/2}
\end{equation*}
for a multi-index $\beta\in\mathbb{Z}^2_+$. The real number $s\geq0$ Sobolev space $H^s(\Gamma)$ is defined by interpolation. The negative-order Sobolev spaces $H^{-s}(\Gamma)$ are defined via duality: for real $s\geq0$, $H^{-s}(\Gamma):=[H^s(\Gamma)]'$.

\section{Preliminary}\label{s4}
In this section, we will recall some basic tools of analysis, which will be used in the construction of approximated solutions and the a prior estimates.
\subsection{Hardy's inequality in high-order form}
We will make the fundamental use of Hardy's inequality in the context of higher-order derivatives for the construction of approximated solutions in Section \ref{s11}. This type inequality was introduced in \cite{DS_2010}.
\begin{lemma}[Hardy's inequality in higher-order form]
\label{hardy}
Let $s\geq1$ be a given integer, and suppose that
\begin{equation*}
u \in H^s(\Omega)\cap H_0^1(\Omega).
\end{equation*}
If $\tilde d(x)$ is defined on $\bar\Omega$, $\tilde d(x)>0$ for $x\in\Omega$, $\tilde d\in H^r(\Omega), r=\max(s-1,3),$ and $\tilde d(x)$ is the distance function to $\Gamma$ when $dist(x,\Gamma)$ is small enough, then we have $\dfrac{u}{\tilde d}\in H^{s-1}(\Omega)$ and
\begin{equation}
\|\dfrac{u}{\tilde d}\|_{s-1}\leq C\|u\|_s.
\end{equation}
\end{lemma}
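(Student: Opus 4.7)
\emph{Step 1 (Localization).} The strategy is to exploit the flat product structure $\Omega = \mathbb{T}^2 \times (0,1)$ to reduce the estimate to one on a one-sided slab near $\Gamma$, where a direct integral representation of $u/x_3$ furnished by the zero trace of $u$ does the work. Since $\Gamma = \mathbb{T}^2 \times \{0,1\}$, the distance to $\Gamma$ is $\min(x_3, 1-x_3)$, and $\tilde d$ agrees with it near $\Gamma$. Fix $\delta \in (0, 1/4)$ and a smooth partition of unity $1 = \chi_0 + \chi_1 + \chi_2$ subordinate to the cover $\{x_3 < 2\delta\}$, $\{\delta < x_3 < 1-\delta\}$, $\{x_3 > 1 - 2\delta\}$, arranged so that $\tilde d = x_3$ on the support of $\chi_0$, $\tilde d = 1-x_3$ on the support of $\chi_2$, and $\tilde d \geq \delta$ on the support of $\chi_1$. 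On the middle piece $\chi_1/\tilde d$ is bounded, and the hypothesis $r \geq \max(s-1,3)$ together with the Sobolev embedding $H^r \hookrightarrow L^\infty$ provides enough regularity for the standard Moser-type multiplier estimate $\|\chi_1 u/\tilde d\|_{s-1} \leq C\|u\|_{s-1}$. By the $x_3 \leftrightarrow 1-x_3$ symmetry, it suffices to estimate $\|\chi_0 u/x_3\|_{s-1}$.

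\emph{Step 2 (Integral representation).} Let $w := \chi_0 u \in H^s(\Omega)$; then $w$ has vanishing trace on $\{x_3 = 0\}$ and is supported in $\{x_3 < 2\delta\}$. Approximating $u$ in $H^s$ by smooth functions vanishing on $\Gamma$, the fundamental theorem of calculus combined with the substitution $t = \lambda x_3$ yields the pointwise identity
\begin{equation*}
w(x', x_3) = x_3 \int_0^1 \partial_3 w(x', \lambda x_3)\, d\lambda,
\end{equation*}
so that $w/x_3 = \int_0^1 \partial_3 w(x', \lambda x_3)\, d\lambda$. Differentiating under the integral, for every multi-index $\alpha = (\alpha_1, \alpha_2, \alpha_3)$ with $|\alpha| \leq s-1$,
\begin{equation*}
D^\alpha(w/x_3)(x', x_3) = \int_0^1 \lambda^{\alpha_3}\, (D^\alpha \partial_3 w)(x', \lambda x_3)\, d\lambda.
\end{equation*}
Minkowski's integral inequality together with the substitution $y_3 = \lambda x_3$ in the inner $L^2$-norm then gives
\begin{equation*}
\|D^\alpha(w/x_3)\|_{L^2(\Omega)} \leq \int_0^1 \lambda^{\alpha_3 - 1/2}\, d\lambda \cdot \|D^\alpha \partial_3 w\|_{L^2(\Omega)} \leq \frac{2}{2\alpha_3 + 1}\, \|u\|_s.
\end{equation*}
The key point is that $\lambda^{\alpha_3 - 1/2}$ is integrable at $\lambda = 0$ for every $\alpha_3 \geq 0$; this is precisely the gain provided by the vanishing trace of $u$. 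Summing over $|\alpha| \leq s-1$ yields $\|\chi_0 u/x_3\|_{s-1} \leq C\|u\|_s$, which combined with Step 1 closes the argument.

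\emph{Main obstacle.} The algebra is short once the integral representation is in hand; the principal technical point is justifying both the representation and the differentiation under the integral when $u$ is only in $H^s \cap H_0^1$. This is a density argument, but it must be sequenced carefully: first establish the estimate for smooth $u$ vanishing on $\Gamma$ (where everything is pointwise and elementary), and then extend by density to $H^s \cap H_0^1$ using the continuity of the map $u \mapsto u/\tilde d$ from $H^s \cap H_0^1$ into $H^{s-1}$ that the just-proven bound encodes. A secondary matter is the Moser estimate in the middle region, where the assumptions $\tilde d \in H^r$, $r \geq \max(s-1,3)$, and $\tilde d > 0$ in $\Omega$ are all invoked.
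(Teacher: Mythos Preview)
Your proof is correct. The paper does not actually prove this lemma; it simply states that ``The proof of this lemma can be found in \cite{DS_2010}.'' Your argument via localization and the integral representation $w/x_3 = \int_0^1 \partial_3 w(x',\lambda x_3)\,d\lambda$ is the standard route to this higher-order Hardy inequality on a slab, and is in the same spirit as the proof in Coutand--Shkoller that the paper cites. The key computation with Minkowski's inequality and the change of variables $y_3=\lambda x_3$ is carried out correctly, and you rightly identify the density step as the only place requiring care.
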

The proof of this lemma can be found in \cite{DS_2010}.
\subsection{$\kappa$-independent elliptic estimates}
In order to obtain a uniform a prior estimates ($\kappa$-independent) for the degenerate parabolic approximate equations in Section \ref{ee}, we will use the following lemma, whose proof can be found in \cite[Section 6]{DS_06}.
\begin{lemma}
\label{elli}
Given a constant $\kappa>0$ and a function $g\in L^{\infty}(0,T;H^s(\Omega))$, $f \in H^1(0,T;H^s(\Omega))$ is a solution to the equation
\begin{equation*}
f+\kappa f_t =g\quad \text{in}\quad (0,T)\times \Omega.
\end{equation*}
Then, we have
\begin{equation*}
\|f\|_{L^{\infty}(0,T;H^s(\Omega))}\leq C\max\{\|f(0)\|_s,\|g\|_{L^{\infty}(0,T;H^s(\Omega))}\},
\end{equation*}
where constant $C$ is independent on $\kappa$.
\end{lemma}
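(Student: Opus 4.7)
The plan is to regard the equation $f+\kappa f_t=g$ as a family of linear scalar ODEs in $t$ indexed by $x\in\Omega$, solve it explicitly by Duhamel's formula, and then take the $H^s(\Omega)$ norm under the integral sign. Rewriting as $f_t+\kappa^{-1}f=\kappa^{-1}g$ and using the integrating factor $e^{t/\kappa}$, one obtains
\begin{equation*}
f(x,t)=e^{-t/\kappa}f(x,0)+\frac{1}{\kappa}\int_0^t e^{-(t-\tau)/\kappa}g(x,\tau)\,d\tau.
\end{equation*}
Since $f(\cdot,0)\in H^s(\Omega)$ and $g\in L^\infty(0,T;H^s(\Omega))$, Minkowski's integral inequality applied in the Banach space $H^s(\Omega)$ gives
\begin{equation*}
\|f(\cdot,t)\|_s\leq e^{-t/\kappa}\|f(\cdot,0)\|_s+\Bigl(\frac{1}{\kappa}\int_0^t e^{-(t-\tau)/\kappa}\,d\tau\Bigr)\|g\|_{L^\infty(0,T;H^s(\Omega))}.
\end{equation*}

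The key observation is that the scalar prefactor evaluates to $1-e^{-t/\kappa}\leq 1$ uniformly in $\kappa>0$ and $t\geq 0$, so the right-hand side is bounded by $\|f(0)\|_s+\|g\|_{L^\infty(0,T;H^s(\Omega))}\leq 2\max\{\|f(0)\|_s,\|g\|_{L^\infty(0,T;H^s(\Omega))}\}$. Taking the supremum over $t\in[0,T]$ yields the lemma with the explicit constant $C=2$, manifestly independent of $\kappa$.

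If one prefers to stay inside the energy-estimate framework (which is what the rest of the paper uses), the same conclusion follows by pairing $D^\alpha f+\kappa D^\alpha f_t=D^\alpha g$ with $D^\alpha f$ in $L^2(\Omega)$ for $|\alpha|\leq s$, using $ab\leq\tfrac12 a^2+\tfrac12 b^2$ to get
\begin{equation*}
\frac{\kappa}{2}\frac{d}{dt}\|D^\alpha f\|_0^2+\frac{1}{2}\|D^\alpha f\|_0^2\leq\frac{1}{2}\|D^\alpha g\|_0^2,
\end{equation*}
and then multiplying by $\kappa^{-1}e^{t/\kappa}$ and integrating; the $\kappa$ disappears in exactly the same way, via the identity $\kappa^{-1}\int_0^t e^{-(t-\tau)/\kappa}\,d\tau=1-e^{-t/\kappa}$. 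I do not foresee a genuine obstacle here, as the proof is essentially a one-line ODE computation; the only point worth noting is that the Duhamel approach handles real $s\geq 0$ without modification, whereas the energy approach naturally covers only integer $s$ and would require an interpolation argument otherwise.
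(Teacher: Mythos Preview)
Your proof is correct; the Duhamel/integrating-factor computation is the standard way to establish this $\kappa$-independent bound, and your explicit constant $C=2$ is sharp for this argument. The paper itself does not supply a proof at all but simply cites \cite[Section~6]{DS_06}, so there is no in-paper argument to compare against; your approach is exactly what one would find upon following that reference.
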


\subsection{Weighted Sobolev spaces and embedding inequalities}
As we mentioned before, the equation \eqref{eq:lag} has a weight $\rho_0$ and $\rho_0$ is corresponding to the distance function in some sense due to the physical vacuum condition \eqref{vacuum2}, so it is natural for us to recall the following kind weighted Sobolev spaces as in \cite{DS_2010, MJ_2010}. Using $d$ to denote the distance function to the boundary $\Gamma$, and letting $p=1,2$, the weighted Sobolev space $H^1_{d^p}(\Omega)$ is defined by
$$H^1_{d^p}(\Omega):=\big\{f\big|[\int_{\Omega}d(x)^p(|f(x)|^2+|Df(x)|^2)\,dx]^{1/2}<\infty\big\},$$ endowed with the norm $[\int_{\Omega}d(x)^p(|f(x)|^2+|Df(x)|^2)\,dx]^{1/2}$. Then we have the following embedding:
\begin{equation*}
H^1_{d^p}(\Omega) \hookrightarrow H^{1-\frac{p}{2}}(\Omega).
\end{equation*}
Therefore, there is a constant $C>0$ depending only on $\Omega$ and $p$, such that
\begin{equation}
\label{embd}
\|f\|_{1-\frac{p}{2}}^2\leq C \int_{\Omega}d(x)^p(|f(x)|^2+|Df(x)|^2)\,dx.
\end{equation}
More details about the weighted Sobolev spaces and embedding inequalities can be found in, for instance, \cite[Section 8.8]{Kufner_1985}.
\subsection{Trace estimates and the Hodge decomposition}
Since $\Omega=\mathbb{T}^2\times(0,1)$, the outward normal vector to $\Gamma$ will be $N=(0,0,1)$ or $(0,0,-1)$. Then the normal trace theorem provides the existence of the normal trace $\omega\cdot N=\pm\omega^3$ of a velocity field $\omega\in L^2(\Omega)$ with $\Div \omega \in L^2(\Omega)$. Motivated by \cite{DS_2010}, we recall the following trace estimates: if $\bar{\partial}\omega \in L^2(\Omega)$ with $\Div \omega\in L^2(\Omega)$, then $\bar{\partial}\omega^3$ exists in $H^{-\frac 1 2}(\Gamma)$ and
\begin{equation}
\|\bar{\partial}\omega^3\|_{H^{-\frac 1 2}(\Gamma)}\leq C[\|\bar{\partial}\omega\|_{L^2(\Omega)}^2+\|\Div\omega\|^2_{L^2(\Omega)}]
\label{gga}
\end{equation}
for some constant $C$ independent of $\omega$. In addition to the normal trace theorem, we also recall the following lemma:
\begin{lemma}
Let $\bar{\partial}\omega\in L^2(\Omega)$ such that $\curl \omega \in L^2(\Omega)$. 
\begin{equation}
\label{trineq}
\|\bar{\partial}\omega^{\alpha}\|_{H^{-\frac 1 2}(\Gamma)}\leq C[\|\bar{\partial}\omega\|_{L^2(\Omega)}^2+\|\curl\omega\|^2_{L^2(\Omega)}], \quad\alpha=1,2
\end{equation}
for some constant $C$ independent of $\omega$.
\end{lemma}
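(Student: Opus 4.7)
The approach is to follow the duality proof of the normal trace bound (\ref{gga}), with the curl playing the role that the divergence did there. The key algebraic identity is that the curl encodes the normal derivative of tangential components: from $[\curl\omega]^2=\omega^1_{,3}-\omega^3_{,1}$ and $[\curl\omega]^1=\omega^3_{,2}-\omega^2_{,3}$ one reads off, for $\alpha\in\{1,2\}$,
$$\omega^\alpha_{,3}=\omega^3_{,\alpha}+\sigma_\alpha[\curl\omega]^{3-\alpha}\qquad(\sigma_1=+1,\ \sigma_2=-1).$$
Thus the hypothesis $\curl\omega\in L^2(\Omega)$ supplies exactly the control of $\omega^\alpha_{,3}$ that $\Div\omega\in L^2(\Omega)$ supplied for $\omega^3_{,3}$ in the proof of (\ref{gga}).

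I would then dualize: for $\varphi\in H^{1/2}(\Gamma)$, choose a lifting $\Phi\in H^1(\Omega)$ with $\Phi|_\Gamma=\varphi$ and $\|\Phi\|_1\leq C|\varphi|_{1/2}$. By decoupling the top and bottom faces with a cutoff $\chi(x_3)$, it suffices to bound expressions of the form $\int_\Omega\partial_3(\chi\,\omega^\alpha_{,\beta}\,\Phi)\,dx$ for each tangential direction $\beta\in\{1,2\}$. Distributing $\partial_3$ produces $\chi'\omega^\alpha_{,\beta}\Phi$ and $\chi\omega^\alpha_{,\beta}\Phi_{,3}$, both immediately bounded by $\|\bar\partial\omega\|_{L^2}\|\Phi\|_1$, together with $\chi\omega^\alpha_{,\beta 3}\Phi$. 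For the latter, I commute $\partial_\beta$ and $\partial_3$, substitute the curl identity to replace $\omega^\alpha_{,3}$, and then integrate the surviving tangential derivative $\partial_\beta$ by parts onto $\Phi$; because the tangential variables live on the compact torus $\mathbb{T}^2$, no boundary contributions appear, and the remaining integral is controlled by $(\|\bar\partial\omega\|_{L^2}+\|\curl\omega\|_{L^2})\,\|\Phi\|_1$. Taking the supremum over $\varphi$ yields the claim.

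The main subtlety is one of justification: under the bare hypotheses $\bar\partial\omega,\curl\omega\in L^2(\Omega)$, the ingredients $\omega^\alpha_{,\beta 3}$ and $([\curl\omega]^{3-\alpha})_{,\beta}$ have no classical pointwise meaning, so the derivative commutations and integrations by parts must first be carried out on a smooth approximation---obtained, for example, by tangential mollification, which is compatible with the product geometry $\Omega=\mathbb{T}^2\times(0,1)$---and the uniform bound then passed to the limit. All other ingredients (the lifting $\Phi$, the cutoff decoupling of the two faces of $\Gamma$, and the trace pairing) are standard and do not carry any surprise.
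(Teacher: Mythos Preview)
The paper does not supply its own proof of this lemma; immediately after stating it, the authors write that ``the proof of inequalities \eqref{gga} and \eqref{trineq} on a general $H^r$ domain can be found in \cite{cheng_08}.'' So there is no in-paper argument to compare against.

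Your proposal is correct and is in fact the standard duality argument one would expect in the cited reference, specialized to the simple product geometry $\Omega=\mathbb{T}^2\times(0,1)$. The key algebraic observation---that $\omega^\alpha_{,3}=\omega^3_{,\alpha}+\sigma_\alpha[\curl\omega]^{3-\alpha}$ expresses the normal derivative of a tangential component in terms of a tangential derivative plus a curl component---is exactly the tangential-trace analogue of the identity $\omega^3_{,3}=\Div\omega-\omega^1_{,1}-\omega^2_{,2}$ underlying \eqref{gga}. Your handling of the lifting, the cutoff to decouple the two boundary faces, the tangential integration by parts (which is clean on $\mathbb{T}^2$), and the density/mollification issue are all sound. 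The approach is precisely what one would write to prove this from scratch in this setting.
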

Combining \eqref{gga} and \eqref{trineq}, we have
\begin{equation}
\|\bar{\partial}\omega\|_{H^{-\frac 1 2}(\Gamma)}\leq C[\|\bar{\partial}\omega\|_{L^2(\Omega)}^2+\|\Div\omega\|_{L^2(\Omega)}^2+\|\curl\omega\|^2_{L^2(\Omega)}]
\end{equation}
for some constant $C$ independent of $\omega$. The proof of inequalities \eqref{gga} and \eqref{trineq} on a general $H^r$ domain can be found in \cite{cheng_08}.

The construction of our higher-order energy function is based on the following Hodge-type elliptic estimate:
\begin{proposition}
\label{curllemma}
For the domain $\Omega=\mathbb{T}^2\times(0,1)$, $r\geq3$, $1\leq s\leq r$, if $\omega\in L^2(\Omega;\mathbb{R}^3)$ with $\curl \omega\in H^{s-1}(\Omega;\mathbb{R}^3), \Div \omega\in H^{s-1}(\Omega;\mathbb{R}^3),$ and $\omega^3|_{\Gamma} \in H^{s-\frac{1}{2}}(\Gamma)$, then there exists a constant $\bar{C} >0$ depending only on $\Omega$ such that
\begin{align}
\nonumber &\|\omega\|_s\leq \bar{C}(\|\omega\|_0+\|\curl \omega\|_{s-1}+\|\Div \omega\|_{s-1}+|\bar{\partial}\omega^3|_{s-\frac{3}{2}}),\\
&\|\omega\|_s\leq \bar{C}(\|\omega\|_0+\|\curl \omega\|_{s-1}+\|\Div \omega\|_{s-1}+\sum_{\alpha=1}^2|\bar{\partial}\omega^{\alpha}|_{s-\frac{3}{2}}).
\label{curlineq}
\end{align}
\end{proposition}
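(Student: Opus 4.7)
I would prove both inequalities simultaneously by induction on the integer $s\geq 1$, starting from an integration-by-parts identity that links $|D\omega|^2$ to $|\curl\omega|^2+|\Div\omega|^2$ on $\Omega$, and bootstrapping the higher-$s$ cases by applying $\bar\partial$ and using the divergence and curl equations to recover the normal derivatives. The key structural observation is that once tangential derivatives of $\omega$ are under control, every first normal derivative of a component of $\omega$ is determined by $\curl\omega$, $\Div\omega$, and tangential derivatives via the algebraic identities
\begin{equation*}
\omega^3_{,3}=\Div\omega-\bar\partial_\alpha\omega^\alpha,\qquad \omega^1_{,3}=\omega^3_{,1}+[\curl\omega]^2,\qquad \omega^2_{,3}=\omega^3_{,2}-[\curl\omega]^1,
\end{equation*}
so higher normal derivatives can be peeled off inductively.

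For the base case $s=1$, I use $\epsilon_{ijk}\epsilon_{ilm}=\delta_{jl}\delta_{km}-\delta_{jm}\delta_{kl}$ to get the pointwise identity $\omega^i_{,j}\omega^i_{,j}=|\curl\omega|^2+\omega^i_{,j}\omega^j_{,i}$, and then rewrite $\omega^i_{,j}\omega^j_{,i}=\partial_j(\omega^i\omega^j_{,i})-\partial_i(\omega^i\Div\omega)+(\Div\omega)^2$. Integrating over $\Omega$ and using $N=(0,0,\pm1)$ on $\Gamma$, together with the fact that $\Gamma\cong\mathbb{T}^2$ has empty boundary so an additional tangential integration by parts is legal, reduces the boundary contribution to either $\pm2\int_\Gamma\omega^\alpha\bar\partial_\alpha\omega^3\,dS$ or $\mp 2\int_\Gamma\omega^3\bar\partial_\alpha\omega^\alpha\,dS$. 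The first form is bounded by $C|\omega|_{H^{1/2}(\Gamma)}|\bar\partial\omega^3|_{-1/2}\leq C\|\omega\|_1|\bar\partial\omega^3|_{-1/2}$ via the trace theorem, and the second symmetrically by $C\|\omega\|_1\sum_\alpha|\bar\partial\omega^\alpha|_{-1/2}$. Young's inequality and absorption of $\tfrac12\|\omega\|_1^2$ into the left-hand side then produce the two $s=1$ estimates.

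For the induction step from $s-1$ to $s$, I apply the inductive hypothesis to $\bar\partial\omega$. Since $\bar\partial$ commutes with both $\curl$ and $\Div$ and with the trace operator, this yields
\begin{equation*}
\|\bar\partial\omega\|_{s-1}\leq \bar C\bigl(\|\bar\partial\omega\|_0+\|\curl\omega\|_{s-1}+\|\Div\omega\|_{s-1}+|\bar\partial\omega^3|_{s-3/2}\bigr),
\end{equation*}
and analogously for the second inequality. This controls every derivative of $\omega$ of total order $\leq s$ that contains at least one tangential direction. The remaining purely normal derivatives $\partial_3^k\omega$ with $k\leq s$ are recovered by iterating the three algebraic identities above, each iteration trading one $\partial_3$ for a tangential derivative of $\omega$ or a derivative of $\curl\omega$ / $\Div\omega$ of order at most $s-1$. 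Summing closes the estimate at level $s$.

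The main obstacle I anticipate is bookkeeping in the induction step: verifying that each iteration of the algebraic identities really outputs tangential derivatives of $\omega$ of total order at most $s$ and derivatives of $\curl\omega$, $\Div\omega$ of order at most $s-1$, so that the right-hand side matches the one claimed and no extra trace terms appear. A secondary technical issue is justifying the base-case integration by parts when $\omega$ is merely in $L^2$ with $\curl\omega,\Div\omega\in L^2$ and the assumed boundary trace; this I would handle by approximating $\omega$ in the natural graph norm by smooth vector fields realizing the same traces and passing to the limit.
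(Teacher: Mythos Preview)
Your proposal is correct and in fact supplies far more than the paper does: the paper does not prove this proposition at all, stating only that ``these estimates are well-known and follow from the identity $-\Delta\omega=\curl\curl\omega-D\Div\omega$'' and referring the reader to Taylor's textbook.

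The route you outline is a genuine, self-contained alternative. The paper's one-line hint points toward treating $\omega$ as the solution of a vector Poisson problem and invoking elliptic regularity with the prescribed normal (resp.\ tangential) trace as boundary data; your argument instead proceeds by the integration-by-parts identity at the $H^1$ level and then an induction that applies the $H^{s-1}$ estimate to $\bar\partial\omega$ and recovers the purely normal derivatives algebraically from $\Div\omega$ and $\curl\omega$. Your approach is more elementary and makes the role of the slab geometry $\mathbb{T}^2\times(0,1)$ transparent (the tangential integration by parts on $\Gamma$ and the explicit formulas for $\omega^i_{,3}$ both rely on it), whereas the elliptic-regularity route is more robust to curved boundaries but requires importing the $H^s$ theory for the Laplacian with Neumann/Dirichlet data. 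The two concerns you flag---the bookkeeping in the normal-derivative recursion and the density argument justifying the base-case integration by parts---are real but routine, and your description of how to handle them is accurate.
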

These estimates are well-known and follow from the identity $-\Delta\omega=\curl \curl \omega-D\Div \omega$. The reader can see \cite{Taylor} for more details.
\section{Some useful identities}
\label{sec5}
In this section, we recall some useful identities, which show the properties of $J$, $F^{-1}$, $F^{\ast}$ and their derivatives.
\subsection{Differentiating the $F^{-1}$ and Jacobian determinant $J$}
In this subsection, we recall some identities which can be checked directly and will be crucial for our high order energy estimate. The details can also be found in \cite{MJ_2010,L_14,LLZ_08,LSZ_12,LZ_05}.

First, we have the following identities,
\begin{align}
\label{dJ}
\partial J=\dfrac{\partial J}{\partial F^r_s}\partial F^r_s &= [J F^{-T}]^r_s\partial F^r_s= {F^{*}}_r^s\partial F^r_s,\\
\partial {F^{-1}}_i^k & = -{F^{-1}}_r^k{F^{-1}}_i^s \partial F^r_{s},
\label{dF}
\end{align}
where $\partial$ can be $D$, $\bar{\partial}$ and $\partial_t$ operators.
\noindent
As a result, we compute the derivatives of ${F^{-1}}_i^kJ^{-1}$ as following:
\begin{align*}
\partial({F^{-1}}_i^kJ^{-1})=&J^{-1}\partial({F^{-1}}_i^k)-J^{-2}{F^{-1}}_i^k\partial J\\=&-J^{-1}{F^{-1}}_r^k{F^{-1}}_i^s\partial\eta^r_{,s}-J^{-1}{F^{-1}}_i^k{F^{-1}}_r^s\partial\eta^r_{,s}
\\=&-J^{-1}{F^{-1}}_r^k{F^{-1}}_r^s\partial\eta^i_{,s}-J^{-1}{F^{-1}}_i^k{F^{-1}}_r^s\partial\eta^r_{,s}\\
&-J^{-1}{F^{-1}}_r^k[{F^{-1}}_i^s\partial\eta^r_{,s}-{F^{-1}}_r^s\partial\eta^i_{,s}]
\end{align*}
Thus, we have
\begin{align}
\nonumber\bar{\partial}({F^{-1}}_i^kJ^{-1})=&-J^{-1}{F^{-1}}_r^k[D_{\eta}\bar{\partial}\eta]_r^i-J^{-1}{F^{-1}}_i^k \Div_{\eta}\bar{\partial}\eta\\&-J^{-1}{F^{-1}}_r^k[\text{Curl}_{\eta}\bar{\partial}\eta]_i^r\label{pn}\\
\nonumber\partial_t({F^{-1}}_i^kJ^{-1})=&-J^{-1}{F^{-1}}_r^k[D_{\eta}v]_r^i-J^{-1}{F^{-1}}_i^k \Div_{\eta}v\\&-J^{-1}{F^{-1}}_r^k[\text{Curl}_{\eta}v]_i^r\label{pt}
\end{align}
We will make use of these two equalities in our high order energy estimate.

Last, we recall the Piola identity
\begin{equation}
\label{piola}
{{F^{*}}_i^k}_{,k}=0,
\end{equation}
which will play a vital role in our energy estimates.
\subsection{Geometric identities for the surface $\eta(\Gamma,t)$}
In this subsection, we recall some geometric identities for the moving surface $\Gamma(t)=\eta(\Gamma,t)\subset \mathbb{R}^3$ as in \cite{DS_2010}, and include them below for completeness. These identities will be used in the estimates of $v$ on the boundary and the estimates of the normal derivative.

For the tangent plane to $\eta(\Gamma,t)$, $(\eta_{,1},\eta_{,2})$ is a basis of this plane, and
\begin{equation*}
\tau_1:=\dfrac{\eta_{,1}}{|\eta_{,1}|},\quad \tau_2:=\dfrac{\eta_{,2}}{|\eta_{,2}|},\quad n:=\dfrac{\eta_{,1}\times\eta_{,2}}{|\eta_{,1}\times\eta_{,2}|}
\end{equation*}
are the unit tangent and normal vectors, respectively, to $\Gamma(t)$.
Let $g_{\alpha\beta}=\eta_{,\alpha}\cdot\eta_{,\beta}$ denote the induced metric on the surface $\Gamma(t)$, then we have $\text{det} g=|\eta_{,1}\times\eta_{,2}|^2$ and
\begin{equation*}
\sqrt{g}n:=\eta_{,1}\times\eta_{,2},
\end{equation*}
where we use $\sqrt{g}$ to denote $\sqrt{\text{det} g}$.

By definition of the cofactor matrix, we have
\begin{equation}
\label{ai3}
{F^{*}}_i^3=\begin{bmatrix}
\eta^2_{,1}\eta^3_{,2}-\eta^3_{,1}\eta^2_{,2}\\
\eta^3_{,1}\eta^1_{,2}-\eta^1_{,1}\eta^3_{,2}\\
\eta^1_{,1}\eta^2_{,2}-\eta^2_{,1}\eta^1_{,2}
\end{bmatrix},
\,\,\text{and}\,\, \sqrt{g}=|{F^{*}}_i^3|.
\end{equation}
It follows that
\begin{equation}
n= {F^{*}}_i^3/\sqrt{g}.
\end{equation}

\section{An asymptotically consistent degenerate parabolic $\kappa$-approximation of the compressible Euler--Poisson equations in vacuum}
\label{sec6}
In this section, motivated by \cite{DS_2010}, we will introduce a $\kappa$-approximation of the equations \eqref{eq:lag} by adding an artificial viscosity. Our artificial viscosity is different from the one in \cite{DS_2010} due to the presence of the potential force term. In this way, we can construct the approximated solutions with higher regularity which is required by our a prior estimates.

\subsection{Smoothing the initial data}
\label{sm}
For the purpose of constructing solutions, we will smooth the initial velocity
field $u_0$ and density field $\rho_0$ while preserving the conditions that $\rho_0 > 0 $ in $\Omega$, $\rho_0=0$ on $\Gamma$ and $\rho_0$ satisfies (\ref{vacuum2}) for $x\in\Omega$ very close to $\Gamma$. This way of smoothing the initial data was developed by Coutand and Shkoller \cite{DS_2010}. For completeness and a self-contained presentation, we will still include them below.

Let $\alpha(x)\in C_0^{\infty}(\mathbb{R}^3)$ be a standard mollifier such that $\text{spt}(\alpha)=\{x\big||x|\leq 1\}$. For $\epsilon>0$, we define $\alpha_{\epsilon}(x)=\frac{1}{\epsilon^3}\alpha(\frac{x}{\epsilon})$, then we  have $\alpha_{\epsilon}(x)\in C_0^{\infty}(\mathbb{R}^3)$,
$\text{spt}(\alpha_{\epsilon}) = \{x\big| |x| \leq \epsilon\}$ and $\int_{\mathbb{R}^3}\alpha_{\epsilon}(x)\,dx=1$. We also denote $E_{\Omega}$ as a Sobolev extension operator mapping $H^s(\Omega)$ to $H^s(\mathbb{T}^2\times\mathbb{R})$ for $s \geq 0$.

We set the smoothed initial velocity filed $u_0^{\kappa}$ as:
\begin{align}
\label{smu}
u_0^{\kappa} &= \alpha_{1/|\ln\kappa|} \ast E_{\Omega}(u_0),
\end{align}

In order to smooth the initial density function, we firstly introduce the boundary convolution operator $\Lambda_{\theta}$ on $\Gamma$.
Let $0\leq \chi \in C_0^{\infty}(\mathbb{R}^2)$ with $\text{spt}(\chi)=\bar{B}(0,1)$ denote a standard  mollifier on $\mathbb{R}^2$. For $\theta>0$, we define $\chi_{\theta}(y)=\frac{1}{\theta^2}\chi(
\frac{y}{\theta})$. Then we  have $\chi_{\theta}(y)\in C_0^{\infty}(\mathbb{R}^2)$,
$\text{spt}(\chi_{\theta}) = \{y\big| |y| \leq \theta\}$ and $\int_{\mathbb{R}^2}\chi_{\theta}(y)\,dy=1$. With $x_h=(x_1,x_2)$, we define the operation of convolution on the boundary as follow
\begin{equation}
\label{boundaryop}
\Lambda_{\theta}f(x_h)=\int_{\mathbb{R}^2}\chi_{\theta}(x_h-y_h)f(y_h)\,dy_h \,\, \text{for}\,\, f\in L_{loc}^1(\mathbb{R}^2).
\end{equation}
By standard properties of convolution, there exists a constant $C$ independent of $\theta$, such that for $s\geq 0$
\begin{equation*}
|\Lambda_{\theta}f|_s\leq C|f|_s,\,\,\forall f\in H^s(\Gamma).
\end{equation*}
Furthermore,
\begin{equation}
\theta|\bar\partial\Lambda_{\theta}f|_0\leq C|f|_0,\,\,\forall f\in L^2(\Gamma).
\label{molli}
\end{equation}

Now, the smoothed initial density function $\rho_0^{\kappa}$ is defined as the solution of the elliptic equation:
\begin{subequations}
\label{smoothrho}
\begin{align}
\label{smp}
\Delta^2 \rho_0^{\kappa} &= \alpha_{1/|\ln\kappa|}\ast E_{\Omega}(\Delta^2\rho_0) &\text{in}&\ \  \Omega,\\
\rho_0^{\kappa} &= 0 &\text{on}&\ \  \Gamma,\\
\dfrac{\partial\rho_0^{\kappa}}{\partial N} &=\Lambda_{1/|\ln\kappa|}\dfrac{\partial\rho_0}{\partial N}&\text{on}&\ \  \Gamma.\\
\nonumber (x_1,x_2)&\mapsto \rho_0(x_1,x_2,x_3)\,\, \text{is 1-periodic}
\end{align}
\end{subequations}
$\Lambda_{1/|\ln\kappa|}$ is the boundary convolution operator defined by \eqref{boundaryop}. So for sufficiently small $\kappa > 0$, $u_0^{\kappa},\rho_0^{\kappa} \in C^{\infty}(\bar{\Omega})$, $\rho_0^{\kappa} >0$ in $\Omega$, and vacuum condition (\ref{vacuum}) is preserved. Details can be found in \cite[Section 7.1]{DS_2010}.

Until Section \ref{s6}, for notational convenience, we will denote $u_0^{\kappa}$ by $u_0$ and $\rho_0^{\kappa}$ by $\rho_0$. In Section \ref{s6}, we will show that Theorem \ref{theorem} holds with the optimal regularity.

\subsection{The degenerate parabolic approximation to the compressible Euler--Poisson equations: the $\kappa$-problem}
In this subsection, we introduce a regularized approximation system for \eqref{polp}. In \cite{DS_2010}, the authors introduced a degenerate parabolic approximation $\kappa$-problem for the compressible Euler equations. Motivated by their results, we introduce the following degenerate parabolic approximation $\kappa$-problem for $\kappa >0$ for our system:
\begin{subequations}
\label{approeq}
\begin{align}
\label{lagappro}
  \rho_0 v_t + {F^{*}}_i^k\partial_k(\rho_0^{2}J^{-2})+\kappa\partial_t[{F^{*}}_i^k\partial_k(\rho_0^{2}J^{-2}) ]&= \rho_0 G^i+\kappa\rho_0\partial_tG^i &\text{in}&\,\,\Omega\times (0,T],\\ \label{ffffff}
(v, \eta) &= (u_0, e) &\text{in}&\,\,\Omega\times \{0\},\\
  \rho_0 &= 0 &\text{on}&\,\,\Gamma,
\end{align}
\end{subequations}
Solutions to \eqref{polp} will be found in the limit as $\kappa\rightarrow 0.$

Moreover, the equation \eqref{lagappro} can be equivalently written as
\begin{equation}
\label{lagc}
v_t^i+{F^{-1}}_i^k(2\rho_0J^{-1}-\Phi)_{,k}+\kappa\partial_t[{F^{-1}}_i^k(2\rho_0J^{-1}-\Phi)_{,k}]=0.
\end{equation}
We will use this form for the curl estimates.
\begin{remark}
In \cite{DS_2010}, the authors introduced $\kappa\partial_t[{F^{*}}_i^k\partial_k(\rho_0^{2}J^{-2}) ]$ as the artificial viscosity for the Euler equations. Motivated by this result, we introduce a similar artificial viscosity but with an extra term $\kappa\rho_0\partial_t G$. This extra viscosity term is necessary to be added. Because of presence of the potential force, we need this extra term to preserve the structure of the curl estimate for our approximate system. More precisely, \eqref{lagc} has a similar structure to the $\kappa$-approximate equation for the Euler equations in \cite{DS_2010}.
\end{remark}
\subsection{Assumption on the initial data}
Recall the fact that $\eta(x,0)=x$ and \eqref{forceF}, the quantity $v_t|_{t=0}$ for the degenerate parabolic $\kappa$-problem can be computed by using (\ref{lagc}):
\begin{equation}
  \begin{split}
u_1:=v_t\bigg|_{t=0}&=\bigg(-{F^{-1}}_i^k(2\rho_0J^{-1})_{,k}+G^i-\kappa\partial_t[{F^{-1}}_i^k(2\rho_0J^{-1})_{,k}-G^i]\bigg)\bigg|_{t=0}\\
  &=\bigg(-2{\rho_0}_{,i}-\int_{\Omega}\dfrac{{\rho_0}_{,i}}{|x-z|}\,dz +2(\kappa\rho_0\Div u_0)_{,i}+2\kappa {u_0^k}_{,i}{\rho_0}_{,k}
  \\&\quad+\kappa\int_{\Omega}\dfrac{(x-z)\cdot(u_0(x)-u_0(z)){\rho_0}_{,i}}{|x-z|^3}\,dz
  -\kappa\int_{\Omega}\dfrac{(\rho_0{u_0^k}_{,i})_{,k}}{|x-z|}\,dz\bigg).
\end{split}
  \label{initialdata}
\end{equation}
Similarly, for all $k \geq 1,\ k \in \mathbb{N}$:
\begin{equation}
\begin{split}
 u_k:&= \partial_t^k v\bigg|_{t=0}=\partial_t^{k-1}\bigg(-{F^{-1}}_i^k(2\rho_0J^{-1})_{,k}+G^i-\kappa\partial_t[{F^{-1}}_i^k(2\rho_0J^{-1})_{,k}-G^i]\bigg)\bigg|_{t=0}.
\end{split}
  \label{vt}
\end{equation}
These formulae make it clear that each $\partial_t^k v|_{t=0}$ is a function of space-derivatives of $u_0$ and $\rho_0$.

\subsection{Introduction of the $X$ variable and the heat-type equation for $X$}
Motivated by \cite{DS_2010}, we derive a heat-type equation by acting $J\Div_{\eta}={F^{*}}_i^j\partial_{x_j}$ on \eqref{lagappro} and using the Piola identity \eqref{piola}:
\begin{align}
\nonumber
J\Div_{\eta}v_t-2\kappa[{F^{*}}_i^j{F^{-1}}_i^k(\rho_0J^{-2}J_t)_{,k}]_{,j}=&-2\kappa[{F^{*}}_i^j\partial_t{F^{-1}}_i^k(\rho_0J^{-1})_{,k}]_{,j}\\
\nonumber &-2[{F^{*}}_i^j{F^{-1}}_i^k(\rho_0J^{-1})_{,k}]_{,j}+{F^{*}}_i^j{G^i}_{,j}\\&+\kappa{F^{*}}_i^j\partial_t{G^i}_{,j}
\label{tmp1}
\end{align}
Now we set
\begin{equation}
\label{xdef}
X=\rho_0J^{-2}J_t=\rho_0J^{-1}\Div_{\eta}v.
\end{equation}
Recall \eqref{field}, \eqref{g} and \eqref{lag_mass}, we have
\begin{align*}
{F^{*}}_i^j{G^i}_{,j}&=\rho_0,\\
\kappa{F^{*}}_i^j\partial_t{G^i}_{,j}&=-\kappa \rho_0\Div_{\eta}v -\kappa J\partial_t({F^{-1}}_i^j) G^i_{,j},
\end{align*}
then we can rewrite \eqref{tmp1} as the following nonlinear heat-type equation for $X$:
\begin{align}
\nonumber
\dfrac{J^{2}X_t}{\rho_0}-2\kappa[{F^{*}}_i^j{F^{-1}}_i^k X_{,k}]_{,j}+\kappa J X=&-2\kappa[{F^{*}}_i^j\partial_t{F^{-1}}_i^k(\rho_0J^{-1})_{,k}]_{,j}-2J^{-1}(J_t)^2\\
\nonumber &+\partial_t{F^{*}}_i^j{v^i}_{,j}-2[{F^{*}}_i^j{F^{-1}}_i^k(\rho_0J^{-1})_{,k}]_{,j}+\rho_0\\&-\kappa J\partial_t({F^{-1}}_i^j)G^i_{,j}.
\label{appro1}
\end{align}
\begin{remark}
Notice that our definition for $X$ is different from the one used in \cite{DS_2010}. The advantage of our definition is that we can improve the space regularity of solutions to the linearized problem of \eqref{appro1} in an easy and clear way with less computation, see Section \ref{regforx2t} to Section \ref{regforx}. However,
the symmetric structure of \eqref{appro1} becomes different from that in \cite{DS_2010}, we will need to make use of $\kappa$ to construct the contract map later.
\end{remark}

\subsection{Identities for the $\kappa$-problem}
In this subsection, we give some identities which will be used in constructing fixed-point iteration scheme in a similar way to \cite{DS_2010}. Some identities are slightly different from those in \cite{DS_2010} due to the presence of the potential force term.

From \eqref{xdef}, we can get the following identity by time-differentiating:
\begin{equation}
\label{lagdiv}
\Div_{\eta}v_t=\dfrac{(XJ)_t}{\rho_0}-\partial_t{F^{-1}}_i^j{v^i}_{,j}.
\end{equation}

We also make use of the following nonlinear vorticity equation by acting $\curl_{\eta}$  on \eqref{lagappro},
\begin{equation}
\label{c01}
(\curl_{\eta} v_t)^k = -\kappa \epsilon_{kji}v^r_{,s}{F^{-1}}_j^s[(2\rho_0J^{-1}-\Phi)_{,l}{F^{-1}}_r^l]_{,m}{F^{-1}}_i^m.
\end{equation}

For the purpose of constructing solutions to \eqref{approeq}, we also need the formula for the normal component of $v_t$ on $\Gamma$:
\begin{equation}
\label{vt3}
v_t^3=-2J^{-2}{F^{*}}_3^3{\rho_0}_{,3}-2\kappa\partial_t[J^{-2}{F^{*}}_3^3]{\rho_0}_{,3}+G^3+\kappa\partial_tG^3
\end{equation}
where
\begin{align}
{F^{*}}_3^3&=(\eta_{,1}\times\eta_{,2})\cdot e_3,\\
\partial_t{F^{*}}_3^3&=(v_{,1}\times\eta_{,2}+\eta_{,1}\times v_{,2})\cdot e_3.
\label{tmp2}
\end{align}
If we linearize \eqref{tmp2} around $\eta=e$, \eqref{tmp2} becomes $\Div_{\Gamma}v$. Thus, we can regard $\Div_{\Gamma}v$ as the linearized analogue of $\partial_t{F^{*}}_3^3$.

\section{Solving the parabolic $\kappa$-problem by a fixed-point method}\label{s5}
In this section, we use fixed-point method to find approximated solutions to the parabolic $\kappa$-problem. The main arguments are quite similar to that in \cite{DS_2010}. But there are some differences in this paper. First, we need to estimate the potential force $G$ and we will encounter the main obstacle that the possible increasing singularity of the kernel when we estimate the derivatives of $G$. Second, since our intermediate variable $X$ defined by \eqref{xdef} is different from the one in \cite{DS_2010}, then the improvement of the space regularity for $X$ will be easier and clearer with less computations. Furthermore, due to this choice of the intermediate variable, the symmetric structure of equations for $X$ is also different from that in \cite{DS_2010}, we will need to make use of $\kappa$ to construct the contract map.
\subsection{Functional framework for the fixed-point scheme}
The functional framework for the fixed-point scheme is just the same as the one in \cite{DS_2010}, we recall them below for completeness.

For $T>0$, we denote the following Hilbert spaces by $X_T$, $Y_T$ and $Z_T$:
\begin{align*}
&X_T=\bigg\{v\in L^2(0,T;H^4(\Omega))|\partial_t^a v \in L^2(0,T;H^{4-a}(\Omega)), a=1,2,3\bigg\},\\
&Y_T=\bigg\{y\in L^2(0,T;H^3(\Omega))|\partial_t^a y \in L^2(0,T;H^{3-a}(\Omega)), a=1,2,3\bigg\},\\
&Z_T=\bigg\{v\in X_T|\rho_0Dv \in X_T\bigg\},
\end{align*}
endowed with their natural Hilbert norms:
\begin{multline}
\label{norm}
\|v\|_{X_T}^2=\sum_{a=0}^3\|\partial_t^a v\|_{L^2(0,T;H^{4-a}(\Omega))}^2, \|y\|_{Y_T}^2=\sum_{a=0}^3\|\partial_t^a y\|_{L^2(0,T;H^{3-a}(\Omega))}^2,\\ \|v\|_{Z_T}^2=\|v\|_{X_T}^2+\|\rho_0Dv\|_{X_T}^2.
\end{multline}
For $M>0$, we define the following closed, bounded, convex subset of $X_T$:
\begin{equation}
C_T(M)=\{w\in Z_T: \|w\|_{Z_T}^2 \leq M, w(0)=u_0, \partial_t^k w(0)=\partial_t^k v|_{t=0} (k=1,2)\},
\end{equation}
We also define the polynomial function $N_0$ of norms of the initial data as
\begin{equation}
\label{n0}
N_0=P(\|u_0\|_{100},\|\rho_0\|_{100}).
\end{equation} Since we have smoothed the initial data, we can use the artificially high $H^{100}(\Omega)$-norm and later, in Section \ref{s6}, we carry out the optimal regularity for the initial data.

Then we assume that $T>0$ is given independent of the choice of $v\in C_T(M)$, such that $\eta(x,t)=x+\int_0^tv(x,s)\,ds$ is injective for $t\in[0,T]$, and
\begin{equation}
\label{Jnorm}
\frac{7}{8}\leq J(x,t)\leq \frac{9}{8}
\end{equation} for $t\in[0,T]$ and $x\in\bar{\Omega}$.
This can be achieved by taking $T>0$ sufficiently small because we have that
\begin{equation*}
\|J(\cdot,t)-1\|_{L^{\infty}(\Omega)}\leq C \|J(\cdot,t)-1\|_{2}=\|\int_0^t{F^{*}}_r^sv^r_{,s}\|_2\leq C\sqrt{T}M.
\end{equation*}
In the same fashion, we can take $T>0$ small enough to ensure that on $[0,T]$,
\begin{equation}
\label{eta0}
\dfrac{7}{8}|x-y|\leq|\eta(x,t)-\eta(y,t)|\leq \dfrac{9}{8}|x-y|,
\end{equation}
and 
\begin{equation}
\label{positive}
\frac{7}{8}|\xi|^2\leq {F^{*}}_i^j(x,t){F^{-1}}_i^k(x,t)\xi_j\xi_k, \,\, \forall \xi\in\mathbb{R}^3, x\in \Omega.
\end{equation}
The space $Z_T$ will be appropriate for our fixed-point methodology to prove existence of a solution to our degenerate parabolic $\kappa$-problem \eqref{approeq}.

\begin{theorem}[Solutions to the $\kappa$-problem]
\label{th2}
Given smooth initial data with $\rho_0$ satisfying $\rho_0(x)>0$ for $x\in\Omega$ and verifying the physical vacuum condition \eqref{vacuum2} near $\Gamma$, then there exists a positive constant $\kappa_0$ only depending on $\|\rho_0\|_4$ and the domain $\Omega$. When $\kappa\leq \kappa_0$, and $T_{\kappa}>0$ sufficiently small, there exists a unique solution $v\in Z_{T_{\kappa}}$ to the degenerate parabolic $\kappa$-problem \eqref{approeq}.
\end{theorem}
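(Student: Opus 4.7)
The plan is to construct a contraction map $\Theta: C_{T_\kappa}(M) \to C_{T_\kappa}(M)$ whose fixed point solves \eqref{approeq}. Given $v^{(n)} \in C_{T_\kappa}(M)$ with $\eta^{(n)}(x,t)=x+\int_0^t v^{(n)}(x,s)\,ds$ satisfying \eqref{Jnorm}--\eqref{positive}, I would build $v^{(n+1)}=\Theta(v^{(n)})$ in two substeps following the outline given after \eqref{xdef}: first, solve a linearized version of the $X$-equation \eqref{appro1} with coefficients frozen at $(v^{(n)},\eta^{(n)})$ to produce an intermediate scalar $X^{(n)}$; second, recover $v^{(n+1)}$ by solving a linear elliptic system that prescribes its divergence, curl, and normal trace on $\Gamma$ in terms of $(v^{(n)},X^{(n)})$ via the identities \eqref{lagdiv}, \eqref{c01} and \eqref{vt3}.

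For the first substep, the linearized $X$-equation has the schematic form
\begin{equation*}
\frac{J_n^2 X_t}{\rho_0} - 2\kappa\bigl[{F_n^{*}}_i^j {F_n^{-1}}_i^k X_{,k}\bigr]_{,j} + \kappa J_n X = \mathcal{R}(v^{(n)},\eta^{(n)}),
\end{equation*}
which, thanks to the uniform positivity \eqref{positive} and the coercive contribution of the $\kappa$-viscous term, fits a Galerkin scheme in $H^1_0(\Omega)$. Testing against $X$ and then against $\partial_t^a X$ gives an energy estimate controlling $\partial_t^a X^{(n)}$ in $L^2_t H^{3-a}$-norms; the fundamental Hardy inequality Lemma \ref{hardy} is then used to upgrade the spatial regularity (as emphasized in the excerpt, our choice of $X=\rho_0 J^{-1}\Div_\eta v$ makes this upgrade cleaner than in \cite{DS_2010}), and Lemma \ref{elli} yields $\kappa$-independent control where needed. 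The bulk of the work here is estimating the source $\mathcal{R}$, whose delicate pieces are the nonlocal terms $G^{(n)}$ and $\partial_t G^{(n)}$; these are handled by the calculation in Section \ref{gfo}, where a $C^\alpha$ Sobolev embedding combined with Taylor's formula for $\eta^{(n)}(x,t)-\eta^{(n)}(z,t)$ reduces the apparent $|x-z|^{-3}$ singularity to an integrable $|x-z|^{-1}$.

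For the second substep, I would define $v^{(n+1)}$ to be the unique vector field on $\Omega$ satisfying
\begin{equation*}
\Div v^{(n+1)} = \mathcal{D}^{(n)}, \qquad \curl v^{(n+1)} = \mathcal{C}^{(n)}, \qquad v^{(n+1)} \cdot N\big|_\Gamma = \mathcal{N}^{(n)},
\end{equation*}
where $\mathcal{D}^{(n)}$ is obtained by time-integrating \eqref{lagdiv} with $X$ replaced by $X^{(n)}$, $\mathcal{C}^{(n)}$ from integrating \eqref{c01}, and $\mathcal{N}^{(n)}$ from integrating \eqref{vt3}. At a fixed point these reduce to the defining identities of \eqref{approeq}, so any $v$ with $\Theta(v)=v$ indeed solves the $\kappa$-problem. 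Existence of $v^{(n+1)}$ with the full $Z_T$-regularity follows from the Hodge elliptic estimate of Proposition \ref{curllemma} applied to $v^{(n+1)}$ and its time derivatives, with the boundary data estimated via the trace inequalities \eqref{gga}--\eqref{trineq}. Combining all estimates gives a bound of the form $\|v^{(n+1)}\|_{Z_T}^2 \le C N_0 + C(M,N_0,\kappa^{-1})\,T^\sigma$ for some $\sigma>0$, so that for $M$ of order $N_0$ and $T=T_\kappa$ chosen small, $\Theta$ maps $C_{T_\kappa}(M)$ into itself.

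The hard part is the contraction estimate for $\Theta$. Since \eqref{appro1} lacks the symmetric structure enjoyed by the corresponding equation in \cite{DS_2010}, subtracting two instances and testing against $X^{(n+1)}-X^{(n)}$ produces asymmetric commutator terms that cannot be absorbed without invoking the coercive $\kappa$-viscosity; this is precisely where, as the authors remark after \eqref{appro1}, one must \emph{use} the factor $\kappa$ to close the argument, at the cost of a factor $\kappa^{-1}$ that is compensated by choosing $T_\kappa$ small relative to $\kappa$. The restriction $\kappa \le \kappa_0(\|\rho_0\|_4,\Omega)$ arises analogously: the perturbative terms generated by $\kappa\rho_0 \partial_t G$ in the artificial viscosity and by $\kappa J_n X$ on the left of the $X$-equation must remain subordinate to the leading-order weighted principal part. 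Once the contraction is established in a weaker topology (typically $X_{T_\kappa}$ rather than $Z_{T_\kappa}$), the Banach fixed-point theorem yields a unique $v \in C_{T_\kappa}(M)$, and uniqueness in all of $Z_{T_\kappa}$ follows from the same contraction applied to any two putative solutions.
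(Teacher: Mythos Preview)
Your outline captures the two-step architecture correctly, but Step 2 as you describe it would not close. First, the curl datum: if you obtain $\mathcal{C}^{(n)}$ by simply integrating \eqref{c01}, the right-hand side involves $D_{\eta^{(n)}}[(2\rho_0 J^{-1}-\Phi)]$, which at the level of the iteration scales like $v^{(n)}_t$ and costs a derivative. The paper avoids this by introducing an auxiliary field $\bar{\mathfrak{F}}$ solving the ODE $\bar{\mathfrak{F}}+\kappa\bar{\mathfrak{F}}_t=-\bar v_t$ with data $2D\rho_0-D\phi_0$; the explicit integrating-factor formula \eqref{f0} shows $\bar{\mathfrak{F}}$ has the regularity of $\bar v$, not $\bar v_t$, and it is $\bar{\mathfrak{F}}$ (together with a further elliptic correction $\bar{\mathfrak{C}}$ ensuring the compatibility $\Div(\text{curl data})=0$) that appears in the curl equation \eqref{cppp}. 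Without this device the Hodge estimate of Proposition~\ref{curllemma} does not give back $v^{(n+1)}$ in the same space as $v^{(n)}$.

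Second, the boundary condition is not a Dirichlet normal trace. Equation \eqref{vt3} contains $\partial_t F^{*3}_{3}$, whose linearization about $\eta=e$ is $\Div_\Gamma v$; accordingly the paper prescribes the \emph{parabolic} relation \eqref{xcvj}, namely $v_t^3+2\kappa\rho_{0,3}\Div_\Gamma v=(\text{data})$ on $\Gamma$. Because $\Div_\Gamma v$ carries one extra tangential derivative, solving this system directly is not possible; the paper first introduces a boundary mollifier $\Lambda_\theta$ (Section~7.4.1), solves the $\theta$-regularized system by a separate contraction argument, obtains $\theta$-independent energy estimates on $\Gamma$ by testing \eqref{xv3j} against $-\tfrac{N^3}{\rho_{0,3}}\bar\partial^3(v^\theta)^3$, and then passes $\theta\to 0$. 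Your scheme, which prescribes $v^{(n+1)}\cdot N$ as plain data, bypasses this mechanism and would not produce the needed $H^{3.5}(\Gamma)$ control. Finally, two smaller points: the constraint $\kappa\le\kappa_0$ arises concretely at \eqref{kappachoice} when estimating the term $L_3$ in the $\partial_t^3 X$ energy (so that $4\kappa_0|D\rho_0|^2_{L^\infty}/C_p\le\delta$), not from $\kappa\rho_0\partial_t G$; and once a fixed point $v=\Theta(v)$ is found, it is not automatic that $v$ solves \eqref{approeq}---the paper devotes a separate three-step argument (showing $\psi=0$, then $D_\eta q\cdot N=k(t)$, then $\Delta_{\eta\eta}q=0$) to verify $\mathfrak{F}=D_\eta(2f-\Phi)$ and $c(t)=0$.
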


The remainder of this section will be devoted to the proof of Theorem \ref{th2}.
\subsection{Fixed-point scheme for the $\kappa$-problem \eqref{approeq}}
Motivated by \cite{DS_2010}, we build the fixed-point scheme by two steps. In the first step, with $\bar{v}\in C_T(M)$, we find a solution $X$ to the linearizion (with respect to $\bar v$) of \eqref{appro1}. Next, in the second step, with $\bar v$ and $X$,  we define $v$ by a linear elliptic system of equations for $v$, which can be viewed as the linear analogue (with respect to $\bar{v}$, and $X$) of equations \eqref{lagdiv}, \eqref{c01} and \eqref{vt3}.

\noindent\textbf{Step 1.}
Given $\bar{v}\in C_T(M)$, we define $\bar{\eta}(t)=e+\int_0^t\bar{v}(t')\,dt'$, and set
$$
\bar{F}^{-1}=[D\bar{\eta}]^{-1}, \bar{J}=\text{det}D\bar{\eta},\,\,  \bar{F}^{\ast}=\bar{J}\bar{F}^{-1},$$
$$
\bar{B}^{jk}=\bar{F}^{*j}_{\,\,\,i}{\bar F}^{-1k}_{\quad i},\,\, \bar{G}=\int_{\Omega}\dfrac{[\bar{F}^{*k}_{\,\,\,i}\partial_k(\rho_0\bar{J}^{-1})]}{|\bar{\eta}(x,t)-\bar{\eta}(z,t)|}\,dz.
$$
Moreover, we define $\bar{\Phi}(x,t)$ as following:
\begin{equation*}
\bar{\Phi}(x,t)=\int_{\mathbb{R}^3}\dfrac{1}{|y|}\bar\rho(\bar{\eta}(x,t)-y,t)\,dy,\,\, x\in\Omega,
\end{equation*}
where
\begin{equation*}
\bar{\rho}(y,t)=\left\{\begin{aligned}\rho_0(x)\bar{J}^{-1}(x,t), \,\, &\text{when}\,\, y=\bar\eta(x,t)\\0,\,\,&\text{otherwise}
\end{aligned}\right., \,\,y\in\mathbb{R}^3
\end{equation*}
Since $\bar v\in C_T(M)$, then $\bar{\eta}$ is injective on $t\in [0,T]$. Thus, $\bar{\rho}$, and $\bar{\Phi}$ are well-defined, and
\begin{equation}
\label{phiG}
D_{\bar{\eta}}\bar{\Phi}=\bar{G}.
\end{equation}
\noindent
Now we define $X$ as the solution of the following linear and degenerate parabolic problem which is the linearization of \eqref{appro1}:
\begin{subequations}
\label{lxversion}
\begin{align}
\label{lx}
\dfrac{\bar{J}^2X_t}{\rho_0}-2\kappa[\bar{B}^{jk}X_{,k}]_{,j}+\kappa\bar JX&=\bar{W}\quad\quad\quad\quad\quad\quad\,\,\,\, \text{in}\,\, \Omega\times(0,T_{\kappa}],\\
X&=0\quad\quad\quad\quad\quad\quad\quad\,\, \text{on}\,\, \Gamma\times(0,T_{\kappa}],\\
(x_1,x_2)&\mapsto X(x_1,x_2,x_3,t) \quad\,\,\text{is 1-periodic},\,\,\\
X&=X_0:=\rho_0\Div u_0\quad\,\,\text{on}\,\,\Omega\times\{0\},
\end{align}
\end{subequations}
where the forcing function $\bar{W}$ is defined as
\begin{multline}
\label{zio}
\bar{W}=-2\kappa[\bar{F}^{*j}_{\,\,\,i}\partial_t\bar{F}^{-1k}_{\quad i}(\rho_0\bar{J}^{-1})_{,k}]_{,j}-2\bar{J}^{-1}(\bar{J}_t)^2
+\partial_t\bar{F}^{*j}_{\,\,\,i}{\bar{v}^i}_{,j}\\-2[\bar{F}^{*j}_{\,\,\,i}\bar{F}^{-1k}_{\quad i}(\rho_0\bar{J}^{-1})_{,k}]_{,j}+\rho_0-\kappa \bar J\partial_t(\bar{F}^{-1j}_{\quad i})\bar G^i_{,j}.
\end{multline}
As a result of \textbf{Step 1}, we will establish the following proposition
\begin{proposition}
\label{p8}
For $0<\mu\ll1$, there exists a positive constant $\kappa_0$ depending on the domain $\Omega$, initial data $N_0$ and $\mu$. When $\kappa\leq\kappa_0$, then for $T>0$ taken sufficiently small, there exists a unique solution to \eqref{lxversion} satisfying
\begin{equation*}
\|X\|_{X_T}^2\leq N_0+TP(\|\bar{v}\|^2_{Z_T})+\mu\|\bar v\|^2_{Z_T}.
\end{equation*}
The norms $X_T, Y_T, $ and $Z_T$ are defined in \eqref{norm}, and $P$ denotes a generic polynomial function of its arguments.
\end{proposition}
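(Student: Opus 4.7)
The plan is to construct $X$ via a Faedo--Galerkin scheme and then close the $X_T$ estimate by combining weighted energy identities with an elliptic bootstrap and a careful analysis of the forcing $\bar W$. For fixed $\bar v\in C_T(M)$ and $\kappa>0$, equation \eqref{lx} is linear in $X$ with smooth time-dependent coefficients, and the associated bilinear form $a(U,V):=2\kappa\int_\Omega \bar B^{jk}U_{,k}V_{,j}+\kappa\int_\Omega \bar J UV$ is coercive on $H^1_0(\Omega)$ by \eqref{positive}. Galerkin projection onto the Dirichlet eigenfunctions of $-\Delta$ therefore produces approximate solutions; the only subtlety is that the coefficient $\bar J^2/\rho_0$ of $X_t$ is unbounded near $\Gamma$, but every basis function vanishes on $\Gamma$ and Lemma \ref{hardy} turns the $1/\rho_0$-weighted norms into ordinary Sobolev norms, so the Galerkin ODE system is well-posed on $[0,T]$. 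Uniqueness follows by applying the energy estimate below to the difference of two solutions.

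For the a priori estimate, I differentiate \eqref{lx} in time $a$ times for $a=0,1,2,3$ and test the resulting equation against $\partial_t^{a+1}X$, which still vanishes on $\Gamma$. This yields an identity of the form
\begin{equation*}
\tfrac{1}{2}\int_\Omega\frac{\bar J^2(\partial_t^{a+1}X)^2}{\rho_0}\,dx+\kappa\frac{d}{dt}\int_\Omega \bar B^{jk}(\partial_t^aX)_{,k}(\partial_t^aX)_{,j}\,dx+\cdots=\int_\Omega(\partial_t^a\bar W)\,\partial_t^{a+1}X\,dx+R_a,
\end{equation*}
where $R_a$ collects commutators involving $\partial_t\bar B$, $\partial_t\bar J$ and products of lower-order $\partial_t^bX$. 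Integrating in time produces $L^2_tL^2_x$-control of $\partial_t^{a+1}X/\sqrt{\rho_0}$ and $L^\infty_tL^2_x$-control of $\kappa^{1/2}D\partial_t^aX$ for $a=0,1,2,3$; the latter at $a=3$ already delivers the $\partial_t^3X\in L^\infty_tH^1\subset L^2_tH^1$ component of $X_T$. It remains to upgrade the spatial regularity of $\partial_t^aX$ for $a=0,1,2$, which I do by reading \eqref{lx} at each fixed $t$ as an elliptic Dirichlet problem for $\partial_t^aX$: uniform ellipticity of $\bar B^{jk}$ by \eqref{positive} gives
\begin{equation*}
\|\partial_t^aX\|_{4-a}\lesssim \kappa^{-1}\big(\|\partial_t^a\bar W\|_{2-a}+\|\partial_t^{a+1}X/\rho_0\|_{2-a}+\kappa\|\partial_t^aX\|_{2-a}\big)+\text{l.o.t.},
\end{equation*}
in which the $1/\rho_0$-weighted term is controlled via Lemma \ref{hardy} using the bound on $\partial_t^{a+1}X\in L^2_tH^{3-a}$ obtained in the previous step of the iteration. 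Running this bootstrap downward in $a=2,1,0$ closes $\|X\|_{X_T}^2$.

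The heart of the proposition is the estimate of $\bar W$ defined in \eqref{zio}, which contains the non-local contributions $\bar G$ and $\partial_t\bar G$. Each summand of $\partial_t^a\bar W$ falls into one of two categories. (i) Pieces that are quadratic or higher in derivatives of $\bar v$, or that multiply a spatial derivative of $\rho_0\bar J^{-1}$ rather than a top-order derivative of $\bar v$: these are bounded by $P(\|\bar v\|_{Z_T}^2)$ and after integration in time contribute the $T\,P(\|\bar v\|_{Z_T}^2)$ piece of the target bound. (ii) Pieces linear in the top-order time/space derivatives of $\bar v$: these can only be bounded by $\|\bar v\|_{Z_T}^2$ and must be absorbed into the $\mu\|\bar v\|_{Z_T}^2$ reservoir. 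The decisive observation is that every type-(ii) summand of $\bar W$ carries an explicit factor of $\kappa$ by the very construction of \eqref{lagappro}---in particular the term $\kappa\rho_0\partial_t\bar G$ was added precisely to preserve this structure, as explained in the remark following \eqref{lagc}---so taking $\kappa\leq\kappa_0(\mu,N_0,\Omega)$ small achieves the required absorption. The non-local contributions $\bar G$ and $\partial_t\bar G$ are handled exactly as advertised in Sections \ref{sg1}--\ref{sg2}: by \eqref{eta0} the denominator $|\bar\eta(x,t)-\bar\eta(z,t)|$ is bounded below by $\tfrac{7}{8}|x-z|$, the apparently singular numerators $\bar\eta(x,t)-\bar\eta(z,t)$ and $\bar v(x,t)-\bar v(z,t)$ are rewritten via Taylor's formula to produce factors of $|x-z|\,\|D\bar v\|_{L^\infty}$, and the remaining $|x-z|^{-1}$ or $|x-z|^{-2}$ kernels are integrable in three dimensions. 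The main obstacle, then, is the combinatorial bookkeeping of step (ii): verifying, for each possible derivative pattern, that every would-be uncontrollable contribution to $\partial_t^a\bar W$ really does carry the requisite $\kappa$-factor. This task is made more delicate by the loss of the natural symmetric structure of \eqref{appro1} relative to \cite{DS_2010}, a price we pay for the convenient choice of intermediate variable \eqref{xdef} (cf.\ the remark after \eqref{appro1}).
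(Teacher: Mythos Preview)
Your overall architecture---Galerkin existence, energy estimates, then elliptic bootstrap---matches the paper. The gap is in your choice of test function. You differentiate \eqref{lx} $a$ times and pair with $\partial_t^{a+1}X$; the paper pairs with $\partial_t^{a}X$. This is not a cosmetic difference: at $a=3$ the forcing $\partial_t^3\bar W$ contains, after expanding $-2\kappa\partial_t^3[\bar F^{*j}_{\,\,i}\partial_t\bar F^{-1k}_{\quad i}(\rho_0\bar J^{-1})_{,k}]_{,j}$, a top-order piece of the schematic form $\kappa\,\rho_{0,k}\,\partial_t^3\bar v_{,\beta j}\,\bar F^{-1}\bar F^{-1}\bar F^{-1}$. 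This carries \emph{two} spatial derivatives on $\partial_t^3\bar v$, and the $Z_T$-norm only places $\partial_t^3\bar v$ in $L^2_tH^1$ (even including the weighted part, you get $\rho_0D^2\partial_t^3\bar v\in L^2_tL^2$, not $\sqrt{\rho_0}D^2\partial_t^3\bar v$). With your test function $\partial_t^4X$ you therefore cannot close: pairing directly fails for lack of $\partial_t^3\bar v\in H^2$, and integrating by parts in space fails because your energy at level $a=3$ yields control of $\sqrt{\kappa}\,D\partial_t^3X$ in $L^\infty_tL^2$, not of $D\partial_t^4X$.

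The paper's remedy is precisely to test with $X_{ttt}\in H^1_0$, which supplies the dissipation $C_p\kappa\int_0^T\|DX_{ttt}\|_0^2$. One then integrates by parts in space to move the $\partial_j$ onto $X_{ttt}$ and splits
\[
\Big|2\kappa\int_\Omega \rho_{0,k}\,\partial_t^3\bar v_{,\beta}\,\bar F^{-1}\bar F^{-1}\bar F^{-1}\,\partial_jX_{ttt}\Big|
\;\le\; \frac{4\kappa|D\rho_0|_{L^\infty}^2}{C_p}\|\partial_t^3D\bar v\|_0^2+\frac{C_p\kappa}{2}\|DX_{ttt}\|_0^2,
\]
the second term being absorbed by the dissipation and the first made $\le\delta\|\bar v\|_{Z_T}^2$ by the choice of $\kappa_0$. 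This is the actual mechanism behind the $\mu\|\bar v\|_{Z_T}^2$ reservoir; it is not merely that type-(ii) terms ``carry a factor of $\kappa$'', but that the $\kappa$ must be balanced against the $\kappa$ in the parabolic dissipation term after a spatial integration by parts that your pairing does not permit. If you switch to testing with $\partial_t^aX$ (so that the first term of your identity becomes $\tfrac12\frac{d}{dt}\int\bar J^2(\partial_t^aX)^2/\rho_0$ and the second loses its $d/dt$), the rest of your outline goes through essentially as in the paper.
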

The proof of this proposition will be given in Sections \ref{s11}.

\noindent\textbf{Step 2.} In this step, we define a linear elliptic system of equations for $v$ as we mentioned before. The main idea of the construction of the linear elliptic system of equations was developed by \cite{DS_2010} for the Euler equations. Given $\bar{v}\in C_T(M)$, $\bar v$ can be regarded as an approximated solution to \eqref{approeq}, and $X$ obtained by solving the linear problem \eqref{lxversion} can be regarded as an approximation to the weighted  divergence in the Eulerian coordinates of the solution, then another approximated solution $v$ can be constructed by adding a proper small perturbation. On the other hand, the identities \eqref{lagdiv}, \eqref{c01} and \eqref{vt3} will not hold with $\bar v$ and $X$, but they can be regarded as small perturbations from zero.
Then by this observation,  we will show in Section \ref{s89} that we can define $v(t)$ on $[0,T_{\kappa}]$, in a similar way to that in \cite{DS_2010}, by specifying the divergence and curl of its time derivative in $\Omega$, as well as the trace of its normal component on the boundary $\Gamma$ in the following way:
\begin{subequations}
\label{eq:vdef}
\begin{align}
\label{8.7}
v(0)&=u_0 \quad\quad\quad\quad\quad\quad\quad\quad\quad\quad\quad\quad\quad\quad\quad\quad\quad\,\,\text{in}\,\, \Omega&\\\label{diveq}
\Div v_t&=\Div \bar{v}_t-\Div_{\bar{\eta}}\bar{v}_t+\dfrac{[\bar{X}\bar{J}]_t}{\rho_0}-\partial_t\bar{F}^{-1j}_{\quad i}\bar{v}^i_{,j}\quad\quad\,\,\,\text{in}\,\,\Omega&
\\
\curl v_t&=\curl \bar{v}_t-\curl_{\bar\eta}\bar v_t+\kappa\epsilon_{\cdot ji}\bar v_{,s}^r\bar{F}^{-1s}_{\quad j}D_{\bar\eta^r}\bar{\mathfrak{F}}^i+\bar{\mathfrak{C}}\,\,\text{in}\,\,\Omega&\label{cppp}\\\nonumber
v_t^3+2\kappa\rho_{0,3}\Div_{\Gamma} v&= 2\kappa\rho_{0,3}\Div_{\Gamma}\bar v -2\bar{J}^{-2}\bar{F}^{*3}_{\,\,\,\,3}\rho_{0,3}-2\kappa\partial_t[\bar{J}^{-2}\bar{F}^{*3}_{\,\,\,\,3}]\rho_{0,3}\\&\quad+\bar{G}^3+\kappa\partial_t\bar G^3+\bar c(t)N^3 \,\,\quad\quad\quad\quad\quad\quad\quad\,\,\,\text{on}\,\,\Gamma&\label{xcvj}\\
\nonumber\int_{\Omega}v_t^{\alpha}\,dx&=-2\int_{\Omega}\bar{F}^{-1k}_{\quad\alpha}(\dfrac{\rho_0}{\bar J})_{,k}\, dx -2\kappa\int_{\Omega}\partial_t[\bar{F}^{-1k}_{\quad\alpha}(\dfrac{\rho_0}{\bar J})_{,k}]\,dx\\&\quad+\int_{\Omega}\bar G^{\alpha}+\kappa\partial_t\bar G^{\alpha}\,dx.&\label{jiaf}
\end{align}
\end{subequations}
The presence of $\Div_{\Gamma}\bar v$ in \eqref{xcvj} presents the linearization of $\partial_t\bar{F}^{*3}_{\,\,\,\,3}$ around $\bar\eta= e$, where
\begin{align}
\nonumber \bar{F^{*}}_3^3&=(\bar\eta_{,1}\times\bar\eta_{,2})\cdot e_3,\\
\partial_t\bar{F^{*}}_3^3&=(\bar v_{,1}\times\bar\eta_{,2}+\bar\eta_{,1}\times \bar v_{,2})\cdot e_3.
\label{tmp3}
\end{align}
The function $\bar c(t)$ on the right-hand side of \eqref{xcvj} is defined by
\begin{align}
\nonumber \bar c(t)=& \dfrac{1}{2}\int_{\Omega}(\Div \bar v_t-\Div_{\bar\eta}\bar v_t)\,dx+\dfrac{1}{2}\int_{\Omega}\dfrac{[\bar X\bar J]_t}{\rho_0}\,dx-\dfrac{1}{2}\int_{\Omega}\partial_t\bar{F}^{-1j}_{\quad i}\bar v^i_{,j}\,dx\\\nonumber
&+\int_{\Gamma}\bar{J}^{-2}\bar{F}^{*3}_{\,\,\,\,3}\rho_{0,3} N^3 \,dS+\kappa\int_{\Gamma}\partial_t[\bar{J}^{-2}\bar{F}^{*3}_{\,\,\,\,3}]\rho_{0,3} N^3\,dS\\
&+\kappa\int_{\Gamma}\Div_{\Gamma}(v-\bar v)\rho_{0,3} N^3\,dS+\int_{\Gamma}(\bar G^3+\kappa\partial_t\bar G^3) N^3\,dS,
\end{align}
and the vector field $\bar{\mathfrak{F}}$ on the right-hand side of \eqref{cppp} is defined on $[0,T]\times \Omega$ as the solution of the ODE
\begin{align}
\label{Eeq}
\bar{\mathfrak{F}}+\kappa\bar{\mathfrak{F}}_t &= -\bar v_t,\\
\bar{\mathfrak{F}}(0)&= 2D\rho_0-D\phi_0=2D\rho_0+\int_{\Omega}\dfrac{D\rho_0}{|x-z|}\,dz.
\end{align}
The vector field $\bar{\mathfrak{C}}$ on the right-hand side of \eqref{cppp} is then defined on $[0,T]\times\Omega$ by
\begin{equation}
\label{Cdef}
\bar{\mathfrak{C}}^i=\bar{F}^{-1j}_{\quad i}\psi_{,j}+\kappa\partial_t[\bar{F}^{-1j}_{\quad i}\psi_{,j}],
\end{equation}
where $\psi$ is solution of the following time-dependent elliptic-type problem for $t\in[0,T]$:
\begin{subequations}
\label{eq:b}
\begin{align}
[\bar{F}^{-1j}_{\quad i}\psi_{,j}]_{,i}+\kappa\partial_t[\bar{F}^{-1j}_{\quad i}\psi_{,j}]_{,i}&=\Div(\curl_{\bar\eta}\bar v_t-\kappa\epsilon_{\cdot ji}\bar v^r_{,s}\bar{F}^{-1s}_{\quad j} D_{\bar{\eta}^r}\bar{\mathfrak{F}}^i)\,\,\text{in}\,\,\Omega,\\
\psi &=0 \quad\quad\quad\quad\quad\quad\quad\quad\quad\quad\quad\quad\quad\quad\,\text{on}\,\,\Gamma,\label{bn}\\
\psi|_{t=0}&=0 \quad\quad\quad\quad\quad\quad\quad\quad\quad\quad\quad\quad\quad\quad\,\,\text{in}\,\, \Omega,
\end{align}
\end{subequations}
so that the compatibility condition for \eqref{cppp}
\begin{equation}
\Div(-\curl_{\bar\eta}\bar v_t+\kappa\epsilon_{\cdot ji}\bar v^r_{,s}\bar{F}^{-1s}_{\quad j} D_{\bar\eta^r}\bar{\mathfrak{F}}^i+\bar{\mathfrak{C}})=0
\end{equation}
holds.
By integrating factor method, we have a closed form solution to the ODE \eqref{Eeq}, and then by integrating by parts in time, we can find that
\begin{align}
\nonumber \bar{\mathfrak{F}}(t,x)&=e^{-\frac{t}{\kappa}}(2D\rho_0(x)+\int_{\Omega}\dfrac{D\rho_0(z)}{|x - z|}\,dz)-\int_0^t\dfrac{e^{\frac{t'-t}{\kappa}}}{\kappa}\bar v_t(t',x)\,dt'\\\nonumber
&=e^{-\frac{t}{\kappa}}(2D\rho_0(x)+\int_{\Omega}\dfrac{D\rho_0(z)}{|x - z|}\,dz)+\int_0^t\dfrac{e^{\frac{t'-t}{\kappa}}}{\kappa^2}\bar v(t',x)\,dt'\\&\quad-\dfrac{1}{\kappa}\bar v(t,x)+\dfrac{e^{-\frac{t}{\kappa}}}{\kappa}u_0(x).
\label{f0}
\end{align}
Thus, the formula \eqref{f0} shows that $\bar{\mathfrak{F}}$ has the same regularity as $\bar v$. This gain in regularity is crucial and should be viewed as one of the key reasons that allow us to construct solutions to \eqref{approeq} using linearization \eqref{lxversion} with a fixed-point argument.

Similarly, we have that
\begin{align}
\nonumber [\bar{F}^{-1j}_{\quad i}\psi_{,j}]_{,i}&=\int_0^t \dfrac{e^{\frac{t'-t}{\kappa}}}{\kappa}
\Div(\curl_{\bar\eta}\bar v_t-\kappa\epsilon_{\cdot ji}\bar v^r_{,s}\bar{F}^{-1s}_{\quad j} D_{\bar{\eta}^r}\bar{\mathfrak{F}}^i)(t',x)\,dt'\\
\nonumber &=\int_0^t\dfrac{e^{\frac{t'-t}{\kappa}}}{\kappa}\epsilon_{kji}[\bar v^i_{,r}\bar{F}^{-1r}_{\quad s}\bar v_{,l}^s \bar{F}^{-1j}_{\quad l}-\dfrac{1}{\kappa}\bar v^i_{,r}\bar{F}^{-1j}_{\quad r}]_{,k}(t',x)\,dt'\\
\nonumber &\quad-\int_0^t\dfrac{e^{\frac{t'-t}{\kappa}}}{\kappa}\epsilon_{kji}[\kappa \bar v^r_{,s}\bar{F}^{-1j}_{\quad s}D_{\bar{\eta}^r}\bar{\mathfrak{F}}^i]_{,k}(t',x)\,dt'\\
&\quad +\dfrac{e^{\frac{-t}{\kappa}}}{\kappa}\Div\curl_{\bar\eta}\bar v(t,x).
\label{lo}\end{align}
Noticing that the left-hand side of \eqref{lo} can be rewritten as $\Delta\psi+[(\bar{F}^{-1j}_{\quad i}-\delta_i^j)\psi_{,j}]_{,i}$, then for $\bar v\in C_T(M)$, the elliptic problem \eqref{lo} is well-defined and together with the boundary condition \eqref{bn} provides the following estimates for any $t \in [0,T]$:
\begin{align*}
\|\psi\|_4&\leq N_0+C(T\|\bar v\|_4+\int_0^t\|\bar v\|_4)\\
\|\psi_t\|_3&\leq N_0+C(T\|\bar v_t\|_3+\|\bar v\|_3+\int_0^t\|\bar v\|_3)
\end{align*}
Recall that \eqref{Cdef}, we have
\begin{align}
\label{ccp}
\|\bar{\mathfrak{C}}\|_2&\leq N_0+C(T\|\bar v_t\|_3+\|\bar v\|_3+\int_0^t\|\bar v\|_3) \\
\|\int_0^t\bar{\mathfrak{C}}\|_3&\leq N_0+C(T\|\bar v\|_4+\int_0^t\|\bar v\|_4)
\end{align}
\begin{remark}
Condition \eqref{jiaf} is necessary only because of the periodicity of our
domain in the directions $e_1$ and $e_2$.
\end{remark}
\subsection{Construction of solutions and regularity theory for $\bar X$ and its time derivatives}
\label{s11}
This subsection will be devoted to the proof of Proposition \ref{p8}. Motivated by \cite{DS_2010}, we will proceed with a two stage process. First, we smooth $\bar{v}$ and use the Galerkin scheme to obtain strong solutions to the linear equation \eqref{lxversion} in the case that the forcing function $\bar W$ and the coefficient matrix $\bar{B}^{jk}$ are $C^{\infty}(\bar{\Omega})$-functions. Second, we use interpolation estimates and the Sobolev embedding theorem to carry out a prior estimate independent on the smoothing parameter and then conclude the proof of Proposition \ref{p8}.
\subsubsection{Smoothing the $\bar v$}
The smoothness of $v$ is quite standard, using the notation of Section \ref{sm}, for each $t\in[0,T_{\kappa}]$ and for $\nu>0$, we define
\begin{equation*}
\bar v^{\nu}(\cdot,t)=\alpha_{\nu}\ast E_{\Omega}(\bar v(\cdot,t)),
\end{equation*}
so that for each $\nu>0$, $\bar{v}^{\nu}(\cdot,t)\in C^{\infty}(\bar{\Omega})$. We define $\bar{W}^{\nu}$ by replacing $\bar{F}^{-1}, \bar{F}^*, \bar J, \bar v$ and $\bar G$ in \eqref{zio} with $\bar{F}^{-1\nu}, \bar{F}^{*\nu}, \bar J^{\nu}, \bar v^{\nu}$ and $\bar G^{\nu}$, respectively. The quantities $\bar{F}^{-1\nu}, \bar{F}^{*\nu}, \bar J^{\nu}, \bar G^{\nu}$ are defined from the map $\bar\eta^{\nu}=x+\int_0^t\bar v^{\nu}$. We also define $[\bar B^{\nu}]^{jk}=(\bar{F}^{*\nu})_i^j(\bar{F}^{-1\nu})_i^k$. According to \eqref{positive}, we can choose $\nu>0$ sufficiently small, so that for $t\in[0,T_{\kappa}]$,
\begin{equation}
\frac{7}{8}|\xi|^2\leq [\bar B^{\nu}]^{jk}(x,t)\xi_j\xi_k,\,\,\forall \xi \in \mathbb{R}^3,\,\, x\in\Omega.
\end{equation}

Until Section \ref{s9}, we will use $\bar B^{\nu}$ and $\bar W^{\nu}$ as the coefficient matrix and forcing function, respectively, but for notational convenience we will not explicitly write the superscript $\nu$.
\subsubsection{Regularity for $\bar G$}
\label{sg1}Before we study the existence and regularity for $X$, we first check the regularity for $\bar G$. In fact, we claim that
\begin{equation}
\label{est:G}
\int_0^t\|D^3\bar G\|_0^2+\|\partial_t^3D\bar{G}\|_0^2+\|\partial_t^2D^2\bar{G}\|_0^2+\|\partial_tD^3\bar G\|_0^2\leq P(\|\bar v\|_{Z_T}^2).
\end{equation}
Recall the definition of $\bar{\Phi}$ and \eqref{phiG} , we have
\begin{align}
\nonumber
D^3 \bar G^i&=D^3D_{\bar{\eta}^i}\bar{\Phi}\\\nonumber&=\int_{\mathbb{R}^3}\dfrac{1}{|y|}D_x^3D_{\bar\eta^i}\bar\rho(\bar\eta(x,t)-y,t)\,dy
\\\nonumber&=\int_{\mathbb{R}^3}\dfrac{1}{|y|} D_x^2(\bar F^{T}(x)D_{\bar\eta})D_{\bar\eta^i}\bar\rho(\bar\eta(x,t)-y,t))\,dy\\
\nonumber&=\int_{\mathbb{R}^3}\dfrac{1}{|y|} D_x^2\bar F^{T}(x) D_{\bar\eta}D_{\bar\eta^i}\bar\rho(\bar\eta(x,t)-y,t))\,dy\\\nonumber&\quad+\int_{\mathbb{R}^3}\dfrac{1}{|y|} D_x(\bar F^{T}(x))\bar F^{T}(x)(D_{\bar\eta})^2D_{\bar\eta^i}\bar\rho(\bar\eta(x,t)-y,t))\,dy\\&\quad+\int_{\mathbb{R}^3}\dfrac{1}{|y|} \bar F^{T}(x)\bar F^{T}(x)\bar F^{T}(x)(D_{\bar\eta})^3D_{\bar\eta^i}\bar\rho(\bar\eta(x,t)-y,t))\,dy.
\label{ineq:est00}
\end{align}
Due to the definition of $\bar\rho$, when $y \in \mathbb{R}^3\backslash\bar\eta(\bar\Omega,t)$, $D_{y}^{a}\bar\rho(y,t)=0$ for $0\leq a\leq 4$, then we change variables to get that
\begin{align}
\nonumber
\int_{\mathbb{R}^3}\dfrac{1}{|y|}(D_{\bar\eta})^a\bar\rho(\bar\eta(x,t)-y,t)\,dy
=&\int_{\mathbb{R}^3}\dfrac{1}{|\bar\eta(x,t)-y|}(D_{y})^a\bar\rho(y,t)\,dy\\
=&\int_{\Omega}\dfrac{1}{|\bar\eta(x,t)-\bar\eta(z,t)|}\bar JD_{\bar{\eta}}^a(\rho_0\bar J^{-1})(z,t)\,dz.
\label{ineq:est0}
\end{align}
Thus by \eqref{eta0} and Young's inequality for convolution, we can have that
\begin{align}
\nonumber \|\int_{\mathbb{R}^3}\dfrac{1}{|y|}D_{\bar\eta}^a\bar\rho(\bar\eta(x,t)-y,t)\,dy\|_0^2&\leq  C\|\dfrac{1}{|x|}\|_{L^1(\Omega)}^2\|\bar JD_{\bar{\eta}}^a(\rho_0\bar{J}^{-1})\|_0^2
\\&\leq P(\|\bar\eta\|_4,\|\rho_0D\bar{\eta}\|_4),
\label{ineq:est1}
\end{align}
and
\begin{align*}
\int_0^t\|D^3\bar G\|_0^2\leq P(\|\bar v\|_{Z_T}^2).
\end{align*}

Now we estimate the $L^2(0,T;L^2(\Omega))$-norm of $\partial_t^3D\bar G$. Similarly, as we showed from \eqref{ineq:est00} to \eqref{ineq:est0}, we can get that
\begin{align*}
\partial_t^3D\bar G=&\partial_t^3(\bar{F}^{T}(x,t) \int_{\Omega}\dfrac{1}{|\bar\eta(x,t)-\bar\eta(z,t)|}\bar J(D_{\bar{\eta}})^2(\rho_0\bar J^{-1})(z,t)\,dz)\\=&
 \bar{F}^{T}(x,t)\sum_{p=0}^3C_p\int_{\Omega}\partial_t^p[\bar J(D_{\bar{\eta}})^2(\rho_0\bar J^{-1})](z,t)\partial_t^{3-p}\dfrac{1}{|\bar\eta(x,t)-\bar\eta(z,t)|}\,dz\\&+
  \sum_{b=1}^3\partial_t^b\bar{F}^{T}(x,t)\partial_t^{3-b}\int_{\Omega}[\bar J(D_{\bar{\eta}})^2(\rho_0\bar J^{-1})](z,t)\dfrac{1}{|\bar\eta(x,t)-\bar\eta(z,t)|}\,dz
\end{align*}
We denote $K(\bar\eta,\bar v)=(\bar\eta(x,t)-\bar\eta(z,t))\cdot ((\bar v(x,t)-\bar v(z,t))$. When $p=0$, we have
\begin{align*}
\partial_t^3\dfrac{1}{|\bar\eta(x,t)-\bar\eta(z,t)|}=&C_1\dfrac{K^3(\bar\eta,\bar v)}{|\bar\eta(x,t)-\bar\eta(z,t)|^{7}}+C_2\dfrac{\partial_tK(\bar{\eta},\bar v)K(\bar\eta,\bar v)}{|\bar\eta(x,t)-\bar\eta(z,t)|^5}\\&+C_3\dfrac{\partial_t^2K(\bar{\eta},\bar v)}{|\bar\eta(x,t)-\bar\eta(z,t)|^3},
\end{align*}
and then
\begin{align}
\nonumber
&\int_{\Omega}|\int_{\Omega}[\bar{J}(D_{\bar{\eta}})^2(\rho_0\bar{J}^{-1})]\partial_t^{3}\dfrac{1}{|\bar\eta(x,t)-\bar\eta(z,t)|}\,dz|^2\,dx\\\nonumber\leq& C\int_{\Omega}|\int_{\Omega}[\bar{J}(D_{\bar{\eta}})^2(\rho_0\bar{J}^{-1})]\dfrac{\partial_t^2K(\bar{\eta},\bar v)}{|\bar\eta(x,t)-\bar\eta(z,t)|^3}\,dz|^2\,dx+R
\\\nonumber
\leq &C\int_{\Omega}|\int_{\Omega}[\bar{J}(D_{\bar{\eta}})^2(\rho_0\bar{J}^{-1})]\dfrac{\partial_t^2(\bar{\eta}(x,t)-\bar{\eta}(z,t))\cdot(\bar v(x,t)-\bar v(z,t))}{|\bar\eta(x,t)-\bar\eta(z,t)|^3}\,dz|^2\,dx\\&+C\int_{\Omega}|\int_{\Omega}[\bar{J}(D_{\bar{\eta}})^2(\rho_0\bar{J}^{-1})]\dfrac{(\bar{\eta}(x,t)-\bar{\eta}(z,t))\cdot\partial_t^2(\bar v(x,t)-\bar v(z,t))}{|\bar\eta(x,t)-\bar\eta(z,t)|^3}\,dz|^2\,dx
+R.
\label{ineq:est2}
\end{align}
By Taylor's formula and \eqref{eta0}, the first term on the right-hand side of \eqref{ineq:est2} can be bounded by the following integral
\begin{multline*}
C\int_{\Omega}|\int_{\Omega}[\bar{J}(D_{\bar{\eta}})^2(\rho_0\bar{J}^{-1})]\dfrac{|D\bar v(\tilde x(x,z),t)||\partial_t(\bar v(x,t)-\bar v(z,t))|}{|x-z|^2}\,dz|^2\,dx\\\leq C\|\bar{J}(D_{\bar{\eta}})^2(\rho_0\bar{J}^{-1})\|_{L^{\infty}(\Omega)}^2\|D\bar v\|_{L^{\infty}(\Omega)}^2\|\partial_t\bar v\|_0^2\|\dfrac{1}{|x|^2}\|_{L^1(\Omega)}^2,
\end{multline*}
where we also used Young's inequality for convolution. Similarly, the other terms can bounded by $C\|\bar{J}(D_{\bar{\eta}})^2(\rho_0\bar{J}^{-1})\|_{L^{\infty}(\Omega)}^2\|\partial_t^2\bar v\|_0^2\|\dfrac{1}{|x|^2}\|_{L^1(\Omega)}^2$.
Thus, by repeating this process for $p=1,2,3$, we can get that
\begin{align*}
\int_0^t\|\partial_t^3D\bar G\|_0^2\leq P(\|\bar v\|_{Z_T}^2).
\end{align*}
We can estimate the $L^2(0,T;L^2(\Omega))$-norm of $\partial_t^2D^2\bar G$ and $\partial_tD^3\bar G$ in a similar fashion, and get that
\begin{equation*}
\int_0^t \|\partial_tD^3\bar G\|_0^2+\|\partial_t^2D^2\bar G\|_0^2\leq P(\|\bar v\|_{Z_T}^2).
\end{equation*}
\begin{remark}
This estimate is independent on $\nu$.
\end{remark}
\subsubsection{$L^2(0,T;{H}^1_0(\Omega))$ regularity for ${X}_{ttt}$}

\begin{definition}[Weak solution of \eqref{lxversion}]
$ X \in L^2(0,T;{H}^1_0(\Omega))$ with $\dfrac{ X_t}{\rho_0} \in L^2(0,T;H^{-1}(\Omega))$ is a weak solution of \eqref{lxversion} if

\noindent
(i) $\bar W\in L^2(0,T;H^{-1}(\Omega))$, for all $V\in{H}_0^1(\Omega)$,
\begin{equation}
\label{weakform}
\langle\dfrac{\bar J^2 X_t}{\rho_0}, V\rangle+2\kappa\int_{\Omega}\bar B^{jk} X_{,k}V_{,j}\,dx =\langle\bar W, V\rangle \,\, \text{a.e.}\,\,[0,T],
\end{equation}
(ii) $X=X_0$.

\noindent $\langle\cdot,\cdot\rangle$ denotes the duality pairing between $ H_0^1(\Omega)$ and $H^{-1}(\Omega)$.
\end{definition}
Recall that if $\bar W\in H^{-1}(\Omega)$, then $$\|\bar W\|_{H^{-1}(\Omega)} =\sup \{\langle\bar W, V\rangle| V\in  H^1_0, \|V\|_{ H_0^1(\Omega)}=1\}.$$ Furthermore, there exists functions $\bar W_0, \bar W_1, \bar W_2, \bar W_3$ in $L^2(\Omega)$ such that $\langle\bar W,V\rangle=\int_{\Omega}\bar W_0 V+\bar W_i V_{,i}\,dx$, so that $\|\bar W\|^2_{H^{-1}(\Omega)}=\inf\sum_{a=0}^3\|\bar W_a\|_0^2$, the infimum being taken over all such functions $\bar W_a$.

\begin{lemma}
\label{weakso}
If $\bar W \in L^2(0,T;H^{-1}(\Omega))$ and $\dfrac{X_0}{\sqrt{\rho_0}}\in L^2(\Omega)$, then for $T>0$ taken sufficiently small so that \eqref{positive} holds, there exists a unique weak solution to \eqref{lxversion} such that for constants $C_p>0$ and $C_{\kappa}>0$,
\begin{align*}
\|\dfrac{ X_t}{\rho_0}\|^2_{L^2(0,T;H^{-1}(\Omega))}+&\sup_{t\in[0,T]}C\|\dfrac{ X(t)}{\sqrt{\rho_0}}\|_0^2+C_p\kappa\| X\|^2_{L^2(0,T; H^1_0(\Omega))}\\
&\leq \|\dfrac{X_0}{\sqrt{\rho_0}}\|_0^2+C_{\kappa}\|\bar W\|^2_{L^2(0,T;H^{-1}(\Omega))}.
\end{align*}
\end{lemma}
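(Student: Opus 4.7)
The plan is to prove Lemma \ref{weakso} by a Faedo--Galerkin approximation adapted to the degenerate weight $1/\rho_0$. Fix a basis $\{e_k\}_{k \geq 1}$ of $L^2(\Omega)$ consisting of smooth functions vanishing on $\Gamma$, orthonormal in $L^2$ and orthogonal in $H^1_0$ (for instance the Dirichlet--Laplacian eigenfunctions of $\Omega$). I would seek finite-dimensional approximations $X^m(t) = \sum_{k=1}^m d^m_k(t)\, e_k$ satisfying, for each $l = 1,\ldots,m$,
\begin{equation*}
\int_\Omega \tfrac{\bar J^2}{\rho_0}\, X^m_t\, e_l\, dx + 2\kappa \int_\Omega \bar B^{jk} X^m_{,k}\, (e_l)_{,j}\, dx = \langle \bar W, e_l\rangle,
\end{equation*}
with $X^m(0)$ the $L^2$-projection of $X_0$ onto $\mathrm{span}(e_1,\ldots,e_m)$. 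The mass matrix $M_{lk}(t) = \int_\Omega (\bar J^2/\rho_0)\, e_l\, e_k\, dx$ has finite entries (Hardy's inequality applied to $e_l \in H^1_0$ together with $\rho_0 \geq c\,\mathrm{dist}(\cdot,\Gamma)$ implies $e_l e_k/\rho_0 \in L^1$) and is uniformly positive definite on $[0,T]$ thanks to the bounds on $\bar J$, so the resulting linear ODE system for $d^m$ admits a smooth global solution.

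For the key energy estimate I multiply the $l$-th equation by $d^m_l(t)$ and sum to obtain
\begin{equation*}
\tfrac{1}{2}\tfrac{d}{dt}\!\!\int_\Omega \tfrac{\bar J^2 (X^m)^2}{\rho_0}\, dx + 2\kappa \int_\Omega \bar B^{jk} X^m_{,k} X^m_{,j}\, dx = \langle \bar W, X^m\rangle + \int_\Omega \tfrac{\bar J \bar J_t}{\rho_0} (X^m)^2\, dx.
\end{equation*}
The coercivity \eqref{positive} yields $2\kappa\!\int \bar B^{jk} X^m_{,k} X^m_{,j}\geq \tfrac{7\kappa}{4}\|DX^m\|_0^2$, and Poincar\'e (using $X^m|_\Gamma = 0$) upgrades this to control of $\kappa\|X^m\|_{H^1_0}^2$. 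Young's inequality handles the forcing, $|\langle \bar W, X^m\rangle| \leq C_\kappa\|\bar W\|_{H^{-1}}^2 + \tfrac{\kappa}{4}\|X^m\|^2_{H^1_0}$, while the remainder factors as $\|\bar J_t/\bar J\|_{L^\infty} \cdot \|X^m/\sqrt{\rho_0}\|_0^2$ (since the smoothed $\bar v$ is $C^\infty(\bar\Omega)$). A Gr\"onwall argument then produces
\begin{equation*}
\sup_{t\in[0,T]} \Bigl\|\tfrac{X^m(t)}{\sqrt{\rho_0}}\Bigr\|_0^2 + C_p\kappa\|X^m\|^2_{L^2(0,T;H^1_0)} \leq \Bigl\|\tfrac{X_0}{\sqrt{\rho_0}}\Bigr\|_0^2 + C_\kappa \|\bar W\|^2_{L^2(0,T;H^{-1})},
\end{equation*}
uniformly in $m$.

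For the time-derivative bound I test the Galerkin equation against the $\mathrm{span}(e_1,\ldots,e_m)$-projection of an arbitrary $V \in H^1_0$; the resulting estimate
\begin{equation*}
\Bigl|\int_\Omega \tfrac{\bar J^2 X^m_t}{\rho_0} V\, dx\Bigr| \leq \bigl(\|\bar W\|_{H^{-1}} + C\kappa\|X^m\|_{H^1_0}\bigr)\|V\|_{H^1_0}
\end{equation*}
extends to arbitrary $V$ by density. Since $\bar J \in C^\infty(\bar\Omega)$ is bounded above and below, $V \mapsto V/\bar J^2$ is a bounded isomorphism of $H^1_0(\Omega)$; testing against $V/\bar J^2$ converts the above into the same bound on $X^m_t/\rho_0$ in $H^{-1}$. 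Integrating in time and invoking the energy estimate gives $\|X^m_t/\rho_0\|^2_{L^2(0,T;H^{-1})}$ bounded uniformly in $m$.

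With these uniform bounds a standard weak-compactness argument extracts a limit $X$ having $X/\sqrt{\rho_0} \in L^\infty(0,T;L^2)$, $X \in L^2(0,T;H^1_0)$ and $X_t/\rho_0 \in L^2(0,T;H^{-1})$; passing to the limit in the Galerkin equations (using density of $\bigcup_m \mathrm{span}(e_1,\ldots,e_m)$ in $H^1_0$) produces the desired weak solution, and a Lions--Magenes-type argument yields continuity in time with respect to the weighted $L^2$ so that $X(0) = X_0$. Uniqueness is immediate from applying the same energy identity to the difference of two weak solutions. I expect the principal obstacle to be the degenerate weight $1/\rho_0$ at $\Gamma$: this is tamed by systematically exploiting the Dirichlet condition $X|_\Gamma = 0$ through Hardy's inequality (Lemma \ref{hardy}), which is exactly what makes $X/\sqrt{\rho_0}$ a meaningful $L^2$ energy quantity.
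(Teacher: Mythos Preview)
Your proposal is correct and follows essentially the same approach as the paper: a Faedo--Galerkin scheme using Dirichlet--Laplacian eigenfunctions, with Hardy's inequality handling the degenerate weight $1/\rho_0$, coercivity \eqref{positive} and Poincar\'e producing the energy bound, Gr\"onwall closing it, and weak compactness plus a standard uniqueness argument finishing. The paper's proof additionally carries the lower-order term $\kappa(\bar J X_n, e_l)$ in the Galerkin equation (cf.\ \eqref{gal}), which is harmless since it contributes with a good sign; your explicit isomorphism argument $V\mapsto V/\bar J^2$ to extract the $H^{-1}$ bound on $X_t/\rho_0$ is a detail the paper simply labels ``standard arguments.''
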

\begin{proof}
Let $(e_n)_{n\in\mathbb{N}}$ denote a Hilbert basis of $ H^1_0(\Omega)$, and each $e_n$ is smooth, for instance, the eigenfunctions of the Laplace operator on $\Omega$ with vanishing Dirichlet boundary conditions on $\Gamma$. We then define the Galerkin approximation at order $n\geq 1$ as $X_n=\sum_{i=0}^n\lambda_i^n(t)e_i$ such that $\forall l \in \{0,\dots,n\}$, $X_n$ satisfies the following equation:
\begin{subequations}
\label{eq:gal}
\begin{align}
\nonumber (\bar J^2\dfrac{X_{nt}}{\rho_0},e_l)_{L^2(\Omega)}&+2\kappa(\bar B^{jk}{X_n}_{,k},{e_l}_{,j})_{L^2(\Omega)}+\kappa(\bar J {X_n}, e_l)_{L^2(\Omega)}\\\label{gal}
&=(\bar W_0,e_l)_{L^2(\Omega)}-(\bar W_i,\dfrac{\partial e_l}{\partial x_i})_{L^2(\Omega)}\,\,\text{in}\,\,[0,T],
\\
\lambda_l^n(0)&=(X_0,e_l)_{L^2(\Omega)}.
\end{align}
\end{subequations}
Since each $e_l$ is in $H^{k+1}(\Omega)\cap H^1_0(\Omega)$ for every $k\geq 1$, by Hardy's inequality \eqref{hardy}, we have that
\begin{equation*}
\dfrac{e_l}{\rho_0}\in H^{k}(\Omega) \,\, \text{for}\,\, k\geq 1,
\end{equation*}
therefore, each integral in \eqref{eq:gal} is well-defined.

Furthermore, since the $e_l$ are linearly independent, then $\dfrac{e_l}{\sqrt{\rho_0}}$ are linearly independent and therefore the determinant of the matrix
\begin{equation*}
[(\dfrac{e_i}{\sqrt{\rho_0}},\dfrac{e_j}{\sqrt{\rho_0}})_{L^2(\Omega)}]_{(i,j)\in \mathbb{N}_n}
\end{equation*}
is nonzero. This implies that our finite-dimensional Galerkin approximation \eqref{eq:gal} is a well-defined first-order differential system of order $n+1$, which therefore has a solution on a time interval $[0,T_n]$, where $T_n$ a prior depends on the rank $n$ of the Galerkin approximation. In order to prove that $T_n=T$, with $T$ independent of $n$, we notice that since $X_n$ is a linear combination of the $e_l$, we have that on $[0,T_n]$,
\begin{align*}
 (\bar J^2\dfrac{X_{nt}}{\rho_0},X_n)_{L^2(\Omega)}&+2\kappa(\bar B^{jk}{X_n}_{,k},{X_n}_{,j})_{L^2(\Omega)}+\kappa(\bar{J}X_n,X_n)_{L^2(\Omega)}\\
&=(\bar W_0,X_n)_{L^2(\Omega)}-(\bar W_i,\dfrac{\partial X_n}{\partial x_i})_{L^2(\Omega)}
\end{align*}
Then it follows that on $[0,T_n]$
\begin{align*}
\dfrac{1}{2}\dfrac{d}{dt}&\int_{\Omega}\bar J^2\dfrac{|X_n|^2}{\rho_0}\,dx+2\kappa\int_{\Omega}\bar B^{jk}X_{n,k}X_{n,j}\,dx+\kappa\int_{\Omega}\bar J|X_n|^2\,dx\\
&=\dfrac{1}{2}\int_{\Omega}(\bar J^2)_t\dfrac{|X_n|^2}{\rho_0}\,dx +\int_{\Omega}\bar W_0 X_n\,dx-\int_{\Omega}\bar W_i X_{n,i}\,dx,
\end{align*}
Using \eqref{positive}, we see that
\begin{align}
\nonumber \dfrac{1}{2}\dfrac{d}{dt}&\int_{\Omega}\bar J^2\dfrac{|X_n|^2}{\rho_0}\,dx+\frac{7}{4}\kappa\int_{\Omega}|DX_{n}|^2\,dx+\kappa\int_{\Omega}\bar{J}|\bar X_n|^2\,dx\\
&\leq\|\dfrac{1}{2}(\bar J^2)_t\|_{L^{\infty}(\Omega)}\int_{\Omega}\dfrac{1}{\rho_0}|X_n|^2\,dx+C\|\bar W\|_{H^{-1}(\Omega)}\|DX_n\|_0.
\label{8.21}
\end{align}
Then by the Sobolev embedding theorem and the Cauchy--Young inequality, we get that
\begin{align*}
\dfrac{1}{2}\dfrac{d}{dt}&\int_{\Omega}\bar J^2\dfrac{|X_n|^2}{\rho_0}\,dx+\frac{7}{8}\kappa\int_{\Omega}|DX_{n}|^2\,dx+\kappa\int_{\Omega}\bar{J}| X_n|^2\,dx\\
&\leq\|\bar J_t\|_{2}\|\bar{J}\|_{L^{\infty}}\int_{\Omega}\dfrac{1}{\rho_0}|X_n|^2\,dx+C_{\kappa}\|\bar W\|_{H^{-1}(\Omega)}^2.
\end{align*}
where the constant $C_{\kappa}$ depends inversely on $\kappa$. Since $\bar v \in C_T(M)$, then with \eqref{Jnorm}, we have that on $[0,T]$,
\begin{equation*}
\int_0^t \|\bar J_t\|_{2}\|\bar{J}\|_{L^{\infty}}\,dt\leq C_M\sqrt{t}
\end{equation*}
for a constant $C_M$ depending on $M$, so that Gronwall's inequality shows that $T_n=T$ independent of $n$, and with $\bar J\geq\frac{7}{8}$ for all $\bar v\in C_T(M)$, we see that
\begin{equation*}
\sup_{t\in[0,T]}C\|\dfrac{X_n(t)}{\sqrt{\rho_0}}\|_0^2+\frac{7}{8}\kappa\int_0^T\|DX_n(t)\|_0^2 \leq \|\dfrac{X_0}{\sqrt{\rho_0}}\|_0^2+C_{\kappa}\int_0^T\|\bar W(t)\|_{H^{-1}(\Omega)}^2.
\end{equation*}
By the Poincar\'e inequality, we have that
\begin{equation*}
\sup_{t\in[0,T]}C\|\dfrac{X_n(t)}{\sqrt{\rho_0}}\|_0^2+C_p\kappa\int_0^T\|X_n(t)\|_1^2 \leq \|\dfrac{X_0}{\sqrt{\rho_0}}\|_0^2+C_{\kappa}\int_0^T\|\bar W(t)\|_{H^{-1}(\Omega)}^2.
\end{equation*}
Thus, there exists a subsequence $\{{X_n}_m\}\subset\{X_n\}$ which converges weakly to some $X$ in $L^2(0,T; H^1_0(\Omega))$, which satisfies
\begin{equation*}
\sup_{t\in[0,T]}C\|\dfrac{ X(t)}{\sqrt{\rho_0}}\|_0^2+C_p\kappa\int_0^T\|X(t)\|_0^2 \leq \|\dfrac{X_0}{\sqrt{\rho_0}}\|_0^2+C_{k}\int_0^T\|\bar W(t)\|_{H^{-1}(\Omega)}^2.
\end{equation*}
Furthermore, it can be shown from the previous estimates, by using standard arguments for weak solutions of linear parabolic systems, that
\begin{equation*}
\dfrac{X_t}{\rho_0}\in L^2(0,T;H^{-1}(\Omega)),
\end{equation*}
and that $X(0)=X_0$ and that this $ X$ verifies the identity \eqref{weakform}. Uniqueness follows by letting $V=X$ in \eqref{weakform}.
\end{proof}

By \eqref{est:G}, it is easy to check that
\begin{equation}
\label{ineq:W}
\|\bar W\|_{L^2(0,T;L^2(\Omega))}^2 \leq P(\|\bar v\|_{Z_T}^2),
\end{equation}
thus it follows from Lemma \ref{weakso} and \eqref{n0} that
\begin{equation}
\|\dfrac{ X_t}{\rho_0}\|_{L^2(0,T;H^{-1}(\Omega))}^2+\sup_{t\in[0,T]}\|\dfrac{ X(t)}{\rho_0}\|_0^2+C_p\kappa\| X\|_{L^2(0,T; H^1_0(\Omega))}\leq C.
\label{8.22}
\end{equation}
In order to improve the regularity for $X$, we construct weak solutions for the time-differentiated problems of \eqref{lxversion}. It is convenient to proceed from the first to third time-differentiated problems. We start with the first time-differentiated version of \eqref{lxversion}:
\begin{subequations}
\label{lxtversion}
\begin{align}
\label{lxt}
\dfrac{\bar{J}^2X_{tt}}{\rho_0}-2\kappa[\bar{B}^{jk}( X_t)_{,k}]_{,j}+\kappa\bar J  X_t&=\bar{W}_t+\mathfrak{W}_1\,\, \quad\quad\,\,\,\text{in}\,\, \Omega\times(0,T_{\kappa}],\\
X_t&=0\,\, \quad\quad\quad\quad\quad\quad\text{on}\,\, \Gamma\times(0,T_{\kappa}],\\
X_t&=X_1\,\,\quad\quad\,\quad\quad\quad\text{on}\,\,\Omega\times\{0\},
\end{align}
\end{subequations}
where the initial condition $X_1$ is given as
\begin{equation}
X_1=2\kappa\rho_0\Delta  X_0-\kappa\rho_0X_0+\rho_0\bar W(0),
\end{equation}
the additional forcing term $\mathfrak{W}_1$ is defined by
\begin{equation}
\mathfrak{W}_1=2\kappa[\bar B^{jk}_t X_{,k}]_{,j}-\dfrac{(\bar J^2)_t X_t}{\rho_0}-\kappa\bar{J}_tX,
\label{8.25}
\end{equation}
and $\bar{W}(0)$ is defined by
\begin{align*}
\bar W(0)=& \rho_0-2\kappa\curl\curl u_0\cdot D\rho_0-2\kappa\Div u_0\Delta\rho_0+2\kappa {u_0^j}_{,i}{\rho_0}_{,ij}-2\Delta\rho_0\\&-(\Div u_0)^2-{u_0^i}_{,j}{u_0^j}_{,i}+\kappa {u_0}^r_{,s}\partial_r\partial_s\phi_0.
\end{align*}
According to the estimate \eqref{8.22}, $\|\mathfrak{W}_1\|^2_{L^2(0,T;H^{-1}(\Omega))}\leq C$.

\noindent From \eqref{zio},  $\|\bar{W}_t\|_{L^2(0,T;L^2(\Omega))}^2\leq P(\|\bar v\|_{X_T}^2)+\|\kappa \partial_t[\bar J\partial_t(\bar{F}^{-1j}_{\quad i})\partial_j \bar G^i]\|_{L^2(0,T;L^2(\Omega))}^2$, and
\begin{align*}
\| \partial_t[\bar J\partial_t(\bar{F}^{-1j}_{\quad i})\partial_j \bar G^i]\|_{0}^2\leq&\|\partial_t(\bar{J}\partial_t\bar {F^{-1}}_i^j)\|_{L^{\infty}(\Omega)}\|\bar{G}^i_{,j}\|_0^2\\&+\|\bar{J}\partial_t\bar {F^{-1}}_i^j\|_{L^{\infty}(\Omega)}\|\partial_t\bar G^i_{,j}\|_0^2.
\end{align*}

\noindent
Hence, by \eqref{est:G},  $\|\bar{W}_t+\mathfrak{W}_1\|^2_{L^2(0,T;H^{-1}(\Omega))}\leq C$ and by Lemma \ref{weakso} (with $ X_t, \bar W_t+\mathfrak{W}_1, X_1$ replacing $ X, \bar W, X_0$, respectively),
\begin{equation}
\|\dfrac{ X_{tt}}{\rho_0}\|_{L^2(0,T;H^{-1}(\Omega))}^2+\sup_{t\in[0,T]}\|\dfrac{ X_t(t)}{\rho_0}\|_0^2+C_p\kappa\| X_t\|_{L^2(0,T; H^1_0(\Omega))}\leq C.
\label{8.26}
\end{equation}
Next, we consider the second time-differentiated version of \eqref{lxversion}:
\begin{subequations}
\label{lx2tversion}
\begin{align}
\label{lx2t}\dfrac{\bar{J}^2{X}_{ttt}}{\rho_0}-2\kappa[\bar{B}^{jk}({X}_{tt})_{,k}]_{,j}+\kappa\bar{J}{X}_{tt}&=\bar{W}_{tt}+\mathfrak{W}_2\,\, \quad\quad\,\,\text{in}\,\, \Omega\times(0,T_{\kappa}],\\
{X}_{tt}&=0\,\, \quad\quad\quad\quad\quad\quad\text{on}\,\, \Gamma\times(0,T_{\kappa}],\\
{X}_{tt}&=X_2\,\,\quad\quad\quad\quad\quad\,\,\text{on}\,\,\Omega\times\{0\},
\end{align}
\end{subequations}
where the initial condition $X_2$ is given as
\begin{equation}
X_2=2\kappa\rho_0\Delta X_1-\kappa\rho_0 X_1+\rho_0\mathfrak{W}_1(0)+\rho_0\bar W_t(0),
\end{equation}
and the forcing function $\mathfrak{W}_2$ is defined by
\begin{equation}
\mathfrak{W}_2=\partial_t \mathfrak{W}_1+2\kappa[\bar B^{jk}_t( X_t)_{,k}]_{,j}-\dfrac{(\bar J^2)_t X_{tt}}{\rho_0}-\kappa\bar{J}_t{X}_t.
\label{8.29}
\end{equation}
The highest-order terms of $\mathfrak{W}_1(0)$ scale like $D^3u_0$ or $\rho_0D^4u_0$ or $D^3\rho_0$, so that
 $\|\sqrt{\rho_0}\mathfrak{W}_1(0)\|_0^2\leq N_0$. Using the estimate \eqref{8.26} and \eqref{est:G}, we see that
 \begin{equation}
\| \mathfrak{W}_2\|^2_{L^2(0,T;H^{-1}(\Omega))}\leq C.
\label{ineq:W_2}
 \end{equation}
Then it follows from Lemma \ref{weakso} again that
\begin{equation}
\|\dfrac{ X_{ttt}}{\rho_0}\|_{L^2(0,T;H^{-1}(\Omega))}^2+\sup_{t\in[0,T]}\|\dfrac{ X_{tt}(t)}{\rho_0}\|_0^2+C_p\| X_{tt}\|_{L^2(0,T; H^1_0(\Omega))}\leq C.
\label{8.30}
\end{equation}
Finally, we consider the third time-differentiated version of \eqref{lxversion}
\begin{subequations}
\label{eq:lx3t}
\begin{align}
\label{lx3t}
\dfrac{\bar{J}^2{X}_{tttt}}{\rho_0}-2\kappa[\bar{B}^{jk}({X}_{ttt})_{,k}]_{,j}+\kappa\bar{J}{X}_{ttt}&=\bar{W}_{ttt}+\mathfrak{W}_3\,\, \quad\,\text{in}\, \Omega\times(0,T_{\kappa}],\\
{X}_{ttt}&=0\,\, \quad\quad\quad\quad\quad\text{on}\,\, \Gamma\times(0,T_{\kappa}],\\
{X}_{ttt}&=X_3\,\,\quad\quad\quad\quad\,\text{on}\,\,\Omega\times\{0\},
\end{align}
\end{subequations}
where the initial condition $X_3$ is given as
\begin{equation}
X_3=2\kappa\rho_0\Delta X_2-\kappa\rho_0 X_2+\rho_0\mathfrak{W}_2(0)+\rho_0\bar W_{tt}(0),
\end{equation}
and the forcing function $\mathfrak{W}_3$ is defined by
\begin{equation}
\mathfrak{W}_3=\partial_t \mathfrak{W}_2+2\kappa[\bar B^{jk}_t( X_{tt})_{,k}]_{,j}-\dfrac{(\bar J^2)_t X_{ttt}}{\rho_0}-\kappa\bar{J}_t{X}_{tt}.
\end{equation}
Once again, the highest-order terms of $\mathfrak{W}_2(0)$ scale like $D^4u_0$ or $\rho_0D^5u_0$ or $D^4\rho_0$, so that $\|\sqrt{\rho_0}\mathfrak{W}_2(0)\|_0^2\leq N_0$. Using the estimate \eqref{8.30} and \eqref{est:G}, we see that $\|\bar W_{ttt}+\mathfrak{W}_3\|^2_{L^2(0,T;H^{-1}(\Omega))}\leq C$. (Note that this constant $C$ crucially depends on $\nu>0$.) Thus Lemma \ref{weakso} yields
\begin{equation}
\|\dfrac{ X_{tttt}}{\rho_0}\|_{L^2(0,T;H^{-1}(\Omega))}^2+\sup_{t\in[0,T]}\|\dfrac{ X_{ttt}(t)}{\rho_0}\|_0^2+C_p\| X_{ttt}\|_{L^2(0,T; H^1_0(\Omega))}\leq C.
\label{8.34}
\end{equation}
\subsubsection{$L^2(0,T;H^2(\Omega))$ regularity for $ X_{tt}$}
\label{regforx2t}
From \eqref{8.26} and the higher-order Hardy inequality, we have
\begin{equation*}
\|\dfrac{\bar J^2 X_t}{\rho_0}\|_{L^2(0,T;L^2(\Omega))}^2 \leq C.
\end{equation*}
Then recall \eqref{lx}, \eqref{8.22}, and \eqref{ineq:W}, by the elliptic estimates, we have
\begin{equation}
\|{X}\|_{L^2(0,T;H^2(\Omega))}^2\leq C.
\label{est:X}
\end{equation}
From \eqref{est:X}, $\|\kappa(\bar B^{jk}_t\partial_k{X})_{,j}\|_{L^2(0,T;L^2(\Omega))}^2\leq C$, and thus $\|\mathfrak{W}_1\|_{L^2(0,T;L^2(\Omega))}^2\leq C$.

\noindent Then, similarly, with \eqref{lxt}, \eqref{8.30}, and the higher-order Hardy inequality, we obtain that
\begin{equation}
\| X_t\|_{L^2(0,T;H^2(\Omega))}^2\leq C.
\label{8.58}
\end{equation}
Once again we repeat the estimates for $ X_{tt}$ which we just explained for $ X$, then we can obtain the desired result
\begin{equation}
\| X_{tt}\|_{L^2(0,T;H^2(\Omega))}^2\leq C.
\label{xtt}
\end{equation}
It follows that \eqref{lxt} holds almost everywhere.
\subsubsection{$L^2(0,T;H^3(\Omega))$ regularity for $ X_t$}
\label{imp}
First, by \eqref{est:G} and \eqref{8.58}, we have $\|\bar{W}\|_{L^2(0,T;H^1(\Omega))}\leq C$, then we can show that
\begin{equation}
\label{est:X_2}
\| X\|_{L^2(0,T;H^3(\Omega))}^2\leq C
\end{equation}
by the higher-order Hardy inequality and the elliptic estimates for \eqref{lx}.

\noindent Then it follows that in \eqref{lxt}, $\|\mathfrak{W}_1\|_{L^2(0,T;H^1(\Omega))}^2\leq C$, $\|\bar{W}_t\|_{L^2(0,T;H^1(\Omega))}^2\leq C$. Thus, by the elliptic estimates again, we have
\begin{equation}
\| X_t\|_{L^2(0,T;H^3(\Omega))}^2\leq C.
\end{equation}
\subsubsection{$L^2(0,T;H^4(\Omega))$ regularity for $ X$.}\label{regforx}
Repeating the argument in Section \ref{imp}, we can get that
\begin{equation}
\| X\|_{L^2(0,T;H^4(\Omega))}^2\leq C.
\end{equation}

Now we establish the existence and regularity of our solution $X$, however, the bounds and time interval of existence depend on $\nu>0$. Next, we use the Sobolev-type estimates to establish bounds for $X$ and time interval of existence independent on $\nu>0$. These estimates are also useful for our fixed-point scheme. The main strategy of the estimates is similar to that used in \cite{DS_2010}, but since our definition of $X$ and the corresponding symmetric structure of equations for $X$ are different, we can not get curl structure and use curl estimates as the authors did in \cite{DS_2010}. Instead, we will make use of $\kappa$ to construct the contract map.
\subsubsection{Estimate for $\|X\|^2_{X_T}$ independent of $\nu$}
\label{s9}
\textbf{Step 1}. We begin this subsection by getting $\nu$ independent energy estimates for the third time differential problem \eqref{eq:lx3t}.
\begin{lemma}
For $\delta>0$, there exists a positive constant $\kappa_0$ depends on the domain $\Omega$, initial data $N_0$ and $\delta$. When $\kappa\leq\kappa_0$, then for $T>0$ taken sufficiently small,
\begin{align}
\nonumber\|\dfrac{ X_{tttt}}{\rho_0}\|_{L^2(0,T;H^{-1}(\Omega))}^2+\sup_{t\in[0,T]}\|\dfrac{ X_{ttt}}{\sqrt{\rho_0}}\|_0^2+C_p\kappa\| X_{ttt}\|_{L^2(0,T; H^1_0(\Omega))}^2\\
\leq N_0+TP(\| X\|_{X_T}^2)+C\delta\|\bar X\|_{X_T}^2+TP(\|\bar v\|_{Z_T}^2)+\delta\|\bar v\|_{Z_T}^2
\label{lx3test}.
\end{align}
\end{lemma}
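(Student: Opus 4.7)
The plan is to mirror the energy argument of Lemma \ref{weakso}, now applied to the third time-differentiated problem \eqref{eq:lx3t}. I would test \eqref{lx3t} against $X_{ttt}\in H^1_0(\Omega)$ and use the coercivity \eqref{positive} to obtain, as in \eqref{8.21}, the differential inequality
\begin{align*}
\tfrac{1}{2}\tfrac{d}{dt}\int_\Omega\bar J^2\tfrac{|X_{ttt}|^2}{\rho_0}\,dx+\tfrac{7}{4}\kappa\|DX_{ttt}\|_0^2+\kappa\|\sqrt{\bar J}\,X_{ttt}\|_0^2\leq \tfrac{1}{2}\|(\bar J^2)_t\|_{L^\infty}\!\!\int_\Omega\tfrac{|X_{ttt}|^2}{\rho_0}\,dx+\langle\bar W_{ttt}+\mathfrak W_3,\,X_{ttt}\rangle.
\end{align*}
Integrating in $t$, using $\int_0^T\|\bar J_t\|_2\|\bar J\|_{L^\infty}\,ds\leq C_M\sqrt{T}$ together with the Poincaré inequality and Gronwall absorbs the degenerate $L^2_{\rho_0^{-1}}$ term, which yields the three terms on the left-hand side of \eqref{lx3test}; the $H^{-1}$--bound on $X_{tttt}/\rho_0$ then follows by reading $X_{tttt}/\rho_0$ directly off \eqref{lx3t} and estimating each right-hand side contribution in $H^{-1}$. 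The datum contribution at $t=0$ equals $\|X_3/\sqrt{\rho_0}\|_0^2$ and, exactly as in the treatment of $\|\sqrt{\rho_0}\mathfrak W_2(0)\|_0^2$ in Section \ref{regforx2t}, is controlled by $N_0$.

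The core of the argument is then the $\nu$-independent estimate of $\|\bar W_{ttt}+\mathfrak W_3\|_{L^2(0,T;H^{-1})}^2$ by $TP(\|X\|_{X_T}^2)+C\delta\|\bar X\|_{X_T}^2+TP(\|\bar v\|_{Z_T}^2)+\delta\|\bar v\|_{Z_T}^2$. Expanding \eqref{zio}, \eqref{8.25}, \eqref{8.29}, together with $\mathfrak W_3=\partial_t\mathfrak W_2+2\kappa[\bar B^{jk}_t(X_{tt})_{,k}]_{,j}-(\bar J^2)_tX_{ttt}/\rho_0-\kappa\bar J_t X_{tt}$, each summand is a product of (i) quantities built from $\bar F^{-1},\bar F^*,\bar J,\rho_0,\bar v$ and from $\bar G$ (the latter controlled by \eqref{est:G}) and (ii) a time--space derivative of $X$. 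Summands whose highest-order factors lie strictly below the $Z_T$- or $X_T$-topology are, via Sobolev embedding, Hardy's inequality and \eqref{Jnorm}--\eqref{eta0}, dominated pointwise in $t$ by a polynomial in $\|\bar v\|_{X_T}$ and $\|X\|_{X_T}$; taking the $L^2_t$-norm over $[0,T]$ gains a $\sqrt{T}$ factor, producing the $TP(\|\bar v\|_{Z_T}^2)+TP(\|X\|_{X_T}^2)$ contributions.

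The main obstacle is the handful of genuinely top-order pieces: the contributions to $\bar W_{ttt}$ that contain $\partial_t^3 D\bar v$ (produced by three $\partial_t$'s falling on $\partial_t\bar F^{*j}_{\,\,\,i}{\bar v^i}_{,j}$, on $[\bar F^*\bar F^{-1}(\rho_0\bar J^{-1})_{,k}]_{,j}$, or on the $\kappa$-viscous terms in \eqref{zio}) and the contributions to $\mathfrak W_3$ that contain $\partial_t^3 DX$ or $X_{tttt}$. For the $\bar v$-top-order pieces I would pair against $X_{ttt}$ by $H^{-1}$--$H^1_0$ duality, recording that $\partial_t^3 D\bar v\in L^2(0,T;L^2)$ by the very definition of $Z_T$; a Cauchy--Young split with parameter $\delta$ then extracts a factor $\sqrt{\delta}\,\|\bar v\|_{Z_T}$ while the conjugate factor is of the form $\sqrt{\kappa}\,\|DX_{ttt}\|_0$ and is absorbed by the $\kappa$-coercive term on the left, provided $\kappa\leq\kappa_0(\Omega,N_0,\delta)$. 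The analogous splitting on the $X$-side yields the $C\delta\|\bar X\|_{X_T}^2$ piece. Because our choice of intermediate variable $X$ destroys the curl-type symmetric structure exploited in \cite{DS_2010}, no transport-type cancellation is available here and the absorption must be carried out \emph{quantitatively in} $\kappa$; this, together with the bookkeeping needed to ensure every implicit constant is truly independent of the mollification parameter $\nu$ so that the $\nu\downarrow 0$ limit may be taken in the subsequent subsection, is the real technical difficulty of the lemma.
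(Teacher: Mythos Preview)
Your proposal is correct and follows essentially the same approach as the paper: test \eqref{lx3t} against $X_{ttt}$, decompose $\bar W_{ttt}+\mathfrak W_3$ into lower-order pieces bounded by $TP(\cdot)$ via Sobolev, Hardy, and the fundamental theorem of calculus, and isolate the top-order $\kappa$-viscous piece carrying $\rho_{0,k}\partial_t^3\bar v_{,\beta j}$, which after integration by parts and Cauchy--Young yields $C\kappa\|\partial_t^3 D\bar v\|_0^2 + \tfrac{1}{2}C_p\kappa\|DX_{ttt}\|_0^2$, the first made $\leq\delta\|\bar v\|_{Z_T}^2$ by taking $\kappa\leq\kappa_0$ and the second absorbed on the left. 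One small correction: $\mathfrak W_3$ contains neither $X_{tttt}$ nor $DX_{ttt}$---its highest $X$-derivatives are $DX_{tt}$ and $X_{ttt}/\rho_0$---so the $C\delta\|X\|_{X_T}^2$ terms arise from ordinary Cauchy--Young splits on pairings like $2\kappa\int\bar B_t^{jk}(X_{tt})_{,k}(X_{ttt})_{,j}$ (with the $X_{tt}$ factor then expanded via $\|X_{tt}(t)\|_1^2\leq\|X_{tt}(0)\|_1^2+t\int\|X_{ttt}\|_1^2$), not from the $\kappa$-absorption mechanism you describe for the $\bar v$-side.
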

\begin{proof}
We write the forcing term $\bar{W}_{ttt}+\mathfrak{W}_3$ as
\begin{equation}
\label{8.66}
\bar{W}_{ttt}+\mathfrak{W}_3=\underbrace{\bar{W}_{ttt}+\partial_t\mathfrak{W}_2}_{I_1}+\underbrace{2\kappa[\bar B^{jk}_t( X_{tt})_{,k}]_{,j}}_{I_2}-\underbrace{\dfrac{(\bar J^2)_t X_{ttt}}{\rho_0}}_{I_3}
\end{equation}
We test \eqref{lx3t} with $ X_{ttt}$, then in a same fashion that we obtained \eqref{8.21}, we can get
\begin{align*}
\dfrac{1}{2}\dfrac{d}{dt}&\int_{\Omega}\dfrac{| X_{ttt}|^2}{\rho_0}\,dx+\frac{7}{4}\kappa\int_{\Omega}|D X_{ttt}|^2\,dx+\kappa\int_{\Omega}\bar{J}| X_{ttt}|^2\,dx\\
\leq &\|\dfrac{1}{2}(\bar J^2)_t\|_{L^{\infty}(\Omega)}\int_{\Omega}\dfrac{1}{\rho_0}| X_{ttt}|^2\,dx+\langle\bar{W}_{ttt}+\mathfrak{W}_3,  X_{ttt}\rangle.
\end{align*}
Integrating this inequality from $0$ to $t\in(0,T]$, we see that
\begin{align*}
&\dfrac{1}{2}\sup_{t\in[0,T]}\int_{\Omega}\dfrac{| X_{ttt}|^2}{\rho_0}\,dx+\frac{7}{4}\kappa\int_0^T\int_{\Omega}|D X_{ttt}|^2\,dx\,dt\\
&\leq N_0+T\sup_{t\in[0,T]}\|\dfrac{1}{2}(\bar J^2)_t\|_{L^{\infty}(\Omega)}\int_{\Omega}\dfrac{1}{\rho_0}| X_{ttt}|^2\,dx+ \int_0^T\langle\bar{W}_{ttt}+\mathfrak{W}_3,  X_{ttt}\rangle\,dt.
\end{align*}
By the Sobolev embedding theorem, $\|\frac{1}{2}(\bar J^2)_t\|_{L^{\infty}(\Omega)}\leq C\|\frac{1}{2}(\bar J^2)_t\|_2$. The highest-order derivative in the term $\frac{1}{2}(\bar J^2)_t$ scales like $D\bar v$, and $\sup\limits_{t\in[0,T]}\|\bar v\|_3\leq N_0+C\sqrt{t}\|\bar v\|_{X_T}$. Therefore, by choosing $T$ sufficiently small and using the Poincar\'e inequality, we see that
\begin{align*}
&\sup_{t\in[0,T]}\int_{\Omega}\dfrac{| X_{ttt}|^2}{\rho_0}\,dx+C_p{\kappa}\int_0^T\|X_{ttt}\|_1^2\,dt\\
&\leq N_0+ \int_0^T\langle\bar{W}_{ttt}+\mathfrak{W}_3,  X_{ttt}\rangle\,dt.
\end{align*}
We proceed to the analysis of the terms in $\int_0^T\langle\bar{W}_{ttt}+\mathfrak{W}_3,  X_{ttt}\rangle\,dt$, and we begin with the term $I_3$ in \eqref{8.66}. We have that
\begin{align*}
\int_0^T\langle I_3, X_{ttt}\rangle\,dt& \leq \sup_{t\in[0,T]}\|(\bar J^2)_t\|_{L^{\infty}(\Omega)}\int_0^T\int_{\Omega}\dfrac{| X_{ttt}|^2}{\rho_0}\,dx\,dt\\
&\leq (N_0+\sqrt{T}C_M)T\sup_{t\in[0,T]}\int_{\Omega}\dfrac{| X_{ttt}|^2}{\rho_0}\,dx,
\end{align*}
where we have made use of the Sobolev embedding theorem giving the inequality that $\|(\bar J^2)_t\|_{L^{\infty}(\Omega)}\leq C\|(\bar J^2)_t\|_2\leq N_0+C_M\sqrt{t}$, $C_M$ depends on $M$.

\noindent To estimate the term $I_2$ in \eqref{8.66}, noticing that
\begin{align*}
\langle I_2, \bar X_{ttt}\rangle=&-2\kappa\int_{\Omega}\bar B^{jk}_t( {X_{tt}})_{,k}({ X}_{ttt})_{,j}\,dx \leq C\kappa\|\bar B_t\|_2\| X_{tt}\|_1\| X_{ttt}\|_1\\
\leq&\delta \| X_{ttt}\|_1^2+C\|\bar B_t\|_2^2\| X_{tt}\|_1^2\\
\leq & \delta \| X_{ttt}\|_1^2+C\|\bar B_t\|_2^2(\| X_{tt}(0)\|_1^2+t\| X_{ttt}\|_1^2),
\end{align*}
and thus
\begin{equation*}
\int_0^T\langle I_2, X_{ttt}\rangle\,dt \leq N_0+\delta\| X\|_{X_T}^2+TP(\|\bar v\|_{Z_T}^2)+TP(\| X\|_{X_T}^2).
\end{equation*}
It remains to estimate $\langle I_1, X_{ttt}\rangle$, we use the identity \eqref{8.29} to expand $I_1$ as
\begin{equation*}
I_1=\bar{W}_{ttt}+\partial_t\mathfrak{W}_2=\bar{W}_{ttt}+\partial_{tt}\mathfrak{W}_1+\partial_t(2\kappa[\bar B^{jk}_t( X_t)_{,k}]_{,j}-\dfrac{(\bar J^2)_t X_{tt}}{\rho_0}-\kappa\bar{J}_t{X}_{t}).
\end{equation*}
The terms $\langle\partial_t(2\kappa[\bar B^{jk}_t( X_t)_{,k}]_{,j}-\dfrac{(\bar J^2)_t X_{tt}}{\rho_0}), X_{ttt}\rangle$ are estimated as
\begin{align*}
&\quad\langle\partial_t(2\kappa[\bar B^{jk}_t( X_t)_{,k}]_{,j}-\dfrac{(\bar J^2)_t X_{tt}}{\rho_0}-\kappa\bar{J}_t{X}_{t}), X_{ttt}\rangle\\&\leq C\int_{\Omega}\bar B_{tt}D^2X_tX_{ttt}\,dx+C\int_{\Omega}\bar B_{t}D^2X_{tt}X_{ttt}\,dx\\
&\quad+C\int_{\Omega}(\bar J^2)_{tt}(\dfrac{X_{tt}}{\rho_0}+X_{tt})X_{ttt}\,dx+C\int_{\Omega}(\bar J^2)_{t}\dfrac{|X_{ttt}|^2}{\rho_0}\,dx,
\end{align*}
 and then have the same bounds as $\langle I_2, X_{ttt}\rangle$ and $\langle I_3, X_{ttt}\rangle$ above, so we focus on estimating $\langle\bar{W}_{ttt}+\partial_{tt}\mathfrak{W}_1,  X_{ttt}\rangle$.  To do so, we use the identity \eqref{8.25} to get that
\begin{equation*}
\bar{W}_{ttt}+\partial_{tt}\mathfrak{W}_1=\bar{W}_{ttt}+\underbrace{\partial_{tt}(2\kappa[\bar B^{jk}_t X_{,k}]_{,j})}_{S_1}-\underbrace{\partial_{tt}(\dfrac{(\bar J^2)_t X_{t}}{\rho_0}-\kappa\bar J_tX)}_{S_2}.
\end{equation*}
Expanding $S_1$ as
\begin{equation*}
S_1=\underbrace{2\kappa[\bar B^{jk}_{ttt} X_{,k}]_{,j}}_{S_{1a}}+\underbrace{4\kappa[\bar B^{jk}_{tt}( X_t)_{,k}]_{,j}}_{S_{1b}}+\underbrace{2\kappa[\bar B^{jk}_{t}(X_{tt})_{,k}]_{,j}}_{S_{1c}},
\end{equation*}
we see that for $\delta>0$,
\begin{align*}
\langle S_{1a}, X_{ttt}\rangle=&-2\kappa\int_{\Omega}[\bar B^{jk}_{ttt}X_{,k}] X_{ttt,j}\,dx\\
\leq&C\|\bar B^{jk}_{ttt}\|_0\| X_{,k}\|_2\|D X_{ttt}\|_0\\
\leq & C\|\bar B^{jk}_{ttt}\|_0(\| X(0)\|_3^2+t\| X_t\|_3^2)+\delta\| X_{ttt}\|_1^2,
\end{align*}
where we used the Sobolev embedding theorem, the Cauchy--Young inequality and the fundamental theorem of calculus. Thus, we have
\begin{equation*}
\int_0^T\langle S_{1a}, X_{ttt}\rangle\,dt \leq N_0+\delta\|\bar X\|_{X_T}^2+TP(\|\bar v\|_{Z_T}^2)+TP(\|X\|_{X_T}^2).
\end{equation*}
The duality pairing involving $S_{1b}$ and $S_{1c}$ can be estimated in the same way to provide the estimate
\begin{equation*}
\int_0^T\langle S_1, X_{ttt}\rangle\,dt \leq N_0+\delta\|X\|_{X_T}^2+TP(\|\bar v\|_{Z_T}^2)+TP(\|X\|_{X_T}^2).
\end{equation*}
The pair involving $S_2$ is estimated in the same manner as $I_3$ and $S_1$ to yield
\begin{align*}
\int_0^T\langle S_2,X_{ttt}\rangle\,dt \leq &N_0+\delta\|X\|_{X_T}^2+TP(\|\bar v\|_{Z_T}^2)+TP(\|X\|_{X_T}^2)\\
&+(N_0+\sqrt{T}C_M)T\sup_{t\in[0,T]}\int_{\Omega}\dfrac{|X_{ttt}|}{\rho_0}\,dx.
\end{align*}
It remains to estimate the pair $\int_0^T\langle\bar W_{ttt}, X_{ttt}\rangle\,dt$. We expand $\bar{W}_{ttt}$ as
\begin{align*}
\bar{W}_{ttt}=&-\underbrace{\partial_{ttt}(2\bar J^{-1}(\bar J_t)^2-\partial_t\bar{F}^{*j}_{\,\,\,\,i}\bar v^i_{,j})}_{L_1}-\underbrace{2\partial_{ttt}[\bar{F}^{*j}_{\,\,\,\,i}\bar {F}^{-1k}_{\quad i}(\rho_0 \bar {J}^{-1})_{,k}]_{,j}}_{L_2}\\
&-\underbrace{2\kappa\partial_{ttt}[\bar{F}^{*j}_{\,\,\,\,i}\partial_t\bar {F}^{-1k}_{\quad i}(\rho_0\bar J^{-1})_{,k}]_{,j}}_{L_3}-\underbrace{\kappa \partial_t^3(\bar J\partial_t(\bar{F}^{-1j}_{\quad i})\partial_j \bar G^i)}_{L_4}.
\end{align*}
By the Cauchy--Schwarz inequality, then we have
\begin{align*}
\int_0^T\langle L_1, X_{ttt}\rangle\,dt\leq& \int_0^T \|\sqrt{\rho_0}\partial_{ttt}(2\bar J^{-1}(\bar J_t)^2-\partial_t{\bar{F^*}}_i^j\bar v^i_{,j}\|_0\|\dfrac{ X_{ttt}}{\sqrt{\rho_0}}\|_0\,dt\\
\leq &TP(\|\bar v\|^2_{Z_T})+CT\sup_{t\in[0,T]}\int_{\Omega}\dfrac{| X_{ttt}|^2}{\rho_0}\,dx.
\end{align*}

We notice that the higher-order derivatives in $\partial_{ttt}[\bar{F}^{*j}_{\,\,\,\,i}\bar {F}^{-1k}_{\quad i}(\rho_0 \bar {J}^{-1})_{,k}]_{,j}$ scale like either $D(\rho_0D\bar v_{tt})$ or $D\bar v_{tt}$, so by the fundamental theorem of calculus and the Cauchy--Young inequality once again, we have that for $\delta >0$,
\begin{equation*}
\int_0^T\langle L_2, X_{ttt}\rangle\,dt=TP(\|\bar v\|_{Z_T}^2)+\delta\| X\|_{X_T}^2.
\end{equation*}
Next, we estimate $L_3$. We expand $L_3$ as
\begin{align*}
&\quad\kappa\partial_{ttt}[{\bar{F^*}}_i^j\partial_t{\bar{F^{-1}}}_i^k(\rho_0\bar J^{-1})_{,k}]_{,j}\\&=\kappa\partial_{ttt}[{\bar{F^*}}_i^j\partial_t{\bar{F^{-1}}}_i^k{\rho_0}_{,k}\bar J^{-1}+\rho_0{\bar{F^*}}_i^j\partial_t{\bar{F^{-1}}}_i^k{\bar J}_{,k}\bar J^{-2}]_{,j}\\&=\kappa[{\bar{F^*}}_i^j\partial_t^4{\bar{F^{-1}}}_i^k{\rho_0}_{,k}\bar J^{-1}+\rho_0{\bar{F^*}}_i^j\partial_t^4{\bar{F^{-1}}}_i^k{\bar J}_{,k}+\rho_0{\bar{F^*}}_i^j\partial_t{\bar{F^{-1}}}_i^k\partial_t^3{\bar J}_{,k}\bar J^{-2}]_{,j}+\text{l.o.t},
\end{align*}
and then by \eqref{dF}, the highest-order derivatives are $\kappa {\rho_0}_{,k}\partial_t^3 \bar v_{,\beta j}\bar{F}^{-1j}_{\quad i}\bar{F}^{-1k}_{\quad\alpha}\bar{F^{-1}}_i^{\beta}$ and $\kappa\rho_0D^3\partial_t^2\bar v$.
All of the other terms arising from the distribution of $\partial_{ttt}$ are lower-order and can be estimated in the same way as $L_2$.

Integrating by parts, we have
\begin{align*}
&\langle 2\kappa {\rho_0}_{,k}\partial_t^3 \bar v_{,\beta j}\bar{F}^{-1j}_{\quad i}\bar{F}^{-1k}_{\quad\alpha}\bar{F^{-1}}_i^{\beta},{X}_{ttt}\rangle\\=&-2\kappa\int_{\Omega}{\rho_0}_{,k}\partial_t^3 \bar v_{,\beta }\bar{F}^{-1j}_{\quad i}\bar{F}^{-1k}_{\quad\alpha}\bar{F^{-1}}_i^{\beta}\partial_j{X}_{ttt}\,dx\\&-2\kappa\int_{\Omega}\partial_j({\rho_0}_{,k}\bar{F}^{-1j}_{\quad i}\bar{F}^{-1k}_{\quad\alpha}\bar{F^{-1}}_i^{\beta})\partial_t^3 \bar v_{,\beta }{X}_{ttt}\,dx
\\\leq&\dfrac{4\kappa|D\rho_0|^2_{L^{\infty}(\Omega)}}{C_p}\|\partial_t^3D\bar v\|_0^2+\dfrac{C_p\kappa}{2}\|DX_{ttt}\|
_0^2\\&+C\|\sqrt{\rho_0}\partial_j({\rho_0}_{,k}\bar{F}^{-1j}_{\quad i}\bar{F}^{-1k}_{\quad\alpha}\bar{F^{-1}}_i^{\beta})\partial_t^3 \bar v_{,\beta }\|_0\|\dfrac{X_{ttt}}{\sqrt{\rho_0}}\|_0.
\end{align*}
Thus, when $\kappa \leq \kappa_0$, where $\kappa_0$ satisfying
\begin{equation}
\label{kappachoice}
\dfrac{4\kappa_0|D\rho_0|^2_{L^{\infty}(\Omega)}}{C_p}\leq \delta,\,\,\dfrac{C_p\kappa_0}{2}\leq \delta,
\end{equation}
we can bound $\int_{0}^{T}\langle L_3, X_{ttt}\rangle\,dt$ by
\begin{equation*}
\delta \|\bar v\|_{Z_T}^2+TP(\|\bar v\|_{Z_T}^2)+\delta\| X\|_{X_T}^2+CT\sup_{t\in[0,T]}\int_{\Omega}\dfrac{| X_{ttt}|^2}{\rho_0}\,dx.
\end{equation*}

Last, we estimate $L_4$. The highest-order of $L_4$ can be written as
\begin{equation*}
\underbrace{\kappa \bar J D\bar v_{ttt}\bar G_{,j}}_{\mathfrak{j}_1}+\underbrace{\kappa \bar J\partial_t\bar{F}^{-1j}_{\quad i}\partial_t^3\partial_j \bar G^i}_{\mathfrak{j}_2},
\end{equation*}
and
\begin{equation*}
\langle\mathfrak{j}_1,X_{ttt}\rangle\leq C\|\sqrt{\rho_0}D\bar v_{ttt}\|_0\|\dfrac{X_{ttt}}{\sqrt{\rho_0}}\|_0,
\end{equation*}
\begin{equation*}
\langle\mathfrak{j}_2,X_{ttt}\rangle\leq C\|\sqrt{\rho_0}\partial_t^3D\bar G\|_0\|\dfrac{X_{ttt}}{\sqrt{\rho_0}}\|_0,
\end{equation*}
Then with \eqref{est:G}, all these terms and lower-order terms can be estimated by the same way as $L_1$ or $L_2$.

Combining the above estimates together and taking $T>0$ sufficiently small concludes the proof.
\end{proof}
It is easy to see that we have the same estimates for the weak solutions $ X,  X_t, X_{tt}$ solving \eqref{lxversion}, \eqref{lxtversion} and \eqref{lx2tversion}, respectively, and we finally get that
\begin{align}
\nonumber \sum_{a=0}^3&\|\partial_t^a\dfrac{ X_t}{\rho_0}\|^2_{L^2(0,T;H^{-1}(\Omega))}+\sup_{t\in[0,T]}\|\dfrac{\partial_t^a X(t)}{\sqrt{\rho_0}}\|_0^2+C_p\kappa\|\partial_t^a X\|^2_{L^2(0,T; H^1_0(\Omega))}
\\&\leq N_0+TP(\| v\|_{X_T}^2)+C\delta\| X\|_{X_T}^2+TP(\|\bar v\|_{Z_T}^2)+C\delta(\|\bar v\|^2_{Z_T}).
\label{8.68}
\end{align}
\textbf{Step 2.} Recall the definition of $\mathfrak{W}_2$ in \eqref{8.29}, by using the estimate \eqref{8.68} together with the Hardy inequality, we can get
\begin{align*}
\|\bar W_{tt}+\mathfrak{W}_2\|_{L^2(0,T;L^2(\Omega))}\leq& N_0+TP(\|\bar v\|_{X_T}^2)+C\delta\| X\|_{X_T}^2\\&+TP(\|\bar v\|_{Z_T}^2)+C\delta(\|\bar v\|^2_{Z_T}).
\end{align*}
Combining this with the estimate \eqref{lx3test}, the equation \eqref{lx2t} shows that
\begin{align*}
4\kappa^2\|[\bar B^{jk}( X_{tt})_{,k}]_{,j}\|_{L^2(0,T;L^2(\Omega))}\leq &N_0+TP(\|X\|_{X_T}^2)+C\delta\| X\|_{X_T}^2\\&+TP(\|\bar v\|_{Z_T}^2)+C\delta\|\bar v\|^2_{Z_T}.
\end{align*}
By the elliptic estimates, we can obtain the desired bound
\begin{equation}
\label{x2test}
\| X_{tt}\|_{L^2(0,T;L^2(\Omega))}^2\leq N_0+TP(\|X\|_{X_T}^2)+C\delta\| X\|_{X_T}^2+TP(\|\bar v\|_{Z_T}^2)+C\delta\|\bar v\|^2_{Z_T}.
\end{equation}
\textbf{Step 3.} From the definition of $\mathfrak{W}_1$, we can similarly get that
\begin{align*}
\|\bar{W}_t+\mathfrak{W}_1\|_{L^2(0,T;H^1(\Omega))}\leq &N_0+TP(\|X\|_{X_T}^2)+C\delta\| X\|_{X_T}^2\\&+TP(\|\bar v\|_{Z_T}^2)+\delta\|\bar v\|^2_{Z_T},
\end{align*}
and thus, following the argument we used for the regularity of $ X_{tt}$, we can obtain that
\begin{align}
\nonumber \|{X}_t\|_{L^2(0,T;H^3(\Omega))}\leq &N_0+TP(\|X\|_{X_T}^2)+C\delta\| X\|_{X_T}^2\\&+TP(\|\bar v\|_{Z_T}^2)+C\delta\|\bar v\|^2_{Z_T}.
\label{xtest}
\end{align}
\textbf{Step 4.} Finally, since
\begin{align*}
\|\bar W\|_{L^2(0,T;H^2(\Omega))}^2\leq &N_0+TP(\|X\|_{X_T}^2)+C\delta\| X\|_{X_T}^2\\&+TP(\|\bar v\|_{Z_T}^2)+C\delta\|\bar v\|^2_{Z_T},
\end{align*}
we have that
\begin{align}
\nonumber\|{X}\|_{L^2(0,T;H^4(\Omega))}\leq &N_0+TP(\|X\|_{X_T}^2)+C\delta\| X\|_{X_T}^2\\&+TP(\|\bar v\|_{Z_T}^2)+C\delta\|\bar v\|^2_{Z_T}.\label{xest}
\end{align}
\subsubsection{The proof of Proposition \ref{p8}}
Combining the inequalities \eqref{lx3test}, \eqref{x2test}, \eqref{xtest} and \eqref{xest}, we have
\begin{equation*}
\| X\|_{X_T}^2\leq N_0+TP(\|X\|_{X_T}^2)+C\delta\| X\|_{X_T}^2+TP(\|\bar v\|_{Z_T}^2)+C\delta\|\bar v\|^2_{Z_T}.
\end{equation*}
Given $\mu\ll 1$, taking $\delta>0$ and $T>0$ sufficiently small and readjusting the constant, we see that
\begin{equation*}
\| X\|_{X_T}^2\leq N_0+TP(\|\bar v\|_{Z_T}^2)+\mu\|\bar v\|^2_{Z_T}.
\end{equation*}
Because the right-hand side does not depend on $\nu>0$, we can pass the limit as $\nu\rightarrow 0$ in \eqref{lx}. This completes the proof of Proposition \ref{p8}.

\subsection{Existence of the fixed-point and the proof of Theorem \ref{th2}}
\label{s89}
The purpose of this subsection is to construct smooth unique solutions to \eqref{eq:vdef}, and to show that the map $\bar v \mapsto v$ has a unique fixed-point. This fixed-point is a solution to our approximate $\kappa$-problem \eqref{approeq}. The arguments are quite similar to that in \cite{DS_2010}, we include them below for completeness and self-contained presentation.

\subsubsection{Solution to (\ref{eq:vdef}) via intermediate $\theta$-regularization and the existence of the fixed point of the map $\bar v\mapsto v$}
We will establish the existence of a solution $v$ to \eqref{eq:vdef} by two stages. Firstly, we consider the $\theta$-regularized system for any $\theta>0$, where the higher-in-space order term in \eqref{xcvj} is smoothed via two boundary convolution operators on $\Gamma$:
\begin{align}
\Div v_t^{\theta}&=\Div \bar{v}_t-\Div_{\bar{\eta}}\bar{v}_t+\dfrac{[{X}\bar{J}^2]_t}{\rho_0}-\partial_t\bar{F}^{*j}_{\,\,\,\,i}\bar{v}^i_{,j}\quad\quad\,\,\,\,\text{in}\,\,\Omega
\\
\curl v_t^{\theta}&=\curl \bar{v}_t-\curl_{\bar\eta}\bar v_t+\kappa\epsilon_{\cdot ji}\bar v_{,s}^r\bar{F}^{-1s}_{\quad i}D_{\bar\eta^r}\bar{\mathfrak{F}}^j+\bar{\mathfrak{C}}\,\text{in}\,\,\Omega\label{cpp3p}\\\nonumber
(v^{\theta})_t^3+2\kappa\rho_{0,3}\Lambda_{\theta}^2\Div_{\Gamma} v^{\theta}&= 2\kappa\rho_{0,3}\Lambda_{\theta}\Div_{\Gamma}\bar v -2\Lambda_{\theta}[\bar{J}^{-2}\bar{F}^{*3}_{\,\,\,\,3}]\rho_{0,3}\\\nonumber &\quad-2\kappa\Lambda_{\theta}[\partial_t[\bar{J}^{-2}\bar{F}^{*3}_{\,\,\,\,3}]]\rho_{0,3}
\\&\quad+\Lambda_{\theta}(\bar{G}^3+\kappa\partial_t\bar G^3)+\bar c_{\theta}(t)N^3 \,\, \quad\quad\quad\quad\quad\,\,\text{on}\,\,\Gamma\label{xcv3j}\\
\nonumber\int_{\Omega}(v^{\theta})_t^{\alpha}\,dx&=-2\int_{\Omega}\bar{F}^{-1k}_{\quad\alpha}(\dfrac{\rho_0}{\bar J})_{,k}\, dx -2\kappa\int_{\Omega}\partial_t[\bar{F}^{-1k}_{\quad\alpha}(\dfrac{\rho_0}{\bar J})_{,k}]\,dx\\&\quad+\int_{\Omega}\bar G^{\alpha}+\kappa\partial_t\bar G^{\alpha}\,dx,
\end{align}
where the vector $\bar{\mathfrak{F}}$ is defined in \eqref{cppp} and the function $\bar c_{\theta}(t)$ on the right-hand side of \eqref{xcv3j} is defined by
\begin{align}
\nonumber \bar c_{\theta}(t)=& \dfrac{1}{2}\int_{\Omega}(\Div \bar v_t-\Div_{\bar\eta}\bar v_t)\,dx+\dfrac{1}{2}\int_{\Omega}\dfrac{[ X\bar J^2]_t}{\rho_0}\,dx-\dfrac{1}{2}\int_{\Omega}\partial_t\bar{F}^{-1j}_{\quad i}\bar v^i_{,j}\,dx\\\nonumber
&+\int_{\Gamma}\Lambda_{\theta}[\bar{J}^{-2}\bar{F}^{*3}_{\,\,\,\,3}]\rho_{0,3} N^3 \,dS+\kappa\int_{\Gamma}\Lambda_{\theta}[\partial_t[\bar{J}^{-2}\bar{F}^{*3}_{\,\,\,\,3}]]\rho_{0,3} N^3\,dS\\
&+\kappa\int_{\Gamma}\Div_{\Gamma}(\Lambda_{\theta}^2v^{\theta}-\Lambda_{\theta}\bar v)\rho_{0,3} N^3\,dS+\int_{\Gamma}(\bar G^3+\kappa\partial_t\bar G^3) N^3\,dS.
\end{align}
Then we prove that the existence of a solution to $\theta$-regularized problem for a small $T_{\theta}>0$ by a fixed-point approach.

Secondly, via $\theta$-independent energy estimates on the solution to \eqref{xcv3j}, we can obtain that the time internal of existence $T_{\theta}=T$ independent of $\theta$, and that the sequence $v^{\theta}$ converges in an appropriate space to a solution $v$ of \eqref{eq:vdef}, which also satisfies the same energy estimates. These estimates then allow us to conclude the existence of a fixed-point $v=\bar v$.

\textbf{Step 1: Solutions to \eqref{xcv3j} via the contraction mapping principle.} For
\begin{equation}
\label{8.76}
\omega \in \Upsilon_T^3=\{\omega \in L^2(0,T;H^3(\Omega)):\partial_t^s\omega \in L^2(0,T;H^{4-s})), 1\leq s\leq 3 \}.
\end{equation}
with norm $\|\omega\|_{\Upsilon_T^3}^2=\|\omega\|_{L^2(0,T;H^3(\Omega))}+\sum_{s=1}^3\|\partial_t^s \omega\|_{L^2(0,T;H^{4-s}))}^2$, we set $\Phi(\omega)=u_0+\int_0^t\partial_t\Phi(\omega)$, where $\Phi(\omega)$ is defined by the elliptic system which specifies the divergence, curl, and normal trace of the vector field $\partial_t\Phi(\omega)$:
\begin{subequations}
\label{eq:theta}
\begin{align}
\label{adii}
\Div \partial_t\Phi(\omega)&=\Div \bar{v}_t-\Div_{\bar{\eta}}\bar{v}_t+\dfrac{[{X}\bar{J}^2]_t}{\rho_0}-\partial_t\bar{F}^{*j}_{\,\,\,\,i}\bar{v}^i_{,j}\quad\quad\quad\,\,\,\text{in}\,\,\Omega,
\\
\curl \partial_t\Phi(\omega)&=\curl \bar{v}_t-\curl_{\bar\eta}\bar v_t+\kappa\epsilon_{\cdot ji}\bar v_{,s}^r\bar{F}^{-1s}_{\quad i}D_{\bar\eta^r}\bar{\mathfrak{F}}^j+\bar{\mathfrak{C}} \quad\text{in}\,\,\Omega,\label{cp3p3p}\\\nonumber
\partial_t\Phi(\omega)\cdot e_3&=-2\kappa\rho_{0,3}\Lambda_{\theta}^2\Div_{\Gamma} \omega+ 2\kappa\rho_{0,3}\Lambda_{\theta}\Div_{\Gamma}\bar v -2\Lambda_{\theta}[\bar{J}^{-2}\bar{F}^{*3}_{\,\,\,\,3}]\rho_{0,3}\\\nonumber&\quad-2\kappa\Lambda_{\theta}[\partial_t[\bar{J}^{-2}\bar{F}^{*3}_{\,\,\,\,3}]]\rho_{0,3}
\\&\quad+(\bar{G}^3+\kappa\partial_t\bar G^3)+\bar c(\omega)(t) \quad\quad\quad\quad\quad\quad\quad\quad\,\, \text{on}\,\,\Gamma,\label{xc3v3j}\\
\nonumber\int_{\Omega}(\partial_t\Phi(\omega))^{\alpha}\,dx&=-2\int_{\Omega}\bar{F}^{-1k}_{\quad\alpha}(\dfrac{\rho_0}{\bar J})_{,k}\, dx -2\kappa\int_{\Omega}\partial_t[\bar{F}^{-1k}_{\quad\alpha}(\dfrac{\rho_0}{\bar J})_{,k}]\,dx\\&\quad+\int_{\Omega}\bar G^{\alpha}+\kappa\partial_t\bar G^{\alpha}\,dx,
\end{align}
\end{subequations}
The function $\bar c(\omega)(t)$ is defined by
\begin{align}
\nonumber [\bar c(\omega)](t)=& \dfrac{1}{2}\int_{\Omega}(\Div \bar v_t-\Div_{\bar\eta}\bar v_t)\,dx+\dfrac{1}{2}\int_{\Omega}\dfrac{[ X\bar J^2]_t}{\rho_0}\,dx-\dfrac{1}{2}\int_{\Omega}\partial_t\bar{F}^{-1j}_{\quad i}\bar v^i_{,j}\,dx\\\nonumber
&+\int_{\Gamma}\Lambda_{\theta}[\bar{J}^{-2}\bar{F}^{*3}_{\,\,\,\,3}]\rho_{0,3}  \,dS+\kappa\int_{\Gamma}\Lambda_{\theta}[\partial_t[\bar{J}^{-2}\bar{F}^{*3}_{\,\,\,\,3}]]\rho_{0,3} \,dS\\
&+\kappa\int_{\Gamma}\Div_{\Gamma}(\Lambda_{\theta}^2\omega-\Lambda_{\theta}\bar v)\rho_{0,3} \,dS+\int_{\Gamma}(\bar G^3+\kappa\partial_t\bar G^3) \,dS,
\end{align}
such that the elliptic system \eqref{eq:theta} satisfies all solvability conditions. Thus, the problem $\partial_t\Phi(\omega)$ is perfectly well-posed. Applying Proposition \ref{curllemma} to \eqref{eq:theta} and its first, second, and third time-differentiated versions, we have
\begin{equation}
\label{contract}
\|\partial_t\Phi(\omega)-\partial_t\Phi(\tilde\omega)\|_{\Upsilon_{T_{\theta}}^3}\leq C(M,\theta)\|\omega-\tilde\omega\|_{\Upsilon_{T_{\theta}}^3}+C_MT_{\theta}\|\omega-\tilde\omega\|_{\Upsilon_{T_{\theta}}^3},
\end{equation}
the $\theta$-dependence in the constant $C(M,\theta)$ coming from repeated use of \eqref{molli}. The lack of $\omega$ on the right-hand side of \eqref{adii} and \eqref{cp3p3p} implies that both the divergence and curl of $\partial_t\Phi(\omega)-\partial_t\Phi(\tilde\omega)$ vanish, and that on $\Gamma$:
\begin{equation*}
[\partial_t\Phi(\omega)-\partial_t\Phi(\tilde\omega)]\cdot e_3=2\kappa{\rho_0}_{,3}\Lambda_{\theta}^2\Div_{\Gamma}(\tilde \omega-\omega)+[\bar c(\omega)-\bar c(\tilde\omega)]N^3.
\end{equation*}
Then it follows from \eqref{contract} that
\begin{equation*}
\|\Phi(\omega)-\Phi(\tilde\omega)\|_{\Upsilon_{T_{\theta}}^3}\leq T_{\theta}C(M,\epsilon)\|\omega-\tilde\omega\|_{\Upsilon_{T_{\theta}}^3},
\end{equation*}
therefore the mapping $\Phi: \Upsilon_{T_{\theta}}^3\mapsto \Upsilon_{T_{\theta}}^3$ is a contraction if $T_{\theta}$ is taken sufficiently small, leading to the existence and uniqueness of a fixed-point $v^{\theta}=\Phi(v^{\theta})$, which therefore is a solution of \eqref{eq:theta} on $[0,T_{\theta}]$.

\textbf{Step 2. $\theta$-independent energy estimates for $v^{\theta}$.} Having obtained a unique solution to \eqref{eq:theta}, we now proceed with $\theta$-independent estimates on this system. We integrate the divergence and curl equations in time, and we now view the PDE for the normal trace as a parabolic equation for $v^{\theta}$ on $\Gamma$:
\begin{subequations}
\label{equ:theta}
\begin{align}
v^{\theta}(0)&=u_0\quad\quad\quad\quad\quad\quad\quad\quad\quad\quad\quad\quad\quad\text{in}\,\,\Omega,\\\label{addii}
\Div v^{\theta}&=\Div \bar{v}-\Div_{\bar{\eta}}\bar{v}+\dfrac{[{X}\bar{J}^2]}{\rho_0}\quad\quad\quad\quad\,\text{in}\,\,\Omega,
\\\nonumber
\curl v^{\theta}&=\curl u_0+\curl \bar{v}-\curl_{\bar\eta}\bar v+\kappa\int_0^t\epsilon_{\cdot ji}\bar v_{,s}^r\bar{F}^{-1s}_{\quad i}D_{\bar\eta^r}\bar{\mathfrak{F}}^j\\
&\quad+\int_0^t(\epsilon_{\cdot ji}\bar v^i_{,s}\partial_t\bar{F}^{-1s}_{\quad j}+\bar{\mathfrak{C}})\quad\quad\quad\,\,\,\,\text{in}\,\,\Omega,\label{c3p}\\\nonumber
(v^{\theta})_t^3+2\kappa\rho_{0,3}\Lambda_{\theta}^2\Div_{\Gamma} v^{\theta}&= 2\kappa\rho_{0,3}\Lambda_{\theta}\Div_{\Gamma}\bar v -2\Lambda_{\theta}[\bar{J}^{-2}\bar{F}^{*3}_{\,\,\,\,3}]\rho_{0,3}\\\nonumber&\quad-2\kappa\Lambda_{\theta}[\partial_t[\bar{J}^{-2}\bar{F}^{*3}_{\,\,\,\,3}]]\rho_{0,3}
\\&\quad+(\bar{G}^3+\kappa\partial_t\bar G^3)+\bar c_{\theta}(t)N^3 \,\, \quad\quad\,\,\,\text{on}\,\,\Gamma,\label{xv3j}\\
\nonumber\int_{\Omega}(v^{\theta})_t^{\alpha}\,dx&=-2\int_{\Omega}\bar{F}^{-1k}_{\quad\alpha}(\dfrac{\rho_0}{\bar J})_{,k}\, dx -2\kappa\int_{\Omega}\partial_t[\bar{F}^{-1k}_{\quad\alpha}(\dfrac{\rho_0}{\bar J})_{,k}]\,dx\\&\quad+\int_{\Omega}\bar G^{\alpha}+\kappa\partial_t\bar G^{\alpha}\,dx,
\end{align}
\end{subequations}
We will establish the existence of a fixed-point in $C_{T_{\kappa}}(M)$, but to do so, we will first make use of the space (depending on $\theta$):
\begin{align*}
 \Upsilon_T^4=\{\omega &\in L^{\infty}(0,T;H^{\frac{7}{2}}(\Omega))\cap L^2(0,T;\cdot H^1_0(\Omega)):\partial_t^s\omega \in L^2(0,T;H^{4-s})), \\&1\leq s\leq 3, \Lambda_{\theta}\omega \in L^2(0,T;H^4(\Omega)), \omega(0)=u_0 \},
\end{align*}
endowed with norm
\begin{equation*}
\|\omega\|_{\Upsilon_T^4}^2=\|\Lambda_{\theta}\omega\|_{L^2(0,T;H^4(\Omega)))}^2+\sum_{s=1}^3\|\partial_t^s\omega\|^2_{L^2(0,T;H^{4-s}(\Omega))}+\sup_{[0,T]}\|\omega\|_{3.5}^2.
\end{equation*}
Since $\bar v \in C_{T_{\kappa}(M)}$, equations \eqref{addii} and \eqref{c3p} show that both $\Div v^{\theta}$ and $\curl v^{\theta}$ are in $L^2(0,T_{\theta};H^3(\Omega))$, additionally, from \eqref{xv3j} and \eqref{molli}, we see that $(v^{\theta})^3$ is in $L^{\infty}(0,T_{\theta};H^{3.5}(\Gamma))$, and hence according to Proposition \ref{curllemma}, $v^{\theta}\in L^2(0,T_{\theta};H^4(\Omega))$, with a bound that depends on $\theta$. We next show that, in fact, we can control $\Lambda_{\theta} v^{\theta}$ in $Z_T$ independently of $\theta$, on a time interval $[0,T]$ with $T>0$ independent of $\theta$.

We proceed by acting $\bar\partial^3$ on each side of \eqref{xv3j}, multiplying this equation by $-\frac{N^3}{\rho_{0,3}}\bar\partial^3(v^{\theta})^3$, and then integrating over $\Gamma$. This yields the following identity
\begin{align}
\nonumber
-&\dfrac{1}{2}\dfrac{d}{dt}\int_{\Gamma}\dfrac{N^3}{\rho_{0,3}}|\bar\partial^3(v^{\theta})^3|^2\,dS
+2\kappa\int_{\Gamma}\bar\partial^3\Lambda_{\theta}^2\Div_{\Gamma} v^{\theta} \bar\partial^3(v^{\theta})^3N^3\,dS\\\nonumber
&=-\int_{\Gamma}\mathfrak{G}\bar\partial^3(v^{\theta})^3\dfrac{N^3}{\rho_{0,3}}\,dS-\int_{\Gamma}\bar\partial^3[\rho_{0,3}\Lambda_{\theta}\mathfrak{Q}]\bar\partial^3(v^{\theta})^3\dfrac{N^3}{\rho_{0,3}}\,dS\\
&\quad-\int_{\Gamma}\bar\partial^3(\bar G^3+\bar G_t^3)\bar\partial^3(v^{\theta})^3\dfrac{N^3}{\rho_{0,3}}\,dS,
\label{8.81}
\end{align}
where
\begin{align}
\mathfrak{G}&=-2\kappa[\bar\partial^3\rho_{0,3}\Lambda_{\theta}^2\Div_{\Gamma} v^{\theta}+3\bar\partial^2\rho_{0,3}\bar\partial\Lambda_{\theta}^2\Div_{\Gamma} v^{\theta}+3\bar\partial\rho_{0,3}\bar\partial^2\Lambda_{\theta}^2\Div_{\Gamma} v^{\theta}],\\
\mathfrak{Q}&=2\kappa\Div_{\Gamma}\bar v-2\bar J^{-2}\bar{F}^{*3}_{\,\,\,\,3}-2\kappa\partial_t[\bar J^{-2}\bar{F}^{*3}_{\,\,\,\,3}].
\end{align}
Since $\mathfrak{G}$ contains lower-order terms, we see that for any $t\in[0,T_{\theta}]$
\begin{equation}
-\int_{\Gamma}\mathfrak{G}\bar\partial^3(v^{\theta})^3\dfrac{N^3}{\rho_{0,3}}\,dS \leq C|\bar\partial^3(v^{\theta})|_0^2.
\end{equation}
We then write
\begin{equation}
\bar\partial^3[\rho_{0,3}\Lambda_{\theta}\mathfrak{Q}]=\rho_{0,3}\bar\partial^3\Lambda_{\theta}\mathfrak{Q}+\bar\partial^3\rho_{0,3}
\Lambda_{\theta}\mathfrak{Q}+3\bar\partial^2\rho_{0,3}\bar\partial\Lambda_{\theta}\mathfrak{Q}+3\bar\partial\rho_{0,3}\bar\partial^2\Lambda_{\theta}\mathfrak{Q}
\end{equation}
and notice that since the last three terms on the right-hand side are lower-order, we easily obtain the estimate
\begin{align}
\nonumber |\int_{\Gamma}[\bar\partial^3\rho_{0,3}\Lambda_{\theta}\mathfrak{Q}
+3\bar\partial^2\rho_{0,3}\bar\partial\Lambda_{\theta}\mathfrak{Q}
+&3\bar\partial\rho_{0,3}\bar\partial^2\Lambda_{\theta}\mathfrak{Q}]\bar\partial^3(v^{\theta})^3\dfrac{N^3}{\rho_{0,3}}\,dS|
\\ &\leq |\bar\partial^3(v^{\theta})^3|_0|\bar\partial^2D\bar v|_0.
\end{align}
Next, we estimate the highest-order term $\int_{\Gamma}\bar\partial^3\Lambda_{\theta}\mathfrak{Q}\bar\partial^3(v^{\theta})^3N^3\,dS$. By the standard properties of the boundary convolution operator $\Lambda_{\theta}$, we have that
\begin{align}
\nonumber&\int_{\Gamma}\bar\partial^3\Lambda_{\theta}\mathfrak{Q}\bar\partial^3(v^{\theta})^3N^3\,dS=\int_{\Gamma}\bar\partial^3\mathfrak{Q}\bar\partial^3\Lambda_{\theta}(v^{\theta})^3N^3\,dS
\\\nonumber=&\underbrace{-2\kappa\int_{\Gamma}\bar\partial^3(\bar J^{-2}\partial_t(\bar{F}^{*3}_{\,\,\,\,3}))\bar\partial^3\Lambda_{\theta}(v^{\theta})^3N^3\,dS}_{\mathfrak{J}_1}-\underbrace{2\kappa\int_{\Gamma}\bar\partial^3(\partial_t(\bar J^{-2})\bar{F}^{*3}_{\,\,\,\,3})\bar\partial^3\Lambda_{\theta}(v^{\theta})^3N^3\,dS}_{\mathfrak{J_2}}\\
&+2\underbrace{\kappa\int_{\Gamma}\bar\partial^3(\Div_{\Gamma}\bar v)\bar\partial^3\Lambda_{\theta}(v^{\theta})^3N^3\,dS}_{\mathfrak{J}_3}-\underbrace{2\int_{\Gamma}\bar\partial^3(\bar J^{-2}\bar{F}^{*3}_{\,\,\,\,3})\bar\partial^3\Lambda_{\theta}(v^{\theta})^3N^3\,dS}_{\mathfrak{J}_4}
\end{align}
In order to estimate the integral $\mathfrak{J}_1$, we recall the formula for $\partial_t\bar{F}^{*3}_{\,\,\,\,3}$ given in \eqref{tmp3}, and write
\begin{align}
\nonumber\mathfrak{J}_1=&-2\kappa\int_{\Gamma}\bar\partial^3(\bar J^{-2}[\bar v_{,1}\times\bar\eta_{,2}+\bar\eta_{,1}\times\bar v_{,2}]^3)\bar\partial^3\Lambda_{\theta}(v^{\theta})^3N^3\,dS\\\nonumber
=&\underbrace{-2\kappa\int_{\Gamma}\bar\partial^3\bar v_{,1}\times(\bar\eta_{,2}\bar J^{-2}-\bar\eta_{,2}(0))^3\bar\partial^3\Lambda_{\theta}(v^{\theta})^3N^3\,dS}_{\mathfrak{J}_{1a}}\\\nonumber
&\underbrace{-2\kappa\int_{\Gamma}\bar\partial^3\bar v^1_{,1}\bar\partial^3\Lambda_{\theta}(v^{\theta})^3N^3\,dS-2\kappa\int_{\Gamma}\bar\partial^3\bar v^2_{,2}\bar\partial^3\Lambda_{\theta}(v^{\theta})^3N^3\,dS}_{\mathfrak{J}_{1b}}\\
&\underbrace{-2\kappa\int_{\Gamma}((\bar\eta_{,1}\bar J^{-2}-\bar\eta_{,1}(0))\times\bar\partial^3\bar v_{,2})^3\bar\partial^3\Lambda_{\theta}(v^{\theta})^3N^3\,dS}_{\mathfrak{J}_{1c}}+R_1
\end{align}
where $R_1$ is a lower-order integral over $\Gamma$ that contains all of the remaining terms from the action of $\bar\partial^3$, so that there are at most three space derivatives on $\bar v$ on $\Gamma$. By the trace theorem and the Cauchy--Schwarz inequality, we have $|R_1|\leq C|(v^{\theta})^3|_3\|\bar v\|_4$. Next, we see that $\mathfrak{J}_{1b}+\mathfrak{J}_3=0$. Then we can use the fundamental theorem of calculus,
\begin{equation*}
\bar\eta_{,2}\bar J^{-2}(t)-\bar \eta_{,2}(0)=\int_0^t\partial_t(\bar\eta_{,2}\bar J^{-2})\,\,,\,\,\bar\eta_{,1}\bar J^{-2}(t)-\bar \eta_{,1}(0)=\int_0^t\partial_t(\bar\eta_{,1}\bar J^{-2}),
\end{equation*}
to estimate the integrals $\mathfrak{J}_{1a}$ and $\mathfrak{J}_{1c}$, and obtain that
\begin{equation*}
|\mathfrak{J}_1+\mathfrak{J}_3|\leq Ct\|\Lambda_{\theta}(v^{\theta})^3\|_4\|\bar v\|_4+C|(v^{\theta})^3|_3\|\bar v\|_4
\end{equation*}
For $\mathfrak{J}_2$, we write the integral $\mathfrak{J}_2$ as
\begin{equation*}
\mathfrak{J}_2=4\kappa\int_{\Gamma}\bar{F}^{*3}_{\,\,\,\,3}\bar J^{-2}\Div_{\bar\eta}\bar\partial^3 \bar v \bar\partial^3 \Lambda_{\theta}(v^{\theta})^3N^3\,dS+R_2
\end{equation*}
with $R_2$ scales like $\int_{\Gamma}\bar\partial^3D\bar\eta\bar\partial^3\Lambda_{\theta}(v^{\theta})^3N^3\,dS$. Then it follows that
\begin{equation*}
|R_2|\leq |D\bar\eta|_{2.5}|\Lambda_{\theta}(v^{\theta})^3|_{3.5}\leq C\|\bar\eta\|_4\|\Lambda_{\theta} v^{\theta}\|_4
\end{equation*}
by the trace theorem.  Since $\bar\eta(t)=x+\int_0^t\bar v$, we see that for some $\delta>0$,
\begin{equation*}
|R_2|\leq N_0+\delta\|\Lambda_{\theta}v^{\theta}\|_{4}^2+Ct\|\bar v\|_{L^2(0,T;H^4(\Omega))}^2.
\end{equation*}
For the remaining term in $\mathfrak{J}_2$, we have
\begin{multline}
|4\kappa\int_{\Gamma}\bar{F}^{*3}_{\,\,\,\,3}\bar J^{-2}\Div_{\bar\eta}\bar\partial^3 \bar v \bar\partial^3 \Lambda_{\theta}(v^{\theta})^3N^3\,dS|\\\leq C\|\Div \bar v\|_3\|\Lambda_{\theta} v^{\theta}\|_4 + Ct\|\Lambda_{\theta}(v^{\theta})^3\|_4\|\bar v\|_{L^2(0,T;H^4(\Omega))}
\end{multline}
Finally, the integral $\mathfrak{J}_4$ can be estimated in the same way as $R_2$ above, so we obtain
\begin{align}
\nonumber |\int_{\Gamma}\bar\partial^3&(\rho_{0,3}\Lambda_{\theta}\mathfrak{Q})\bar\partial^3(v^{\theta})\dfrac{N^3}{{\rho_0}_{,3}}\,dS|\\\nonumber
\leq &N_0+\delta\|\Lambda_{\theta}v^{\theta}\|_4^2+Ct\|\bar v\|_{L^2(0,T;H^4(\Omega))}^2+Ct\|\Lambda_{\theta}(v^{\theta})^3\|_4\|\bar v\|_{L^2(0,T;H^4(\Omega))}\\
&+C|(v^{\theta})^3|_3\|\bar v\|_4+C\|\Div \bar v\|_3\|\Lambda_{\theta}v^{\theta}\|_4.
\end{align}
Recall the vacuum condition \eqref{vacuum}, the last term on the right-hand side of \eqref{8.81} can be estimated by
\begin{align*}
\int_{\Gamma}\bar\partial^3(\bar G^3+\kappa\bar G_t^3)\bar\partial^3(v^{\theta})^3N^3\,dS
&\leq C|\bar G+\kappa\partial_t\bar G|_3|\bar\partial^3(v^{\theta})^3|_0
\\&\leq CP(\|\bar v\|_4,\|\rho_0D\bar v\|_4)|\bar\partial^3(v^{\theta})^3|_0
\end{align*}
We now return to estimate the second term on the left-hand side of \eqref{8.81}, which will give us a sign-definite energy term plus a small perturbation. We first see that by the properties of the boundary convolution $\Lambda_{\theta}$,
\begin{equation}
\int_{\Gamma}\bar\partial^3[\Lambda_{\theta}^2({v^{\theta}}_{,1}^1+{v^{\theta}}_{,2}^2)]\bar\partial^3(v^{\theta})^3N^3\,dS
=\int_{\Gamma}\bar\partial^3\Lambda_{\theta}({v^{\theta}}_{,1}^1+{v^{\theta}}_{,2}^2)\bar\partial^3\Lambda_{\theta}v^{\theta}\cdot N \,dS
\end{equation}
Then by applying the divergence theorem to the integral on the right-hand side, we have that
\begin{align*}
&\int_{\Gamma}\bar\partial^3\Lambda_{\theta}({v^{\theta}}_{,1}^1+{v^{\theta}}_{,2}^2)\bar\partial^3\Lambda_{\theta}v^{\theta}\cdot N \,dS\\
=&\int_{\Omega}\bar\partial^3\Lambda_{\theta}({v^{\theta}}_{,1}^1+{v^{\theta}}_{,2}^2)\bar\partial^3\Lambda_{\theta}{v^{\theta}}^3_{,3} \,dx+\int_{\Omega}\bar\partial^3\Lambda_{\theta}({v^{\theta}}_{,13}^1+{v^{\theta}}_{,23}^2)\bar\partial^3\Lambda_{\theta}v^{\theta}\,dx
\\=&-\int_{\Omega}|\bar\partial^3\Lambda_{\theta}(v^{\theta})^3_{,3}|^2\,dx+\int_{\Omega}\Lambda_{\theta}\bar\partial^3\Div v^{\theta}\bar\partial^3\Lambda_{\theta}{v^{\theta}}_{,3}^3\,dx\\
&-\int_{\Omega}\bar\partial^3\Lambda_{\theta}(v^{\theta})^1_{,3}\bar\partial^3\Lambda_{\theta}(v^{\theta})^3_{,1}\,dx
-\int_{\Omega}\bar\partial^3\Lambda_{\theta}(v^{\theta})^2_{,3}\bar\partial^3\Lambda_{\theta}(v^{\theta})^3_{,2}\,dx,\\
=&-\int_{\Omega}|\bar\partial^3\Lambda_{\theta}(Dv^{\theta})^3|^2\,dx+\int_{\Omega}\Lambda_{\theta}\bar\partial^3\Div v^{\theta}\bar\partial^3\Lambda_{\theta}{v^{\theta}}_{,3}^3\,dx\\
&+\int_{\Omega}\Lambda_{\theta}\bar\partial^3[(\curl v^{\theta}\cdot e_2)]\bar\partial^3\Lambda_{\theta}(v^{\theta})^3_{,1}\,dx
-\int_{\Omega}\Lambda_{\theta}\bar\partial^3[(\curl v^{\theta}\cdot e_1)]\bar\partial^3\Lambda_{\theta}(v^{\theta})^3_{,2}\,dx,
\end{align*}
Thanks to \eqref{addii}, we have that, for all $t\in[0,T_{\theta}]$,
\begin{equation}
\|\Div v^{\theta}\|_3^2\leq Ct^2\|\bar v\|_4^2+C\|\bar v\|_3^2+Ct\|\bar v\|^2_{L^2(0,T;H^4(\Omega))}+C\| X\|_4^2,
\end{equation}
where we have used the higher-order Hardy inequality.

On the other hand, with \eqref{c3p}, we see that for all $t\in[0,T_{\theta}]$
\begin{align}
\nonumber \|\curl v^{\theta}\|_3\leq& Ct\|\bar v\|_4+C\|\bar v\|_3+C\|u_0\|_4+C\sqrt{t}\|D\mathfrak{F}\|^2_{L^2(0,T;H^3(\Omega))}\\&+C\sqrt{t}\|\bar v\|_{L^2(0,t;H^4(\Omega))}^2,\\
\leq & Ct\|\bar v\|_4+C_{\kappa}\|u_0\|_4+C\sqrt{t}\|\bar v\|_{L^2(0,t;H^4(\Omega))},
\end{align}
where we have used \eqref{ccp} and the identity \eqref{f0}, relating $\mathfrak{F}$ to $\bar v$. Note that \eqref{f0} provides us with a bound which is $\theta$-independent, but which indeed depends on $\kappa$.

The action of the boundary convolution operator $\Lambda_{\theta}$ does not affect these estimates; thus using Proposition \ref{p8}, we obtain
\begin{align}
\label{p1p}
&\int_0^T\|\Div\Lambda_{\theta}v^{\theta}\|_3^2\,dt \leq N_0+TP(\|\bar v\|_{Z_T}^2)+\mu(\|\bar v\|_{Z_T}^2),\\
&\int_0^T\|\curl\Lambda_{\theta}v^{\theta}\|_3^2\,dt \leq N_0+TP(\|\bar v\|_{Z_T}^2).
\label{p2p}
\end{align}
We combine these estimates and recall vacuum condition \eqref{vacuum}, then for any $t\in[0,T]$,
\begin{align*}
|\bar\partial^3&(v^{\theta})^3(t)|_0^2+\int_0^t\|\bar\partial^3D\Lambda_{\theta}(v^{\theta})^3\|_0^2\\
&\leq N_0+Ct|(v^{\theta})^3|_3^2+Ct\|\bar v\|_{Z_T}^2+Ct\int_0^t\|\Lambda_{\theta}(v^{\theta})^3\|_4^2\\
&\quad +C\sqrt{t}\int_0^t\|\bar v\|_4^2+\frac{C}{\sqrt{t}}\int_0^t|(v^{\theta})^3|_3^2+\delta\|\Lambda_{\theta}v^{\theta}\|_4^2\\
&\quad +TP(\|\bar v\|_{Z_T}^2)+\mu\|\bar v\|_{Z_T}^2+C\|\Div \bar v\|_{Y_T}^2.
\end{align*}
By taking $\delta>0$ sufficiently small, and using \eqref{p1p} and \eqref{p2p}, we obtain
\begin{align*}
|v^{\theta}(t)|_3^2&+\int_0^t\|\Lambda_{\theta}v^{\theta}\|_4^2\\
&\leq N_0+C(t+\sqrt{t})\sup_{t\in[0,T]}|v^{\theta}|_3^2+C(t+\sqrt{t})\|\bar v\|_{Z_T}^2+Ct\int_0^t\|\Lambda_{\theta}v^{\theta}\|_4^2\\
&\quad +TP(\|\bar v\|_{Z_T}^2)+\mu\|\bar v\|_{Z_T}^2+C\|\Div \bar v\|_{Y_T}^2.
\end{align*}
Thus, we have
\begin{align}
\nonumber \sup_{t\in[0,T]}|v^{\theta}(t)|_3^2+\int_0^T\|\Lambda_{\theta}v^{\theta}\|_4^2
&\leq N_0+C\sqrt{T}P(\|v^{\theta}\|_{{\Upsilon}_T^4}^2)\\&\quad+C\sqrt{T}P(\|\bar v\|_{Z_T}^2)+\mu\|\bar v\|_{Z_T}^2+C\|\Div \bar v\|_{Y_T}^2.
\label{9.87}
\end{align}
And then by using the expression \eqref{xv3j}, we can straightforward have the following estimate
\begin{align}
\nonumber \int_0^T|(v_t^{\theta})^3|_{2.5}^2 &\leq N_0+C\int_0^T|\Lambda_{\theta} v^{\theta}|_{3.5}^2\,dt+C\sqrt{T}P(\|\bar v\|_{Z_T}^2)\\\nonumber &\leq N_0+C\sqrt{T}P(\|v^{\theta}\|_{{\Upsilon}_T^4}^2)+C\sqrt{T}P(\|\bar v\|_{Z_T}^2)+P(\|\curl \bar v\|_{Y_T}^2)\\&\quad+C\|\Div \bar v\|_{Y_T}^2.
\label{normaltrace}
\end{align}
With the same argument we used for the estimations for the divergence and curl of $v^{\theta}$, we can also estimate the divergence and curl of $v_t^{\theta}$, and then with the normal trace estimate \eqref{normaltrace}, we obtain the following estimate for $v_t^{\theta}$:
\begin{align}
\nonumber \int_0^T\|(v_t^{\theta})\|_{3}^2 \,dt &\leq N_0+C\sqrt{T}P(\|v^{\theta}\|_{{\Upsilon}_T^4}^2)+C\sqrt{T}P(\|\bar v\|_{Z_T}^2)+\mu\|\bar v\|_{Z_T}^2\\&+C\|\Div \bar v\|_{Y_T}^2.
\label{apl}
\end{align}

Similarly, we can consider the time-differentiated version of \eqref{cpp3p} and using \eqref{normaltrace} and \eqref{apl} to show that
\begin{align}
\nonumber \int_0^T\|(v_{tt}^{\theta})\|_{2}^2 \,dt &\leq N_0+C\sqrt{T}P(\|v^{\theta}\|_{{\Upsilon}_T^4}^2)+C\sqrt{T}P(\|\bar v\|_{Z_T}^2)+\mu\| \bar v\|_{Z_T}^2\\&+C\|\Div \bar v\|_{Y_T}^2,
\label{apl2}
\end{align}
and consider the second time-differentiated version of \eqref{cpp3p} and using \eqref{normaltrace},\eqref{apl} and \eqref{apl2} to show that
\begin{align}
\nonumber \int_0^T\|(v_{ttt}^{\theta})\|_{1}^2 &\leq N_0+C\sqrt{T}P(\|v^{\theta}\|_{{\Upsilon}_T^4}^2)+C\sqrt{T}P(\|\bar v\|_{Z_T}^2)+\mu\|\bar v\|_{Z_T}^2\\&+C\|\Div \bar v\|_{Y_T}^2,
\label{apl3}
\end{align}

Then combining \eqref{normaltrace}, \eqref{apl} to \eqref{apl3}, we have
\begin{align}
\nonumber \|v^{\theta}\|_{\Upsilon_T^4}^2  &\leq N_0+C\sqrt{T}P(\|v^{\theta}\|_{{\Upsilon}_T^4}^2)+C\sqrt{T}P(\|\bar v\|_{Z_T}^2)+C\mu\|\bar v\|_{Z_T}^2\\&+C\|\Div \bar v\|_{Y_T}^2,
\label{apl4}
\end{align}
where the polynomial function $P$ on the right-hand side is independent of $\theta$.

From \eqref{apl4}, we can infer there exists $T>0$ independent of $\theta$ such that $v^{\theta} \in \Upsilon_T^4$ and satisfies the estimate:
 \begin{equation}
\|v^{\theta}\|_{\Upsilon_T^4}^2 \,dt \leq 2N_0+C\sqrt{T}P(\|\bar v\|_{Z_T}^2)+C\mu\|\bar v\|_{Z_T}^2+C\|\Div \bar v\|_{Y_T}^2,
\label{ap34}
\end{equation}

\textbf{Step 3: The limit as $\theta\rightarrow 0$ and the fixed-point of the map $\bar v\mapsto v$}
We set $\theta=\dfrac{1}{n}$, and from \eqref{ap34}, there exists a subsequence and a vector field $v\in\Upsilon_T^3, V\in L^2(0,T;H^4(\Omega))\cap L^{\infty}(0,T;H^3)$ such that
\begin{align}
\label{0zz}
v^{\theta}&\rightharpoonup v\,\, \text{in}\,\, \Upsilon_T^3,\\
v^{\theta}&\rightarrow v\,\, \text{in}\,\, \Upsilon_T^2,\\
\Lambda_{\theta}v^{\theta}&\rightharpoonup V\,\, \text{in}\,\, L^2(0,T;H^4(\Omega)),
\end{align}
where the space $\Upsilon_T^3$ is defined in \eqref{8.76} and
\begin{equation*}
\omega \in \Upsilon_T^2=\{\omega \in L^2(0,T;H^2(\Omega)):\partial_t^s\omega \in L^2(0,T;H^{3-s})), 1\leq s\leq 2 \}.
\end{equation*}
Next, we notice that for any $\phi \in C_c^{\infty}(\Omega)$, we have for each $i=1,2,3$ and $t \in [0,T]$, $T$ still depending on $\kappa>0$, that
\begin{equation*}
\lim_{\theta\rightarrow0}\int_{\Omega}\Lambda_{\theta}(v^{\theta})^i\cdot \phi\,dx = \lim_{\theta\rightarrow0}\int_{\Omega}(v^{\theta})^i\cdot \Lambda_{\theta}\phi\,dx =\int_{\Omega}v^i\phi \,dx,
\end{equation*}
where we used the fact that $\Lambda_{\theta}\phi\rightarrow\phi$ in $L^2(\Omega)$. This shows us that $v=V$, and that
\begin{equation*}
\|v\|_{X_T}^2 \leq 2N_0+C\sqrt{T}P(\|\bar v\|_{Z_T}^2)+C\mu\|\bar v\|_{Z_T}^2+C\|\Div \bar v\|_{Y_T}^2.
\end{equation*}
The estimates for those terms with weight $\rho_0$ in the definition of the $Z_T$-norm follow immediately from multiplication by $\rho_0$ of the equations \eqref{diveq} and \eqref{cppp}. Because $\rho_0=0$ on $\Gamma$ and using the unweighted estimates already obtained, there is no need to consider the parabolic equation \eqref{xcvj}. Then we can get that
\begin{equation}
\label{itr}
\|v\|_{Z_T}^2\leq N_0+C\sqrt{T}P(\|\bar v\|_{Z_T}^2)+C\mu\|\bar v\|_{Z_T}^2+C\|\Div \bar v\|_{Y_T}^2.
\end{equation}
Moreover, the convergence in \eqref{0zz} and the definition of the sequence of problems \eqref{equ:theta} easily show us that $v$ is a solution of the problem \eqref{eq:vdef}. Furthermore, we can obtain the same type of energy estimates for the system \eqref{eq:vdef} as we did in {\bf Step 2} above. This shows the uniqueness of the solution $v$ of \eqref{eq:vdef}, and hence allows us to define $\Theta:\bar v\in Z_T \mapsto v \in Z_T$.

Next, we begin our iteration scheme. We choose any $v^{(1)}\in C_T(M)$ and define for $n \in \mathbb{N}$,
\begin{equation*}
v^{(n+1)}=\Theta(v^{(n)}),\,\, v^{(n)}|_{t=0}=u_0.
\end{equation*}
For each $n \in \mathbb{N}$, we set $\eta^{(n)}(x,t)=x+\int_0^t v^{(n)}(x,t')\,dt', F^{(n)}=D\eta^{(n)}, J^{(n)}=\text{det}D\eta^{(n)}, F^{*(n)}=J^{(n)}{F^{(n)}}^{-1}$, $X^{(n)}$ is the solution to \eqref{lxversion} with $v^{(n)},  F^{*(n)}, J^{(n)}$ and ${F^{(n)}}^{-1}$. Similarly, we define $\mathfrak{F}^{(n)}$ via \eqref{f0} with $v^{(n)}$ replacing $\bar v$ and define $\mathfrak{C}^{(n)}$ via \eqref{lo}.

According to \eqref{itr},
\begin{equation}
\label{itr1}
\|v^{(n+1)}\|_{Z_T}^2\leq N_0+C\sqrt{T}P(\|v^{(n)}\|_{Z_T}^2)+C\mu\|v^{(n)}\|_{Z_T}^2+C\|\Div v^{(n)}\|_{Y_T}^2.
\end{equation}
From \eqref{diveq},
\begin{equation*}
\Div v^{(n)}=\Div v^{(n-1)}-\Div_{\eta^{(n-1)}}v^{(n-1)}+\dfrac{{J^{(n-1)}}^2X^{(n-1)}}{\rho_0},
\end{equation*}
then
\begin{align}
\nonumber \|\Div v^{(n)}\|_{Y_T}^2&\leq \|\Div v^{(n-1)}-\Div_{\eta^{(n-1)}}v^{(n-1)}\|_{Y_T}^2+\|\dfrac{{J^{(n-1)}}^2X^{(n-1)}}{\rho_0}\|_{Y_T}^2\\
&\leq N_0+\sqrt{T}P(\|v^{(n-1)}\|_{Z_T}^2)+C\mu\|v^{(n)}\|_{Z_T}^2,
\end{align}
where we used the higher-order Hardy inequality and Proposition \ref{p8} for the second inequality.
Thus, we obtain the inequality
\begin{multline*}
\|v^{(n+1)}\|_{Z_T}^2\leq N_0+\sqrt{T}P(\|v^{(n)}\|_{Z_T}^2)+\sqrt{T}P(\|v^{(n-1)}\|_{Z_T}^2)\\+\sqrt{T}P(\|v^{(n-2)}\|_{Z_T}^2)+C\mu\|v^{(n)}\|_{Z_T}^2.
\end{multline*}
This shows that choosing $C\mu\leq \dfrac{1}{2}$ and taking $T>0$ sufficiently small and $M\gg N_0$ sufficiently large, the convex set $C_T(M)$ is stable under the map $\Theta$. Furthermore, we can also show that
\begin{multline}
\|v^{(n+1)}-v^{(n)}\|_{Z_T}^2\leq N_0+\sqrt{T}P(\|v^{(n)}-v^{(n-1)}\|_{Z_T}^2)+\sqrt{T}P(\|v^{(n-1)}-v^{(n-2)}\|_{Z_T}^2)\\+\sqrt{T}P(\|v^{(n-2)}-v^{(n-3)}\|_{Z_T}^2)+\dfrac{1}{2}(\|v^{(n)}-v^{(n-1)}\|_{Z_T}^2)
\end{multline}
by a similar argument as we did above. Thus, when $\kappa\leq \kappa_0$, the map $\Theta$ has a unique fixed-point $v=\Theta(v)$ for $T=T_{\kappa}$ sufficiently small. Recall the choice of $\mu$ and $\kappa_0$ (see \eqref{kappachoice}), $\kappa_0$ actually only depends on $\|\rho_0\|_4$ and the domain $\Omega$.
\subsubsection{The fixed-point of the map $\Theta$ is a solution of the $\kappa$-problem}
In this subsection, we will verify that the fixed-point is actually a solution to $\kappa$-problem. Our description is in the Lagrangian coordinates, which is slightly different from \cite{DS_2010}.

In a straightforward manner, we deduce from \eqref{eq:vdef} the following relations for our fixed-point $v=\Theta(v)$:
\begin{subequations}
\label{eq:fix}
\begin{align}
\label{afd}
\Div_{\eta} v_t&= \dfrac{[XJ]_t}{\rho_0}-\partial_t{F^{-1}}_i^jv^i_{,j}\,\,\quad\quad\quad\quad\quad\quad\quad\quad\text{in}\,\,\Omega,\\
\curl v_t&=-\kappa\epsilon_{\cdot ji} v_{,s}^r{F^{-1}}_j^sD_{\eta^r}{\mathfrak{F}}^i+{\mathfrak{C}}\quad\quad\quad\quad\quad\quad\text{in}\,\,\Omega,\label{cpppt}\\\nonumber
v_t^3&= -2{J}^{-2}{F^*}_3^3\rho_{0,3}-2\kappa\partial_t[{J}^{-2}{F^*}_3^3]\rho_{0,3}\\&\quad+{G}^3+\kappa\partial_t G^3+ c(t)N^3 \quad\quad\quad\quad\quad\quad\,\, \text{on}\,\,\Gamma,\label{xcvjt}\\
\nonumber\int_{\Omega}v_t^{\alpha}\,dx&=-2\int_{\Omega}{F^{-1}}_{\alpha}^k(\dfrac{\rho_0}{ J})_{,k}\, dx -2\kappa\int_{\Omega}\partial_t[{F^{-1}}_{\alpha}^k(\dfrac{\rho_0}{ J})_{,k}]\,dx\\&\quad+\int_{\Omega} G^{\alpha}+\kappa\partial_t G^{\alpha}\,dx,\label{per}\\
(x_1,x_2) &\mapsto v_t(x_1,x_2,x_3,t)\,\, \quad\quad\quad\quad\quad\quad\quad\quad\quad\quad\text{is $1$-periodic},
\end{align}
\end{subequations}
where $X$ is a solution of \eqref{appro1}, $G=D_{\eta}\Phi$, $\Phi(x,t)=\int_{\mathbb{R}^3}\dfrac{1}{|y|}\rho(\eta(x,t)-y,t)\,dy$, $f(x,t)=\rho(\eta(x,t),t)=\rho_0J^{-1}(x,t)$ and the function $c(t)$ is defined by
\begin{align*}
c(t)=& \dfrac{1}{2}\int_{\Omega}(\Div  v_t-\Div_{\eta} v_t)\,dx+\dfrac{1}{2}\int_{\Omega}\dfrac{[ X J]_t}{\rho_0}\,dx-\dfrac{1}{2}\int_{\Omega}\partial_t{F^{-1}}_i^j\bar v^i_{,j}\,dx\\
&+\int_{\Gamma}{J}^{-2}{F^*}_3^3\rho_{0,3} N^3 \,dS+\kappa\int_{\Gamma}\partial_t[{J}^{-2}{F^*}_3^3]\rho_{0,3} N^3\,dS\\
&+\int_{\Gamma}( G^3+\kappa\partial_t G^3) N^3\,dS\\=&\dfrac{1}{2}\int_{\Omega}(\Div  v_t-\Div_{\eta} v_t)\,dx+\dfrac{1}{2}\int_{\Omega}\dfrac{[ X J]_t}{\rho_0}\,dx-\dfrac{1}{2}\int_{\Omega}\partial_t{F^{-1}}_i^j\bar v^i_{,j}\,dx\\
&+\int_{\Gamma}D_{\eta}(2f-\Phi)\cdot N \,dS+\kappa\int_{\Gamma}\partial_t[D_{\eta}(2f-\Phi)]\cdot N\,dS,
\end{align*}
where $dS=dx_1dx_2$. By using \eqref{afd} and the divergence theorem, we can obtain the identity (since the volume of $\Omega$ is equal to 1)
\begin{equation}
\label{in:c}
c(t)=\dfrac{1}{2}\int_{\Gamma}[v_t+D_{\eta}(2f-\Phi)+\kappa[D_{\eta}(2f-\Phi)]_t]\cdot N\, dS.
\end{equation}
The fixed-point of the map $\Theta$ also satisfies the equation
\begin{subequations}
\label{fequ}
\begin{align}
\label{eq:fv}
v_t+\mathfrak{F}+\kappa\mathfrak{F}_t&=0,\\
\mathfrak{F}(0)&=2D\rho_0-D\phi_0.\label{eq:fini}
\end{align}
\end{subequations}
Now if we can prove that
\begin{equation}
c(t)=0\quad\text{and}\quad \mathfrak{F}=D_{\eta}(2f-\Phi),
\end{equation}
then from \eqref{xcvjt} and \eqref{fequ}, we can say that the fixed-point is a solution to the $\kappa$-problem \eqref{eq:lag}. We prove this claim by three steps.

\noindent\textbf{Step 1.} We apply $\curl_{\eta}$ on \eqref{eq:fv} and compare it to \eqref{cpppt}. This implies that
\begin{equation}
\kappa\epsilon_{\cdot ji}v_{,s}^r{F^{-1}}_i^sD_{\eta^r}\mathfrak{F}^i+\curl_{\eta}\mathfrak{F}+\kappa[\curl_{\eta}\mathfrak{F}_t]=-\mathfrak{C}.
\end{equation}
Then we have
\begin{equation}
\label{fpsi}
\curl_{\eta}\mathfrak{F}+\kappa[\curl_{\eta}\mathfrak{F}]_t=-D_{\eta}\psi-\kappa[D_{\eta}\psi]_t.
\end{equation}
According to \eqref{eq:fini}, we have $\curl_{\eta}\mathfrak{F}(x,0)=0$. Furthermore, by our definition \eqref{eq:b}, we have $(D_{\eta}\psi)(x,0)=0$ in $\Omega$, then with \eqref{fpsi}, we can conclude that for $t\in[0,T]$,
\begin{equation*}
[\curl_{\eta}\mathfrak{F}+D_{\eta}\psi] (x,t) =0,
\end{equation*}
and therefore we can consider the following elliptic problem
\begin{align*}
\Delta_{\eta\eta}\psi=-\Div_{\eta}(\curl_{\eta}\mathfrak{F})&=0\quad\text{in}\quad\Omega,\\
\psi&=0 \quad \text{on}\quad \Gamma,\\
(x_1,x_2)\mapsto \psi(x_1,x_2,&x_3,t)\,\,\text{is 1-periodic},
\end{align*}
which shows that $\psi=0$ and hence $\mathfrak{C}=0$.

\noindent Therefore, $\curl_{\eta}\mathfrak{F}=0$ in $\Omega$ and  there exists a scalar function $Y$ defined in $\Omega$ such that
\begin{equation}
\mathfrak{F}=D_{\eta}Y.
\end{equation}

It remains to establish that $D_{\eta}Y=D_{\eta}(2f-\Phi)$.

\noindent \textbf{Step 2.} We take the scalar product of \eqref{eq:fv} with $e_3$ to get that
\begin{equation}
v_t^3+D_{\eta}Y\cdot e_3+\kappa[D_{\eta}Y]_t\cdot e_3=0,
\end{equation}
then by comparison with \eqref{xcvjt}, we can get the following identity on $\Gamma$:
\begin{multline}
\label{0lpo}
[D_{\eta}(Y-2f+\Phi)+\kappa[D_{\eta}(Y-2f+\Phi)]_t]\cdot e_3=-c(t)N^3\\
=\dfrac{1}{2}\int_{\Gamma}[D_{\eta}(Y-2f+\Phi)+\kappa[D_{\eta}(Y-2f+\Phi)]_t]\cdot N dS N^3,
\end{multline}
where we used the expression \eqref{in:c} for $c(t)$. Denoting
\begin{equation}
q=2f-\Phi-Y.
\end{equation}
Since $N=(0,0,N^3)$ on $\Gamma$, then \eqref{0lpo} implies:
\begin{equation*}
D_{\eta}q\cdot N+\kappa[D_{\eta}q\cdot N]_t=c(t).
\end{equation*}
By integration with respect to $t$, and taking into account that $(D_{\eta}(2f-\Phi))(x,0)=(D_{\eta}Y)(x,0)$, we have
\begin{equation*}
D_{\eta}q\cdot N=\int_0^t \dfrac{c(s)}{\kappa}e^{\frac{s}{\kappa}}\,ds,
\end{equation*}
which is indeed a function depending only on time. We denote it by $k(t)$. Integrating this relation over $\Gamma$, we finally obtain that on $\Gamma$
\begin{equation}
\label{gamm}
D_{\eta}q\cdot N=k(t)=\dfrac{1}{2}\int_{\Gamma}D_{\eta}q\cdot N \,dS.
\end{equation}

\noindent\textbf{Step 3.} We now apply $\Div_{\eta}$ to \eqref{eq:fv}, and compare the resulting equation with \eqref{appro1}. Using \eqref{afd} and the fact that $X(0)=\rho_0\Div u_0$, we have
\begin{equation}
X=\rho_0J^{-1}\Div_{\eta}v.
\end{equation}
This leads us to
\begin{equation}
\Div_{\eta}[D_{\eta}q+\kappa[D_{\eta}q]_t]=0\quad\text{in}\,\,\Omega.
\end{equation}
This is equivalent to
\begin{equation}
\label{ode}
\Delta_{\eta\eta}q+\kappa[\Delta_{\eta\eta}q]_t-\partial_t{F^{-1}}_i^k\partial_kD_{\eta^i}q=0, \,\,\text{in}\,\, [0,T]\times \Omega.
\end{equation}
Since $2D\rho_0-D\phi_0=DY(0)$, we have that
\begin{equation}
\label{qini}
(\Delta_{\eta\eta}q)(x,0)=0\quad\text{in}\,\,\Omega.
\end{equation}
Also, from \eqref{gamm}, we have the perturbed Neumann boundary condition
\begin{equation}
D_{\eta^3}qN_3=k(t),\quad\text{on}\,\,\Gamma.
\end{equation}
By \eqref{per} and \eqref{fequ}, we obtain that for $\alpha=1,2,$
\begin{equation*}
\int_{\Omega}D_{\eta^{\alpha}}q\,dx+\kappa\int_{\Omega}[D_{\eta^{\alpha}}q]_t\,dx=0,
\end{equation*}
which together with the initial condition $\int_{\Omega}(D_{\eta^{\alpha}})q(x,0)\,dx=0$ implies that
\begin{equation}
\int_{\Omega}D_{\eta^{\alpha}}q(x,t)\,dx=0.
\label{avg}
\end{equation}
Therefore, by setting $\tilde f = \Delta_{\eta\eta}q$, we have the system for all $t\in[0,T]$:
\begin{subequations}
\label{qequ}
\begin{align}
\Delta_{\eta\eta}q&=\tilde f\quad\text{in}\,\,\Omega\label{laplace}\\
\int_{\Omega}D_{\eta^{\alpha}}q\,dx&=0,\\
D_{\eta^3}qN^3&=k(t)\quad\text{on}\,\,\Gamma,
\label{d3q}\\
D_{\eta}q \quad&\text{is 1-periodic in the directions $e_1$ and $e_2$}.\label{dq}
\end{align}
\end{subequations}
We now act $D_{\eta^3}$ on \eqref{laplace}, then multiply the equation by $JD_{\eta^3}q$ and integrate by parts in $\Omega$. With the condition \eqref{dq} and the Piola indentity, we have
\begin{multline*}
-\int_{\Omega}J|D_{\eta^3}D_{\eta}q|^2\,dx+\int_{\Gamma}D_{\eta^i}D_{\eta^3}q{F^*}_i^kN^kD_{\eta^3}q\,dx\\=-\int_{\Omega}\tilde fJD_{\eta^3}D_{\eta^3}q\,dx+\int_{\Gamma}\tilde f{F^*}_i^kN^kD_{\eta^3}q\,dx.
\end{multline*}
We denote $\zeta$ as a smooth function in $\Omega$ such that $\zeta=\dfrac{1}{N^3}$ on $\Gamma$. Then with \eqref{d3q}, we have that
\begin{multline}
\label{vm}
-\int_{\Omega}J|D_{\eta^3}D_{\eta}q|^2\,dx+k(t)\int_{\Gamma}D_{\eta^i}D_{\eta^3}q{F^*}_i^kN^k\zeta\,dS\\=-\int_{\Omega}\tilde fJD_{\eta^3}D_{\eta^3}q\,dx+k(t)\int_{\Gamma}\tilde f{F^*}_3^kN^k\zeta\,dS.
\end{multline}
By the divergence theorem, the boundary integral of \eqref{vm} can be written as
\begin{align*}
&\int_{\Gamma}D_{\eta^i}D_{\eta^3}q{F^*}_i^kN^k\zeta\,dS=\int_{\Omega}J\Delta_{\eta\eta}D_{\eta^3}q\zeta\,dx+\int_{\Omega}D_{\eta^i}D_{\eta^3}JD_{\eta^i}\zeta\,dx,\\
&\int_{\Gamma}\tilde f{F^*}_3^kN^k\zeta\,dS=\int_{\Omega}JD_{\eta^3}\tilde f\zeta\,dx+\int_{\Omega}\tilde f JD_{\eta^3}\zeta\,dx.
\end{align*}
Thus, we can obtain that
\begin{multline*}
\int_{\Omega}J|D_{\eta}D_{\eta^3}q|^2\,dx=\int_{\Omega}\tilde fJD_{\eta^3}D_{\eta^3}q\,dx\\+k(t)\int_{\Omega}D_{\eta^i}D_{\eta^3}JD_{\eta^i}\zeta\,dx-k(t)\int_{\Omega}\tilde f JD_{\eta^3}\zeta\,dx,
\end{multline*}
which provides us with the estimate
\begin{equation}
\label{d3qq}
\|D_{\eta^3}D_{\eta}q\|_0^2\leq C\|\tilde f\|_0^2+Ck^2(t).
\end{equation}
From \eqref{gamm}, by the divergence theorem,
\begin{equation*}
k(t)=\dfrac{1}{2}\int_{\Omega}[D_{\eta^i}q]_{,i}\,dx=\dfrac{1}{2}\int_{\Omega}F^j_iD_{\eta^j}(D_{\eta^i}q)\,dx,
\end{equation*}
and since $|F^j_i-\delta^j_i|\leq Ct$, we have
\begin{equation}
\label{kt}
|k(t)|\leq C\|\Delta_{\eta\eta}q\|_0+Ct\|D_{\eta}D_{\eta}q\|_0.
\end{equation}
Combining with \eqref{d3qq}, we can get
\begin{equation}
\label{d3qqq}
\|D_{\eta^3}D_{\eta}q\|_0^2\leq C\|\tilde f\|_0^2+Ct\|D_{\eta}D_{\eta}q\|_0^2.
\end{equation}

Now we write \eqref{laplace} as
\begin{equation}
\label{d12}
D_{\eta^1}D_{\eta^1}q+D_{\eta^2}D_{\eta^2}q = g,
\end{equation}
where $g=-D_{\eta^3}D_{\eta^3}q+\tilde f$.

\noindent
It follows from \eqref{d12} that
\begin{equation*}
\int_{\Omega}J(D_{\eta^1}D_{\eta^1}q+D_{\eta^2}D_{\eta^2}q)^2\,dx=\int_{\Omega}J|g|^2\,dx.
\end{equation*}
Integrating by parts on the left-hand side of this equation, together with the Piola identity, we have
\begin{multline}
\label{vm9}
\int_{\Omega}J|D_{\eta^{\alpha}}D_{\eta^{\beta}}q|^2\,dx+2\int_{\Gamma}D_{\eta^1}D_{\eta^1}qD_{\eta^2}q{F^*}_2^kN^k\,dS\\
-2\int_{\Gamma}D_{\eta^1}D_{\eta^2}qD_{\eta^2}q{F^*}^k_1N^k\,dS=\int_{\Omega}J|g|^2\,dx.
\end{multline}
For $i=1,2,3$, we smoothly extend $N^i$ into $\Omega$. Recall the definition of $\zeta$, with integration by parts with respect to $x_3$, we have
\begin{multline}
\label{sfa}
\int_{\Gamma}D_{\eta^1}D_{\eta^1}qD_{\eta^2}q{F^*}_2^kN^k\,dS=\int_{\Omega}D_{\eta^1}\partial_3D_{\eta^1}qD_{\eta^2}q{F^*}_2^kN^k\zeta\,dx\\
+\int_{\Omega}\partial_3{F^{-1}}_1^i\partial_iD_{\eta^1}qD_{\eta^2}q{F^*}_2^kN^k\zeta\,dx+\int_{\Omega}D_{\eta^1}D_{\eta^1}q\partial_3(D_{\eta^2}q{F^*}_2^kN^k\zeta)\,dx,
\end{multline}
then by integration by parts for the first term on the right-hand side of \eqref{sfa}, we can get that
\begin{multline}
\int_{\Gamma}D_{\eta^1}D_{\eta^1}qD_{\eta^2}q{F^*}_2^kN^k\,dS=-\int_{\Omega}\partial_3D_{\eta^1}qJD_{\eta^1}(D_{\eta^2}q{F^{-1}}_2^kN^k\zeta)\,dx\\
+\int_{\Gamma}\partial_3D_{\eta^1}qD_{\eta^2}q{F^{-1}}_2^kN^k\zeta{F^*}_1^sN^s\,dx+\int_{\Omega}\partial_3{F^{-1}}_1^i\partial_iD_{\eta^1}qD_{\eta^2}q{F^*}_2^kN^k\zeta\,dx
\\+\int_{\Omega}D_{\eta^1}D_{\eta^1}q\partial_3(D_{\eta^2}q{F^*}_2^kN^k\zeta)\,dx.
\label{vmp1}
\end{multline}
Similarly, we also can get that
\begin{multline}
\int_{\Gamma}D_{\eta^1}D_{\eta^2}qD_{\eta^2}q{F^*}_1^kN^k\,dS=-\int_{\Omega}\partial_3D_{\eta^1}qJD_{\eta^2}(D_{\eta^2}q{F^{-1}}_1^kN^k\zeta)\,dx\\
+\int_{\Gamma}\partial_3D_{\eta^1}qD_{\eta^2}q{F^{-1}}_1^kN^k\zeta{F^*}_2^sN^s\,dx+\int_{\Omega}\partial_3{F^{-1}}_2^i\partial_iD_{\eta^1}qD_{\eta^2}q{F^*}_1^kN^k\zeta\,dx
\\+\int_{\Omega}D_{\eta^1}D_{\eta^2}q\partial_3(D_{\eta^2}q{F^*}_1^kN^k\zeta)\,dx.
\label{vmp2}
\end{multline}
Combining \eqref{vm9}, \eqref{vmp1} and \eqref{vmp2}, we have
\begin{multline*}
\int_{\Omega}J|D_{\eta^{\alpha}}D_{\eta^{\beta}}q|^2\,dx=\int_{\Omega}J|g|^2\,dx+2\int_{\Omega}\partial_3D_{\eta^1}qJD_{\eta^1}(D_{\eta^2}q{F^{-1}}_2^kN^k\zeta)\,dx
\\-2\int_{\Omega}D_{\eta^1}D_{\eta^1}q\partial_3(D_{\eta^2}q{F^*}_2^kN^k\zeta)\,dx
-2\int_{\Gamma}\partial_3D_{\eta^1}qD_{\eta^2}q{F^{-1}}_2^kN^k\zeta{F^*}_1^sN^s\,dx\\+2\int_{\Omega}D_{\eta^1}D_{\eta^2}q\partial_3(D_{\eta^2}q{F^*}_1^kN^k\zeta)\,dx-2\int_{\Omega}\partial_3D_{\eta^1}qJD_{\eta^2}(D_{\eta^2}q{F^{-1}}_1^kN^k\zeta)\,dx\\
+2\int_{\Gamma}\partial_3D_{\eta^1}qD_{\eta^2}q{F^{-1}}_1^kN^k\zeta{F^*}_2^sN^s\,dx,
\end{multline*}
Then with the estimate \eqref{d3qqq}, the relations $|F^j_i-\delta^j_i|\leq Ct$ and $|{F^*}_{\alpha}^kN_k|_{W^{1,\infty}}\leq Ct$, we have
\begin{multline}
\int_{\Omega}|D_{\eta^{\alpha}}D_{\eta^{\beta}}q|^2\,dx\leq C\|\tilde f\|_0^2+Ct\|D_{\eta}D_{\eta}q\|_0^2+Ct\|D_{\eta}D_{\eta}q\|_0\|D_{\eta}q\|_0.
\end{multline}
Combining this estimate with \eqref{d3qqq}, we obtain that
\begin{equation}
\int_{\Omega}|D_{\eta}D_{\eta}q|^2\,dx\leq C\|\tilde f\|_0^2+Ct\|D_{\eta}D_{\eta}q\|_0^2+Ct\|D_{\eta}D_{\eta}q\|_0\|D_{\eta}q\|_0.
\end{equation}
Now, we notice that the conditions \eqref{d3q}, \eqref{dq} and \eqref{avg} yield Poincar\'e's inequalities for $D_{\eta^{\alpha}}q$ and $D_{\eta^3}q$, so we can get
\begin{align*}
&\|D_{\eta^3}q\|_0\leq C\|\partial_3D_{\eta}q\|_0+|k(t)|\leq C\|D_{\eta^3}D_{\eta}q\|_0+C\|\Delta_{\eta\eta}q\|_0+Ct\|D_{\eta}D_{\eta}q\|_0\\
&\|D_{\eta^{\alpha}}q\|_0\leq C\|DD_{\eta^{\alpha}}q\|_0\leq C\|D_{\eta}D_{\eta^{\alpha}}q\|_0+Ct\|D_{\eta}D_{\eta}q\|_0,\quad \alpha=1,2,
\end{align*}
where we also used the estimate \eqref{kt}. Then by taking $T>0$ small enough, we finally have that
\begin{equation}
\label{qqq}
\|D_{\eta}D_{\eta}q\|_0^2\leq C\|\Delta_{\eta\eta}q\|_0^2,
\end{equation}
where $T, C$ depends on $M$.

Therefore, by using the Gronwall inequality, the ODE \eqref{ode} with initial condition \eqref{qini} implies that on $[0,T]\times\Omega$,
\begin{equation}
\Delta_{\eta\eta}q=0.
\label{qq}
\end{equation}
From \eqref{qq} and \eqref{qqq}, we infer that
\begin{equation*}
D_{\eta}q=0 \quad\text{in}\,[0,T]\times\Omega,
\end{equation*}
which proves that $D_{\eta}Y=D_{\eta}(2f-\Phi)$, and therefore $\mathfrak{F}=D_{\eta}(2f-\Phi)$ and $c(t)=0$. This finally establishes that $v$ is a solution of the $\kappa$-problem \eqref{approeq} on a time interval $[0,T_{\kappa}]$ and concludes the proof of Theorem \ref{th2}.
\section{$\kappa$-independent estimates for $\kappa$-problem \ref{approeq} and solutions to the compressible Euler--Poisson equations }
\label{ee}
In this section, we obtain $\kappa$-independent estimates for the smooth solutions to $\kappa$-problem \eqref{approeq}. This allows us to consider the limit of this sequence of solutions as $\kappa\rightarrow 0$. We prove that this limit exists, and it is the unique solution of \eqref{eq:ep}. This kind a prior estimate was developed by Coutand and Shkoller in \cite{DS_09, DS_2010} for the compressible Euler equations. Due to the presence of the potential force, we need to get a estimate for the potential force $G$ at first. The analysis difficulty arising from getting this estimate is that the possible increasing singularity of the convolution kernel, we use Taylor's formula and the Sobolev $C^{\alpha}$ estimates to overcome this difficulty. These calculations and estimates will be explained explicitly in Section \ref{sg2}, then mainly follow the arguments in \cite[Section 9]{DS_2010} to carry out a prior estimates.
\subsection{The higher-order energy function} The higher-order energy function on $[0,T_{\kappa}]$ is defined as follows:
\begin{equation}
  \begin{split}
    \tilde{E}(t;v)&=\sum_{s=0}^4 [\|\partial_t^{2s} \eta(t,.)\|_{{4-s}}^2+\|\rho_0\partial_t^{2s}\bar{\partial}^{4-s}D\eta(t,.)\|_{0}^2
    +\|\sqrt{\rho_0}\partial_t^{2s}\bar{\partial}^{4-s}v(t,.)\|_{0}^2\\&+\int_0^t\|\sqrt{\kappa}\rho_0\partial_t^{2s}\bar{\partial}^{4-s}Dv(\tau,.)\|_{0}^2\,d\tau]
    +\sum_{s=0}^3\|\rho_0\partial_t^{2s}J^{-2}\|_{4-s}^2
    \\&\quad+\| \curl_{\eta}v(t)\|_3^2+\|\rho_0\bar{\partial}^4 \curl_{\eta}v(t)\|_0^2.
\end{split}
  \label{energy}
\end{equation}
The function $\tilde E(t)$ is appropriate for our $\kappa$-independent estimates for \eqref{approeq}.
We also denote $P$ as a generic polynomial function of its arguments whose meaning may change from line to line. Let
\begin{equation}
\tilde{M}_0=P(\tilde{E}(0;v)).
\label{M0000}
\end{equation}
\subsection{Assumptions on a prior bounds on $[0,T_{\kappa}]$}
For the remainder of this section, similar to \cite[Section 9.2]{DS_2010}, we assume that we have solutions $\eta_{\kappa} \in X_{T_{\kappa}}$ on a time interval $[0,T_{\kappa}]$, and that for all such solutions, the time $T_{\kappa}>0$ is taken sufficiently small such that for $t\in[0,T_{\kappa}]$ and $\xi\in\mathbb{R}^3$,
\begin{multline}
\label{ini2}
\dfrac{7}{8}\leq J(t)\leq\dfrac{9}{8},\quad  \frac{7}{8}|\xi|^2\leq J^2{F^{-1}}_l^j{F^{-1}}_k^l\xi_j\xi_k,\\\text{det}\sqrt{g(\eta(t))}\leq 2\text{det}\sqrt{g(\eta_0)}=2,\quad
\|J^{-1}{F^{-1}}_r^k{F^{-1}}_s^k-\delta^r_k\delta^s_k\|_{L^{\infty}}\leq \dfrac{1}{4}.
\end{multline}
and
\begin{equation}
\label{eta}
\dfrac{7}{8}|x-y|\leq|\eta(x,t)-\eta(y,t)|\leq \dfrac{9}{8}|x-y|.
\end{equation}
We further assume that our solutions satisfy the bounds
\begin{align}
\nonumber&\|\eta(t)\|_{H^{3.5}(\Omega)}\leq 2|e|^2_{3.5}+1,\\
\nonumber&\|\partial_t^av(t)\|^2_{H^{3-a/2}(\Omega)}\leq 2\|\partial_t^av(0)\|^2_{H^{3-a/2}(\Omega)}+1 \quad\text{for}\quad a=0,1,\dots,6,\\
\nonumber&\|\rho_0\partial_t^{2a}\eta(t)\|_{H^{4.5-a/2}(\Omega)}^2\leq2\|\rho_0\partial_t^a\eta(0)\|_{H^{4.5-a/2}(\Omega)}^2+1 \quad\text{for} \quad a=0,1,\dots,7,\\
&\|\sqrt{\kappa}\partial_t^{2a+1}v(t)\|^2_{H^{3-a}(\Omega)}\leq 2\|\sqrt{\kappa}\partial_t^{2a+1}v(0)\|_{H^{3-a}(\Omega)}^2+1 \quad\text{for} \quad a=0,1,2,3.
\label{ini}
\end{align}
The right-hand sides in the inequalities \eqref{ini} will be denoted by a generic constant $C$ in the estimates below. We will show that this can be achieved in a time interval independent of $\kappa$. We continue to assume that $\rho_0$ is smooth coming from our approximation \eqref{smoothrho}.
\subsection{The estimates for $G$}
\label{sg2}
As we mentioned in the introduction, our main obstacle is how to overcome the increased singularity of the kernel when we estimate the derivatives of $G$. In this subsection, we will show that we can get a good estimate of $G$ with
the help of the Sobolev embedding inequality ($C^{\alpha}$ estimate) and Taylor's formula, which will be used to reduce the increased singularity.
\begin{proposition}
\label{ineq:g}
For all $t\in(0,T)$, where we take $T\in(0,T_{\kappa}),$
\begin{equation}
\label{gest}
\sum_{a=0}^4[\|\sqrt{\rho_0}\bar{\partial}^{4-a}\partial_t^{2a}G(t)\|_0^2+\int_0^t\kappa\|\sqrt{\rho_0}\bar{\partial}^{4-a}\partial_t^{2a+1}G\|_0^2\,dt'] \leq \tilde{M}_0+CP(\sup_{t\in[0,T]}\tilde{E}(t)).
\end{equation}
\end{proposition}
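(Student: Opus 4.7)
The plan is to start from the explicit representation \eqref{forceF} and change variables, so that for each pair $(a, 4-a)$ we can write
\[
\bar\partial^{4-a}\partial_t^{2a} G^i(x,t) = \sum \text{(kernel terms)} \cdot \text{(volume integrals over $\Omega$)},
\]
where the kernel arises from distributing derivatives onto $1/|\eta(x,t)-\eta(z,t)|$ and the integrands contain products of $F^{-1}$, $F^*$, $J^{-1}$, $\rho_0$, $v$ and their derivatives. The scheme is: (i) when a derivative lands on $1/|\eta(x,t)-\eta(z,t)|$ it produces a factor of the form $(\eta(x)-\eta(z))\cdot \partial^\alpha v(x)$ (with possibly several $v$'s from repeated time derivatives), dividing by higher powers of $|\eta(x)-\eta(z)|$; (ii) I cancel one spatial singularity by Taylor's formula $v(x,t)-v(z,t) = Dv(x^*,t)(x-z)$ and replace $|\eta(x)-\eta(z)|$ by $|x-z|$ through \eqref{eta}; (iii) the resulting operator is bounded on $L^2(\Omega)$ by Young's inequality for convolution against $1/|x|\in L^1(\Omega)$.

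First I would bound the pure spatial derivative term ($a=0$), following exactly the pattern of Section \ref{sg1}: all four tangential derivatives are pushed through the $\bar F^T$ prefactor, and after changing the variable $y\mapsto z$ we are left with $\int_\Omega \frac{1}{|\eta(x)-\eta(z)|} \bar J D_{\bar\eta}^a(\rho_0\bar J^{-1})(z)\,dz$, which Young's inequality bounds by $P(\|\eta\|_{H^4}, \|\rho_0 D\eta\|_{H^4})$; the weight $\sqrt{\rho_0}$ can even be dropped at this level. Next I would handle the mixed terms $1\le a\le 3$: each time derivative can either land on $F^*,F^{-1},J^{-1}$, producing $v$-derivatives that are absorbed into the energy $\tilde E(t)$ via the a priori assumptions \eqref{ini}, or it can land on the kernel, producing factors like
\[
\frac{(\eta(x)-\eta(z))\cdot \partial_t^{p_1} v(x,t)-\partial_t^{p_1} v(z,t))}{|\eta(x)-\eta(z)|^3}\cdots
\]
which are handled by the Taylor/$C^\alpha$ trick as displayed in the introduction: the worst $\partial_t^{p} v$ for $p\le 2a-1$ is placed in $L^\infty$ via Sobolev embedding and a priori bounds, while the highest $\partial_t^{2a} v$ is kept in $L^2$, paired with a weight $\sqrt{\rho_0}$ when needed to match the energy. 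The top-order term $a=4$ requires placing $\partial_t^8(\rho_0J^{-1})$ suitably; after integration by parts in $x$ or by commuting $\partial_t$ with the change of variables this reduces to $\|\sqrt{\rho_0}\partial_t^8 \eta\|_0$-type quantities already in $\tilde E(t)$.

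For the integrated $\kappa$-term $\int_0^t \kappa \|\sqrt{\rho_0}\bar\partial^{4-a}\partial_t^{2a+1}G\|_0^2\,dt'$, I apply exactly the same decomposition with one additional $\partial_t$. This extra time derivative either yields one more power of $v$ in the kernel (handled by Taylor as above) or produces $\partial_t^{2a+1}v$ which, by the last line of \eqref{ini}, has an a priori bound with the factor $\sqrt\kappa$; multiplying by the prefactor $\sqrt\kappa$ and integrating in time closes the estimate against $\tilde M_0 + P(\sup\tilde E)$.

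The main obstacle is the simultaneous occurrence of high time order and the worst kernel singularity: when $\partial_t^{2a}$ is fully applied to the kernel itself, terms of the form $K(\eta,v)^j / |\eta(x)-\eta(z)|^{2j+1}$ appear, and only one Taylor factor $|x-z|$ is available per $v$-difference. The key check is that the combinatorics always leaves enough $v$-differences (hence enough $|x-z|$ cancellations) to reduce the kernel to at worst $1/|x-z|^2$, which is still in $L^1_{\mathrm{loc}}(\mathbb R^3)$; lower-order factors $\partial_t^{p}v$ with $p < 2a$ are placed in $L^\infty$ via $W^{1,\infty}\hookleftarrow H^{3-p/2}$ and the bounds \eqref{ini}, while at most one factor at the top order stays in $L^2$. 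Summing the resulting contributions over the $2^{4-a+2a}$ Leibniz terms and over $a=0,\dots,4$ produces the claimed bound.
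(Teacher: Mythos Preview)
Your plan captures the overall spirit—distribute derivatives between kernel and integrand, tame kernel singularities via Taylor/Sobolev, and close with Young's inequality—but the treatment of the pure spatial term $\|\sqrt{\rho_0}\bar\partial^4 G\|_0$ has a genuine gap. You propose to follow Section~\ref{sg1} verbatim, pushing $\bar\partial^4$ through the chain-rule factor $F^T$ to land on integrals $\int_\Omega |\eta(x)-\eta(z)|^{-1}\,J\,D_\eta^a(\rho_0 J^{-1})(z)\,dz$. Since $G$ already carries one $D_\eta$, this forces $a=5$; converting $D_\eta^5=(F^{-1}D)^5$ back to Lagrangian coordinates produces a top-order term $\rho_0 D^5(J^{-1})\sim \rho_0 D^6\eta$. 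Neither the $\|\rho_0 D\eta\|_{H^4}$ you quote nor the actually needed $\|\rho_0 D^6\eta\|_0$ is contained in $\tilde E(t)$: the energy \eqref{energy} only carries $\|\eta\|_4$, $\|\rho_0\bar\partial^4 D\eta\|_0$, and $\|\rho_0 J^{-2}\|_4$. Section~\ref{sg1} could afford this because there $\bar v\in Z_T$ supplies the extra weighted norm $\|\rho_0 D\bar v\|_{X_T}$; the $\kappa$-independent energy does not.

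The paper avoids this by a different decomposition (see \eqref{barG}): write $\bar\partial_x=(\bar\partial_x-\bar\partial_z)+\bar\partial_z$ and integrate the $\bar\partial_z$ piece by parts onto $\partial_k(\rho_0 F^{-1k}_i)(z)$. This keeps all four transferred derivatives \emph{tangential} in $z$, so the worst integrand term is $\bar\partial^4(\rho_0 F^{-1})\sim\rho_0\bar\partial^4 D\eta$, which \emph{is} in $\tilde E$. The $(\bar\partial_x-\bar\partial_z)$ pieces hitting the kernel generate the differences $R'(\eta)=(\eta(x)-\eta(z))\cdot(\bar\partial\eta(x)-\bar\partial\eta(z))$; for high powers of these one needs not only Taylor's formula (gain $|x-z|$) but also the $C^{1/2}$ Sobolev embedding $|\bar\partial^2\eta(x)-\bar\partial^2\eta(z)|\le C\|\eta\|_4\,|x-z|^{1/2}$ to reduce the kernel to $|x-z|^{-5/2}\in L^1(\Omega)$. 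Your outline omits this half-power device, yet it is essential for terms such as $B_3$ in the paper where two second-order differences meet. Your handling of the time-derivative endpoints is closer to correct, but the same $C^{1/2}$ trick reappears there (e.g.\ for $C_1$, $C_3$ in the $\partial_t^9 G$ analysis) and should be stated explicitly.
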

\begin{proof}
\textbf{Step 1. Estimates for $\|\sqrt{\rho_0}\bar{\partial}^4 G\|_0^2+\int_0^t\kappa\|\sqrt{\rho_0}\bar{\partial}^4\partial_tG(t',\cdot)\|_0^2\,dt'$.}

\noindent From \eqref{forceF}, we have
\begin{align}
\nonumber \bar{\partial}_x^4 G^i=&\sum_{a=0}^4c_a\int_{\Omega}(\bar\partial_x-\bar\partial_z)^a\dfrac{1}{|\eta(x,t)-\eta(z,t)|}[\bar\partial^{4-a}\partial_k(\rho_0{F^{-1}}_i^k)](z,t)\,dz.
\\=&\sum_{a=0}^4 c_ag_a^i.\label{barG}
\end{align}
In fact, by using integration by parts, it is easy to check that
\begin{align*}
\bar\partial G^i=&\int_{\Omega}(\bar\partial_x-\bar\partial_z)\dfrac{1}{|\eta(x,t)-\eta(z,t)|}\partial_k(\rho_0{F^{-1}}_i^k)\,dz\\&\quad+\int_{\Omega}\bar\partial_z\dfrac{1}{|\eta(x,t)-\eta(z,t)|}\partial_k(\rho_0{F^{-1}}_i^k)\,dz\\
=&\int_{\Omega}(\bar\partial_x-\bar\partial_z)\dfrac{1}{|\eta(x,t)-\eta(z,t)|}\partial_k(\rho_0{F^{-1}}_i^k)\,dz\\&\quad-\int_{\Omega}\dfrac{1}{|\eta(x,t)-\eta(z,t)|}\bar\partial_z\partial_k(\rho_0{F^{-1}}_i^k)\,dz,
\end{align*}
then we can get \eqref{barG} inductively.

\noindent We also denote that $R'(\eta)=(\eta(x,t)-\eta(z,t))\cdot(\bar{\partial}\eta(x,t)-\bar{\partial}\eta(z,t))$, and $R(\eta,v)=(\eta(x,t)-\eta(z,t))\cdot(v(x,t)-v(z,t))$ for notational convenience.

\noindent
When $a=0$, by the divergence theorem, we have
\begin{align*}
g_0^i=&-\int_{\Omega}\partial_k\dfrac{1}{|\eta(x,t)-\eta(z,t)|}\bar{\partial}^4(\rho_0{F^{-1}}_i^k)(z,t)\,dz\\=&C\int_{\Omega}\dfrac{(\eta(x,t)-\eta(z,t))\cdot \partial_k\eta(z,t)}{|\eta(x,t)-\eta(z,t)|^3}\bar{\partial}^4(\rho_0{F^{-1}}_i^k)\,dz.
\end{align*}
Recall that \eqref{eta}, and by Taylor's formula
\begin{equation}
\label{taylor}
\eta(x,t)-\eta(z,t)=(x-z) \cdot \int_0^1 D\eta(z+s(x-z))\,ds,
\end{equation}
we can obtain that
\begin{multline*}
|\int_{\Omega}\dfrac{(\eta(x,t)-\eta(z,t))\cdot \partial_k\eta(z,t)}{|\eta(x,t)-\eta(z,t)|^3}\bar{\partial}^4(\rho_0{F^{-1}}_i^k)\,dz|\\\leq C\|D\eta\|_{L^{\infty}(\Omega)}\int_{\Omega}\dfrac{1}{|x-z|^2}|\bar{\partial}^4(\rho_0{F^{-1}}_i^k)|\,dz.
\end{multline*}
By \eqref{dF}, and $\|\dfrac{\bar\partial\rho_0}{\rho_0}\|_{L^{\infty}(\Omega)}\leq C\|\rho_0\|_4$, which is followed by the higher-order Hardy inequality, we have
\begin{align}
\nonumber\bar{\partial}^4(\rho_0{F^{-1}}_i^k)=&\sum_{b=0}^4\bar\partial^b\rho_0\bar\partial^{4-b}{F^{-1}}_i^k\\
=&C\rho_0[(\bar\partial D\eta)^4+\bar\partial^3D\eta\bar\partial D\eta+(\bar\partial^2D\eta)^2+\bar\partial^2D\eta(\bar\partial D\eta)^2+\bar\partial^4D\eta]+\text{l.o.t},
\label{d4eta}
\end{align}
where $C$ depends on $\|{F^{-1}}\|_{L^{\infty}(\Omega)}$ which is close to $1$ by \eqref{ini2}, and $\|\rho_0\|_4$. Thus,
\begin{align*}
\|g_0\|_0^2\leq & C(|D\eta|_{L^{\infty}(\Omega)})\|\dfrac{1}{|x|^2}\|_{L^1(\Omega)}^2\|\bar{\partial}^4(\rho_0{F^{-1}}_i^k)\|_0^2
\\\leq &CP(\|D\eta\|_{L^{\infty}(\Omega)},\|D^2\eta\|_{L^{\infty}(\Omega)})(\|\rho_0\bar\partial^4D\eta\|_0^2+\|\bar\partial^3D\eta\|_0^2+\|\bar\partial^2D\eta\|_{L^4(\Omega)}^4)
\\\leq & CP(\sup_{t\in[0,T]}\tilde{E}(t)).
\end{align*}
On the other hand, we have
\begin{align*}
\partial_tg_0^i=&C\int_{\Omega}\partial_t\partial_k\dfrac{1}{|\eta(x,t)-\eta(z,t)|}\bar{\partial}^4(\rho_0{F^{-1}}_i^k)(z,t)\,dz
\\&+C\int_{\Omega}\partial_k\dfrac{1}{|\eta(x,t)-\eta(z,t)|}\bar{\partial}^4(\rho_0\partial_t{F^{-1}}_i^k)(z,t)\,dz
\\=&C\int_{\Omega}\dfrac{(v(x,t)-v(z,t))\cdot \partial_k\eta(z,t)}{|\eta(x,t)-\eta(z,t)|^3}\bar{\partial}^4(\rho_0{F^{-1}}_i^k)\,dz\\&+C\int_{\Omega}\dfrac{(\eta(x,t)-\eta(z,t))\cdot \partial_kv(z,t)}{|\eta(x,t)-\eta(z,t)|^3}\bar{\partial}^4(\rho_0{F^{-1}}_i^k)\,dz\\&+C\int_{\Omega}\dfrac{(\eta(x,t)-\eta(z,t))\cdot \partial_k\eta(z,t)R(\eta,v)}{|\eta(x,t)-\eta(z,t)|^5}\bar{\partial}^4(\rho_0{F^{-1}}_i^k)\,dz\\
&+C\int_{\Omega}\partial_k\dfrac{1}{|\eta(x,t)-\eta(z,t)|}\bar{\partial}^4(\rho_0\partial_t{F^{-1}}_i^k)(z,t)\,dz,
\end{align*}
and then by using Taylor's formula similarly and \eqref{eta}, we can obtain that
\begin{multline*}
\|\partial_tg_0\|_0^2 \leq CP(\|D\eta\|_{L^{\infty}(\Omega)},\|Dv\|_{L^{\infty}(\Omega)})
\|\dfrac{1}{|x|^2}\|_{L^1(\Omega)}^2[\|\bar{\partial}^4(\rho_0{F^{-1}}_i^k)\|_0^2\\+\|\bar{\partial}^4(\rho_0\partial_t{F^{-1}}_i^k)\|_0^2].
\end{multline*}
With \eqref{d4eta} and
\begin{align*}
\nonumber\bar{\partial}^4(\rho_0\partial_t{F^{-1}}_i^k)=&\sum_{b=0}^4\bar\partial^b\rho_0\bar\partial^{4-b}\partial_t{F^{-1}}_i^k\\\nonumber
=&C\rho_0[((\bar\partial D\eta)^4+\bar\partial^2D\eta(\bar\partial D\eta)^2+(\bar\partial^2 D\eta)^2+\bar\partial^3D\eta\bar\partial D\eta+\bar\partial^4D\eta)Dv\\
&+(\bar\partial^3D\eta+\bar\partial^2D\eta\bar\partial D\eta+(\bar\partial D\eta)^3)\bar\partial Dv\\
&+(\bar\partial^2D\eta+(\bar\partial D\eta)^2)\bar\partial^2Dv+\bar\partial^3Dv\bar\partial D\eta+\bar\partial^4Dv]+\text{l.o.t},
\end{align*}
we have
\begin{align*}
\int_0^t\|\sqrt\kappa\partial_tg_0\|_0^2\,dt' &\leq CP(\|D\eta\|_{L^{\infty}(\Omega)},\|D^2\eta\|_{L^{\infty}(\Omega)},\|Dv\|_{L^{\infty}(\Omega)})\\&\times[\int_0^t(\|\rho_0\bar{\partial}^4D\eta\|_0^2+\|\sqrt\rho_0\bar{\partial}^4Dv\|_0^2\,dt'\\
&+\int_0^t\|\rho_0\bar{\partial}^2Dv\|_{L^3(\Omega)}^2\|\bar{\partial}^2D\eta\|_{L^6(\Omega)}^2+\|\bar{\partial}Dv\|_{L^6(\Omega)}^2\|\rho_0\bar{\partial}^3D\eta\|_{L^3(\Omega)}^2)\,dt']
\\&\leq \tilde{M}_0+CP(\sup_{t\in[0,T]}\tilde{E}(t)),
\end{align*}
where we used \eqref{ini} and the Sobolev embedding inequality to get that $\|\rho_0\bar\partial^2Dv\|_{L^3(\Omega)}\leq \|\rho_0\bar\partial^2Dv\|_1\leq\tilde M_0$ and $\|\bar{\partial}^2D\eta\|_{L^6(\Omega)}\leq \|\eta\|_4$.

\noindent
When $a=1$, using the divergence theorem again, we have
\begin{align*}
g_1^i=&\int_{\Omega}\partial_k(\bar\partial_x-\bar\partial_z)\dfrac{1}{|\eta(x,t)-\eta(z,t)|}[\bar{\partial}^3(\rho_0{F^{-1}}_i^k)](z,t)\,dz\\
=&C\int_{\Omega}\dfrac{\partial_k[R'(\eta)]}{|\eta(x,t)-\eta(z,t)|^3}[\bar{\partial}^3(\rho_0{F^{-1}}_i^k)](z,t)\,dz
\\&+C\int_{\Omega}\dfrac{R'(\eta)(\eta(x,t)-\eta(z,t))\cdot\partial_k\eta(x,t)}{|\eta(x,t)-\eta(z,t)|^5}[\bar{\partial}^3(\rho_0{F^{-1}}_i^k)](z,t)]\,dz.
\end{align*}
Since we have that
\begin{equation}
\label{d3f}
\bar{\partial}^3(\rho_0{F^{-1}}_i^k)=C\rho_0(\bar\partial^3D\eta+\bar\partial^2D\eta\bar\partial D\eta+(\bar\partial D\eta)^3)+\text{l.o.t}.
\end{equation}
Then, similarly, with \eqref{eta},\eqref{taylor}, and \eqref{ini}, we can get
\begin{align*}
|g_1^i|\leq CP(|D\eta|_{L^{\infty}},|D^2\eta|_{L^{\infty}}) \int_{\Omega}|\dfrac{1}{|x-z|^2}[\bar{\partial}^3(\rho_0{F^{-1}}_i^k)](z,t)|\,dz,
\end{align*}
and
\begin{align*}
\|g_1\|_0^2&\leq CP(\|D\eta\|_{L^{\infty}(\Omega)},\|D^2\eta\|_{L^{\infty}(\Omega)})\|\dfrac{1}{|x|^2}\|_{L^1(\Omega)}^2\|\rho_0\bar{\partial}^3D\eta\|_0^2\\
\leq & CP(\sup_{t\in[0,T]}\tilde{E}(t)).
\end{align*}
Next, we have that
\begin{align*}
\partial_tg_1^i=&C\int_{\Omega}\partial_t\partial_k(\bar\partial_x-\bar\partial_z)\dfrac{1}{|\eta(x,t)-\eta(z,t)|}[\bar{\partial}^3(\rho_0{F^{-1}}_i^k)](z,t)\,dz
\\&+C\int_{\Omega}\partial_k(\bar\partial_x-\bar\partial_z)\dfrac{1}{|\eta(x,t)-\eta(z,t)|}[\partial_t\bar{\partial}^3(\rho_0\partial_t{F^{-1}}_i^k)](z,t)\,dz
\\=&C\int_{\Omega}\dfrac{\partial_t\partial_k[R'(\eta)]}{|\eta(x,t)-\eta(z,t)|^3}[\bar{\partial}^3(\rho_0{F^{-1}}_i^k)](z,t)\,dz
\\&+C\int_{\Omega}\dfrac{\partial_k[R'(\eta)]R(\eta,v)}{|\eta(x,t)-\eta(z,t)|^5}[\bar{\partial}^3(\rho_0{F^{-1}}_i^k)](z,t)\,dz
\\&+\underbrace{C\int_{\Omega}\dfrac{\partial_t[R'(\eta)(\eta(x,t)-\eta(z,t))\cdot\partial_k\eta(z,t)]}{|\eta(x,t)-\eta(z,t)|^5}[\bar{\partial}^3(\rho_0{F^{-1}}_i^k)](z,t)\,dz}_{B_1}
\\&+C\int_{\Omega}\dfrac{[R'(\eta)(\eta(x,t)-\eta(z,t))\cdot\partial_k\eta(z,t)]R(\eta,v)}{|\eta(x,t)-\eta(z,t)|^7}[\bar{\partial}^3(\rho_0{F^{-1}}_i^k)](z,t)\,dz
\\&+C\int_{\Omega}\partial_k(\bar{\partial}_x-\bar\partial_z)\dfrac{1}{|\eta(x,t)-\eta(z,t)|}[\bar{\partial}^3(\rho_0\partial_t{F^{-1}}_i^k)](z,t)\,dz.
\end{align*}
We first estimate $B_1$. By a straightforward computation, we have
\begin{align*}
Z(x,z,t):=&\dfrac{\partial_t(R'(\eta)(\eta(x,t)-\eta(z,t))\cdot \partial_k\eta(z,t)}{|\eta(x,t)-\eta(z,t)|^5}\\
=&\dfrac{(\eta(x,t)-\eta(z,t))\cdot(\bar\partial v(x,t)-\bar\partial v(z,t))(\eta(x,t)-\eta(z,t))\cdot \partial_k\eta(z,t)}{|\eta(x,t)-\eta(z,t)|^5}\\
&+\dfrac{(v(x,t)-v(z,t))\cdot(\bar\partial \eta(x,t)-\bar\partial \eta(z,t))(\eta(x,t)-\eta(z,t))\cdot \partial_k\eta(z,t)}{|\eta(x,t)-\eta(z,t)|^5}\\
&+\dfrac{R'(\eta)[(v(x,t)-v(z,t))\cdot \partial_k\eta(z,t)+(\eta(x,t)-\eta(z,t))\cdot\partial_k v(z,t)]}{|\eta(x,t)-\eta(z,t)|^5},
\end{align*}
then by Taylor's formula, the Sobolev embedding theorem
\begin{equation}
\label{calpha}
|\bar\partial v(x,t)-\bar\partial v(z,t)|\leq C \|v\|_3|x-z|^{1/2},
\end{equation}
and \eqref{eta}, we can obtain that
\begin{multline*}
|Z(x,z,t)|\leq C\dfrac{1}{|x-z|^2}(|D\eta||D^2\eta(\tilde x(x,z))||D v(\tilde x(x,z))|+|Dv||D^2\eta(\tilde x(x,z))|)\\
+C\dfrac{1}{|x-z|^{5/2}}|D\eta|\|v\|_3.
\end{multline*}
Thus, we have
\begin{align*}
B_1=&\int_{\Omega}Z(x,z,t)[\bar\partial^3(\rho_0{F^{-1}}_i^k)](z,t)\,dz\\
\leq&C(\|D\eta\|_{L^{\infty}(\Omega)},\|v\|_3)\int_{\Omega}\dfrac{1}{|x-z|^{\frac 5 2}}[\bar{\partial}^3(\rho_0{F^{-1}}_i^k)](z,t)|\,dz.
\end{align*}
Combining with \eqref{d3f}, and using Young's inequality for convolution, we can obtain that,
\begin{align}
\nonumber\|B_1\|_0^2\leq& CP(\|D\eta\|_{L^{\infty}(\Omega)},\|v\|_3)\|\dfrac{1}{|x|^{\frac 5 2}}\|_{L^1(\Omega)}^2\|\bar{\partial}^3(\rho_0{F^{-1}}_i^k)\|_{0}^{2}
\\\nonumber&+CP(\|D\eta\|_{L^{\infty}(\Omega)},\|D^2\eta\|_{L^{\infty}(\Omega)},\|Dv\|_{L^{\infty}(\Omega)})\|\dfrac{1}{|x|^2}\|_{L^1(\Omega)}^2\|\bar{\partial}^3(\rho_0{F^{-1}}_i^k)\|_0^2\\
\leq& \tilde{M}_0+CP(\sup_{t\in[0,T]}\tilde{E}(t)).
\label{tio}
\end{align}
As we show in the case $a=0$, the $L^2$-norm of other terms can also be bounded by $\tilde{M}_0+CP(\sup_{t\in[0,T]}\tilde{E}(t))$, thus
\begin{equation*}
\int_0^t\|\kappa\partial_tg_1\|_0^2\,dt' \leq \tilde{M}_0+CP(\sup_{t\in[0,T]}\tilde{E}(t)).
\end{equation*}
When $a=2$, we have
\begin{align*}
g_2^i=&\int_{\Omega}(\bar{\partial}_x-\bar\partial_z)^2\dfrac{1}{|\eta(x,t)-\eta(z,t)|}[\bar{\partial}^2\partial_k(\rho_0{F^{-1}}_i^k)](z,t)\,dz\\
=&C\int_{\Omega}\dfrac{(\bar{\partial}_x-\bar\partial_z)R'(\eta)}{|\eta(x,t)-\eta(z,t)|^3}[\bar{\partial}^2\partial_k(\rho_0{F^{-1}}_i^k)](z,t)\,dz\\&+
C\int_{\Omega}\dfrac{R'^2(\eta)}{|\eta(x,t)-\eta(z,t)|^5}[\bar{\partial}^2\partial_k(\rho_0{F^{-1}}_i^k)](z,t)\,dz,
\end{align*}
and
\begin{align*}
\partial_tg_2^i=&\int_{\Omega}\partial_t(\bar{\partial}_x-\bar\partial_z)^2\dfrac{1}{|\eta(x,t)-\eta(z,t)|}[\bar{\partial}^2\partial_k(\rho_0{F^{-1}}_i^k)](z,t)\,dz\\
&+\int_{\Omega}(\bar{\partial}_x-\bar\partial_z)^2\dfrac{1}{|\eta(x,t)-\eta(z,t)|}[\partial_t\bar{\partial}^2\partial_k(\rho_0{F^{-1}}_i^k)](z,t)\,dz
\\=&C\int_{\Omega}\dfrac{\partial_t(\bar{\partial}_x-\bar\partial_z)R'(\eta)}{|\eta(x,t)-\eta(z,t)|^3}[\bar{\partial}^2\partial_k(\rho_0{F^{-1}}_i^k)](z,t)\,dz\\&+
C\int_{\Omega}\dfrac{\partial_tR'^2(\eta)}{|\eta(x,t)-\eta(z,t)|^5}[\bar{\partial}^2\partial_k(\rho_0{F^{-1}}_i^k)](z,t)\,dz\\
&+C\int_{\Omega}\dfrac{\partial_t(\bar{\partial}_x-\bar\partial_z)R'(\eta)}{|\eta(x,t)-\eta(z,t)|^3}[\bar{\partial}^2\partial_k(\rho_0{F^{-1}}_i^k)](z,t)\,dz
\\&+C\int_{\Omega}\dfrac{(\bar{\partial}_x-\bar\partial_z)R'(\eta)R(\eta,v)}{|\eta(x,t)-\eta(z,t)|^5}[\bar{\partial}^2\partial_k(\rho_0{F^{-1}}_i^k)](z,t)\,dz
\\&+C\int_{\Omega}\dfrac{R'^2(\eta)R(\eta,v)}{|\eta(x,t)-\eta(z,t)|^7}[\bar{\partial}^2\partial_k(\rho_0{F^{-1}}_i^k)](z,t)\,dz
\\&+C\int_{\Omega}(\bar{\partial}_x-\bar\partial_z)^2\dfrac{1}{|\eta(x,t)-\eta(z,t)|}[\partial_t\bar{\partial}^2\partial_k(\rho_0{F^{-1}}_i^k)](z,t)\,dz.
\end{align*}
With \eqref{eta}, \eqref{taylor}, and $\bar{\partial}^2\partial_k(\rho_0{F^{-1}}_i^k)$ scales like $\rho_0 D^4\eta+D^3\eta+\text{l.o.t}$, $\partial_t\bar{\partial}^2\partial_k(\rho_0{F^{-1}}_i^k)$ scales like $\rho_0 D^4v+D^3v+\text{l.o.t}$, and by a similar argument as we did for the case $a=1$, we have
\begin{align*}
\|g_2\|_0^2\leq CP(\sup_{t\in[0,T]}\tilde{E}(t)),
\end{align*}
and
\begin{align*}
\|\partial_tg_2\|_0^2\leq  \tilde{M}_0+CP(\sup_{t\in[0,T]}\tilde{E}(t)).
\end{align*}
When $a=3$, we have
\begin{align*}
g_3^i=&\int_{\Omega}(\bar{\partial}_x-\bar\partial_z)^3\dfrac{1}{|\eta(x,t)-\eta(z,t)|}[\bar{\partial}\partial_k(\rho_0{F^{-1}}_i^k)](z,t)\,dz\\
=&\underbrace{C\int_{\Omega}\dfrac{(\bar{\partial}_x-\bar\partial_z)^2R'(\eta)}{|\eta(x,t)-\eta(z,t)|^3}[\bar{\partial}\partial_k(\rho_0{F^{-1}}_i^k)](z,t)\,dz}_{B_2}\\&+
C\int_{\Omega}\dfrac{(\bar{\partial}_x-\bar\partial_z)(R'^2(\eta))}{|\eta(x,t)-\eta(z,t)|^5}[\bar{\partial}\partial_k(\rho_0{F^{-1}}_i^k)](z,t)\,dz
\\&+C\int_{\Omega}\dfrac{R'^3(\eta)}{|\eta(x,t)-\eta(z,t)|^7}[\bar{\partial}\partial_k(\rho_0{F^{-1}}_i^k)(z,t)]\,dz,
\end{align*}
First, we estimate $B_2$. Since
\begin{multline*}
(\bar\partial_x-\bar\partial_z)^2R'(\eta)=C(\bar\partial\eta(x,t)-\bar\partial\eta(z,t))\cdot(\bar\partial^2\eta(x,t)-\bar\partial^2\eta(z,t))\\
+C(\eta(x,t)-\eta(z,t))\cdot(\bar\partial^3\eta(x,t)-\bar\partial^3\eta(z,t)),
\end{multline*}
then with \eqref{ini}, \eqref{taylor} and \eqref{eta}, $\|B_2\|_0^2$ can be bounded by
\begin{align}
\nonumber CP(&\|D\eta\|_{L^{\infty}(\Omega)},\|D^2\eta\|_{L^{\infty}(\Omega)})\|\int_{\Omega}\dfrac{1+\bar\partial^3\eta(x,t)-\bar\partial^3\eta(z,t)}{|x-z|^2}|\rho_0 D^3\eta|\,dz\|_0^2\\
&\leq \|\dfrac{1}{|x|^2}\|_{L^1}^2(\|D^3\eta\|_1^2\|D^3\eta\|_{1}^2).
\label{b1}
\end{align}
The $L^2$ norm of other terms can be estimated by
\begin{equation*} CP(\|D\eta\|_{L^{\infty}(\Omega)},\|D^2\eta\|_{L^{\infty}(\Omega)})\|\dfrac{1}{|x|^2}\|_{L^1(\Omega)}^2(\|\rho_0D^3\eta\|_0^2),
\end{equation*}
then we have
\begin{equation*}
\|g_3\|_0^2 \leq  \tilde{M}_0+CP(\sup_{t\in[0,T]}\tilde{E}(t)).
\end{equation*}
With the same methodology we showed above, and
\begin{align*}
\partial_tg_3^i=&C\int_{\Omega}(\bar{\partial}_x-\bar\partial_z)^3\dfrac{1}{|\eta(x,t)-\eta(z,t)|}\bar{\partial}\partial_t\partial_k(\rho_0{F^{-1}}_i^k)(z,t)\,dz\\
&+C\int_{\Omega}\partial_t\dfrac{(\bar{\partial}_x-\bar\partial_z)^2R'(\eta)}{|\eta(x,t)-\eta(z,t)|^3}\bar{\partial}_x\partial_k(\rho_0{F^{-1}}_i^k)(z,t)\,dz\\&+
C\int_{\Omega}\partial_t\dfrac{(\bar{\partial}_x-\bar\partial_z)(R'^2(\eta))}{|\eta(x,t)-\eta(z,t)|^5}\bar{\partial}_x\partial_k(\rho_0{F^{-1}}_i^k)(z,t)\,dz
\\&+C\int_{\Omega}\partial_t\dfrac{R'^3(\eta)}{|\eta(x,t)-\eta(z,t)|^7}\bar{\partial}_x\partial_k(\rho_0{F^{-1}}_i^k)(z,t)\,dz,
\end{align*}
we can bounded $\|\kappa\partial_tg_3\|_0^2$ by
$\tilde{M}_0+CP(\sup_{t\in[0,T]}\tilde{E}(t))$.

\noindent When $a=4$, we have
\begin{align*}
g_4^i=&\int_{\Omega}(\bar{\partial}_x-\bar\partial_z)^4\dfrac{1}{|\eta(x,t)-\eta(z,t)|}\partial_k(\rho_0{F^{-1}}_i^k)(z,t)\,dz\\
=&C\underbrace{\int_{\Omega}\dfrac{(\bar{\partial}_x-\bar\partial_z)^3R'(\eta)}{|\eta(x,t)-\eta(z,t)|^3}\partial_k(\rho_0{F^{-1}}_i^k)(z,t)\,dz}_{B_3}\\&+
C\int_{\Omega}\dfrac{(\bar{\partial}_x-\bar\partial_z)^2(R'^2(\eta))}{|\eta(x,t)-\eta(z,t)|^5}\partial_k(\rho_0{F^{-1}}_i^k)(z,t)\,dz
\\&+C\int_{\Omega}\dfrac{(\bar{\partial}_x-\bar\partial_z)(R'^3(\eta))}{|\eta(x,t)-\eta(z,t)|^7}\partial_k(\rho_0{F^{-1}}_i^k)(z,t)\,dz
\\&+C\int_{\Omega}\dfrac{R'^4(\eta)}{|\eta(x,t)-\eta(z,t)|^9}\partial_k(\rho_0{F^{-1}}_i^k)(z,t)\,dz.
\end{align*}
First, we estimate $B_3$:
\begin{multline*}
B_3=\sum_{b=0}^3\int_{\Omega}(\bar{\partial}_x-\bar\partial_z)^b[\eta(x,t)-\eta(z,t)]\cdot(\bar{\partial}_x-\bar\partial_z)^{3-b}[\bar\partial\eta(x,t)-\bar\partial\eta(z,t)]\\\times\dfrac{1}{|\eta(x,t)-\eta(z,t)|^3}\partial_k(\rho_0{F^{-1}}_i^k)(z,t)\,dz.
\end{multline*}
For $b=0,1,3$, the integral could be estimated by using a similar argument we used for $B_2$ (see \eqref{b1}). When $b=2$, with \eqref{eta} and the Sobolev embedding inequality $|\bar{\partial}^2\eta(x,t)-\bar\partial^2\eta(z,t)|\leq C\|\eta\|_4|x-z|^{1/2}$, we have
\begin{align*}
&|\int_{\Omega}\dfrac{[\bar{\partial}^2\eta(x,t)-\bar\partial^2\eta(z,t)]\cdot(\bar{\partial}^2\eta(x,t)-\bar\partial^2\eta(z,t))}{|\eta(x,t)-\eta(z,t)|^3}\partial_k(\rho_0{F^{-1}}_i^k)(z,t)\,dz|^2
\\\leq& CP(\|\eta\|_4)|\int_{\Omega}\dfrac{1}{|x-z|^{\frac 5 2}}\partial_k(\rho_0{F^{-1}}_i^k)(z,t)\,dz|^2\\
\leq &CP(\|\eta\|_4)\|\dfrac{1}{|x|^{5/2}}\|_{L^1(\Omega)}^{2}\|D^2\eta\|_{0}^2.
\end{align*}
Thus, $\|B_3\|_0^2\leq \tilde{M}_0+CP(\sup_{t\in[0,T]}\tilde{E}(t))$.

\noindent The other terms can be estimated by applying the same methodology we showed above, and finally we have
\begin{equation*}
\|g_4\|_0^2\leq \tilde{M}_0+CP(\sup_{t\in[0,T]}\tilde{E}(t)),
\end{equation*}
For $\partial_tg_4$, we have
\begin{align*}
\partial_tg_4^i=&\int_{\Omega}\partial_t(\bar{\partial}_x-\bar\partial_z)^4\dfrac{1}{|\eta(x,t)-\eta(z,t)|}\partial_k(\rho_0{F^{-1}}_i^k)(z,t)\,dz
\\&+\int_{\Omega}(\bar{\partial}_x-\bar\partial_z)^4\dfrac{1}{|\eta(x,t)-\eta(z,t)|}\partial_t\partial_k(\rho_0{F^{-1}}_i^k)(z,t)\,dz
\end{align*}
and similarly, we can bounded $\|\partial_tg_4\|_0^2$ by $\tilde{M}_0+CP(\sup_{t\in[0,T]}\tilde{E}(t))$.

\noindent Finally, we have get that
\begin{equation*}
\|\sqrt{\rho_0}\bar{\partial}^4G\|_0^2+\int_0^t\kappa\|\sqrt{\rho_0}\bar{\partial}^4\partial_tG\|_0^2\,dt' \leq \tilde{M}_0+CP(\sup_{t\in[0,T]}\tilde{E}(t)).
\end{equation*}
\textbf{Step 2. Estimates for $\|\sqrt{\rho_0}\partial_t^8G\|_0^2+\int_0^t\kappa\|\sqrt{\rho_0}\partial_t^9G(t',\cdot)\|_0^2\,dt'$.}

\noindent By the formula \eqref{forceF}, we have
\begin{align*}
&\int_{\Omega}\kappa|\sqrt{\rho_0}\partial_t^9G^i|^2\,dx
\\=&C\int_{\Omega}\kappa|\sqrt{\rho_0}\int_{\Omega}\partial_t^9\dfrac{[\partial_k(\rho_0{F^{-1}}_i^k)](z,t)\,dz}{|\eta(x,t)-\eta(z,t)|}|^2\,dx
\\\leq &C \sum_{p=0}^9\int_{\Omega}|\sqrt{\rho_0}\int_{\Omega}\partial_t^p[\partial_k(\rho_0{F^{-1}}_i^k)](z,t)\partial_t^{9-p}\dfrac{1}{|\eta(x,t)-\eta(z,t)|}\,dz|^2\,dx,
\end{align*}
We denote
\begin{equation*}g_p^i=\int_{\Omega}\partial_t^p\partial_k(\rho_0{F^{-1}}_i^k)\partial_t^{9-p}\dfrac{1}{|\eta(x,t)-\eta(z,t)|}\,dz,
\end{equation*}
then we have $\kappa\|\sqrt{\rho_0}\partial_t^9G^i\|_0^2\leq C\sum_{p=0}^9\kappa\|\sqrt{\rho_0}g_p^i\|_0^2$.
We also denote $R(\eta,v)=(\eta(x,t)-\eta(z,t))\cdot ((v(x,t)-v(z,t))$ for notational convenience.

\noindent When $p=9$, integrating by part with respect of $z_k$, we can get
\begin{align*}
&\quad|\int_{\Omega}\dfrac{1}{|\eta(x,t)-\eta(z,t)|}\partial_k\partial_t^9(\rho_0{F^{-1}}_i^k)\,dz|\\
&=|\int_{\Omega}\dfrac{(\eta(x,t)-\eta(z,t))\cdot\partial_k\eta(z,t)}{|\eta(x,t)-\eta(z,t)|^3}\rho_0\partial_t^9({F^{-1}}_i^k)\,dz|
\\&\leq C\int_{\Omega}\dfrac{1}{|x-z|^2}|\partial_k\eta(z,t)\rho_0\partial_t^9({F^{-1}}_i^k)|\,dz,
\end{align*}
where we used \eqref{eta} and Taylor's formula \eqref{taylor}.
$\partial_t^9{F^{-1}}_i^k$ scales like $l[\partial_t^9D\eta+\partial_t^6D\eta\partial_t^3D\eta+\partial_t^5D\eta\partial_t^4D\eta+\text{l.o.t}]$ where $l$ denote $L^{\infty}$ function. Then with \eqref{ini2}, we have
\begin{align*}\|g_9^i\|_0^2\leq & C\|\dfrac{1}{|x|^2}\|_{L^1(\Omega)}^2\|\rho_0(\partial_t^9D\eta+\partial_t^6D\eta\partial_t^3D\eta+\partial_t^5D\eta\partial_t^4D\eta)\|_0^2\\
\leq &C(\|\sqrt{\rho_0}\partial_t^8Dv\|_0^2+\|\partial_t^6D\eta\|_0^2\|\rho_0\partial_t^3D\eta\|_{L^{\infty}(\Omega)}^2+\|\partial_t^4D\eta\|_1^2\|\rho_0\partial_t^5D\eta\|_{1}^2),
\end{align*}
and thus $\int_0^t\kappa\|g_9^i\|_0^2\,dt'\leq \tilde M_0+CP(\sup_{t\in[0,T]}\tilde{E}(t))$.

\noindent For the case $p=8$, similarly, after integrating by part with respect to $z_k$, and using \eqref{eta} and \eqref{taylor},
\begin{align*}
&\int_{\Omega}\partial_k\dfrac{R(\eta,v)}{|\eta(x,t)-\eta(z,t)|^3}\rho_0\partial_t^{8}({F^{-1}}_i^k)\,dz
\\=&\int_{\Omega}\dfrac{\partial_k\eta(z,t)\cdot ((v(x,t)-v(z,t))}{|\eta(x,t)-\eta(z,t)|^3}\rho_0\partial_t^{8}({F^{-1}}_i^k)\,dz\\&+\int_{\Omega}\dfrac{(\eta(x,t)-\eta(z,t))\cdot \partial_k v(z,t)}{|\eta(x,t)-\eta(z,t)|^3}\rho_0\partial_t^{8}({F^{-1}}_i^k)\,dz
\\&+C\int_{\Omega}\dfrac{R(\eta,v)(\eta(x,t)-\eta(z,t))\cdot\partial_k\eta(z,t)}{|\eta(x,t)-\eta(z,t)|^5}\rho_0\partial_t^{8}({F^{-1}}_i^k)\,dz
\\\leq &CP(\|Dv\|_{L^{\infty}(\Omega)}+\|D\eta\|_{L^{\infty}(\Omega)})\int_{\Omega}\dfrac{1}{|x-z|^2}\rho_0\partial_t^8({F^{-1}}_i^k)\,dz.
\end{align*}
$\rho_0\partial_t^8({F^{-1}}_i^k)$ scales like $l[\partial_t^8D\eta+\partial_t^5D\eta\partial_t^3D\eta+\partial_t^4D\eta\partial_t^4D\eta+\text{l.o.t}]$, thus with \eqref{ini2}, we have
\begin{align*}\|g_8^i\|_0^2\leq & C\|\dfrac{1}{|x|^2}\|_{L^1(\Omega)}^2\|\rho_0(\partial_t^8D\eta+\partial_t^5D\eta\partial_t^3D\eta+\partial_t^4D\eta\partial_t^4D\eta\|_0^2\\
\leq &C(\|\rho_0\partial_t^8D\eta\|_0^2+\|\partial_t^5D\eta\|_0^2\|\rho_0\partial_t^3D\eta\|_{L^{\infty}(\Omega)}^2+\|\partial_t^4D\eta\|_1^2\|\rho_0\partial_t^5D\eta\|_{1}^2)
\\\leq & CP(\sup_{t\in[0,T]}\tilde{E}(t)).
\end{align*}
For the case $p=7$, we used same methodology as $p=8$ to get that
\begin{align*}
|g^i_7|&\leq
|\int_{\Omega}\partial_k\dfrac{\partial_tR(\eta,v)}{|\eta(x,t)-\eta(z,t)|^3}\rho_0\partial_t^{7}({F^{-1}}_i^k)\,dz|
\\&\quad+|\int_{\Omega}\partial_k\dfrac{R^2(\eta,v)}{|\eta(x,t)-\eta(z,t)|^5}\rho_0\partial_t^{7}({F^{-1}}_i^k)\,dz|\\&\leq CP(\|Dv\|^2_{L^{\infty}(\Omega)},\|D^2\eta\|_{L^{\infty}(\Omega)},\|\partial_t^2\eta\|_{L^{\infty}(\Omega)},\|D\partial_t^2\eta\|_{L^{\infty}(\Omega)})
\\&\quad\quad\times|\int_{\Omega}\dfrac{1}{|x-z|^2}\rho_0\partial_t^{7}({F^{-1}}_i^k)\,dz|
\\&+CP(\|Dv\|^2_{L^{\infty}(\Omega)},\|D\eta\|_{L^{\infty}(\Omega)},\|v\|_{L^{\infty}(\Omega)})
|\int_{\Omega}\dfrac{1}{|x-z|^2}\rho_0\partial_t^{7}({F^{-1}}_i^k)\,dz|,\end{align*}
and then we can obtain
\begin{align*}
\|g^i_7\|_0^2&\leq C\|\dfrac{1}{|x|^2}\|_{L^1(\Omega)}^2\|\rho_0\partial_t^7({F^{-1}}_i^k)\|_0^2\leq C(\|\rho_0\partial_t^7D\eta\|_0^2+\|\partial_t^3D\eta\|_1^2\|\partial_t^4D\eta\|_1^2)
\\&\leq CP(\sup_{t\in[0,T]}\tilde{E}(t)).
\end{align*}
For the case $p=6$, we have
\begin{align*}
|g_6^i|&\leq|\int_{\Omega}\partial_k\dfrac{\partial_t^2R(\eta,v)}{|\eta(x,t)-\eta(z,t)|^3}(\rho_0\partial_t^{6}({F^{-1}}_i^k))\,dz|
\\&\quad +|\int_{\Omega}\partial_k\dfrac{\partial_t(R^2(\eta,v))}{|\eta(x,t)-\eta(z,t)|^5}(\rho_0\partial_t^{6}({F^{-1}}_i^k))\,dz|
\\&\quad +|\int_{\Omega}\partial_k\dfrac{R^3(\eta,v)}{|\eta(x,t)-\eta(z,t)|^7}(\rho_0\partial_t^{6}({F^{-1}}_i^k))\,dz|\\
&\leq CP(\|Dv\|_{L^{\infty}(\Omega)},\|\partial_t^2\eta\|_{L^{\infty}(\Omega)}, \|\eta\|_{2})|\partial_k\partial_t^2v||\int_{\Omega}\dfrac{1}{|x-z|^2}(\rho_0\partial_t^{6}({F^{-1}}_i^k))\,dz|\\
&\quad+CP(\|Dv\|_{L^{\infty}(\Omega)},\|D\eta\|_{L^{\infty}(\Omega})\dfrac{1}{|x-z|^2}(\rho_0\partial_t^{6}({F^{-1}}_i^k))\,dz|.
\end{align*}
With H\"older's inequality, Young's inequality for convolution and the Sobolev embedding inequality, we can obtain that
\begin{align*}
\|g_6^i\|_0^2&\leq CP(\|Dv\|_{L^{\infty}(\Omega)},\|\partial_t^2\eta\|_{L^{\infty}(\Omega)}, \|\eta\|_{2})\|\dfrac{1}{|x|^2}\|_{L^1(\Omega)}^2\\&\quad\quad\times\|D\partial_t v\|_1^2(\|\rho_0\partial_t^6D\eta\|_{H^{1/2}}^2+\|\partial_t^3D\eta\|_1^4)\\
&\leq \tilde M_0+ CP(\sup_{t\in[0,T]}\tilde E(t)),
\end{align*}
where we also used \eqref{ini2}.

\noindent
For the case $p=5$,
\begin{align*}
|g_5^i|\leq &C|\int_{\Omega}\dfrac{\partial_t^3R(\eta,v)}{|\eta(x,t)-\eta(z,t)|^3}\partial_k(\rho_0\partial_t^{5}({F^{-1}}_i^k))\,dz|
\\&+C|\int_{\Omega}\dfrac{\partial_t^2(R^2(\eta,v))}{|\eta(x,t)-\eta(z,t)|^5}\partial_k(\rho_0\partial_t^{5}({F^{-1}}_i^k))\,dz|
\\&+C|\int_{\Omega}\dfrac{\partial_t(R^3(\eta,v))}{|\eta(x,t)-\eta(z,t)|^7}\partial_k(\rho_0\partial_t^{5}({F^{-1}}_i^k))\,dz|
\\&+C|\int_{\Omega}\dfrac{R^4(\eta,v)}{|\eta(x,t)-\eta(z,t)|^9}\partial_k(\rho_0\partial_t^{5}({F^{-1}}_i^k))\,dz|\\
\leq & CP(\|\eta\|_{4},\|v\|_{3},\|\partial_t^2\eta\|_{3},\|\partial_t^4\eta\|_2)\int_{\Omega}\dfrac{1}{|x-z|^2}\partial_k(\rho_0\partial_t^{5}({F^{-1}}_i^k))\,dz.
\end{align*}
Then with \eqref{ini2}, Young's inequality for convolution, we have
\begin{align*}
\|g_5^i\|_0^2&\leq CP(\|\eta\|_{4},\|v\|_{3},\|\partial_t^2\eta\|_{3},\|\partial_t^4\eta\|_2)\|\dfrac{1}{\|x|^2}\|_{L^1(\Omega)}^2(\|\rho_0D^2\partial_t^5\eta\|_0^2+\|\partial_t^5D\eta\|_0^2
\\&\quad+\|\partial_t^4D\eta\|_1^2\|\partial_tD^2\eta\|_1^2+\|\partial_t^3D\eta\|_1^2\|\partial_t^2D^2\eta\|_1^2)
\\&\leq \tilde M_0+ CP(\sup_{t\in[0,T]}\tilde E(t)).
\end{align*}
For the case $p=4$,
\begin{align*}
|g_4^i|\leq
&C|\int_{\Omega}\dfrac{\partial_t^4[R(\eta,v)]}{|\eta(x,t)-\eta(z,t)|^3}\partial_k(\rho_0\partial_t^{4}({F^{-1}}_i^k))\,dz|
\\&+C|\int_{\Omega}\dfrac{\partial_t^3(R^2(\eta,v))}{|\eta(x,t)-\eta(z,t)|^5}\partial_k(\rho_0\partial_t^{4}({F^{-1}}_i^k))\,dz|
\\&+C|\int_{\Omega}\dfrac{\partial_t^2(R^3(\eta,v))}{|\eta(x,t)-\eta(z,t)|^7}\partial_k(\rho_0\partial_t^{4}({F^{-1}}_i^k))\,dz|
\\&+C|\int_{\Omega}\dfrac{\partial_t(R^4(\eta,v))}{|\eta(x,t)-\eta(z,t)|^9}\partial_k(\rho_0\partial_t^{4}({F^{-1}}_i^k))\,dz|
\\&+C|\int_{\Omega}\dfrac{R^5(\eta,v)}{|\eta(x,t)-\eta(z,t)|^{11}}\partial_k(\rho_0\partial_t^{4}({F^{-1}}_i^k))\,dz|\\
\leq&CP(\|v\|_2,\|\eta\|_2,\|\partial_t^2\eta\|_2)\int_{\Omega}\dfrac{1+|\partial_t^4v(x,t)-\partial_t^4v(z,t)|}{|x-z|^2}\partial_k(\rho_0\partial_t^{4}({F^{-1}}_i^k))\,dz,\\
\end{align*}
and by \eqref{ini2}, H\"older's inequality, the Sobolev embedding inequalities and Young's inequality, we have
\begin{align*}
\|g_4^i\|_0^2&\leq CP(\|v\|_2,\|\eta\|_2,\|\partial_t^2\eta\|_2)\|\dfrac{1}{|x|^2}\|_{L^1(\Omega)}^2\|\partial_t^4v\|_1^2\|\partial_k(\rho_0\partial_t^{4}({F^{-1}}_i^k))\|_1^2\\
&\leq CP(\|v\|_2,\|\eta\|_2,\|\partial_t^2\eta\|_2)\|\partial_t^4v\|_1^2(\|\rho_0\partial_t^4D^2\eta\|_1^2+\|\partial_t^4D\eta\|_1^2\\&\quad\quad+\|\rho_0\partial_t^3D\eta\|_2^2\|\partial_tD^2\eta\|_1^2)\\
&\leq \tilde M_0+ CP(\sup_{t\in[0,T]}\tilde E(t)).
\end{align*}
For the case $p=3$
\begin{align*}
|g_3^i|\leq
&C|\int_{\Omega}\dfrac{\partial_t^5[R(\eta,v)]}{|\eta(x,t)-\eta(z,t)|^3}\partial_k(\rho_0\partial_t^{3}({F^{-1}}_i^k))\,dz|
\\&+C|\int_{\Omega}\dfrac{\partial_t^4[R^2(\eta,v)]}{|\eta(x,t)-\eta(z,t)|^5}\partial_k(\rho_0\partial_t^{3}({F^{-1}}_i^k))\,dz|
\\&+C|\int_{\Omega}\dfrac{\partial_t^3[R^3(\eta,v)]}{|\eta(x,t)-\eta(z,t)|^7}\partial_k(\rho_0\partial_t^{3}({F^{-1}}_i^k))\,dz|
\\&+C|\int_{\Omega}\dfrac{\partial_t^2[R^4(\eta,v)]}{|\eta(x,t)-\eta(z,t)|^9}\partial_k(\rho_0\partial_t^{3}({F^{-1}}_i^k))\,dz|
\\&+C|\int_{\Omega}\dfrac{\partial_t[R^5(\eta,v)]}{|\eta(x,t)-\eta(z,t)|^{11}}\partial_k(\rho_0\partial_t^{3}({F^{-1}}_i^k))\,dz|
\\&+C|\int_{\Omega}\dfrac{[R^6(\eta,v)]}{|\eta(x,t)-\eta(z,t)|^{13}}\partial_k(\rho_0\partial_t^{3}({F^{-1}}_i^k))\,dz|
\\\leq &CP(\|v\|_3,\|\eta\|_4,\|\partial_t^3\eta\|_2)(\underbrace{\int_{\Omega}\dfrac{|\partial_t^3\eta(x,t)-\partial_t^3\eta(z,t)|}{|x-z|^3}\partial_k(\rho_0\partial_t^{3}({F^{-1}}_i^k))\,dz}_{C_1}\\
&\quad+\underbrace{\int_{\Omega}\dfrac{1+|\partial_t^5v(x,t)-\partial_t^5v(z,t)|}{|x-z|^2}\partial_k(\rho_0\partial_t^{3}({F^{-1}}_i^k))\,dz}_{C_2}).
\end{align*}
Then we use $|\partial_t^3\eta(x,t)-\partial_t^3\eta(x,t)|\leq C\|\partial_t^3\eta\|_2|x-z|^{1/2}$ to estimate $C_1$ and $L^3$-$L^6$ H\"older's inequality to estimate $C_2$, and with \eqref{ini2} to get
\begin{align*}
\|g_3^i\|_0^2\leq \tilde M_0+ CP(\sup_{t\in[0,T]}\tilde E(t)).
\end{align*}
For the case $p=2$,
\begin{align*}
|g_2^i|\leq
&C|\int_{\Omega}\dfrac{\partial_t^6[R(\eta,v)]}{|\eta(x,t)-\eta(z,t)|^3}\partial_k(\rho_0\partial_t^2({F^{-1}}_i^k))\,dz|
\\&+C|\int_{\Omega}\dfrac{\partial_t^5[R^2(\eta,v)]}{|\eta(x,t)-\eta(z,t)|^5}\partial_k(\rho_0\partial_t^2({F^{-1}}_i^k))\,dz|
\\&+C|\int_{\Omega}\dfrac{\partial_t^4[R^3(\eta,v)]}{|\eta(x,t)-\eta(z,t)|^7}\partial_k(\rho_0\partial_t^2({F^{-1}}_i^k))\,dz|
\\&+C|\int_{\Omega}\dfrac{\partial_t^3[R^4(\eta,v)]}{|\eta(x,t)-\eta(z,t)|^9}\partial_k(\rho_0\partial_t^2({F^{-1}}_i^k))\,dz|
\\&+C|\int_{\Omega}\dfrac{\partial_t^2[R^5(\eta,v)]}{|\eta(x,t)-\eta(z,t)|^{11}}\partial_k(\rho_0\partial_t^2({F^{-1}}_i^k))\,dz|
\\&+C|\int_{\Omega}\dfrac{\partial_t[R^6(\eta,v)]}{|\eta(x,t)-\eta(z,t)|^{13}}\partial_k(\rho_0\partial_t^2({F^{-1}}_i^k))\,dz|
\\&+C|\int_{\Omega}\dfrac{[R^7(\eta,v)]}{|\eta(x,t)-\eta(z,t)|^{15}}\partial_k(\rho_0\partial_t^2({F^{-1}}_i^k))\,dz|
\end{align*}
\begin{align*}
\leq &CP(\|v\|_3,\|\eta\|_4,\|\partial_t^3\eta\|_2)(\int_{\Omega}\dfrac{|\partial_t^4\eta(x,t)-\partial_t^4\eta(z,t)|}{|x-z|^3}\partial_k(\rho_0\partial_t^{2}({F^{-1}}_i^k))\,dz\\
&\quad+\int_{\Omega}\dfrac{1+|\partial_t^6v(x,t)-\partial_t^6v(z,t)|}{|x-z|^2}\partial_k(\rho_0\partial_t^{2}({F^{-1}}_i^k))\,dz).
\end{align*}
Then by a similar argument we used for $p=3$, we have
\begin{align*}
\|g_2^i\|_0^2\leq \tilde M_0+ CP(\sup_{t\in[0,T]}\tilde E(t)).
\end{align*}
For the case $p=1$, we have
\begin{align*}
|g_1^i|\leq
&C|\int_{\Omega}\dfrac{\partial_t^7[R(\eta,v)]}{|\eta(x,t)-\eta(z,t)|^3}\partial_k(\rho_0\partial_t({F^{-1}}_i^k))\,dz|
\\&+C|\int_{\Omega}\dfrac{\partial_t^6[R^2(\eta,v)]}{|\eta(x,t)-\eta(z,t)|^5}\partial_k(\rho_0\partial_t({F^{-1}}_i^k))\,dz|
\\&+C|\int_{\Omega}\dfrac{\partial_t^5[R^3(\eta,v)]}{|\eta(x,t)-\eta(z,t)|^7}\partial_k(\rho_0\partial_t({F^{-1}}_i^k))\,dz|
\\&+C|\int_{\Omega}\dfrac{\partial_t^4[R^4(\eta,v)]}{|\eta(x,t)-\eta(z,t)|^9}\partial_k(\rho_0\partial_t({F^{-1}}_i^k))\,dz|
\\&+C|\int_{\Omega}\dfrac{\partial_t^3[R^5(\eta,v)]}{|\eta(x,t)-\eta(z,t)|^{11}}\partial_k(\rho_0\partial_t({F^{-1}}_i^k))\,dz|
\\&+C|\int_{\Omega}\dfrac{\partial_t^2[R^6(\eta,v)]}{|\eta(x,t)-\eta(z,t)|^{13}}\partial_k(\rho_0\partial_t({F^{-1}}_i^k))\,dz|
\\&+C|\int_{\Omega}\dfrac{\partial_t[R^7(\eta,v)]}{|\eta(x,t)-\eta(z,t)|^{15}}\partial_k(\rho_0\partial_t({F^{-1}}_i^k))\,dz|
\\&+C|\int_{\Omega}\dfrac{[R^8(\eta,v)]}{|\eta(x,t)-\eta(z,t)|^{17}}\partial_k(\rho_0\partial_t({F^{-1}}_i^k))\,dz|
\\\leq&CP(\|v\|_3,\|\eta\|_4,\|\partial_t^4\eta\|_2)(\int_{\Omega}\dfrac{|\partial_t^4\eta(x,t)-\partial_t^4\eta(z,t)|}{|x-z|^3}\partial_k(\rho_0\partial_t({F^{-1}}_i^k))\,dz\\
&\quad+\underbrace{\int_{\Omega}\dfrac{|\partial_t^3\eta(x,t)-\partial_t^3\eta(z,t)|}{|x-z|^3}|\partial_t^4v(x,t)-\partial_t^4v(z,t)|\partial_k(\rho_0\partial_t({F^{-1}}_i^k))\,dz}_{C_3}
\\&\quad+\int_{\Omega}\dfrac{1+|\partial_t^7v(x,t)-\partial_t^7v(z,t)|}{|x-z|^2}\partial_k(\rho_0\partial_t({F^{-1}}_i^k))\,dz).
\end{align*}
We use the Sobolev embedding inequality, H\"older's inequality and Young's inequality to estimate $C_3$ as
\begin{align*}
C_3\leq &C\|\partial_t^3\eta\|_2|\partial_t^4v||\int_{\Omega}\dfrac{1}{|x-z|^{5/2}}\partial_k(\rho_0\partial_t({F^{-1}}_i^k))\,dz|\\
&+C\|\partial_t^3\eta\|_2|\int_{\Omega}\dfrac{1}{|x-z|^{5/2}}|\partial_t^4v|\partial_k(\rho_0\partial_t({F^{-1}}_i^k))\,dz|
\end{align*}
and
\begin{align*}
\|C_3\|_0^2\leq C\|\partial_t^3\eta\|_2^2\|\partial_t^4v\|_{L^6(\Omega)}^2\|\dfrac{1}{|x-z|^{5/2}}\|_{L^1(\Omega)}^2\|\partial_k(\rho_0\partial_t({F^{-1}}_i^k)\|_{L^3(\Omega)}^2.
\end{align*}
Thus, we can obtain that
\begin{align*}
\|g_1^i\|_0^2\leq \tilde M_0+ CP(\sup_{t\in[0,T]}\tilde E(t)).
\end{align*}
For the case $p=0$,
\begin{align*}
|g_0^i|\leq
&C|\int_{\Omega}\dfrac{\partial_t^8[R(\eta,v)]}{|\eta(x,t)-\eta(z,t)|^3}\partial_k(\rho_0({F^{-1}}_i^k))\,dz|
\\&+C|\int_{\Omega}\dfrac{\partial_t^7[R^2(\eta,v)]}{|\eta(x,t)-\eta(z,t)|^5}\partial_k(\rho_0({F^{-1}}_i^k))\,dz|
\\&+C|\int_{\Omega}\dfrac{\partial_t^6[R^3(\eta,v)]}{|\eta(x,t)-\eta(z,t)|^7}\partial_k(\rho_0({F^{-1}}_i^k))\,dz|
\\&+C|\int_{\Omega}\dfrac{\partial_t^5[R^4(\eta,v)]}{|\eta(x,t)-\eta(z,t)|^9}\partial_k(\rho_0({F^{-1}}_i^k))\,dz|
\\&+C|\int_{\Omega}\dfrac{\partial_t^4[R^5(\eta,v)]}{|\eta(x,t)-\eta(z,t)|^{11}}\partial_k(\rho_0({F^{-1}}_i^k))\,dz|
\\&+C|\int_{\Omega}\dfrac{\partial_t^3[R^6(\eta,v)]}{|\eta(x,t)-\eta(z,t)|^{13}}\partial_k(\rho_0({F^{-1}}_i^k))\,dz|
\\&+C|\int_{\Omega}\dfrac{\partial_t^2[R^7(\eta,v)]}{|\eta(x,t)-\eta(z,t)|^{15}}\partial_k(\rho_0({F^{-1}}_i^k))\,dz|
\\&+C|\int_{\Omega}\dfrac{\partial_t[R^8(\eta,v)]}{|\eta(x,t)-\eta(z,t)|^{17}}\partial_k(\rho_0({F^{-1}}_i^k))\,dz|
\\&+C|\int_{\Omega}\dfrac{[R^9(\eta,v)]}{|\eta(x,t)-\eta(z,t)|^{19}}\partial_k(\rho_0({F^{-1}}_i^k))\,dz|
\\\leq&CP(\|v\|_3,\|\eta\|_4,\|\partial_t^4\eta\|_2)(\int_{\Omega}\dfrac{|\partial_t^4\eta(x,t)-\partial_t^4\eta(z,t)|}{|x-z|^3}\partial_k(\rho_0({F^{-1}}_i^k))\,dz\\
&\quad+\int_{\Omega}\dfrac{|\partial_t^4\eta(x,t)-\partial_t^4\eta(z,t)|}{|x-z|^3}|\partial_t^5\eta(x,t)-\partial_t^5\eta(z,t)|\partial_k(\rho_0({F^{-1}}_i^k))\,dz
\\&\quad+\int_{\Omega}\dfrac{|\partial_t^3\eta(x,t)-\partial_t^3\eta(z,t)|}{|x-z|^3}|\partial_t^6\eta(x,t)-\partial_t^6\eta(z,t)|\partial_k(\rho_0({F^{-1}}_i^k))\,dz
\\&\quad+\int_{\Omega}\dfrac{1+|\partial_t^8v(x,t)-\partial_t^8v(z,t)|}{|x-z|^2}\partial_k(\rho_0({F^{-1}}_i^k))\,dz).
\end{align*}
By the weighted Sobolev embedding inequality \eqref{embd}, we can bound $\kappa\|\partial_t^8v\|_0^2$ by
\begin{equation*}
\kappa\|\rho_0\partial_t^8v\|^2_0+\kappa\|\rho_0\partial_t^8Dv\|_0^2.
\end{equation*}
Then combining with a similar argument we used to get the estimates for $C_2, C_3$, we obtain that
\begin{equation*}
\int_0^t\kappa\|\sqrt{\rho_0}\partial_t^9G\|_0^2\,dt' \leq \tilde M_0 +CP(\sup_{t\in[0,T]}\tilde E(t)).
\end{equation*}
Similarly, we also have $\|\sqrt{\rho_0}\partial_t^8G\|_0^2\leq \tilde M_0 +CP(\sup_{t\in[0,T]}\tilde E(t))$.

\noindent\textbf{Step 3. Estimates for $\sum_{a=1}^3[\|\sqrt{\rho_0}\bar{\partial}^{4-a}\partial_t^{2a}G(t)\|_0^2+\int_0^t\kappa\|\sqrt{\rho_0}\bar{\partial}^{4-a}\partial_t^{2a+1}G(t',\cdot)\|_0^2\,dt']$.}

\noindent Since we have provided detailed proofs of the energy estimates for the two end-point cases, the $\bar{\partial}^4$-estimates and the $\partial_t^8$-estimates, we have covered all of the estimation strategies for the rest three remaining intermediate estimates. And we can finally get that
\begin{equation*}
\sum_{a=1}^3[\|\sqrt{\rho_0}\bar{\partial}^{4-a}\partial_t^{2a}G(t)\|_0^2+\int_0^t\kappa\|\sqrt{\rho_0}\bar{\partial}^{4-a}\partial_t^{2a+1}G\|_0^2\,dt'] \leq \tilde{M}_0+CP(\sup_{t\in[0,T]}\tilde{E}(t)).
\end{equation*}
\end{proof}
After we getting the estimates for $G$, the remainder of a prior estimates in this section are quite similar to \cite[Section 9]{DS_2010}. For a self-contained presentation, we still carry out the proof. We also combine some ideas from \cite{MJ_2010} to make this proof a little simpler.
\subsection{Curl Estimates}
\begin{proposition}
\label{propc}
For all $t \in(0,T)$, where we take $T \in(0,T_{\kappa}),$
\begin{align}
\nonumber\sum_{a=0}^3\|& \curl\partial_t^{2a}\eta(t)\|_{3-a}^2+\sum_{l=0}^4\|\rho_0\bar{\partial}^{4-l}\curl\partial_t^{2l}\eta(t)\|_0^2\\
+&\sum_{l=0}^4\int_0^t\|\sqrt{\kappa}\rho_0 \curl_{\eta}\bar{\partial}^{4-l}\partial_t^{2l}v(s)\|_0^2 \leq \tilde{M}_0+CTP(\sup_{t\in[0,T]}\tilde{E}(t)).
\label{curlest}
\end{align}
\end{proposition}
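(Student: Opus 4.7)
The plan is to start from the Lagrangian identity \eqref{c01} derived in Section \ref{sec6},
\[
(\curl_{\eta} v_t)^k \;=\; -\kappa\,\epsilon_{kji}\,v^r_{,s}{F^{-1}}_j^s\bigl[(2\rho_0 J^{-1}-\Phi)_{,l}{F^{-1}}_r^l\bigr]_{,m}{F^{-1}}_i^m,
\]
which shows that the Eulerian curl is not transported (as for pure Euler) but satisfies a forced equation of order $\kappa$. Combined with the commutator identity $\partial_t\curl_\eta v=\curl_\eta v_t+[\partial_t,\curl_\eta]v$, where $[\partial_t,\curl_\eta]$ scales like $\partial_t F^{-1}\cdot D$ and hence like $Dv\cdot F^{-2}\cdot D$, this gives an expression for $\partial_t\curl_\eta v$ made of a quadratic commutator piece plus a $\kappa$-multiple of a third-order forcing. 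Time integration will then convert bounds on $\curl_\eta v$ into bounds on $\curl \eta$, using $\curl v=\curl_\eta v+({F^{-1}}-I)\cdot Dv$ with ${F^{-1}}-I=O(t)$, so that the algebra and topology of the two curls are comparable modulo a factor $T$ on the right-hand side.

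I would then handle the two sums in the target inequality separately. For the unweighted sum $\sum_{a=0}^3\|\curl\partial_t^{2a}\eta(t)\|_{3-a}^2$, I apply $\partial_t^{2a}$ to the curl equation, use $\curl\partial_t^{2a}\eta(t)=\int_0^t\curl\partial_t^{2a}v$, and perform straightforward Sobolev estimates at level $H^{3-a}$. All commutator terms are quadratic in $D\eta$, $Dv$ and their $\partial_t$-derivatives and so sit below the controlled level of $\tilde E$; the $\kappa$ forcing contributes $\kappa\int_0^t\|\cdots\|_{3-a}$ which is absorbed into $CTP(\sup \tilde E)$ because the worst high-derivative factor is multiplied either by $\rho_0$ (Hardy) or by $Dv$ in $L^\infty$, and $\Phi$-contributions are bounded by Proposition \ref{ineq:g}. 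For the weighted sum $\sum_{l=0}^4\|\rho_0\bar\partial^{4-l}\curl\partial_t^{2l}\eta(t)\|_0^2$ together with the viscous integral $\int_0^t\|\sqrt\kappa\rho_0\curl_\eta\bar\partial^{4-l}\partial_t^{2l}v\|_0^2$, I differentiate the evolution equation for $\partial_t\curl_\eta v$ with $\rho_0\bar\partial^{4-l}\partial_t^{2l-1}$, test against $\rho_0\bar\partial^{4-l}\partial_t^{2l}\curl_\eta v$, and integrate by parts in $t$; the $\kappa\partial_t$ structure on the right-hand side of \eqref{c01} furnishes, after this integration, precisely the sign-definite dissipation $\int_0^t\|\sqrt{\kappa}\rho_0\,\curl_\eta\bar\partial^{4-l}\partial_t^{2l}v\|_0^2$ that appears on the left of \eqref{curlest}, while the non-dissipative top terms are of the schematic form $\rho_0\bar\partial^{4-l}\partial_t^{2l}\bigl(v\cdot DF^{-1}\cdot D^2f\bigr)$ and are estimated by Cauchy--Schwarz, Hardy \eqref{embd}, and Proposition \ref{ineq:g}.

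The remainder terms split into three categories: purely geometric factors ($F^{-1}$, $J^{-1}$, their time/tangential derivatives), which are handled by the a priori bounds \eqref{ini2}--\eqref{ini} together with $\rho_0\!\cdot\!$Hardy; potential-force contributions coming from the $\Phi$ part of $2\rho_0 J^{-1}-\Phi$, bounded directly by the newly proven estimate \eqref{gest}; and commutators between $\bar\partial^{4-l}\partial_t^{2l}$ and $\curl_\eta$, which are strictly of lower differential order and inductively controlled. Finally the transition from $\curl_\eta$ to $\curl$ in the statement uses $\curl\partial_t^{2l}\eta=\int_0^t\curl\partial_t^{2l}v$ and the identity $\curl v=\curl_\eta v+\epsilon_{\cdot jk}({F^{-1}}_j^r-\delta_j^r)v^k_{,r}$; since ${F^{-1}}-I=\int_0^t \partial_t F^{-1}$, the correction is bounded by $CTP(\sup\tilde E)$.

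The main obstacle is the simultaneous presence of four tangential derivatives and no normal derivative in the term with $l=0$: the forcing in \eqref{c01} has two Eulerian derivatives on $f=2\rho_0J^{-1}-\Phi$, so $\bar\partial^4$ of it is a top-order object of the form $\rho_0\bar\partial^4 D^2\eta+\bar\partial^4 D\Phi$, and controlling it against $\rho_0\bar\partial^4\curl_\eta v$ requires integrating by parts one $\bar\partial$ back onto the test function while keeping the $\rho_0$-weight intact. This is exactly the place where the enhanced regularity given by Proposition \ref{ineq:g} for $\sqrt{\rho_0}\bar\partial^4 G$ is used in a non-trivial way; without it the boundary contribution from the integration by parts would not close. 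I plan to organise the energy hierarchy by induction from $l=4$ (pure time derivatives, essentially a standard $\partial_t^8$ curl estimate) down to $l=0$ (pure tangential, the hardest case), so that at each stage the commutator remainders are bounded by previously controlled norms and only the highest new piece needs to be treated carefully.
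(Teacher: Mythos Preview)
Your plan contains a structural misconception about the curl equation. The right-hand side of \eqref{c01} is $-\kappa\,\epsilon_{kji}v^r_{,s}{F^{-1}}_j^s[D_{\eta^r}(2\rho_0J^{-1}-\Phi)]_{,m}{F^{-1}}_i^m$, which is $\kappa$ times a nonlinear expression; it is \emph{not} of the form $\kappa\partial_t(\cdot)$ applied to anything containing $\curl_\eta v$. Consequently, testing the differentiated curl equation against $\rho_0\bar\partial^{4-l}\partial_t^{2l}\curl_\eta v$ does not produce a sign-definite dissipation $\int_0^t\|\sqrt{\kappa}\rho_0\curl_\eta\bar\partial^{4-l}\partial_t^{2l}v\|_0^2$ on the left-hand side. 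The $\kappa\partial_t$ structure lives in the full momentum equation \eqref{lagc}, where it indeed yields such dissipation for the \emph{full gradient} (this is Section~\ref{energyestimate}); but after applying $\curl_\eta$ that structure is gone, since $\curl_\eta D_\eta(2\rho_0J^{-1}-\Phi)\equiv 0$ kills the $\kappa\partial_t$ term up to the commutator, leaving only the forcing $Q$ in \eqref{c01}. Your proposed energy identity for the curl therefore does not close.

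The paper obtains all three families of terms in \eqref{curlest} by a different, non-energy mechanism: one writes $\curl_\eta v(t)=\curl u_0+\int_0^t[B+Q]$ and estimates each piece pointwise in the relevant norm, converting top-order $Dv$ factors to $D\eta$ by integration by parts in time. The crucial device you are missing is Lemma~\ref{elli}: since \eqref{lagc} reads $D_\eta(2\rho_0J^{-1}-\Phi)+\kappa\partial_t[D_\eta(2\rho_0J^{-1}-\Phi)]=-v_t$, Lemma~\ref{elli} gives $\kappa$-independent control of $D_\eta(2\rho_0J^{-1}-\Phi)$ (and of its time integrals and higher derivatives) in terms of $v_t$ and $\eta$. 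This is what makes the forcing $Q$---which contains $D[D_\eta(2\rho_0J^{-1}-\Phi)]$ and, after $D^3$ or $\rho_0\bar\partial^4$, top-order objects not directly in $\tilde E$---tractable at the highest order without any energy cancellation. The $\sqrt{\kappa}$-weighted integrals on the left of \eqref{curlest} are then bounded, not extracted as dissipation, by applying Jensen's inequality to the time-integrated formula and once more invoking Lemma~\ref{elli} on the $\kappa^{3/2}$-scaled identity obtained from \eqref{lagc}. Without Lemma~\ref{elli}, the $\kappa$-independence of the bound on $Q$ at level $H^{3-a}$ or $\rho_0\bar\partial^4L^2$ is not clear, and your sketch does not address it.
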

\begin{proof}
Letting $\curl_{\eta}$ act on \eqref{lagappro}, we can obtain the identity
\begin{equation}
\label{c0}
(\curl_{\eta} v_t)^k = -\kappa \epsilon_{kji}v^r_{,s}{F^{-1}}_j^s[(2\rho_0J^{-1}-\Phi)_{,l}{F^{-1}}_r^l]_{,m}{F^{-1}}_i^m,
\end{equation}
and then
\begin{equation}
\label{c1}
\partial_t(\curl_{\eta} v)^k =\epsilon_{kji}{F^{-1}}_{t\,j}^sv^i_{,s} -\kappa \epsilon_{kji}v^r_{,s}{F^{-1}}_j^s[(2\rho_0J^{-1}-\Phi)_{,l}{F^{-1}}_r^l]_{,m}{F^{-1}}_i^m.
\end{equation}
Defining the $k$-th component of the vector field $B(F^{-1},Dv)$ by
\begin{equation*}
B^{k}(F^{-1},Dv)=-\epsilon_{kji}{F^{-1}}_r^sv^r_{,l}{F^{-1}}^l_jv^i_{,s},
\end{equation*}
and the $k$-th component of the vector field $Q$ by
\begin{equation*}
Q^{k}(F^{-1},Dv)=-\kappa \epsilon_{kji}v^r_{,s}{F^{-1}}_j^s[(2\rho_0J^{-1}-\Phi)_{,l}{F^{-1}}_r^l]_{,m}{F^{-1}}_i^m.
\end{equation*}
Applying the fundamental theorem of calculus, we can get
\begin{equation}
\label{c2}
\curl_{\eta}v(t)=\curl u_0+\int_0^t [B(F^{-1}(\tau),Dv(\tau))+Q(\tau)]\,d\tau
\end{equation}
Acting $D$ on \eqref{c2} and using the fundamental theorem of calculus again, we finally obtain that
\begin{align}
\label{c3}
\nonumber D\curl\eta(t)=&tD\curl u_0-\epsilon_{\cdot ji}\int_0^t{F^{-1}}_{t\,j}^s(\tau)\,d\tau D\eta^i_{,s}\\
\nonumber &+ \epsilon_{\cdot ji}\int_0^t[{F^{-1}}_{t\,j}^sD\eta^i_{,s}-D{F^{-1}}^s_j v^i_{,s}]\,d\tau\\
&+\int_0^t\int_0^{\tau}[DB({F}^{-1}(t'),Dv(t'))+DQ(t')]\,dt'\,d\tau.
\end{align}
\textbf{Step 1. Estimate for $\curl \eta$.} Acting $D^2$ on \eqref{c3}, with $\partial_t {F^{-1}}^s_j = -{F^{-1}}^s_lv^l_{,p}{F^{-1}}^p_j$ and $D{F^{-1}}^s_j=-{F^{-1}}^s_lD\eta^l_{,p}{F^{-1}}^p_j$, we see that terms arising from the action of $D^2$ on the first three terms on the right-hand side of \eqref{c3} are bounded by $\tilde{M}_0+CTP(\sup_{t\in[0,T]}\tilde{E}(t))$. Since
\begin{equation*}
DB^k(F^{-1},Dv)=-\epsilon_{kji}[Dv^i_{,s}{F^{-1}}^s_lv^l_{,p}{F^{-1}}^p_j+v^i_{,s}{F^{-1}}^s_lDv^l_{,p}{F^{-1}}_j^p+v^i_{,s}v^l_{,p}D({F^{-1}}^s_l{F^{-1}}^p_j)],
\end{equation*}
the highest-order term arising from the action of $D^2$ on $DB(F^{-1},Dv)$ is written as
\begin{equation*}
-\epsilon_{kji}\int_0^t\int_0^{\tau}[D^3v^i_{,s}{F^{-1}}^s_lv^l_{,p}{F^{-1}}^p_j+v^i_{,s}{F^{-1}}^s_lD^3v^l_{,p}{F^{-1}}_j^p]\,dt'\,d\tau.
\end{equation*}
Both summands in the integral scale like $D^4vDv F^{-1}F^{-1}$. Integrating by parts in time,
\begin{multline*}
\int_0^t\int_0^{\tau}D^4v Dv F^{-1}F^{-1}\,dt'\,d\tau = - \int_0^t\int_0^{\tau}D^4\eta \partial_t(Dv F^{-1}F^{-1})\,dt'\,d\tau\\+\int_0^tD^4\eta Dv F^{-1}F^{-1}\,dt',
\end{multline*}
from which it follows that
\begin{equation*}
\|\int_0^t\int_0^{\tau}D^3B(F^{-1}(t'),Dv(t'))\,dt'\,d\tau\|_0^2\leq CTP(\sup_{t\in[0,T]}\tilde{E}(t)).
\end{equation*}
We next estimate the term associated with $Q$. Since
\begin{align*}
DQ^k=&-\kappa \epsilon_{kji}[Dv^r_{,s}{F^{-1}}_j^s[(2\rho_0J^{-1}-\Phi)_{,l}{F^{-1}}_r^l]_{,m}{F^{-1}}_i^m\\&+v^r_{,s}[(2\rho_0J^{-1}-\Phi)_{,l}{F^{-1}}_r^l]_{,m}D({F^{-1}}_j^s{F^{-1}}_i^m)\\
&+v^r_{,s}{F^{-1}}_j^sD[(2\rho_0J^{-1}-\Phi)_{,l}{F^{-1}}_r^l]_{,m}{F^{-1}}_i^m],
\end{align*}
then the highest-order term arising from the action of $D^2$ on $DQ$ is written as
\begin{multline*}
\kappa\epsilon_{kji}\int_0^t\int_0^{\tau}[D^3v^r_{,s}{F^{-1}}_j^s[(2\rho_0J^{-1}-\Phi)_{,l}{F^{-1}}_r^l]_{,m}{F^{-1}}_i^m\\
+v^r_{,s}{F^{-1}}_j^sD^3[(2\rho_0J^{-1}-\Phi)_{,l}{F^{-1}}_r^l]_{,m}{F^{-1}}_i^m]\,dt'\,d\tau.
\end{multline*}
The first summand in the integrand scales like $D^4v[D^2(\rho_0J^{-1})+DG]F^{-1}F^{-1}$. Recall \eqref{ineq:est00}, we have that
\begin{align*}
\|D^3G^i\|_0^2\leq &C \|\int_{\Omega}\dfrac{1}{|\eta(x,t)-\eta(z,t)|}JD_{\eta}^4(\rho_0J^{-1})\,dx\|_0^2
\\\leq &C\|\int_{\Omega}\dfrac{1}{|\eta(x,t)-\eta(z,t)|}(\rho_0D^5\eta)\,dx\|_0^2+\|\int_{\Omega}\dfrac{1}{|\eta(x,t)-\eta(z,t)|}(D^4\eta)\,dx\|_0^2,
\end{align*}
 then we can use integration by parts for the first term of right-hand side, and get that $\|DG\|_2^2\leq P(\sup_{t\in[0,T]}\tilde E(t))$.  We can also control $\|\partial_tDG\|_2^2$ by the same bound. Then the integrand can be estimated by integrating by parts in time in a similar way as we did for the terms associated to $D^3B(F^{-1},Dv)$.

The estimate for the second summand is more complicated. First, by integrating by parts in time (in the integral from $0$ to $\tau$), we have
\begin{align*}
&\kappa\int_0^t\int_0^{\tau}D^4(D_{\eta}(2f-\phi))Dv F^{-1}F^{-1}\,dt'\,d\tau\\
=&-\kappa\int_0^t\int_0^{\tau}\partial_t(DvF^{-1}F^{-1})D^4\int_0^{t'}D_{\eta}(2f-\Phi)\,dt^{''}\,dt'\,d\tau\\
&+\kappa\int_0^tDvF^{-1}F^{-1}D^4\int_0^{\tau}D_{\eta}(2f-\Phi)\,dt'\,d\tau.
\end{align*}
Now recall \eqref{lagappro}, we have that
\begin{equation}
\label{c4}
v_t+D_{\eta}(2\rho_0J^{-1}-\Phi)+\kappa[D_{\eta}(2\rho_0J^{-1}-\Phi)]_t =0,
\end{equation}
then by integrating \eqref{c4} in time twice, we can get
\begin{multline*}
\int_0^t\int_0^{\tau}D^4(D_{\eta}(2\rho_0J^{-1}-\Phi))\,dt'\,d\tau+\kappa\int_0^tD^4(D_{\eta}(2\rho_0J^{-1}-\Phi))\,d\tau \\= -D^4\eta(t)+tD^4u_0,
\end{multline*}
where we have used the fact that $D^4\eta(0)=0$ since $\eta(0)=e$.
By using Lemma \ref{elli}, we see that the following estimates hold independently of $\kappa$:
\begin{equation*}
\|\int_0^t\int_0^{\tau}D^4(D_{\eta}(2\rho_0J^{-1}-\Phi))\,dt'\,d\tau\|_0^2\leq \tilde{M}_0+C\tilde{E}(t),
\end{equation*}
then by using \eqref{c4},
\begin{equation*}
\|\int_0^t\int_0^{\tau}\kappa D^4(D_{\eta}(2\rho_0J^{-1}-\Phi))\,dt'\,d\tau\|_0^2\leq \tilde{M}_0+C\tilde{E}(t),
\end{equation*}
Thus, we finally get the estimate
\begin{equation*}
\|\int_0^t\int_0^{\tau}D^3Q\,dt'\,d\tau\|_0^2\leq CTP(\sup_{t\in[0,T]}\tilde{E}(t)),
\end{equation*}
and hence
\begin{equation*}
\sup_{t\in[0,T]}\|\curl\eta(t)\|_3^2\leq \tilde{M}_0+CTP(\sup_{t\in[0,T]}\tilde{E}(t)).
\end{equation*}
\textbf{Step 2. Estimate for $\curl v_t$.} From \eqref{c0},
\begin{equation}
\label{1c}
\curl v_t=\epsilon_{\cdot ji}\int_0^t{F^{-1}}_{t\,j}^s(t')\,dt'v^i_{t,s}+Q.
\end{equation}
Since
\begin{equation}
\kappa\partial_t[D_{\eta}(2\rho_0J^{-1}-\Phi)]+D_{\eta}(2\rho_0J^{-1}-\Phi)=-v_t,
\end{equation}
by Lemma \ref{elli}, we see that
\begin{equation}
\|D_{\eta}(2\rho_0J^{-1}-\Phi)\|_3^2\leq \tilde{M}_0+\|v_t\|_3^2.
\label{c6}
\end{equation}
from which it immediately follows that $\|Q\|_2^2\leq\tilde{M}_0+CTP(\sup_{t\in[0,T]}\tilde{E}(t))$. For later use, from equation \eqref{lagc} and the estimate \eqref{c6}, we have that
\begin{equation}
\label{4c}
\|\kappa\partial_t[D_{\eta}(2\rho_0J^{-1}-\Phi)]\|_3^2\leq \tilde{M}_0+\|v_t\|_3^2.
\end{equation}
Since the highest-order term in $D^2B(F^{-1},Dv)$ is $D^3v$, then $\|B(F^{-1},Dv)\|_2^2\leq\tilde{M}_0+CTP(\sup_{t\in[0,T]}\tilde{E}(t))$ and we can obtain
\begin{equation}
\|\curl v_t(t)\|_2^2\leq \tilde{M}_0+CTP(\sup_{t\in[0,T]}\tilde{E}(t)).
\end{equation}
\textbf{Step 3. Estimates for $\curl\partial_t^3 v$ and $\curl\partial_t^5 v$.} By time-differentiating \eqref{1c}, and estimating in the same way as we did \textbf{Step 2}, we can show that
\begin{equation*}
\|\curl \partial_t^3v\|_1^2+\|\curl\partial_t^5v\|_0^2\leq \tilde{M}_0+CTP(\sup_{t\in[0,T]}\tilde{E}(t)).
\end{equation*}
\textbf{Step 4. Estimate for $\rho_0\bar{\partial}^4\curl \eta$.} To prove this weighted estimate, we write \eqref{c2} as
\begin{equation*}
\curl v(t)= \epsilon_{jki}v^i_{,s}\int_0^t{F^{-1}}_{t\,j}^2(t')\,dt'+\curl u_0+\int_0^t[B(F^{-1},Dv)+Q](t')\,dt',
\end{equation*}
and integrate in time to find that
\begin{align}
\nonumber \curl\eta(t)=&t\curl u_0+\underbrace{\int_0^t\epsilon_{jki}v^i_{,s}\int_{0}^{t'}{F^{-1}}_{t\,j}^s(\tau)d\tau dt'}_{I_1}\\
&+\underbrace{\int_0^t\int_0^{t'}B(F^{-1},Dv)(\tau)d\tau dt'}_{I_2}+\underbrace{\int_0^t\int_0^{t'}Q(\tau)d\tau dt'}_{I_3}.
\end{align}
It follows that
\begin{equation}
\rho_0\bar{\partial}^4\curl\eta(t)=t\rho_0\bar{\partial}^4\curl u_0+\rho_0\bar{\partial}^4I_1+\rho_0\bar{\partial}^4I_2+\rho_0\bar{\partial}^4I_3.
\end{equation}
By the definition of $\tilde{M}_0$, we have $\|t\rho_0\bar{\partial}^4\curl u_0\|_0^2\leq \tilde{M}_0$, then we only need to estimate the $L^2(\Omega)$-norm of $\rho_0\bar{\partial}^4I_1+\rho_0\bar{\partial}^4I_2+\rho_0\bar{\partial}^4I_3$. We first estimate $\rho_0\bar{\partial}^4I_2$. We write $\rho_0\bar{\partial}^4I_2$ as
\begin{equation*}
\rho_0\bar{\partial}^4I_2(t)=\underbrace{\int_0^t\int_0^{t'}\epsilon_{kji}{F^{-1}}_{t\,j}^s\rho_0\bar{\partial}^4v^i_{,s}d\tau dt'}_{K_1}+\underbrace{int_0^t\int_0^{t'}\epsilon_{kji}\rho_0\bar{\partial}^4{F^{-1}}_{t\,j}^sv^i_{,s}d\tau dt'}_{K_2}+R,
\end{equation*}
where $R$ denotes remainder terms which are lower-order in the derivative count, in particular the terms with the highest derivative order in $R$ scale like $\rho_0\bar{\partial}^3Dv$ or $\rho_0\bar{\partial}^4\eta$, and hence satisfy the inequality $\|R\|_0^2\leq \tilde{M}_0+CTP(\sup_{t\in[0,T]}\tilde{E}(t))$. First we focus on the integral $K_1$, by integrating by parts in time, we get
\begin{equation*}
K_1(t)=\int_0^t\int_0^{t'}\epsilon_{kji}\partial_t^2{F^{-1}}_{j}^s\rho_0\bar{\partial}^4\eta^i_{,s}d\tau dt'+\int_0^t\epsilon_{kji}{F^{-1}}_{t\,j}^s\rho_0\bar{\partial}^4\eta^i_{,s}dt',
\end{equation*}
and hence
\begin{equation*}
\|K_1\|_0^2\leq \tilde{M}_0+CTP(\sup_{t\in[0,T]}\tilde{E}(t)).
\end{equation*}
Using the identity $\partial_t{F^{-1}}_j^s=-{F^{-1}}^s_pv^p_{,b}{F^{-1}}^b_j$, we can show that $K_2$ can be bounded in the same fashion as $K_1$, then we obtain
\begin{equation}
\|\rho_0\bar{\partial}^4I_2(t)\|_0^2\leq\tilde{M}_0+CTP(\sup_{t\in[0,T]}\tilde{E}(t)).
\end{equation}
Similarly, using the same integration-by-parts argument, we also have
\begin{equation}
\|\rho_0\bar{\partial}^4I_1(t)\|_0^2\leq\tilde{M}_0+CTP(\sup_{t\in[0,T]}\tilde{E}(t)).
\end{equation}
Now we estimate $\rho_0\bar{\partial}^4I_3$, which can be written as
\begin{multline*}
\underbrace{\kappa\int_0^t\int_0^{t'}\rho_0\bar{\partial}^4DvD(D_{\eta}(2f-\phi))F^{-1}d\tau dt'}_{J_1}\\+\underbrace{\kappa\int_0^t\int_0^{t'}\rho_0\bar{\partial}^4D(D_{\eta}(2f-\phi))DvF^{-1}d\tau dt'}_{J_2}+R,
\end{multline*}
where $R$ also denotes the remainder terms and satisfies the estimate $\|R\|_0^2\leq\tilde{M}_0+CTP(\sup_{t\in[0,T]}\tilde{E}(t))$.
Since
\begin{equation*}
D(D_{\eta}(2\rho_0J^{-1}-\Phi))+\kappa D(D_{\eta}(2\rho_0J^{-1}-\Phi))=Dv_t,
\end{equation*}
by Lemma \ref{elli}, we see that independently of $\kappa$,
\begin{equation*}
\|D(D_{\eta}(2\rho_0J^{-1}-\Phi))\|_2^2\leq \tilde{M}_0+\|v_t\|_3^2\leq \tilde{M}_0+C\tilde{E}(t),
\end{equation*}
and that
\begin{equation*}
\kappa\|D(D_{\eta}(2\rho_0J^{-1}-\Phi))\|_2^2\leq  \tilde{M}_0+C\tilde{E}(t).
\end{equation*}
Thus, by the Sobolev embedding inequality,
\begin{equation*}
\kappa\|D(D_{\eta}(2\rho_0J^{-1}-\Phi))\|_{L^{\infty}}^2\leq  \tilde{M}_0+C\tilde{E}(t).
\end{equation*}
Hence, using a similar integration-by-parts in time argument we used to estimate $K_1$ above, we can obtain
\begin{equation}
\|J_1(t)\|_0^2\leq\tilde{M}_0+CTP(\sup_{t\in[0,T]}\tilde{E}(t)).
\end{equation}
In order to estimate $J_2$, we will use the structure of the Euler--Poisson equations \eqref{lagc} again. Integrating in time twice, we see that
\begin{equation}
\label{5c}
\int_0^t\int_0^{t'}\rho_0\bar{\partial}^4D(D_{\eta}(2f-\phi)+\kappa\int_0^t\rho_0\bar{\partial}^4D(D_{\eta}(2f-\phi)=-\rho_0\bar{\partial}^4D\eta(t)+t\rho_0\bar{\partial}^4Du_0.
\end{equation}
According to Lemma \ref{elli}, independently of $\kappa$, we have
\begin{equation*}
\|\int_0^t\int_0^{t'}\rho_0\bar{\partial}^4D(D_{\eta}(2f-\Phi)\|_{L^{\infty}(0,T;L^2)}^2\leq \tilde{M}_0+C\|\rho_0\bar{\partial}^4D\eta(t)\|_0^2\leq\tilde{M}_0+C\tilde{E}(t),
\end{equation*}
and then by using \eqref{5c},
\begin{equation}
\|\kappa\int_0^t\rho_0\bar{\partial}^4D(D_{\eta}(2\rho_0J^{-1}-\Phi))\|_{L^{\infty}(0,T;L^2)}^2\leq \tilde{M}_0+C\tilde{E}(t).
\end{equation}
To estimate $J_2$, we integrate-by-parts in time and get
\begin{align*}
J_2=&\int_0^t\int_0^{t'}\kappa\int_0^{\tau}\rho_0\bar{\partial}^4D(D_{\eta}(2\rho_0J^{-1}-\Phi))(s)ds[F^{-1}Dv]_t(\tau)d\tau dt'\\
&+\int_0^t\kappa\int_0^{t'}\rho_0\bar{\partial}^4D(D_{\eta}(2\rho_0J^{-1}-\Phi))(s)dsF^{-1}Dv(t')dt',
\end{align*}
then
\begin{equation*}
\|J_2\|_0^2\leq\tilde{M}_0+CTP(\sup_{t\in[0,T]}\tilde{E}(t)),
\end{equation*}
and $\|\rho_0\bar{\partial}^4I_3(t)\|_0^2\leq \tilde{M}_0+CTP(\sup_{t\in[0,T]}\tilde{E}(t))$. Thus, we finally get
\begin{equation}
\|\rho_0\bar{\partial}^4\curl\eta(t)\|_0^2\leq\tilde{M}_0+CTP(\sup_{t\in[0,T]}\tilde{E}(t))
\end{equation}
\textbf{Step 5. Estimate for $\rho_0\bar{\partial}^3\curl v_t$.} From \eqref{1c}, we have
\begin{align*}
\|\rho_0\bar{\partial}^3\curl v_t(t)\|_0^2&\leq\|\epsilon_{\cdot ji}\rho_0\bar{\partial}^3(\int_0^t{F^{-1}}_{t\,j}^s(t')dt'v_{t,s}^i)\|_0^2+\|\rho_0\bar{\partial}^3Q(t)\|_0^2
\\&\leq CTP(\sup_{t\in[0,T]}\tilde{E}(t))+\|\rho_0\bar{\partial}^3Q(t)\|_0^2,
\end{align*}
and by \eqref{ini}, we have
\begin{multline*}
\|\rho_0\bar{\partial}^3Q(t)\|_0^2\leq C\|\rho_0\bar{\partial}^3Dv(t)\|_0^2\|\kappa D[D_{\eta}(2\rho_0J^{-1}-\Phi)]\|_{L^{\infty}(\Omega)}^2\\+C\|\kappa\rho_0\bar{\partial}^3D[D_{\eta}(2\rho_0J^{-1}-\Phi)]\|_0^2+CTP(\sup_{t\in[0,T]}\tilde{E}(t)).
\end{multline*}
First, we use \eqref{4c} and the fundamental theorem of calculus to get that
\begin{equation*}
\|\kappa D[D_{\eta}(2\rho_0J^{-1}-\Phi)]\|_{L^{\infty}(\Omega)}^2\leq C\|\kappa D_{\eta}(2\rho_0J^{-1}-\Phi)\|_3^2\leq C\tilde{M}_0+CTP(\sup_{t\in[0,T]}\tilde{E}(t)).
\end{equation*}
Then employing the fundamental theorem of calculus again, we have
\begin{equation*}
\|\rho_0\bar{\partial}^3Dv(t)\|_0^2\leq \tilde{M}_0+CTP(\sup_{t\in[0,T]}\tilde{E}(t)),
\end{equation*}
and hence $\|\rho_0\bar{\partial}^3Dv(t)\|_0^2\|D[D_{\eta}(2\rho_0J^{-1}-\Phi)]\|^2_{L^{\infty}(\Omega)}\leq\tilde{M}_0+CTP(\sup_{t\in[0,T]}\tilde{E}(t)).$

On the other hand, since
\begin{equation*}
\rho_0\kappa\partial_t\bar{\partial}^3D[D_{\eta}(2\rho_0J^{-1}-\Phi)]+\rho_0\bar{\partial}^3D[D_{\eta}(2\rho_0J^{-1}-\Phi)]=-\rho_0\bar{\partial}^3Dv_t,
\end{equation*}
by Lemma \ref{elli}, we see that independently of $\kappa$,
\begin{equation*}
\|\rho_0\bar{\partial}^3D[D_{\eta}(2\rho_0J^{-1}-\Phi)]\|\leq \tilde{M}_0+C\tilde{E}(t),
\end{equation*}
and, in turn,
\begin{equation*}
\|\kappa\rho_0\bar{\partial}^3D\partial_t[D_{\eta}(2\rho_0J^{-1}-\Phi)]\|_0^2\leq \tilde{M}_0+C\tilde{E}(t).
\end{equation*}
By the fundamental theorem of calculus, we thus get
\begin{equation*}
\|\kappa\rho_0\bar{\partial}^3D[D_{\eta}(2\rho_0J^{-1}-\Phi)]\|_0^2 \leq\tilde{M}_0+CTP(\sup_{t\in[0,T]}\tilde{E}(t)),
\end{equation*}
which shows that $\|\rho_0\bar{\partial}^3\curl v_t\|_0^2\leq\tilde{M}_0+CTP(\sup_{t\in[0,T]}\tilde{E}(t)).$

\noindent \textbf{Step 6. Estimates for $\rho_0\bar{\partial}^2\curl\partial_t^3v$, $\rho_0\bar{\partial}\curl\partial_t^5v$, and $\rho_0\curl \partial_t^7v$.} By time-differentiating \eqref{1c} and estimating as in \textbf{Step 5}, we immediately obtain the inequality
\begin{align*}
\|\rho_0&\bar{\partial}^2\curl\partial_t^3v\|_0^2+\|\rho_0\bar{\partial}\curl\partial_t^5v\|_0^2+\|\rho_0\curl \partial_t^7v\|_0^2\\
&\leq \tilde{M}_0+CTP(\sup_{t\in[0,T]}\tilde{E}(t))
\end{align*}
\textbf{Step 7. Estimate for $\sqrt{\kappa}\rho_0\curl_{\eta}\bar{\partial}^4v$.} From \eqref{c2},
\begin{align*}
\sqrt{\kappa}\rho_0\curl_{\eta}\bar{\partial}^4v(t)=&\underbrace{\sqrt{\kappa}\rho_0\bar{\partial}^4\curl u_0}_{S_1}+\underbrace{\sqrt{\kappa}\rho_0\epsilon_{\cdot ij}v^i_{,r}\bar{\partial}^4{F^{-1}}_j^r(t)}_{S_2}\\
&+\underbrace{\int_0^t\sqrt{\kappa}\rho_0\bar{\partial}^4B(F^{-1},Dv)dt'}_{S_3}+\underbrace{\int_0^t\sqrt{\kappa}\rho_0\bar{\partial}^4Q dt'}_{S_4}+R(t)
\end{align*}
where $R$ is a lower-order remainder term satisfying an inequality of the type $\int_0^T|R(t)|^2dt\leq CTP(\sup_{t\in[0,T]}\tilde{E}(t))$. It is easy to check that
$\int_0^t\|S_1\|_0^2dt'\leq t\tilde{M}_0$, and $\int_0^t\|S_2\|_0^2dt'\leq CTP(\sup_{t\in[0,T]}\tilde{E}(t))$.
By Jensen's inequality, we also have $\int_0^t\|S_3\|_0^2dt'\leq CTP(\sup_{t\in[0,T]}\tilde{E}(t))$.

\noindent The highest-order terms in $S_4$ can be written under the form
\begin{multline*}
\underbrace{\int_0^t\kappa^{\frac{3}{2}}\rho_0\bar{\partial}^4Dv F^{-1}D[D_{\eta}(2\rho_0J^{-1}-\Phi)]F^{-1}dt'}_{S_{4a}}\\
+\underbrace{\int_0^t\kappa^{\frac{3}{2}}\rho_0\bar{\partial}^4D^2[D_{\eta}(2\rho_0J^{-1}-\Phi)]v F^{-1}F^{-1}dt'}_{S_{4b}},
\end{multline*}
with all other terms being lower-order and easily estimated. By Jensen's inequality and using \eqref{ini},
\begin{equation*}
\int_0^t\|S_{4a}\|_0^2dt'\leq C\kappa\int_0^t t'\int_0^{t'}\|\sqrt{\kappa}\rho_0\bar{\partial}^4Dv(t{''})\|_0^2dt^{''}dt'\leq CT\sup_{t\in[0,T]}\tilde{E}(t).
\end{equation*}
In order to estimate the term $S_{4b}$, we will use the identity
\begin{align*}
\kappa^{\frac{3}{2}}\rho_0\bar{\partial}^4D&[D_{\eta}(2f-\Phi)]+\sqrt{\kappa}\int_0^t \rho_0\bar{\partial}^4D[D_{\eta}(2\rho_0J^{-1}-\Phi)]dt'\\&=-\sqrt{\kappa}\rho_0\bar{\partial}^4Dv(t)+\sqrt{\kappa}\rho_0\bar{\partial}^4Du_0+\kappa^{\frac{3}{2}}\rho_0\bar{\partial}^4D^2(2\rho_0J^{-1}-\Phi),
\end{align*}
which follows from differentiating the equation \eqref{lagc}. Taking the $L^2(\Omega)$-inner product of this equation with $\kappa^{\frac{3}{2}}\rho_0\bar{\partial}^4D[D_{\eta}(2\rho_0J^{-1}-\Phi)](t)$ and integrating in time, we deduce that
\begin{equation*}
\int_0^t \|\kappa^{\frac{3}{2}}\rho_0\bar{\partial}^4D[D_{\eta}(2\rho_0J^{-1}-\Phi)]\|_0^2dt'\leq \tilde{M}_0+\sup_{t\in[0,T]}\tilde{E}(t),
\end{equation*}
from which it follows, using Jensen's inequality, that
\begin{equation*}
\int_0^t\|S_{4b}\|_0^2dt'\leq CTP(\sup_{t\in[0,T]}\tilde{E}(t)),
\end{equation*}
and thus $\int_0^t\|\sqrt{\kappa}\rho_0\curl_{\eta}\bar{\partial}^4v\|_0^2dt'\leq CTP(\sup_{t\in[0,T]}\tilde{E}(t))$.

\noindent \textbf{Step 8. Estimates for $\sqrt{\kappa}\rho_0\curl_{\eta}\bar{\partial}^{4-l}\partial_t^{2l}v$, $l=1,2,3,4$.} Following the identical methodology as we used in \textbf{Step 7}, we obtain the desired inequality
\begin{equation*}
\sum_{l=1}^4\int_0^t\|\sqrt{\kappa}\rho_0\curl_{\eta}\bar{\partial}^{4-l}\partial_t^{2l}v(t')\|_0^2\,dt' \leq CTP(\sup_{t\in[0,T]}\tilde{E}(t)).
\end{equation*}
\end{proof}
\subsection{$\kappa$-independent energy estimates for tangential and time derivatives }
\label{energyestimate}
In this subsection, we take $T\in(0,T_{\kappa})$. In the estimates below we would provide a detailed explanation about how the energy \eqref{energy} is formed and how to control error terms by the higher-order energy function. We will also show that all of the estimates do not depend on the parameter $\kappa$.
\subsubsection{The $\bar{\partial}^4$-problem}
\begin{proposition}
For $\delta>0$ and letting the constant $\tilde{M}_0$ depend on $1/\delta$,
\begin{multline}
\sup_{t\in[0,T]}(\|\sqrt{\rho_0}\bar{\partial}^4v(t)\|_0^2+\|\rho_0\bar{\partial}^4D\eta(t)\|_0^2+\int_0^t\|\sqrt{\kappa}\rho_0\bar{\partial}^4Dv(s)\|_0^2\,ds)\\
\leq \tilde{M}_0+\delta\sup_{t\in[0,T]}\tilde{E}(t)+C\sqrt{T}P(\sup_{t\in[0,T]}\tilde{E}(t)).
\label{p4prop}
\end{multline}
\end{proposition}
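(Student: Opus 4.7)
\medskip

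\noindent\textbf{Proof plan for the $\bar\partial^4$-problem.}
The plan is to carry out a tangential energy estimate for the approximate system \eqref{lagappro}. First I would apply $\bar\partial^4$ to \eqref{lagappro}, multiply by $\bar\partial^4 v^i$, and integrate over $\Omega\times[0,t]$. The key structural trick is to rewrite the pressure term using the Piola identity \eqref{piola} as
\begin{equation*}
{F^{*}}_i^k\partial_k(\rho_0^2 J^{-2})=\partial_k\bigl[{F^{*}}_i^k\,\rho_0^2 J^{-2}\bigr],
\end{equation*}
so that an integration by parts in $x_k$ transfers the spatial derivative onto $\bar\partial^4 v^i$. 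The boundary term on $\Gamma$ vanishes thanks to $\rho_0=0$ on $\Gamma$ (and the fact that $\bar\partial^4$ is purely tangential). The same integration by parts handles the $\kappa$-viscous term.

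Next I would extract the two coercive pieces. Using identity \eqref{dJ}, the top-order contribution to $\bar\partial^4[{F^{*}}_i^k\rho_0^2 J^{-2}]$ is $-2\rho_0^2 J^{-3}\,{F^{*}}_i^k {F^{*}}_r^s\,\bar\partial^4\eta^r_{,s}$ (all other distributions of $\bar\partial^4$ onto $\rho_0$, ${F^{*}}_i^k$, or $J^{-2}$ carry strictly fewer derivatives of $\eta$). Pairing with $\bar\partial^4 v^i_{,k}$, symmetrizing in $(r,s)\leftrightarrow(i,k)$ and using $\partial_t\eta=v$ yields
\begin{equation*}
\tfrac{1}{2}\tfrac{d}{dt}\!\int_{\Omega}\rho_0^2 J^{-1}{F^{-1}}_r^s{F^{-1}}_i^k\,\bar\partial^4\eta^r_{,s}\,\bar\partial^4\eta^i_{,k}\,dx,
\end{equation*}
which is coercive in $\|\rho_0\bar\partial^4 D\eta\|_0^2$ by the positivity assumption \eqref{ini2}. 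The same manipulation on the $\kappa$-term produces the dissipative contribution
\begin{equation*}
2\kappa\!\int_{\Omega}\rho_0^2 J^{-1}{F^{-1}}_r^s{F^{-1}}_i^k\,\bar\partial^4 v^r_{,s}\,\bar\partial^4 v^i_{,k}\,dx,
\end{equation*}
whose time integral controls $\int_0^t\|\sqrt{\kappa}\,\rho_0\bar\partial^4 Dv\|_0^2$. Together with the time-derivative term $\tfrac{1}{2}\tfrac{d}{dt}\|\sqrt{\rho_0}\,\bar\partial^4 v\|_0^2$, this produces the full left-hand side of \eqref{p4prop}.

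I would then treat the remainders in three groups: (i) the lower-order commutators produced when $\bar\partial^4$ distributes with at most $3$ derivatives on ${F^{*}}$, $J^{-2}$, or $\rho_0$; (ii) the symmetrization error $-\tfrac12\int\partial_t(\rho_0^2J^{-1}{F^{-1}}{F^{-1}})\bar\partial^4 D\eta\cdot\bar\partial^4 D\eta$; and (iii) the forcing $\bar\partial^4(\rho_0 G^i)+\kappa\bar\partial^4(\rho_0\partial_t G^i)$. Group (i)–(ii) are bounded pointwise in time using the $L^\infty$ bounds supplied by \eqref{ini}, the higher-order Hardy inequality (Lemma \ref{hardy}) to convert $\|\bar\partial^{\leq 3}\rho_0 D\eta\|$ into weighted-$H^4$ norms, and the weighted embedding \eqref{embd}; after time-integration, each gives a contribution bounded by $\sqrt{T}\,P(\sup_{[0,T]}\tilde E)$. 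Group (iii) is handled by Proposition \ref{ineq:g} together with Cauchy–Young: the non-viscous forcing gives $\delta\|\sqrt{\rho_0}\bar\partial^4 v\|_0^2 + \delta^{-1}\|\sqrt{\rho_0}\bar\partial^4 G\|_0^2$, while the $\kappa$-forcing is paired, via Cauchy–Schwarz in $t$, with $\int_0^t\kappa\|\sqrt{\rho_0}\bar\partial^4\partial_t G\|_0^2$. Finally, summing, absorbing the $\delta$-term on the left, and invoking the curl estimate from Proposition \ref{propc} wherever $\curl\eta$ or $\curl v$ appears in the commutators yields \eqref{p4prop}.

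The main technical obstacle is the treatment of the second-highest commutators, in particular the term $\bar\partial\,{F^{*}}_i^k\cdot\bar\partial^3\partial_k(\rho_0^2J^{-2})$ and its time-differentiated analogue, because $\bar\partial^3\partial_k(\rho_0^2J^{-2})$ carries the weight $\rho_0^2$ times $\bar\partial^3D\eta$ with one more normal derivative than tangential ones; bounding this requires a careful split between purely tangential and mixed derivatives, using \eqref{pn}–\eqref{pt} to express $\bar\partial({F^{-1}}J^{-1})$ in terms of $D_\eta\bar\partial\eta$, $\mathrm{Div}_\eta\bar\partial\eta$, and $\mathrm{Curl}_\eta\bar\partial\eta$, so that the divergence piece is absorbed into the coercive term, the curl piece is dominated via Proposition \ref{propc}, and only the genuine tangential remainder remains—then handled by Hardy as above.
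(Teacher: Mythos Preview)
Your overall structure is right, but there is a genuine gap at the heart of the argument: your identification of the top-order term is incorrect, and as a consequence your ``coercive'' quadratic form controls only the Lagrangian divergence, not the full gradient.

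Concretely, you assert that when $\bar\partial^4$ acts on ${F^{*}}_i^k\rho_0^2 J^{-2}$ the only top-order contribution comes from $\bar\partial^4(J^{-2})$, and that putting $\bar\partial^4$ on ${F^{*}}_i^k$ is lower order. This is false: ${F^{*}}_i^k$ is a polynomial in $D\eta$, so $\bar\partial^4{F^{*}}_i^k$ also contains $\bar\partial^4 D\eta$. Equivalently, writing ${F^{*}}_i^k\rho_0^2 J^{-2}=\rho_0^2\,{F^{-1}}_i^k J^{-1}$ and using \eqref{pn}, the genuine top-order piece of $\bar\partial^4(\rho_0^2{F^{-1}}_i^k J^{-1})$ has \emph{three} parts, the $D_\eta$-, $\Div_\eta$-, and $\text{Curl}_\eta$-parts, not just the divergence. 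Your quadratic form
\[
\int_\Omega \rho_0^2 J^{-1}{F^{-1}}_r^s{F^{-1}}_i^k\,\bar\partial^4\eta^r_{,s}\,\bar\partial^4\eta^i_{,k}\,dx
=\int_\Omega \rho_0^2 J^{-1}\bigl(\Div_\eta\bar\partial^4\eta\bigr)^2\,dx
\]
is only the divergence squared and is not coercive in $\|\rho_0\bar\partial^4 D\eta\|_0^2$; the positivity assumption \eqref{ini2} concerns ${F^{-1}}_l^j{F^{-1}}_l^k\xi_j\xi_k$ with the \emph{same} first index summed, which is a different bilinear form from yours. The same error occurs in your $\kappa$-dissipation term.

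The paper's proof uses \eqref{pn} at top order (not only in the commutators, where you already invoke it) to produce the energy $\int\rho_0^2 J^{-1}\bigl(|D_\eta\bar\partial^4\eta|^2+|\Div_\eta\bar\partial^4\eta|^2\bigr)$, which is coercive via \eqref{ini2}, together with a curl contribution carrying the wrong sign. That curl term is then controlled by Proposition~\ref{propc} (since $\rho_0\curl_\eta\bar\partial^4\eta$ differs from $\rho_0\bar\partial^4\curl\eta$ by $O(t)\cdot\rho_0\bar\partial^4 D\eta$). Once you fix the top-order identification along these lines, the rest of your plan---the remainder bounds via integration by parts in $t$ and H\"older, and the forcing via Proposition~\ref{ineq:g}---goes through essentially as the paper does.
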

\begin{proof}
Letting $\bar{\partial}^4$ act on \eqref{lagappro}, and taking $L^2(\Omega)$-inner product of this with $\bar{\partial}^4 v^i$ yields
\begin{multline*}
\dfrac{1}{2}\dfrac{d}{dt}\int_{\Omega}\rho_0|\bar{\partial}^4v|^2\,dx+\underbrace{\int_{\Omega}\bar{\partial}^4\partial_k(\rho_0^2{F^{-1}}_i^kJ^{-1})\bar{\partial}^4v^i\,dx}_{I_1}
\\+\underbrace{\int_{\Omega}\kappa\partial_t\bar{\partial}^4\partial_k(\rho_0^2{F^{-1}}_i^kJ^{-1})\bar{\partial}^4v^i\,dx}_{I_2}
=\underbrace{\int_{\Omega}\rho_0\bar{\partial}^4G^i\bar{\partial}^4v^i\,dx}_{I_3}+\underbrace{\kappa\int_{\Omega}\rho_0\bar{\partial}^4\partial_tG^i\bar{\partial}^4v^i\,dx}_{I_4}.
\end{multline*}
To estimate $I_1$, we use \eqref{pn} and integrate by parts to obtain that
\begin{align*}
I_1=&\int_{\Omega}\rho_0^2\big(J^{-1}{F^{-1}}_r^k[D_{\eta}\bar{\partial}^4\eta]_r^i+J^{-1}{F^{-1}}_i^k \Div_{\eta}\bar{\partial}^4\eta\big)\bar{\partial}^4v^i_{,k}\,dx\\&+\underbrace{\int_{\Omega}\rho_0^2J^{-1}{F^{-1}}_r^k[\text{Curl}_{\eta}\bar{\partial}^4\eta]_i^r\bar{\partial}^4v^i_{,k}\,dx}_{I_{11}}\\
&-\int_{\Omega}\sum_{p=1}^{3}\partial_k\big\{\bigg(\bar{\partial}^p[\rho_0^2J^{-1}{F^{-1}}_r^k{F^{-1}}_i^s]\bar{\partial}^{4-p}\eta^r_{,s}
\\&\quad\quad+\bar{\partial}^p[\rho_0^2J^{-1}{F^{-1}}_i^k{F^{-1}}_r^s]\bar{\partial}^{4-p}\eta^r_{,s}\bigg)\big\}\bar{\partial}^4v^i\,dx+R
\\=&\dfrac{1}{2}\dfrac{d}{dt}\int_{\Omega}\rho_0^2J^{-1}|D_{\eta}\bar{\partial}^4\eta|^2\,dx+\dfrac{1}{2}\dfrac{d}{dt}\int_{\Omega}\rho_0^2J^{-1}|\Div_{\eta}\bar{\partial}^4\eta|^2\,dx
\\&+\dfrac{1}{2}\int_{\Omega}\rho_0^2J^{-2}J_t|D_{\eta}\bar{\partial}^4\eta|^2\,dx
-\int_{\Omega}\rho_0^2\bar{\partial}^4\eta^i_{,k}J^{-1}\partial_t{F^{-1}}_r^k[D_{\eta}\bar{\partial}^4\eta]_r^i\,dx\\
&+\dfrac{1}{2}\int_{\Omega}\rho_0^2J^{-2}J_t|\Div_{\eta}\bar{\partial}^4\eta|^2\,dx
-\int_{\Omega}\rho_0^2\bar{\partial}^4\eta^i_{,k}J^{-1}\partial_t{F^{-1}}_i^k[\Div_{\eta}\bar{\partial}^4\eta]\,dx\\
&+I_{11}+R_1+R,
\end{align*}
where
\begin{multline*}
R_1=-\int_{\Omega}\sum_{p=1}^{3}\partial_k\big\{\bigg(\bar{\partial}^p[\rho_0^2J^{-1}{F^{-1}}_r^k{F^{-1}}_i^s]\bar{\partial}^{4-p}\eta^r_{,s}
\\+\bar{\partial}^p[\rho_0^2J^{-1}{F^{-1}}_i^k{F^{-1}}_r^s]\bar{\partial}^{4-p}\eta^r_{,s}\bigg)\big\}\bar{\partial}^4v^i\,dx,
\end{multline*}
and $R=\int_{\Omega}\partial_k(\bar\partial^3[\bar{\partial}(\rho_0^2){F^{-1}}_i^kJ^{-1}])\bar\partial^4v^i\,dx.$

\noindent For $I_{11}$, we can use the following anti-symmetrization to obtain the curl structure
\begin{align*}
I_{11}=&(\sum_{i>r}+\sum_{i<r})\int_{\Omega}\rho_0^2J^{-1}{F^{-1}}_r^k\bar{\partial}^4v^i_{,k}({F^{-1}}_i^s\bar{\partial}^4\eta^r_{,s}-{F^{-1}}_r^s\bar{\partial}^4\eta^i_{,s})\,dx\\
=&\sum_{i>r}\int_{\Omega}\rho_0^2J^{-1}({F^{-1}}_r^k\bar{\partial}^4v^i_{,k}-{F^{-1}}_i^k\bar{\partial}^4v^r_{,k})({F^{-1}}_i^s\bar{\partial}^4\eta^r_{,s}-{F^{-1}}_r^s\bar{\partial}^4\eta^i_{,s})\,dx
\\=&-\dfrac{1}{2}\dfrac{d}{dt}\int_{\Omega}\rho_0^2J^{-1}|\curl_{\eta}\bar{\partial}^4\eta|^2\,dx-\dfrac{1}{2}\int_{\Omega}\rho_0^2J^{-2}J_t|\curl_{\eta}\bar{\partial}^4\eta|^2\,dx
\\&+\underbrace{\sum_{i>r}\int_{\Omega}\rho_0^2J^{-1}(\partial_t{F^{-1}}_r^k\bar{\partial}^4\eta^i_{,k}-\partial_t{F^{-1}}_i^k\bar{\partial}^4\eta^r_{,k})[\text{Curl}_{\eta}\bar{\partial}^4\eta]_i^r\,dx}_{R_{12}}.
\end{align*}
For $R_1$, we claim that
\begin{equation}
\label{rest}
\int_0^T R_1(t)\,dt \leq \tilde{M}_0+ C\sqrt{T}P(\sup_{t\in[0,T]}\tilde{E}(t)).
\end{equation}
By integrating by parts with respect to $x_k$ and then with respect to $t$, and using \eqref{piola}, we can obtain that
\begin{multline*}
\int_0^T R_1\,dt=\sum_{p=1}^3\int_0^T\int_{\Omega}\rho_0^2\partial_t\bigg(\bar{\partial}^p[J^{-1}{F^{-1}}_r^k{F^{-1}}_i^s]\bar{\partial}^{4-p}\eta^r_{,s}
\\+\bar{\partial}^p[J^{-1}{F^{-1}}_i^k{F^{-1}}_r^s]\bar{\partial}^{4-p}\eta^r_{,s}\bigg)\bar{\partial}^4\eta^i_{,k}\,dxdt\\
-\sum_{p=1}^3\int_{\Omega}\rho_0^2\bigg(\bar{\partial}^p[J^{-1}{F^{-1}}_r^k{F^{-1}}_i^s]\bar{\partial}^{4-p}\eta^r_{,s}
\\+\bar{\partial}^p[J^{-1}{F^{-1}}_i^k{F^{-1}}_r^s]\bar{\partial}^{4-p}\eta^r_{,s}\bigg)\bar{\partial}^4\eta^i_{,k}\,dx\bigg|_0^T+R_{11}
\end{multline*}
$R_{11}$ is defined by
$$
\int_0^T\int_{\Omega}\sum_{p=1,1\leq q\leq p}^{3}\bigg(\bar{\partial}^{q}(\rho_0^2)\bar{\partial}^{p-q}[J^{-1}{F^{-1}}_r^k{F^{-1}}_i^s+J^{-1}{F^{-1}}_i^k{F^{-1}}_r^s]\bar{\partial}^{4-p}\eta^r_{,s}\bigg)\bar{\partial}^4{v^i}_{,k}\,dx\,dt
$$
Notice that when $p=1$, the space-time integral on the right side hand scale like $l[\rho_0\bar{\partial}Dv\bar{\partial}^3D\eta+\rho_0\bar{\partial}D\eta\bar{\partial}^3Dv]\rho_0\bar{\partial}^4D\eta$ where $l$ denotes $L^{\infty}(\Omega)$ function. Since $\|\rho_0\bar{\partial}^4D\eta\|_0^2$ is contained in the energy function $\tilde{E}$, $\|\bar{\partial}D\eta\|_{\infty}\leq C\|D^2\eta\|_2$ with $\|D^2\eta\|_2$ being contained in $\tilde{E}$, and we can also write $\rho_0\bar{\partial}^3Dv(t)=\rho_0\bar{\partial}^3Dv(0)+\int_0^t\rho_0\bar{\partial}^3Dv_t(\tau)\,d\tau$, the second term could be estimated by using $L^{\infty}$-$L^2$-$L^2$ H\"{o}lder's inequality and be controlled by a bound indicated in \eqref{rest}. Similarly, for the first term, we use $L^4$-$L^4$-$L^2$ H\"older's inequality.

\noindent For the case that $p=2$, the space-time integral scales like $l[\rho_0\bar{\partial}^2Dv\bar{\partial}^2D\eta+\rho_0\bar{\partial}Dv(\bar{\partial}^2Dv+\bar{\partial}^2D\eta)]\rho_0\bar{\partial}^4D\eta$, the first part can be estimated by using $L^4$-$L^4$-$L^2$ H\"older's inequality, and the second part could be estimated by $L^{\infty}$-$L^2$-$L^2$ H\"{o}lder's inequality. For the case that $p=3$, the estimate is just the same as $p=1$.

Now we deal with the space integral on the right-hand side of the expression for $\int_0^TR_1\,dt$. The integral at time $t=0$ is equal to zero since $\eta(x,0)=x$. And by using the fundamental theorem of calculus the integral evaluated at $t=T$ can be written as
\begin{multline*}
-\sum_{p=1}^3\int_{\Omega}\rho_0^2\bigg(\bar{\partial}^p[J^{-1}{F^{-1}}_r^k{F^{-1}}_i^s]\bar{\partial}^{4-p}\eta^r_{,s}
\\+\bar{\partial}^p[J^{-1}{F^{-1}}_i^k{F^{-1}}_r^s]\bar{\partial}^{4-p}\eta^r_{,s}\bigg)\bar{\partial}^4\eta^i_{,k}\,dx\bigg|_{t=T}\\
=-\sum_{p=1}^3\int_{\Omega}\rho_0^2\int_0^T\bigg(\bar{\partial}^p[J^{-1}{F^{-1}}_r^k{F^{-1}}_i^s]\bar{\partial}^{4-p}\eta^r_{,s}
\\+\bar{\partial}^p[J^{-1}{F^{-1}}_i^k{F^{-1}}_r^s]\bar{\partial}^{4-p}\eta^r_{,s}\bigg)_t\,dt\bar{\partial}^4\eta^i_{,k}(T)\,dx,
\end{multline*}
which can be estimated in the identical fashion as the corresponding space-time integral. For $R_{11}$, it can be estimated in the same fashion before.
As such, we have proved that $R_1$ has the claimed bound \eqref{rest}.

For $\int_0^TR\,dt$, by integration-by-parts with respect to $x_k$ and time $t$, we have
\begin{multline*}
\int_0^T R\,dt= \int_0^T\int_{\Omega}\partial_t\bar{\partial}^3[\bar{\partial}(\rho_0^2){F^{-1}}_i^kJ^{-1}]\bar{\partial}^4{\eta^i}_{,k}\,dx\,dt\\-\int_{\Omega}\bar{\partial}^3[\bar{\partial}(\rho_0^2){F^{-1}}_i^kJ^{-1}]\bar{\partial}^4{\eta^i}_{,k}\,dx\bigg|_{t=T}
\end{multline*}
For $\int_{\Omega}\bar{\partial}^4(\rho_0^2){F^{-1}}_i^kJ^{-1}\bar{\partial}^4{\eta^i}_{,k}\,dx (T)$, by the fundamental theorem of calculus, we have
\begin{multline*}
\int_{\Omega}\bar{\partial}^4(\rho_0^2){F^{-1}}_i^kJ^{-1}\bar{\partial}^4{\eta^i}_{,k}\,dx (T)=\int_{\Omega}\bar{\partial}^4(\rho_0^2)\Div\bar\partial^4\eta\,dx(T)\\+\int_{\Omega}\bar{\partial}^4(\rho_0^2)\int_0^T({F^{-1}}_i^kJ^{-1})_t\,d\tau\bar\partial^4{\eta^i}_{,k}\,dx(T)
\\\leq \tilde{M}_0+\delta\sup_{t\in[0,T]}\tilde{E}(t)+C\sqrt{T}P(\sup_{t\in[0,T]}\tilde{E}(t)),
\end{multline*}
for some $\delta>0$. Here we also used Young's inequality.
All other integrals can be estimated in a similar fashion for $R_1$. Thus we have
$$
\int_0^T R\,dt\leq \tilde{M}_0+\delta\sup_{t\in[0,T]}\tilde{E}(t)+ C\sqrt{T}P(\sup_{t\in[0,T]}\tilde{E}(t)).$$

It is also easy to see that
\begin{equation*}\int_0^TR_{12}(\tau)\,d\tau \leq C \int_0^T \|\rho_0 \bar{\partial}^4D\eta\|_0 d\tau \leq CT\sup_{[0,T]}\tilde{E}(t).
\end{equation*}

Now we estimate the integral $I_2$. By integrating by parts with respect to $x_k$, we have
\begin{align*}
I_2=&\kappa\int_{\Omega}\rho_0^2\big(J^{-1}{F^{-1}}_r^k[D_{\eta}\bar{\partial}^4v]_r^i+J^{-1}{F^{-1}}_i^k \Div_{\eta}\bar{\partial}^4v\big)\bar{\partial}^4v^i_{,k}\,dx\\&+\underbrace{\kappa\int_{\Omega}\rho_0^2J^{-1}{F^{-1}}_r^k[\text{Curl}_{\eta}\bar{\partial}^4v]_i^r\bar{\partial}^4v^i_{,k}\,dx}_{I_{21}}\\
&-\int_{\Omega}\sum_{p=1}^{3}\partial_k\{\kappa\rho_0^2\bigg(\bar{\partial}^p[J^{-1}{F^{-1}}_r^k{F^{-1}}_i^s]\bar{\partial}^{4-p}v^r_{,s}
\\&\quad\quad+\bar{\partial}^p[J^{-1}{F^{-1}}_i^k{F^{-1}}_r^s]\bar{\partial}^{4-p}v^r_{,s}\bigg)\}\bar{\partial}^4v^i\,dx
\\=&\int_{\Omega}\kappa\rho_0^2J^{-1}(|D_{\eta}\bar{\partial}^4v|^2+|\Div_{\eta}\bar{\partial}^4v|^2)\,dx+I_{21}+R_2,
\end{align*}
where
\begin{multline*}
R_2=-\int_{\Omega}\sum_{p=1}^{3}\partial_k\{\kappa\rho_0^2\bigg(\bar{\partial}^p[J^{-1}{F^{-1}}_r^k{F^{-1}}_i^s]\bar{\partial}^{4-p}v^r_{,s}
\\+\bar{\partial}^p[J^{-1}{F^{-1}}_i^k{F^{-1}}_r^s]\bar{\partial}^{4-p}v^r_{,s}\bigg)\}\bar{\partial}^4v^i\,dx.
\end{multline*}
For $I_{21}$, we see that
\begin{align*}
I_{21}=&(\sum_{i>r}+\sum_{i<r})\int_{\Omega}\kappa\rho_0^2J^{-1}{F^{-1}}_r^k\bar{\partial}^4v^i_{,k}({F^{-1}}_i^s\bar{\partial}^4v^r_{,s}-{F^{-1}}_r^s\partial_t^7v^i_{,s})\,dx\\
=&\sum_{i>r}\int_{\Omega}\kappa\rho_0^2J^{-1}({F^{-1}}_r^k\bar{\partial}^4v^i_{,k}-{F^{-1}}_i^k\partial_t^8v^r_{,k})({F^{-1}}_i^s\bar{\partial}^4v^r_{,s}-{F^{-1}}_r^s\bar{\partial}^4v^i_{,s})\,dx\\
\\=&-\int_{\Omega}\kappa\rho_0^2J^{-1}|\curl_{\eta}\bar{\partial}^4v|^2\,dx-\dfrac{1}{2}\kappa\int_{\Omega}\rho_0^2J^{-2}J_t|\curl_{\eta}\bar{\partial}^4v|^2\,dx
\\&+\underbrace{\sum_{i>r}\kappa\int_{\Omega}\rho_0^2J^{-1}(\partial_t{F^{-1}}_r^k\bar{\partial}^4v^i_{,k}-\partial_t{F^{-1}}_i^k\bar{\partial}^4\eta^r_{,k})[\text{Curl}_{\eta}\bar{\partial}^4v]_i^r\,dx}_{R_{22}}.
\end{align*}
The remainder terms $R_2,R_{22}$ can be estimated by a similar way we used above for $R_1, R_{12}$, then
\begin{equation*}
\int_0^T R_2(t)\,dt \leq \tilde{M}_0+\delta\sup_{t\in[0,T]}\tilde{E}(t)+ C\sqrt{T}P(\sup_{t\in[0,T]}\tilde{E}(t)),
\end{equation*}
and
\begin{equation*}\int_0^TR_{22}(\tau)\,d\tau \leq CT\sup_{[0,T]}\tilde{E}(t).
\end{equation*}
For $I_3$ and $I_4$, we have
\begin{align*}
&I_3\leq\|\sqrt{\rho_0}\bar{\partial}^4G\|_0\|\sqrt{\rho_0}\bar{\partial}^4v\|_0,\\
&I_4\leq\kappa\|\sqrt{\rho_0}\partial_t\bar{\partial}^4G\|_0\|\sqrt{\rho_0}\bar{\partial}^4v\|_0,
\end{align*}
then with \eqref{gest}, $\int_0^T I_3+I_4\,dt\leq \tilde{M}_0+C\sqrt{T}P(\sup_{t\in[0,T]}\tilde E(t))$.

Combining all these estimates above, the proposition is proved.
\end{proof}
\begin{corollary}[Estimates for the trace of the tangential components of $\eta(t)$]
For $\alpha=1,2$, and $\delta>0$,
\begin{equation*}
\sup_{t\in[0,T]}|\eta^{\alpha}(t)|^2_{3.5}\leq \tilde{M}_0+\delta\sup_{t\in[0,T]}\tilde{E}+C\sqrt{T}P(\sup_{t\in[0,T]}\tilde{E}).
\end{equation*}
\end{corollary}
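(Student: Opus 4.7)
The plan is to combine the tangential-curl boundary estimate \eqref{trineq} with the just-proved $\bar{\partial}^4$-energy inequality \eqref{p4prop} and the curl estimate \eqref{curlest}. Since $\Gamma=\mathbb{T}^2\times\{0,1\}$ is a flat two-torus, a Fourier series characterization gives
\[
|\eta^\alpha(t)|_{3.5}^2\lesssim |\bar{\partial}^4\eta^\alpha|_{H^{-1/2}(\Gamma)}^2+|\eta^\alpha(t)|_0^2,
\]
and the last term is bounded by $\tilde M_0$ using the a priori bound $\|\eta\|_{3.5}\leq 2|e|_{3.5}^2+1$ from \eqref{ini} together with the trace theorem. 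I would then apply \eqref{trineq} to the vector field $\omega=\bar{\partial}^3\eta$, giving
\[
|\bar{\partial}^4\eta^\alpha|_{H^{-1/2}(\Gamma)}^2\leq C\bigl(\|\bar{\partial}^4\eta\|_{0}^2+\|\curl \bar{\partial}^3\eta\|_{0}^2\bigr).
\]

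The curl term is immediate: $\|\curl\bar{\partial}^3\eta\|_0^2\leq\|\curl\eta\|_3^2$, and by Proposition \ref{propc} this is bounded by $\tilde M_0+CTP(\sup_{[0,T]}\tilde E(t))$. For the interior term $\|\bar{\partial}^4\eta\|_0^2$ the naive bound $\|\bar{\partial}^4\eta\|_0^2\leq \|\eta\|_4^2\leq\tilde E(t)$ would only yield $\sup\tilde E$ on the right, losing the needed $\delta$-smallness. Instead I would invoke the weighted Sobolev embedding \eqref{embd} with $p=2$ to trade one derivative for the weight $\rho_0$:
\[
\|\bar{\partial}^4\eta\|_0^2\leq C\bigl(\|\rho_0\bar{\partial}^4\eta\|_0^2+\|\rho_0\bar{\partial}^4 D\eta\|_0^2\bigr).
\]
The second piece is already controlled by the energy inequality \eqref{p4prop}, which is exactly where the $\delta$ comes from. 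For the first piece, since $\eta(t)=e+\int_0^t v\,d\tau$ and $\bar{\partial}^4 e=0$, the fundamental theorem of calculus combined with Jensen's inequality yields
\[
\|\rho_0\bar{\partial}^4\eta(t)\|_0^2\leq t\int_0^t\|\rho_0\bar{\partial}^4 v\|_0^2\,d\tau\leq CT^2\sup_{[0,T]}\|\sqrt{\rho_0}\bar{\partial}^4 v\|_0^2\leq CT^2\sup_{[0,T]}\tilde E(t).
\]

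Putting these estimates together and taking the supremum in $t\in[0,T]$ gives
\[
\sup_{t\in[0,T]}|\eta^\alpha(t)|_{3.5}^2\leq \tilde M_0+\delta\sup_{t\in[0,T]}\tilde E(t)+C\sqrt{T}P\bigl(\sup_{t\in[0,T]}\tilde E(t)\bigr),
\]
after readjusting the constant in front of $\delta$. The main (and essentially the only) subtle point is the third step: one must avoid the crude interior bound $\|\bar{\partial}^4\eta\|_0\leq \|\eta\|_4$ and route through the weighted embedding \eqref{embd} so that the sharp tangential $\bar{\partial}^4$-energy estimate \eqref{p4prop} becomes applicable; this is exactly what produces the $\delta\sup\tilde E$ term that is compatible with the eventual bootstrap closing of the full a priori estimate.
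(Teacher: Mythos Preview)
Your proof is correct and follows essentially the same approach as the paper's: both use the weighted Sobolev embedding \eqref{embd} (with $p=2$) to control $\|\bar\partial^4\eta\|_0^2$ by the weighted quantities appearing in \eqref{p4prop}, handle $\|\rho_0\bar\partial^4\eta\|_0^2$ via the fundamental theorem of calculus, invoke the curl estimate from Proposition~\ref{propc}, and then apply the tangential trace inequality \eqref{trineq} with $\omega=\bar\partial^3\eta$. The only cosmetic difference is ordering: the paper establishes the interior bound $\|\bar\partial^4\eta\|_0^2$ first and then passes to the boundary, whereas you start from the boundary norm and reduce to the interior; the ingredients are identical.
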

\begin{proof}
The weighted embedding estimate \eqref{embd} shows that
\begin{equation*}
\|\bar{\partial}^4\eta(t)\|_0^2 \leq C\int_{\Omega}\rho_0^2(|\bar{\partial}^4\eta|^2+|\bar{\partial}^4D\eta|^2)\,dx.
\end{equation*}
And by the fundamental theorem of calculus, we see that
\begin{equation*}
\sup_{t\in[0,T]}\int_{\Omega}\rho_0^2|\bar{\partial}^4\eta|^2\,dx=\sup_{t\in[0,T]}\int_{\Omega}\rho_0^2|\int_0^t\bar{\partial}^4v\,dt'|^2\,dx\leq T^2 \sup_{t\in[0,T]}\|\sqrt{\rho_0}\bar{\partial}^4v\|_0^2.
\end{equation*}
Then it follows from \eqref{p4prop} that
\begin{equation*}
\sup_{t\in[0,T]}\|\bar{\partial}^4\eta(t)\|_0^2\leq\tilde{M}_0+C\sqrt{T}P(\sup_{t\in[0,T]}\tilde{E}).
\end{equation*}
According to our curl estimates \eqref{curlest}, $\sup_{[0,T]}\|\curl\eta\|_3^2\leq \tilde{M}_0+CTP(\sup_{[0,T]}\tilde{E})$, from which it follows that
\begin{equation*}
\sup_{t\in[0,T]}\|\bar{\partial}^4\curl\eta(t)\|_{H^1(\Omega)'}^2\leq \tilde{M}_0+CTP(\sup_{t\in[0,T]}\tilde{E}),
\end{equation*}
since $\bar{\partial}$ is a tangential derivative, and the integration by parts with respect to $\bar{\partial}$ does not produce any boundary contributions. From the tangential trace inequality \eqref{trineq}, we find that
\begin{equation*}
\sup_{t \in [0,T]}|\bar{\partial}^4\eta^{\alpha}(t)|_{-0.5}^2\leq \tilde{M}_0 +C\sqrt{T}P(\sup_{t\in[0,T]}\tilde{E}),
\end{equation*}
and then
\begin{equation*}
\sup_{t \in [0,T]}|\bar{\partial}^4\eta^{\alpha}(t)|_{3.5}^2\leq \tilde{M}_0 +C\sqrt{T}P(\sup_{t\in[0,T]}\tilde{E}).
\end{equation*}
\end{proof}
\subsubsection{The $\partial_t^8$-problem}
\begin{proposition}
\label{tprop}
For $\delta>0$ and letting the constant $\tilde{M}_0$ depend on $1/\delta$,
\begin{align}
\nonumber \sup_{t\in[0,T]}\bigg(\|\sqrt{\rho_0}\partial_t^8 v(t)\|_0+\|\rho_0\partial_t^7Dv(t)\|_0+\int_0^t\|\sqrt{\kappa}\rho_0D\partial_t^8v(s)\|_0^2\,ds\bigg)\\
\leq \tilde{M}_0+\delta \sup_{[0,T]}\tilde{E}+C\sqrt{T}P(\sup_{[0,T]}\tilde{E}).
\end{align}
\end{proposition}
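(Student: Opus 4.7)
The plan is to mirror the $\bar\partial^4$-argument with $\partial_t^8$ playing the role of $\bar\partial^4$, exploiting the identity $\partial_t^8\eta=\partial_t^7 v$ so that $\rho_0 D\partial_t^8\eta=\rho_0\partial_t^7Dv$ is precisely the quantity appearing in the statement. I would act $\partial_t^8$ on the $\kappa$-equation \eqref{lagappro}, pair with $\partial_t^8 v^i$ in $L^2(\Omega)$, and obtain an identity of the schematic form
\begin{equation*}
\tfrac12\tfrac{d}{dt}\|\sqrt{\rho_0}\partial_t^8v\|_0^2+K_1+K_2=K_3+K_4,
\end{equation*}
where $K_1=\int_\Omega\partial_t^8\partial_k(\rho_0^2{F^*}_i^kJ^{-2})\,\partial_t^8v^i\,dx$, $K_2$ is its $\kappa\partial_t$ companion, and $K_3+K_4$ collect the forcing $\int_\Omega\rho_0\partial_t^8G^i\partial_t^8v^i\,dx+\kappa\int_\Omega\rho_0\partial_t^9G^i\partial_t^8v^i\,dx$. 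By Cauchy--Schwarz and Proposition~\ref{ineq:g}, the time-integral of $K_3+K_4$ is bounded by $\tilde M_0+C\sqrt{T}P(\sup_{[0,T]}\tilde E)$.

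For $K_1$, I would integrate by parts in $x_k$ and use \eqref{pt} iteratively to isolate the principal contribution of $\partial_t^7({F^{-1}}_i^kJ^{-1})$, yielding the good sign-definite terms
\begin{equation*}
\tfrac12\tfrac{d}{dt}\!\int_\Omega\rho_0^2J^{-1}\bigl(|D_\eta\partial_t^8\eta|^2+|\Div_\eta\partial_t^8\eta|^2\bigr)dx
\end{equation*}
together with a $\text{Curl}_\eta$ piece. Exactly as in $I_{11}$, I antisymmetrize in the indices $(i,r)$ to rewrite the curl piece as $-\tfrac12\tfrac{d}{dt}\!\int_\Omega\rho_0^2J^{-1}|\curl_\eta\partial_t^8\eta|^2$ plus a lower-order remainder, then absorb this time derivative by invoking the curl estimate \eqref{curlest} (with $l=4$) so that the resulting term becomes part of $\tilde M_0+CTP(\sup_{[0,T]}\tilde E)$. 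For $K_2$, the same bookkeeping produces the positive dissipation $\kappa\int_\Omega\rho_0^2J^{-1}(|D_\eta\partial_t^8v|^2+|\Div_\eta\partial_t^8v|^2)\,dx$ plus a curl contribution controlled by the weighted curl bound from Proposition~\ref{propc}; after invoking \eqref{positive} this dominates $\kappa\|\rho_0 D\partial_t^8v\|_0^2$, giving the third term in the claim.

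The remainder terms arising from Leibniz expansions of $\partial_t^8$ and $\partial_t^9$ across ${F^{-1}}$, $J$, $\rho_0^2$ are analogous to $R,R_1,R_{11},R_2,R_{22}$ of the $\bar\partial^4$-proof; I would handle them by integrating by parts in $t$ (moving one $\partial_t$ from a high-time-derivative factor onto a low-time-derivative factor), then using the fundamental theorem of calculus, the weighted embedding \eqref{embd}, the a~priori bounds \eqref{ini}, and H\"older's inequality in the combinations $L^\infty\text{-}L^2\text{-}L^2$ or $L^4\text{-}L^4\text{-}L^2$. The boundary-in-time contributions at $t=0$ are absorbed into $\tilde M_0$ via the explicit formulas \eqref{initialdata}--\eqref{vt} for $\partial_t^k v|_{t=0}$, while the terminal contributions at $t=T$ are converted by a second application of the fundamental theorem of calculus into $\delta\sup_{[0,T]}\tilde E+C\sqrt{T}P(\sup_{[0,T]}\tilde E)$ after Young's inequality. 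Integrating the identity from $0$ to $t$ and taking supremum then yields the proposition.

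The main obstacle, unlike in the $\bar\partial^4$-case where tangential derivatives commute cleanly with spatial integration by parts and vanish at no boundary, is that $\partial_t^8$ distributed across many factors generates intermediate time derivatives (e.g.\ $\partial_t^3 Dv$, $\partial_t^4 Dv$, $\partial_t^5 D\eta$) that are not directly in the energy $\tilde E$ at a fixed time; one must exhibit each such factor either in $L^\infty_tL^p_x$ via interpolation and the weighted Sobolev embedding \eqref{embd}, or in $L^2_tL^p_x$ via the fundamental theorem of calculus, before invoking the a~priori bounds \eqref{ini}. The estimates for the highest-order terms involving $\kappa\rho_0\partial_t^9 G$ in the $K_4$ forcing and for the cross term $\kappa\partial_t[D_\eta(2\rho_0J^{-1}-\Phi)]$ (which by Lemma~\ref{elli} inherits the $\tilde M_0+\tilde E$ bound independently of $\kappa$) are the delicate points, and here I lean on the non-trivial kernel estimate of Proposition~\ref{ineq:g} established via Taylor's formula and the $C^\alpha$ embedding.
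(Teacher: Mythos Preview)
Your proposal is correct and follows essentially the same approach as the paper: act $\partial_t^8$ on \eqref{lagappro}, test with $\partial_t^8 v$, extract the energy and dissipation via \eqref{pt} and the antisymmetrization trick, control the forcing by Proposition~\ref{ineq:g}, and handle the Leibniz remainders $R_1$ by integration by parts in $t$ and case-by-case H\"older inequalities over $p=1,\dots,7$. One simplification the paper makes that you miss: for the $\kappa$-remainder $R_2$ it does \emph{not} repeat the integration-by-parts-in-$t$ argument but instead applies Young's inequality directly to bound $R_2\le CP(\tilde E)+\tfrac12\|\sqrt\kappa\rho_0\partial_t^8 Dv\|_0^2$ and absorbs the last piece into the dissipation; your reference to Lemma~\ref{elli} is also unnecessary here, as that lemma enters only through Proposition~\ref{propc}.
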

\begin{proof}
Letting $\partial_t^8$ act on \eqref{lagappro}, and taking $L^2(\Omega)$-inner product of this with $\partial_t^8 v^i$ yields
\begin{multline*}
\dfrac{1}{2}\dfrac{d}{dt}\int_{\Omega}\rho_0|\partial_t^8v|^2\,dx+\underbrace{\int_{\Omega}\partial_t^8\partial_k(\rho_0^2{F^{-1}}_i^kJ^{-1})\partial_t^8v^i\,dx}_{I_1}
\\+\underbrace{\int_{\Omega}\kappa\partial_t^9\partial_k(\rho_0^2{F^{-1}}_i^kJ^{-1})\partial_t^8v^i\,dx}_{I_2}
=\underbrace{\int_{\Omega}\rho_0\partial_t^8G^i\partial_t^8v^i\,dx}_{I_3}+\underbrace{\kappa\int_{\Omega}\rho_0\partial_t^9G^i\partial_t^8v^i\,dx}_{I_4}.
\end{multline*}
To estimate $I_1$, we use \eqref{pt} and integrate by parts to obtain that
\begin{align*}
I_1=&\int_{\Omega}\rho_0^2\big(J^{-1}{F^{-1}}_r^k[D_{\eta}\partial_t^7v]_r^i+J^{-1}{F^{-1}}_i^k \Div_{\eta}\partial_t^7v\big)\partial_t^8v^i_{,k}\,dx\\&+\underbrace{\int_{\Omega}\rho_0^2J^{-1}{F^{-1}}_r^k[\text{Curl}_{\eta}\partial_t^7v]_i^r\partial_t^8v^i_{,k}\,dx}_{I_{11}}\\
&-\int_{\Omega}\sum_{p=1}^{7}\partial_k\{\rho_0^2\bigg(\partial_t^p[J^{-1}{F^{-1}}_r^k{F^{-1}}_i^s]\partial_t^{8-p}\eta^r_{,s}
\\&\quad\quad+\partial_t^p[J^{-1}{F^{-1}}_i^k{F^{-1}}_r^s]\partial_t^{8-p}\eta^r_{,s}\bigg)\}\partial_t^8v^i\,dx
\end{align*}
\begin{align*}
\\=&\dfrac{1}{2}\dfrac{d}{dt}\int_{\Omega}\rho_0^2J^{-1}|D_{\eta}\partial_t^7v|^2\,dx+\dfrac{1}{2}\dfrac{d}{dt}\int_{\Omega}\rho_0^2J^{-1}|\Div_{\eta}\partial_t^7v|^2\,dx
\\&+\dfrac{1}{2}\int_{\Omega}\rho_0^2J^{-2}J_t|D_{\eta}\partial_t^7v|^2\,dx
-\int_{\Omega}\rho_0^2\partial_t^7v^i_{,k}J^{-1}\partial_t{F^{-1}}_r^k[D_{\eta}\partial_t^7v]_r^i\,dx\\
&+\dfrac{1}{2}\int_{\Omega}\rho_0^2J^{-2}J_t|\Div_{\eta}\partial_t^7v|^2\,dx
-\int_{\Omega}\rho_0^2\partial_t^7v^i_{,k}J^{-1}\partial_t{F^{-1}}_i^k[\Div_{\eta}\partial_t^7v]\,dx\\
&+I_{11}+R_1,
\end{align*}
where
\begin{multline*}
R_1=-\int_{\Omega}\sum_{p=1}^{7}\partial_k\{\rho_0^2\bigg(\partial_t^p[J^{-1}{F^{-1}}_r^k{F^{-1}}_i^s]\partial_t^{8-p}\eta^r_{,s}
\\+\partial_t^p[J^{-1}{F^{-1}}_i^k{F^{-1}}_r^s]\partial_t^{8-p}\eta^r_{,s}\bigg)\}\partial_t^8v^i\,dx.
\end{multline*}
For $I_{11}$, we can use the following anti-symmetrization to obtain the curl structure
\begin{align*}
I_{11}=&(\sum_{i>r}+\sum_{i<r})\int_{\Omega}\rho_0^2J^{-1}{F^{-1}}_r^k\partial_t^8v^i_{,k}({F^{-1}}_i^s\partial_t^7v^r_{,s}-{F^{-1}}_r^s\partial_t^7v^i_{,s})\,dx\\
=&\sum_{i>r}\int_{\Omega}\rho_0^2J^{-1}({F^{-1}}_r^k\partial_t^8v^i_{,k}-{F^{-1}}_i^k\partial_t^8v^r_{,k})({F^{-1}}_i^s\partial_t^7v^r_{,s}-{F^{-1}}_r^s\partial_t^7v^i_{,s})\,dx
\\=&-\dfrac{1}{2}\dfrac{d}{dt}\int_{\Omega}\rho_0^2J^{-1}|\curl_{\eta}\partial_t^7v|^2\,dx-\dfrac{1}{2}\int_{\Omega}\rho_0^2J^{-2}J_t|\curl_{\eta}\partial_t^7v|^2\,dx
\\&+\underbrace{\sum_{i>r}\int_{\Omega}\rho_0^2J^{-1}(\partial_t{F^{-1}}_r^k\partial_t^7v^i_{,k}-\partial_t{F^{-1}}_i^k\partial_t^7v^r_{,k})[\text{Curl}_{\eta}\partial_t^7v]_i^r\,dx}_{R_{12}}
\end{align*}
For $R_1$, we claim that
\begin{equation*}
\int_0^T R_1(t)\,dt \leq \tilde{M}_0+\delta\sup_{t\in[0,T]}\tilde{E}(t)+ C\sqrt{T}P(\sup_{t\in[0,T]}\tilde{E}(t)).
\end{equation*}
By integrating by parts with respect to $x_k$ and then with respect to the time derivative $\partial_t$, we obtain that
\begin{align*}
&\int_0^TR_1\,dt\\=&\sum_{p=1}^7\int_0^T\int_{\Omega}\rho_0^2\partial_t\bigg(\partial_t^p[J^{-1}{F^{-1}}_r^k{F^{-1}}_i^s]\partial_t^{8-p}\eta^r_{,s}
+\partial_t^p[J^{-1}{F^{-1}}_i^k{F^{-1}}_r^s]\partial_t^{8-p}\eta^r_{,s}\bigg)\partial_t^7v^i_{,k}\,dxdt\\
-&\sum_{p=1}^7\int_{\Omega}\rho_0^2\bigg(\partial_t^p[J^{-1}{F^{-1}}_r^k{F^{-1}}_i^s]\partial_t^{8-p}\eta^r_{,s}
+\partial_t^p[J^{-1}{F^{-1}}_i^k{F^{-1}}_r^s]\partial_t^{8-p}\eta^r_{,s}\bigg)\partial_t^7v^i_{,k}\,dx\bigg|_0^T
\end{align*}
Notice that when $p=1$, the space-time integral on the right side hand scales like $l[\rho_0Dv_t\partial_t^7D\eta+\rho_0Dv\partial_t^8D\eta]\rho_0\partial_t^7Dv$ where $l$ denotes $L^{\infty}(\Omega)$ function. Since $\|\rho_0\partial_t^7Dv\|_0^2$ is contained in the energy function $\tilde{E}$, $\|Dv_t\|_{\infty}\leq C\|\partial_t^2\eta\|_2$ with $\|\partial_t^2\eta\|_2$ being contained in $\tilde{E}$, and we can also write $\rho_0\partial_t^7D\eta(t)=\rho_0\partial_t^7D\eta(0)+\int_0^t\rho_0\partial_t^8D\eta(\tau)\,d\tau$, there terms could be estimated by using $L^{\infty}$-$L^2$-$L^2$ H\"{o}lder's inequality and be controlled by a bound which is similar to \eqref{rest}.

\noindent For the case that $p=2$, the integral scales like $$l[\rho_0(Dv_{tt}+Dv_tDv+DvDvDv)\partial_t^6D\eta+\rho_0(Dv_t+DvDv)\partial_t^7D\eta]\rho_0\partial_t^7Dv,$$ which can be estimated in a simliar way as we did for the case $p=1$. For the case that $p=3$, the integral scales like $l[\rho_0(\partial_t^4D\eta+\partial_t^3D\eta Dv+Dv_tDvDv)\partial_t^5D\eta+\rho_0(\partial_t^3D\eta+Dv_tDv+ DvDvDv)\partial_t^6D\eta]\rho_0\partial_t^7Dv$. The first part can be estimated by using $L^3$-$L^6$-$L^2$ H\"{o}lder's inequality, the second part can be estimated by using $L^{\infty}$-$L^2$-$L^2$ H\"{o}lder's inequality. The case $p=4,5$ is treated as the case $p=3$, the case $p=6$ is treated as the case $p=2$, and the case $p=7$ is treated as the case $p=1$.

It is also easy to see that
\begin{equation*}\int_0^TR_{12}(\tau)\,d\tau \leq C \int_0^T \|\rho_0 \partial_t^7Dv\|_0 d\tau \leq CT\sup_{[0,T]}\tilde{E}(t)
\end{equation*}

Now we estimate the integral $I_2$. By integrating by parts with respect to $x_k$, we have
\begin{align*}
I_2=&\kappa\int_{\Omega}\rho_0^2\big(J^{-1}{F^{-1}}_r^k[D_{\eta}\partial_t^8v]_r^i+J^{-1}{F^{-1}}_i^k \Div_{\eta}\partial_t^8v\big)\partial_t^8v^i_{,k}\,dx\\&+\underbrace{\kappa\int_{\Omega}\rho_0^2J^{-1}{F^{-1}}_r^k[\text{Curl}_{\eta}\partial_t^8v]_i^r\partial_t^8v^i_{,k}\,dx}_{I_{21}}\\
&-\int_{\Omega}\sum_{p=1}^{8}\partial_k\{\kappa\rho_0^2\bigg(\partial_t^p[J^{-1}{F^{-1}}_r^k{F^{-1}}_i^s]\partial_t^{9-p}\eta^r_{,s}
\\&\quad\quad+\partial_t^p[J^{-1}{F^{-1}}_i^k{F^{-1}}_r^s]\partial_t^{9-p}\eta^r_{,s}\bigg)\}\partial_t^8v^i\,dx
\\=&\int_{\Omega}\kappa\rho_0^2J^{-1}(|D_{\eta}\partial_t^8v|^2+|\Div_{\eta}\partial_t^8v|^2)\,dx+I_{21}+R_2,
\end{align*}
where
\begin{multline*}
R_2=-\int_{\Omega}\sum_{p=1}^{8}\partial_k\{\kappa\rho_0^2\bigg(\partial_t^p[J^{-1}{F^{-1}}_r^k{F^{-1}}_i^s]\partial_t^{9-p}\eta^r_{,s}
\\+\partial_t^p[J^{-1}{F^{-1}}_i^k{F^{-1}}_r^s]\partial_t^{9-p}\eta^r_{,s}\bigg)\}\partial_t^8v^i\,dx.
\end{multline*}
For $I_{21}$, we can get that

$$I_{21}=(\sum_{i>r}+\sum_{i<r})\int_{\Omega}\kappa\rho_0^2J^{-1}{F^{-1}}_r^k\partial_t^8v^i_{,k}({F^{-1}}_i^s\partial_t^7v^r_{,s}-{F^{-1}}_r^s\partial_t^7v^i_{,s})\,dx$$
\begin{align*}
=&\sum_{i>r}\int_{\Omega}\kappa\rho_0^2J^{-1}({F^{-1}}_r^k\partial_t^8v^i_{,k}-{F^{-1}}_i^k\partial_t^8v^r_{,k})({F^{-1}}_i^s\partial_t^7v^r_{,s}-{F^{-1}}_r^s\partial_t^7v^i_{,s})\,dx\\
=&-\int_{\Omega}\kappa\rho_0^2J^{-1}|\curl_{\eta}\partial_t^8v|^2\,dx-\dfrac{1}{2}\int_{\Omega}\rho_0^2J^{-2}J_t|\curl_{\eta}\partial_t^8v|^2\,dx
\\&+\underbrace{\sum_{i>r}\int_{\Omega}\rho_0^2J^{-1}(\partial_t{F^{-1}}_r^k\partial_t^8v^i_{,k}-\partial_t{F^{-1}}_i^k\partial_t^8v^r_{,k})[\text{Curl}_{\eta}\partial_t^8v]_i^r\,dx}_{R_{22}},
\end{align*}
and for $R_2$, we have
\begin{align}
\nonumber
R_2 &\leq \sum_{p=1}^9(2\|\rho_0\bigg(\partial_t^p[J^{-1}{F^{-1}}_r^k{F^{-1}}_i^s]\partial_t^{9-p}\eta^r_{,s}
+\partial_t^p[J^{-1}{F^{-1}}_i^k{F^{-1}}_r^s]\partial_t^{9-p}\eta^r_{,s}\bigg)\|_0^2\\\nonumber&\quad\quad+\dfrac{1}{2}\|\sqrt{\kappa}\rho_0\partial_t^8Dv\|_0^2)
\\&\leq CP(\tilde{E}(t))+\dfrac{1}{2}\|\sqrt{\kappa}\rho_0\partial_t^8Dv\|_0^2.
\label{restk}
\end{align}
Next, we estimate the term $I_3$ and $I_4$, which is related to the potential force. Since by H\"{o}lder's inequality, we can get $\int_{\Omega}\rho_0\partial_t^8G^i\partial_t^8v\,dx\leq \|\sqrt{\rho_0}\partial_t^8 v\|_0\|\sqrt{\rho_0}\partial_t^8G\|_0$, then with \eqref{gest}, we have
\begin{align*}
\int_0^T I_3+I_4\,dt \leq \tilde M_0+C\sqrt{T}P(\sup_{t\in[0,T]}\tilde E(t)).
\end{align*}
Combining all the estimates above, we see that the proposition is proved.
\end{proof}

\begin{corollary}[Estimates for $\partial_t^8\eta(t)$]
\label{et8}
\begin{equation*}
\sup_{0\in[0,T]}\|\partial_t^8\eta(t)\|_0\leq \sup_{t \in [0,T]}|\bar{\partial}^4\eta^{\alpha}(t)|_{-0.5}^2\leq \tilde{M}_0 +C\sqrt{T}P(\sup_{t\in[0,T]}\tilde{E}).
\end{equation*}
\end{corollary}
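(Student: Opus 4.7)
The plan is to mirror the proof of the preceding corollary (for the tangential trace of $\eta$), but now applied to the time derivative. The key observation is that since $\partial_t\eta=v$ and this identity holds throughout the paper, we have $\partial_t^{8}\eta=\partial_t^{7}v$, so the statement is really an estimate on $\|\partial_t^{7}v(t)\|_{0}$. I will pass from the weighted controls provided by Proposition \ref{tprop} to an \emph{unweighted} $L^{2}$ control via the weighted Sobolev embedding \eqref{embd}.

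First, I would apply \eqref{embd} with $p=2$ to $f=\partial_t^{7}v(t)$. Since $\rho_{0}$ and the distance function $d$ are comparable on $\bar{\Omega}$ (both vanish linearly at $\Gamma$ by the physical vacuum condition \eqref{vacuum2}, and both are bounded above), this yields
\begin{equation*}
\|\partial_t^{7}v(t)\|_{0}^{2}\;\leq\; C\,\|\rho_{0}\,\partial_t^{7}v(t)\|_{0}^{2}+C\,\|\rho_{0}\,\partial_t^{7}Dv(t)\|_{0}^{2}.
\end{equation*}
The second summand is bounded pointwise in $t$ directly by Proposition \ref{tprop}, which gives
\begin{equation*}
\sup_{t\in[0,T]}\|\rho_{0}\,\partial_t^{7}Dv(t)\|_{0}^{2}\;\leq\;\tilde M_{0}+\delta\sup_{[0,T]}\tilde E+C\sqrt{T}\,P(\sup_{[0,T]}\tilde E).
\end{equation*}

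Next, for the first summand I would use the fundamental theorem of calculus in $t$, writing $\partial_t^{7}v(t)=\partial_t^{7}v(0)+\int_{0}^{t}\partial_t^{8}v(\tau)\,d\tau$. Multiplying by $\rho_{0}$, taking $L^{2}(\Omega)$ norms, and using $\rho_{0}\leq C\sqrt{\rho_{0}}$ (since $\rho_{0}$ is bounded on $\Omega$), I get
\begin{equation*}
\|\rho_{0}\partial_t^{7}v(t)\|_{0}\;\leq\;\|\rho_{0}\partial_t^{7}v(0)\|_{0}+C\int_{0}^{t}\|\sqrt{\rho_{0}}\,\partial_t^{8}v(\tau)\|_{0}\,d\tau\;\leq\;\tilde M_{0}+CT\sup_{\tau\in[0,T]}\|\sqrt{\rho_{0}}\,\partial_t^{8}v(\tau)\|_{0},
\end{equation*}
where the initial contribution is controlled by $\tilde M_{0}$ because $\partial_t^{7}v(0)=u_{7}$ is a polynomial expression in space derivatives of $u_{0}$ and $\rho_{0}$ by \eqref{vt}. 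Invoking Proposition \ref{tprop} once more absorbs the supremum on the right into $\tilde M_{0}+\delta\sup\tilde E+C\sqrt{T}P(\sup\tilde E)$.

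Combining the two bounds and taking the supremum over $t\in[0,T]$ closes the estimate and yields the claimed inequality. There is no real obstacle here: the content is essentially a bookkeeping step converting the weighted energy bounds of Proposition \ref{tprop} into an unweighted $L^{2}$ bound via \eqref{embd}, exactly parallel to the argument used for the tangential trace corollary. The only subtlety worth noting is making sure the initial data term $\|\rho_{0}\partial_t^{7}v(0)\|_{0}$ is indeed dominated by $\tilde M_{0}=P(\tilde E(0;v))$, which follows from \eqref{vt} and the smoothness of the approximated initial data from Section \ref{sm}.
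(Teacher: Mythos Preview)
Your proof is correct and follows essentially the same approach as the paper: apply the weighted embedding \eqref{embd} with $p=2$ to $\partial_t^{8}\eta=\partial_t^{7}v$, bound the gradient term $\|\rho_0\partial_t^{7}Dv\|_0$ directly from Proposition \ref{tprop}, and handle the lower-order term $\|\rho_0\partial_t^{7}v\|_0$ via the fundamental theorem of calculus together with the control of $\|\sqrt{\rho_0}\partial_t^{8}v\|_0$ from the same proposition. The only difference is cosmetic---the paper keeps the notation $\partial_t^{8}\eta$ throughout while you switch to $\partial_t^{7}v$, but the two are identical.
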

\begin{proof}
The weighted embedding estimate \eqref{embd} shows that
\begin{equation*}
\|\partial_t^8\eta(t)\|_0\leq C\int_{\Omega}\rho_0^2(|\partial_t^8\eta|^2+|\partial_t^8D\eta|^2)\,dx.
\end{equation*}
Applying the fundamental calculus theorem, we have
\begin{equation*}
\int_{\Omega}\rho_0^2|\partial_t^8\eta|^2\,dx \leq \tilde{M}_0+T^2\sup_{t\in[0,T]}\|\sqrt{\rho_0}\partial_t^8v\|_0^2
\end{equation*}
Thus, by Proposition \ref{tprop},
\begin{equation*}
\sup_{t\in[0,T]}\|\partial_t^8\eta(t)\|_0^2\leq \sup_{t \in [0,T]}|\bar{\partial}^4\eta^{\alpha}(t)|_{-0.5}^2\leq \tilde{M}_0 +C\sqrt{T}P(\sup_{t\in[0,T]}\tilde{E}).
\end{equation*}
\end{proof}
\subsubsection{The $\partial_t^2\bar{\partial}^3, \partial_t^4\bar{\partial}^2, \partial_t^6\bar{\partial}$ problems}
Since we have provided detailed proofs of the energy estimates for the two end-point cases, the $\bar{\partial}^4$-problem and the $\partial_t^8$-problem, we have covered all of the estimation strategies for all possible error terms in the three remaining intermediate problems. Meanwhile, the energy contributions for the three intermediate are found in the identical fashion as for the $\bar{\partial}^4$ and $\partial_t^8$ problems. As such we have the additional estimate
\begin{proposition}
\label{prop6}
For $\delta>0$ and letting $\tilde{M}_0$ depend on $1/\delta$, for $\alpha=1,2,$
\begin{align*}
\sup_{t\in[0,T]}\sum_{a=1}^3[|\partial_t^{2a}\eta^{\alpha}|^2_{3.5-a}+\|\sqrt{\rho_0}\bar{\partial}^{4-a}\partial_t^{2a}v(t)\|_0^2+\|\rho_0\bar{\partial}^{4-a}\partial_t^{2a}D\eta(t)\|_0^2]\\
+\kappa\int_0^t\|\rho_0\bar{\partial}^{4-a}\partial_t^{2a}Dv(s)\|_0^2\,ds]\leq\tilde{M}_0 +\delta\sup_{[0,T]}\tilde{E}+C\sqrt{T}P(\sup_{t\in[0,T]}\tilde{E}).
\end{align*}
\end{proposition}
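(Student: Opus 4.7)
The plan is to treat the three intermediate problems by a common template that interpolates between the $\bar\partial^4$-problem and the $\partial_t^8$-problem, since for every $a\in\{1,2,3\}$ the operator $\bar\partial^{4-a}\partial_t^{2a}$ is composed of tangential space derivatives and time derivatives, none of which create boundary contributions on $\Gamma$ when we integrate by parts. Concretely, for each $a$ I would apply $\bar\partial^{4-a}\partial_t^{2a}$ to the $\kappa$-equation \eqref{lagappro}, take the $L^2(\Omega)$ inner product with $\bar\partial^{4-a}\partial_t^{2a}v^i$, and split the result exactly as in the proofs of the $\bar\partial^4$ and $\partial_t^8$ propositions into
\begin{equation*}
\tfrac{1}{2}\tfrac{d}{dt}\|\sqrt{\rho_0}\bar\partial^{4-a}\partial_t^{2a}v\|_0^2+I_1+I_2=I_3+I_4,
\end{equation*}
where $I_1$ comes from the pressure term, $I_2$ from the $\kappa$-viscosity, and $I_3,I_4$ from $G$ and $\kappa\partial_t G$.

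For $I_1$, I would commute $\bar\partial^{4-a}\partial_t^{2a-1}$ past $\partial_k$, then use the identities \eqref{pn}, \eqref{pt} and the Piola identity \eqref{piola} to write the principal part as the sum of three pieces corresponding to $D_\eta$, $\Div_\eta$ and $\mathrm{Curl}_\eta$ of $\bar\partial^{4-a}\partial_t^{2a-1}v$ acting against $\bar\partial^{4-a}\partial_t^{2a}v^i_{,k}$. Integration by parts in $x_k$ produces the sign-definite energy contributions
\begin{equation*}
\tfrac{1}{2}\tfrac{d}{dt}\!\int_{\Omega}\rho_0^2J^{-1}\bigl(|D_\eta \bar\partial^{4-a}\partial_t^{2a-1}v|^2+|\Div_\eta\bar\partial^{4-a}\partial_t^{2a-1}v|^2\bigr)\,dx,
\end{equation*}
together with a curl-type piece which, after the same anti-symmetrization trick used in the $\bar\partial^4$ and $\partial_t^8$ cases, becomes $-\tfrac{1}{2}\tfrac{d}{dt}\int_\Omega \rho_0^2J^{-1}|\curl_\eta\bar\partial^{4-a}\partial_t^{2a-1}v|^2\,dx$ plus a harmless remainder controlled by $\tilde E(t)$. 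The commutator remainder $R_1$ will be a sum of terms of the form $\rho_0\,\bar\partial^p\partial_t^q[J^{-1}F^{-1}F^{-1}]\,\bar\partial^{4-a-p}\partial_t^{2a-q}\eta_{,s}\,\bar\partial^{4-a}\partial_t^{2a}v^i_{,k}$, and I would bound its time integral by first integrating by parts in $x_k$ and then in $t$ to replace $\partial_t^{2a}v=\partial_t^{2a+1}\eta$ by $\partial_t^{2a}\eta$ at the endpoints, so each term becomes a product controlled by $\tilde M_0+\delta\sup\tilde E+C\sqrt T P(\sup\tilde E)$ via $L^\infty$–$L^2$–$L^2$ or $L^3$–$L^6$–$L^2$ H\"older (exactly as in $R_1$ in the $\bar\partial^4$-proof). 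For $I_2$, the identical manipulation produces the dissipation $\kappa\int_\Omega \rho_0^2 J^{-1}(|D_\eta\bar\partial^{4-a}\partial_t^{2a}v|^2+|\Div_\eta\bar\partial^{4-a}\partial_t^{2a}v|^2)\,dx$ plus an absorbable curl-type term plus a commutator estimated as in \eqref{restk}. The forcing integrals $I_3,I_4$ are handled by Cauchy–Schwarz and Proposition \ref{ineq:g}, which furnishes $\int_0^T(\|\sqrt{\rho_0}\bar\partial^{4-a}\partial_t^{2a}G\|_0^2+\kappa\|\sqrt{\rho_0}\bar\partial^{4-a}\partial_t^{2a+1}G\|_0^2)\,dt\le\tilde M_0+CP(\sup\tilde E)$.

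Integrating in time from $0$ to $t\in[0,T]$ and absorbing the $\delta$-small terms, I obtain control of $\|\sqrt{\rho_0}\bar\partial^{4-a}\partial_t^{2a}v(t)\|_0^2$, $\|\rho_0\bar\partial^{4-a}\partial_t^{2a}D\eta(t)\|_0^2$ (the latter coming from the $D_\eta\bar\partial^{4-a}\partial_t^{2a-1}v$ energy piece combined with the fundamental theorem of calculus applied to $\partial_t D\eta=Dv$), and the $\kappa$-dissipation $\int_0^t\kappa\|\rho_0\bar\partial^{4-a}\partial_t^{2a}Dv\|_0^2\,ds$. The tangential trace bound on $\eta^\alpha$ follows exactly as in the corollary to the $\bar\partial^4$-proposition: the curl estimate \eqref{curlest} together with the weighted embedding \eqref{embd} and the fundamental theorem of calculus controls $\|\bar\partial^{4-a}\partial_t^{2a}\eta\|_0$ and $\|\bar\partial^{4-a}\partial_t^{2a}\curl\eta\|_{H^1(\Omega)'}$, so \eqref{trineq} yields $|\partial_t^{2a}\eta^\alpha|_{3.5-a}^2\le\tilde M_0+\delta\sup\tilde E+C\sqrt T P(\sup\tilde E)$.

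The main obstacle is bookkeeping in the mixed commutator $R_1$: for the intermediate values $a=1,2,3$ the derivative count $\bar\partial^{4-a}\partial_t^{2a}$ distributes over products of $F^{-1}$, $J^{-1}$ and $\eta$ in many ways, and the number of factors of $v$ and its derivatives varies with $p$, so one must verify for each term that at most one factor carries the top derivative count (which sits inside the energy) while the remaining factors are controlled in $L^\infty$ or $L^4$ by lower-order entries of $\tilde E$ through Sobolev embedding. The endpoint cases in the $\bar\partial^4$ and $\partial_t^8$ propositions already exhibit every combinatorial type that arises here, so the estimates go through; one simply chooses the H\"older exponents according to how the $4-a$ tangential and $2a$ time derivatives split, and uses the fundamental theorem of calculus in time to convert $\partial_t^{k}v$-factors into $\partial_t^{k-1}\eta$-factors whenever the $L^\infty$ or $L^4$ norm of $\partial_t^k v$ is not directly dominated by $\tilde E$.
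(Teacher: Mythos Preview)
Your proposal is correct and follows essentially the same approach as the paper. In fact, the paper does not give a detailed proof of this proposition at all: it simply states that since the two endpoint cases (the $\bar\partial^4$-problem and the $\partial_t^8$-problem) have been treated in full detail, all estimation strategies for the error terms in the three intermediate problems have already been covered, and the energy contributions arise in the identical fashion. Your outline is a faithful and more explicit version of precisely this template, including the trace corollary for $|\partial_t^{2a}\eta^\alpha|_{3.5-a}$ via the weighted embedding \eqref{embd} and the tangential trace inequality \eqref{trineq}.
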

\subsection{Additional Elliptic-Type Estimates for Normal Derivatives}
Our energy estimates in Section \ref{energyestimate} provide a priori control of tangential and time derivatives of $\eta$, it remains to gain a priori control of the normal derivatives of $\eta$ to close the argument. This is accomplished via bootstrapping procedure relying on having $\partial_t^8\eta(t)$ bounded in $L^2(\Omega)$.
\begin{proposition}
\label{6tp}
For $t\in[0,T]$, $\partial_t^5v(t)\in H^1(\Omega), \rho_0\Div\partial_t^6\eta\in H^1(\Omega)$, then
\begin{equation}
\sup_{[0,T]}(\|\partial_t^5v\|_1^2+\|\rho_0\partial_t^6J^{-2}\|_1^2) \leq \tilde{M}_0 +\delta\sup_{[0,T]}\tilde{E}+C\sqrt{T}P(\sup_{t\in[0,T]}\tilde{E}).
\end{equation}
\end{proposition}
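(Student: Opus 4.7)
The plan is to close both bounds by combining the Hodge-type elliptic estimate of Proposition \ref{curllemma} with the ODE estimate of Lemma \ref{elli}, applied to a time-differentiated form of the momentum equation \eqref{lagappro}. Together these give the missing normal-derivative information that complements the tangential, time-derivative, and curl bounds already established in Section \ref{energyestimate} and in Proposition \ref{propc}.

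For $\|\partial_t^5 v\|_1$, I would first invoke Proposition \ref{curllemma} with $\omega = \partial_t^5 v$ and $s = 1$, which reduces the $H^1$ norm to
$$\|\partial_t^5 v\|_1 \lesssim \|\partial_t^5 v\|_0 + \|\curl \partial_t^5 v\|_0 + \|\Div \partial_t^5 v\|_0 + |\bar{\partial}(\partial_t^5 v)^\alpha|_{-\frac{1}{2}}.$$
The $L^2$ piece would follow from the fundamental theorem of calculus applied to $\partial_t^6 v$, whose weighted $L^2$ norms are controlled by Proposition \ref{tprop} together with the weighted embedding \eqref{embd}. The curl piece is exactly the content of Proposition \ref{propc}. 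The tangential trace $|\bar{\partial}\partial_t^6 \eta^\alpha|_{-1/2}$ is dominated by the $|\partial_t^6\eta^\alpha|_{1/2}$-bound supplied by Proposition \ref{prop6}. For the divergence piece, I would use the identity $J_t={F^{*}}_i^k v^i_{,k}$ differentiated five times in time to express $\Div \partial_t^5 v$ in terms of $\partial_t^6 J$ (and hence, modulo the $\rho_0$-weight, in terms of $\rho_0\partial_t^6 J^{-2}$) plus commutators already controlled by the previous propositions, so this piece closes against the second quantity being estimated.

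For $\|\rho_0 \partial_t^6 J^{-2}\|_1$, I would apply $\partial_t^5$ to \eqref{lagappro} and set $\mathcal{P}_i := \partial_t^5[{F^{*}}_i^k(\rho_0^2 J^{-2})_{,k}]$, yielding the ODE-parabolic identity
$$\mathcal{P}_i + \kappa\,\partial_t\mathcal{P}_i = \rho_0\bigl(\partial_t^5 G^i + \kappa\partial_t^6 G^i - \partial_t^6 v^i\bigr).$$
Lemma \ref{elli} then supplies a $\kappa$-independent bound on $\|\mathcal{P}\|_{L^\infty(0,T;H^1)}$ in terms of initial data, $\|\rho_0(\partial_t^5 G + \kappa\partial_t^6 G)\|_{L^\infty(0,T;H^1)}$ (controlled by Proposition \ref{ineq:g}), and $\|\rho_0\partial_t^6 v\|_{L^\infty(0,T;H^1)}$ (from Proposition \ref{tprop} and weighted embedding). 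The leading-order part of $\mathcal{P}_i$ is ${F^{*}}_i^k\partial_k(\rho_0^2\partial_t^5 J^{-2})$; using the positivity \eqref{positive} and the near-identity structure of $F^{*}$ from \eqref{ini2}, this converts the $\mathcal{P}$-bound into control of $D(\rho_0^2\partial_t^5 J^{-2})$ in $L^2$, which together with $\partial_t J^{-2} = -2J^{-3}{F^{*}}_i^k v^i_{,k}$ (and one further time differentiation) delivers the desired $H^1$ bound on $\rho_0 \partial_t^6 J^{-2}$ modulo commutators.

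The main obstacle will be the bookkeeping of commutators arising from $\partial_t^5[{F^{*}}_i^k(\rho_0^2 J^{-2})_{,k}]$. Each mixed-order commutator is to be paired, via weighted H\"older and Sobolev embeddings, with pieces of $\tilde{E}$ that contain at most one factor of a top-order time derivative; any residual top-order piece will be absorbed via Young's inequality into the $\delta\sup_{[0,T]}\tilde{E}$-term on the right-hand side, while the $C\sqrt{T}\,P(\sup\tilde{E})$ contribution emerges, as in Section \ref{energyestimate}, from integrating in time lower-order time-derivative energy norms against themselves. The non-local $G$-estimates from Proposition \ref{ineq:g} are essential throughout, since $\partial_t^5 G$ and $\kappa\partial_t^6 G$ appear at top order on the right-hand side of the $\mathcal{P}$-equation.
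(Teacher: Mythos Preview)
Your Hodge-decomposition plan for $\|\partial_t^5 v\|_1$ is the same as the paper's, but the argument you sketch for $\|\rho_0\partial_t^6 J^{-2}\|_1$ has a real gap.

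First, the time-derivative count is off. You apply $\partial_t^5$ to \eqref{lagappro} and obtain an ODE for $\mathcal{P}=\partial_t^5[{F^{*}}_i^k(\rho_0^2 J^{-2})_{,k}]$, whose leading part is ${F^{*}}_i^k\partial_k(\rho_0^2\partial_t^5 J^{-2})$. Even if Lemma~\ref{elli} hands you $\mathcal{P}\in L^\infty_t H^1_x$, what you recover is information on $\rho_0^2\partial_t^5 J^{-2}$, not on $\rho_0\partial_t^6 J^{-2}$. Your proposed ``one further time differentiation'' does not convert a pointwise-in-time $H^1$ bound on $\rho_0^2\partial_t^5 J^{-2}$ into one on $\rho_0\partial_t^6 J^{-2}$. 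The paper instead applies $\partial_t^6$ to the equivalent form \eqref{lagc}; the ODE then has right-hand side $-\partial_t^7 v$, whose $L^2$-bound is exactly what Corollary~\ref{et8} supplies, and this gives directly $\|\partial_t^6[{F^{-1}}_i^k(2\rho_0 J^{-1}-\Phi)_{,k}]\|_0$.

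Second, and more importantly, you are missing the mechanism that actually extracts the normal-derivative bound. From the $L^2$ control of the $\partial_t^6$-differentiated pressure term the paper isolates, via the decomposition \eqref{akl}, the specific combination $\rho_0{F^{*}}_i^3\partial_t^6 J^{-2}_{,3}+2\rho_{0,3}{F^{*}}_i^3\partial_t^6 J^{-2}$. Controlling the $L^2$ norm of this sum is not enough: one must separate the two summands. The paper does this by expanding the square, integrating the cross term $\int\rho_0\rho_{0,3}|{F^{*}}_\cdot^3|^2\partial_t^6 J^{-2}\partial_t^6 J^{-2}_{,3}$ by parts in $x_3$, and invoking the physical vacuum condition ($|\rho_{0,3}|\geq\theta_1$ near $\Gamma$, $\rho_0\geq\theta_2$ in the interior). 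This step is the heart of the proposition and has no counterpart in your sketch; your appeal to ``positivity \eqref{positive}'' only inverts ${F^{*}}$ but does not unbundle the two weight structures $\rho_0\,\partial_3(\cdot)$ and $\rho_{0,3}(\cdot)$.

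Finally, your claim that Proposition~\ref{ineq:g} controls $\|\rho_0(\partial_t^5 G+\kappa\partial_t^6 G)\|_{L^\infty_t H^1_x}$ is not supported: that proposition gives only $\|\sqrt{\rho_0}\bar\partial^{4-a}\partial_t^{2a}G\|_0$ and $\int_0^t\kappa\|\sqrt{\rho_0}\bar\partial^{4-a}\partial_t^{2a+1}G\|_0^2$, which does not include a pointwise $H^1$ bound on $\rho_0\partial_t^5 G$. In the paper's argument $G$ enters only at the $L^2$ level (the term $J_2=\partial_t^6 G$ in the decomposition), which is handled by the same techniques as in Section~\ref{sg2}.
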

\begin{proof}
We begin by taking six time derivatives of \eqref{lagc} to obtain that
\begin{equation*}
\kappa\partial_t^7[{F^{-1}}_i^k(2\rho_0J^{-1}-\Phi)_{,k}]+\partial_t[{F^{-1}}_i^k(2\rho_0J^{-1}-\Phi)_{,k}]=-\partial_t^7v^i.
\end{equation*}
With Lemma \ref{elli}, and the bound on $\|\partial_t^7v\|_0^2$ given by Corollary \ref{et8}, we have
\begin{equation}
\label{0o}
\sup_{[0,T]}\|\partial_t^6[{F^{-1}}_i^k(2\rho_0J^{-1}-\Phi)_{,k}]\|_0^2\leq \tilde{M}_0 +\delta\sup_{[0,T]}\tilde{E}+C\sqrt{T}P(\sup_{t\in[0,T]}\tilde{E}).
\end{equation}
For $\beta=1,2,$
\begin{equation}
\label{akl}
2{F^{-1}}_i^k(\rho_0J^{-1})_{,k}=\rho_0{F^{*}}_i^3{J^{-2}}_{,3}+2\rho_{0,3}{F^{*}}_i^3J^{-2}+\rho_0{F^{*}}_i^{\beta}{J^{-2}}_{,\beta}+2\rho_{0,\beta}{F^{*}}_i^{\beta}J^{-2}.
\end{equation}
Acting $\partial_t^6$ on equation \eqref{akl}, we have that
\begin{align*}
&\rho_0{F^{*}}_i^3\partial_t^6{J^{-2}}_{,3}+2\rho_{0,3}{F^{*}}_i^3\partial_t^6J^{-2}\\
=&\underbrace{\partial_t^6[{F^{-1}}_i^k(2\rho_0J^{-1}-\Phi)_{,k}]}_{J_1}+\underbrace{\partial_t^6G}_{J_2}-\underbrace{\rho_0\partial_t^6({F^{*}}_i^{\beta}{J^{-2}}_{,\beta})}_{J_3}-2\underbrace{\rho_{0,\beta}\partial_t^6({F^{*}}_i^{\beta}J^{-2})}_{J_4}
\\&-\underbrace{(\partial_t^6({F^{*}}_i^3)[\rho_0{J^{-2}}_{,3}+2{\rho_0}_{,3}J^{-2}]}_{J_5}+\underbrace{\sum_{a=1}^5c_a\partial_t^a({F^{*}}_i^3)\partial_t^{6-a}[\rho_0{J^{-2}}_{,3}+2{\rho_0}_{,3}J^{-2}]}_{J_6}\end{align*}
Firstly, the $L^2(\Omega)$ bound for $J_1$ is given by \eqref{0o}. For $J_2$, by the same argument we used in the proof of Proposition \ref{ineq:g}, we have
\begin{align*}
\|J_2\|_0^2\leq CP(\sup_{t\in[0,T]}\tilde E(t))\|\rho_0\partial_t^6D\eta\|_0^2\leq \tilde M_0+CTP(\sup_{t\in[0,T]}\tilde E(t)).
\end{align*}

\noindent According to Proposition \ref{prop6}, we have
\begin{equation}
\label{0t}
\sup_{[0,T]}(\|\sqrt{\rho_0}\partial_t^6v\|_0^2+\|\rho_0\bar{\partial}D\partial_t^5v\|_0^2)\leq \tilde{M}_0 +\delta\sup_{[0,T]}\tilde{E}+C\sqrt{T}P(\sup_{t\in[0,T]}\tilde{E}),
\end{equation}
so with \eqref{ini}, we see that
\begin{equation*}
\sup_{t\in[0,T]}\|J_3\|_0^2\leq\tilde{M}_0 +\delta\sup_{[0,T]}\tilde{E}+C\sqrt{T}P(\sup_{t\in[0,T]}\tilde{E}).
\end{equation*}
To estimate $J_4$, we need the following inequality for $\beta=1,2$
\begin{equation*}
\|\dfrac{{\rho_0}_{,\beta}}{\rho_0}\|_{L^{\infty}(\Omega)}\leq C\|\dfrac{{\rho_0}_{,\beta}}{\rho_0}\|_2\leq C\|{\rho_0}_{,\beta}\|_3.
\end{equation*}
where we used the Sobolev embedding theorem and Lemma \ref{hardy} for ${\rho_0}_{,\beta}\in H^3(\Omega)\cap {H}_0^1(\Omega), \beta=1,2$. Thus, we have that
\begin{align}
&\|2{\rho_0}_{,\beta}\partial_t^6({F^{-1}}_i^{\beta}J^{-1})\|_0^2\leq\|2\rho_0\partial_t^6({F^{*}}_i^{\beta}J^{-2})\|_0^2\|\dfrac{{\rho_0}_{,\beta}}{\rho_0}\|_{L^{\infty}(\Omega)}^2.
\label{hav}
\end{align}
Thanks to \eqref{0t} and \eqref{ini}, and the fact that $\|\rho_0\|_4$ is bounded by assumption, from which it follows that
\begin{equation*}
\sup_{t\in[0,T]}\|J_4(t)\|_0^2\leq\tilde{M}_0 +\delta\sup_{[0,T]}\tilde{E}+C\sqrt{T}P(\sup_{t\in[0,T]}\tilde{E}).
\end{equation*}

\noindent To estimate $J_5(t)$, we need to use the identity \eqref{ai3}, which shows that ${F^{*}}_i^3$ is quadratic in $\bar{\partial}\eta$, and in particular, depends only on tangential derivatives. From the estimate \eqref{0t} and the weighted embedding inequality \eqref{embd}, we infer that
\begin{equation*}
\sup_{t\in[0,T]}\|\bar{\partial}\partial_t^6\eta(t)\|_0^2 \leq  \tilde{M}_0 +\delta\sup_{[0,T]}\tilde{E}+C\sqrt{T}P(\sup_{t\in[0,T]}\tilde{E}).
\end{equation*}
Then we have $\sup_{[0,T]}\|J_5\|_0^2\leq  \tilde{M}_0 +\delta\sup_{[0,T]}\tilde{E}+C\sqrt{T}P(\sup_{t\in[0,T]}\tilde{E}).$

\noindent Last, each summand in $J_6$ is a lower-order term, such that the time derivative of each summand is controlled by the energy function $\tilde{E}(t)$. Then by the fundamental theorem of calculus, we have that
\begin{equation*}
\sup_{t\in[0,T]}\|J_6\|_0^2 \leq  \tilde{M}_0 +\delta\sup_{[0,T]}\tilde{E}+C\sqrt{T}P(\sup_{t\in[0,T]}\tilde{E}).
\end{equation*}

Now we have proved that for all $t\in[0,T]$,
\begin{equation*}
\|\rho_0{F^{*}}_i^3\partial_t^6{J^{-2}}_{,3}+2\rho_{0,3}{F^{*}}_i^3\partial_t^6J^{-2}\|_0^2\leq\tilde{M}_0 +\delta\sup_{[0,T]}\tilde{E}+C\sqrt{T}P(\sup_{t\in[0,T]}\tilde{E}),
\end{equation*}
and we will show that the $L^2(\Omega)$-norm of each summand on the left-hand side is uniformly bounded on $[0,T]$. To achieve this goal, we expand the $L^2(\Omega)$-norm to obtain the inequality
\begin{multline}
\label{popppp}
\|\rho_0|{F^{*}}_.^3|\partial_t^6{J^{-2}}_{,3}(t)\|_0^2+4\||{F^{*}}_.^3{\rho_0}_{,3}|\partial_t^6J^{-2}(t)\|_0^2\\
+4\int_{\Omega}\rho_0{\rho_0}_{,3}|{F^{*}}_.^3|^2\partial_t^6J^{-2}\partial_t^6{J^{-2}}_{,3}\,dx\leq\tilde{M}_0 +\delta\sup_{[0,T]}\tilde{E}+C\sqrt{T}P(\sup_{t\in[0,T]}\tilde{E}).
\end{multline}
For each $\kappa>0$, solutions to our degenerate parabolic approximation \eqref{approeq} have sufficient regularity to ensure that $\rho_0{|\partial_t^6J^{-2}|^2}_{,3}$ is integrable. So we can integrate-by-parts with respect to $x_3$ to find that
\begin{multline}
\label{iopi}
4\int_{\Omega}\rho_0{\rho_0}_{,3}|{F^{*}}_.^3|^2\partial_t^6J^{-2}\partial_t^6{J^{-2}}_{,3}\,dx\\
=-2\||{F^{*}}_.^3{\rho_0}_{,3}|\partial_t^6J^{-2}(t)\|_0^2-2\int_{\Omega}\rho_0({\rho_0}_{,3}|{F^{*}}_.^3|^2)_{,3}(\partial_t^6J^{-2})^2\,dx.
\end{multline}
Substituting \eqref{iopi} into \eqref{popppp}, we can get
\begin{multline}
\label{tio2}
\|\rho_0|{F^{*}}_.^3|\partial_t^6{J^{-2}}_{,3}(t)\|_0^2+2\||{F^{*}}_.^3{\rho_0}_{,3}|\partial_t^6J^{-2}(t)\|_0^2\\
\leq\tilde{M}_0 +\delta\sup_{[0,T]}\tilde{E}+C\sqrt{T}P(\sup_{t\in[0,T]}\tilde{E}).
\end{multline}
Using \eqref{ini2}, we see that $|{F^{*}}_.^3|^2$ has a strictly positive lower-bound. By the physical vacuum condition \eqref{vacuum2}, for $\epsilon>0$ taken sufficiently small, there constants $\theta_1, \theta_2>0$ such that $|{\rho_0}_{,3}|\geq \theta_1$ whenever $1-\epsilon\leq x_3\leq 1$ and $0\leq x_3\leq\epsilon$, and $\rho_0(x)>\theta_2$ whenever $\epsilon\leq x_3\leq 1-\epsilon$. Hence, by readjusting the constants on the right-hand side of \eqref{tio2}, we find that
\begin{multline}
\|\rho_0\partial_t^6{J^{-2}}_{,3}\|_0^2+2\|\partial_t^6J^{-2}\|_0^2
\\\leq\tilde{M}_0 +\delta\sup_{[0,T]}\tilde{E}+C\sqrt{T}P(\sup_{t\in[0,T]}\tilde{E})+C\int_{\Omega}\rho_0|\partial_t^6J^{-2}|^2\,dx.
\label{pm}
\end{multline}
By Proposition \ref{prop6}, for $\beta=1,2$,
\begin{equation*}
\sup_{t\in[0,T]}\|\rho_0\partial_t^6{J^{-2}}_{,\beta}\|_0\leq\tilde{M}_0 +\delta\sup_{[0,T]}\tilde{E}+C\sqrt{T}P(\sup_{t\in[0,T]}\tilde{E}).
\end{equation*}
and by the fundamental theorem of calculus and Proposition \ref{tprop},
\begin{equation*}
\sup_{t\in[0,T]}\|\rho_0\partial_t^6J^{-2}\|_0\leq\tilde{M}_0 +\delta\sup_{[0,T]}\tilde{E}+C\sqrt{T}P(\sup_{t\in[0,T]}\tilde{E}).
\end{equation*}
These two inequalities, combined with \eqref{pm}, imply that
\begin{equation*}
\|\rho_0\partial_t^6J^{-2}\|_1^2+\|\partial_t^6J^{-2}\|_0^2\leq \tilde{M}_0 +\delta\sup_{[0,T]}\tilde{E}+C\sqrt{T}P(\sup_{t\in[0,T]}\tilde{E})+C\int_{\Omega}\rho_0|\partial_t^6J^{-2}|^2\,dx.
\end{equation*}
We use Young's inequality and the fundamental theorem of calculus (with respect to $t$) for the last integral, then we find that for $\theta >0$,
\begin{align*}
\int_{\Omega}\rho_0\partial_t^6J^{-2}\partial_t^6J^{-2}dx&\leq \theta\|\partial_t^6J^{-2}(t)\|_0^2+C_{\theta}\|\rho_0\partial_t^6J^{-2}(t)\|_0^2\\
&\leq\theta\|\partial_t^6J^{-2}(t)\|_0^2+C_{\theta}\|\rho_0\partial_t^5Dv\|_0^2\\
&\leq\theta\|\partial_t^6J^{-2}(t)\|_0^2+\tilde{M}_0 +\delta\sup_{[0,T]}\tilde{E}+C\sqrt{T}P(\sup_{t\in[0,T]}\tilde{E}),
\end{align*}
where we have used the fact that $\|\rho_0\partial_t^7Dv(t)\|_0^2$ is contained in the energy function $\tilde{E}(t)$. We choose $\theta\ll1$ and once again readjust the constants, then we see that on $[0,T]$
\begin{equation}
\label{xcv}
\|\rho_0\partial_t^6J^{-2}\|_1^2+\|\partial_t^6J^{-2}\|_0^2\leq \tilde{M}_0 +\delta\sup_{[0,T]}\tilde{E}+C\sqrt{T}P(\sup_{t\in[0,T]}\tilde{E}).
\end{equation}
With $J_t={F^{*}}_i^jv^i_{,j}$, we see that
\begin{equation}
\label{bcz}
{F^{*}}_i^j\partial_t^5v^i_{,j}=\partial_t^tJ-v^i_{j}\partial_t^5({F^{*}}_i^j)-\sum_{a=1}^4c_a\partial_t^a\partial_t^{5-a}v^i_{,j}.
\end{equation}
Then by using \eqref{xcv} and applying the fundamental theorem of calculus to the last two terms on the right-hand side of \eqref{bcz}, we see that
\begin{equation*}
\|{F^{*}}_i^j\partial_t^5v^i_{,j}\|_0^2\leq \tilde{M}_0 +\delta\sup_{[0,T]}\tilde{E}+C\sqrt{T}P(\sup_{t\in[0,T]}\tilde{E}),
\end{equation*}
from which it follows that
\begin{equation*}
\|\Div\partial_t^5v(t)\|_0^2\leq \tilde{M}_0 +\delta\sup_{[0,T]}\tilde{E}+C\sqrt{T}P(\sup_{t\in[0,T]}\tilde{E}).
\end{equation*}
Since Proposition \ref{propc} provides the estimate
\begin{equation*}
\|\curl\partial_t^5v(t)\|_0^2\leq \tilde{M}_0 +\delta\sup_{[0,T]}\tilde{E}+C\sqrt{T}P(\sup_{t\in[0,T]}\tilde{E}),
\end{equation*}
and Proposition \ref{prop6} shows that for $\alpha=1,2$,
\begin{equation*}
|\partial_t^5v^{\alpha}(t)|_{0.5}^2\leq \tilde{M}_0 +\delta\sup_{[0,T]}\tilde{E}+C\sqrt{T}P(\sup_{t\in[0,T]}\tilde{E}).
\end{equation*}
We thus conclude from Proposition \ref{curllemma} that
\begin{equation*}
\sup_{t\in[0,T]}\|\partial_t^5v(t)\|_1^2\leq \tilde{M}_0 +\delta\sup_{[0,T]}\tilde{E}+C\sqrt{T}P(\sup_{t\in[0,T]}\tilde{E}).
\end{equation*}
\end{proof}
After getting a good bound for $\partial_t^5v(t)$ in $H^1(\Omega)$, we proceed with our bootstrapping.
\begin{proposition}
\label{0908}
For $t\in[0,T]$, $\partial_t^3v\in H^2(\Omega)$, $\rho_0\partial_t^4J^{-2}(t)\in H^2(\Omega)$ and
\begin{equation}
\sup_{t\in[0,T]}(\|\partial_t^3v\|_2^2+\|\rho_0\partial_t^4J^{-2}\|_2^2)\leq\tilde{M}_0 +\delta\sup_{[0,T]}\tilde{E}+C\sqrt{T}P(\sup_{t\in[0,T]}\tilde{E}).
\end{equation}
\end{proposition}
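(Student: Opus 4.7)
The plan is to mirror the bootstrap argument of Proposition \ref{6tp}, now trading two time derivatives for one extra spatial derivative. Starting from the $H^1$-control on $\partial_t^5 v$ and $\rho_0 \partial_t^6 J^{-2}$ furnished by Proposition \ref{6tp}, I would first apply $\partial_t^4$ to \eqref{lagc} to get
\begin{equation*}
\kappa \partial_t^5 [{F^{-1}}_i^k (2\rho_0 J^{-1} - \Phi)_{,k}] + \partial_t^4 [{F^{-1}}_i^k(2\rho_0 J^{-1} - \Phi)_{,k}] = -\partial_t^5 v^i,
\end{equation*}
and then invoke Lemma \ref{elli} at the $H^1$-level (it is $\kappa$-independent) to transfer the $H^1$-bound on the right-hand side into an $H^1$-bound on $\partial_t^4[{F^{-1}}_i^k(2\rho_0 J^{-1}-\Phi)_{,k}]$ with right-hand side of the desired form.

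Next I would apply $\partial_t^4$ to the algebraic identity \eqref{akl} and isolate the normal-direction piece $\rho_0 {F^{*}}_i^3 \partial_t^4 {J^{-2}}_{,3} + 2 \rho_{0,3} {F^{*}}_i^3 \partial_t^4 J^{-2}$. The remaining terms split as follows: the tangential terms $\rho_0 \partial_t^4 ({F^{*}}_i^\beta {J^{-2}}_{,\beta})$ and $\rho_{0,\beta}\partial_t^4({F^{*}}_i^\beta J^{-2})$ for $\beta=1,2$ are controlled in $H^1$ directly by Proposition \ref{prop6} together with Lemma \ref{hardy} to handle the weight $\rho_{0,\beta}/\rho_0$; the potential-force term $\partial_t^4 G$ is bounded in $H^1$ using the kernel/Taylor estimates of Section \ref{sg2} carried one spatial derivative further (no essentially new difficulty arises since only one additional $D$ falls on the kernel, and the increased singularity is again tamed by \eqref{taylor} and the Sobolev $C^\alpha$-bound); the commutator terms analogous to $J_5, J_6$ in the proof of Proposition \ref{6tp} are estimated by the fundamental theorem of calculus against $\tilde{E}(t)$. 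The step I expect to be most delicate is the $H^1$ analogue of the integration-by-parts identity \eqref{iopi}: differentiating the quadratic test integrand one more time in space generates cross terms such as $[\rho_0 \rho_{0,3} |{F^{*}}_.^3|^2]_{,3k}(\partial_t^4 J^{-2})^2$ and $\rho_0 ({\rho_0}_{,3}|{F^{*}}_.^3|^2) D\partial_t^4 J^{-2} \,\partial_t^4 J^{-2}$ that must be absorbed using the physical vacuum condition \eqref{vacuum2} together with the nondegeneracy $|{F^{*}}_.^3|^2\ge c>0$ from \eqref{ini2}, exactly as in the $L^2$ case but with one further layer of product-rule cross terms to balance.

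After readjusting constants and splitting $\Omega$ into the near-boundary strip and its interior complement exactly as in the proof of Proposition \ref{6tp}, I expect to obtain
\begin{equation*}
\|\rho_0 \partial_t^4 {J^{-2}}_{,3}\|_1^2 + \|\partial_t^4 J^{-2}\|_1^2 \leq \tilde{M}_0 + \delta \sup_{[0,T]}\tilde{E} + C\sqrt{T}P(\sup_{[0,T]}\tilde{E}),
\end{equation*}
which combined with the tangential control of Proposition \ref{prop6} delivers the $H^2$ bound on $\rho_0 \partial_t^4 J^{-2}$. To then derive $\partial_t^3 v \in H^2(\Omega)$ with the same right-hand side, I would close the argument by the Hodge-type estimate of Proposition \ref{curllemma} with $s=2$: the divergence $\Div \partial_t^3 v$ is controlled in $H^1$ from $\partial_t^4 J = {F^{*}}_i^j \partial_t^3 v^i_{,j} + (\text{l.o.t.})$ using the newly obtained $H^2$-bound on $\rho_0 \partial_t^4 J^{-2}$ and the fundamental theorem of calculus for the commutators; the curl $\curl \partial_t^3 v$ is controlled in $H^1$ directly from Proposition \ref{propc}; and the tangential boundary trace $|\partial_t^3 v^\alpha|_{3/2}$ for $\alpha=1,2$ is inherited from the $|\partial_t^{2a}\eta^\alpha|_{3.5-a}$ bound of Proposition \ref{prop6} after differentiating once in time. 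Assembling these three ingredients via Proposition \ref{curllemma} closes the bootstrap at the $\partial_t^3$ level and yields the stated estimate.
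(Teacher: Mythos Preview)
Your overall strategy matches the paper's, but there is a genuine ordering gap in the step where you claim the ``tangential terms'' $J_3=\rho_0\partial_t^4({F^{*}}_i^\beta {J^{-2}}_{,\beta})$ and $J_4$ are controlled in $H^1$ \emph{directly} by Proposition~\ref{prop6}. That proposition only supplies $\|\rho_0\bar\partial^2\partial_t^4 D\eta\|_0$, i.e.\ two tangential and one arbitrary derivative; it does \emph{not} control the mixed normal--tangential piece $\|\rho_0\partial_t^4 {J^{-2}}_{,\beta 3}\|_0$ that appears in $J_{3,3}$. Consequently you cannot bound the full $H^1$-norm of the right-hand side in one shot, and the single-step ``$H^1$ analogue of \eqref{iopi}'' you describe would leave an unbounded term on the right.

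The paper resolves this by a two-stage bootstrap within the proposition itself. First one differentiates the $\partial_t^4$-version of \eqref{akl} only in the tangential directions $x_\alpha$, $\alpha=1,2$; at this stage all $J_{l,\alpha}$ are genuinely controlled by Proposition~\ref{prop6}, and the integration-by-parts argument of Proposition~\ref{6tp} yields $\|\partial_t^4 J^{-2}_{,\alpha}\|_0+\|\rho_0\partial_t^4 J^{-2}_{,\alpha}\|_1$. One then runs an intermediate Hodge estimate to obtain $\|\partial_t^3 v_{,\alpha}\|_1$ (this is \eqref{dff} in the paper). Only with these tangential bounds in hand does one differentiate in $x_3$; now $J_{3,3}$ is controlled by the just-established $\|\rho_0\partial_t^4 J^{-2}_{,\alpha}\|_1$, the coefficient in front of $\rho_{0,3}{F^{*}}_i^3\partial_t^4 J^{-2}_{,3}$ becomes $3$ rather than $2$ (harmless), and the same integration-by-parts closes. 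Your final Hodge step and the treatment of $G$ and the commutators are fine; just insert this tangential-then-normal splitting in place of the direct $H^1$ claim.
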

\begin{proof}
We take four time-derivatives of \eqref{lagc} to obtain
\begin{equation*}
\kappa\partial_t^5[{F^{-1}}_i^k(2\rho_0J^{-1}-\Phi)_{,k}]+\partial_t^4[{F^{-1}}_i^k(2\rho_0J^{-1}-\Phi)_{,k}]=-\partial_t^5v^i.
\end{equation*}
With Lemma \ref{elli}, and the bound on $\|\partial_t^5v\|_1^2$ given by Proposition \ref{6tp}, we have
\begin{equation}
\label{mbx}
\sup_{t\in[0,T]}\|\partial_t^4[{F^{-1}}_i^k(2\rho_0J^{-1}-\Phi)_{,k}]\|_1^2\leq\tilde{M}_0 +\delta\sup_{[0,T]}\tilde{E}+C\sqrt{T}P(\sup_{t\in[0,T]}\tilde{E}).
\end{equation}
Acting $\partial_t^4$ on equation \eqref{akl}, we have that
\begin{align}
\nonumber&\rho_0{F^{*}}_i^3\partial_t^4{J^{-2}}_{,3}+2\rho_{0,3}{F^{*}}_i^3\partial_t^4J^{-2}\\
\nonumber=&\underbrace{\partial_t^4[{F^{-1}}_i^k(2\rho_0J^{-1}-\Phi)_{,k}]}_{J_1}+\underbrace{\partial_t^4G}_{J_2}-\underbrace{\rho_0\partial_t^4({F^{*}}_i^{\beta}{J^{-2}}_{,\beta})}_{J_3}-2\underbrace{\rho_{0,\beta}\partial_t^4({F^{*}}_i^{\beta}J^{-2})}_{J_4}
\\&-\underbrace{\partial_t^4({F^{*}}_i^3)[\rho_0{J^{-2}}_{,3}+2{\rho_0}_{,3}J^{-2}]}_{J_5}+\underbrace{\sum_{a=1}^3c_a\partial_t^a({F^{*}}_i^3)\partial_t^{4-a}[\rho_0{J^{-2}}_{,3}+2{\rho_0}_{,3}J^{-2}]}_{J_6}
\label{zcvv}
\end{align}
In order to estimate $\partial_t^4J^{-2}(t)$ in $H^1(\Omega)$, we first estimate tangential derivatives of $\partial_t^4J^{-2}(t)$ in $L^2(\Omega)$. We consider for $\alpha=1,2,$
\begin{multline}
\label{vnzl}
\rho_0{F^{*}}_i^3\partial_t^4{J^{-2}}_{,3\alpha}+2{\rho_0}_{,3}{F^{*}}_i^3\partial_t^4{J^{-2}}_{,\alpha}\\
=\sum_{l=1}^6J_{l,\alpha}-(\rho_0{F^{*}}_{,\alpha}\partial_t^4{J^{-2}}_{,3})-2({\rho_0}_{,3}{F^{*}}_i^3)_{,\alpha}\partial_t^4J^{-2}.
\end{multline}
\textbf{Bounds for ${J_1}_{,\alpha}$.} The estimate \eqref{mbx} shows that
\begin{equation*}
\|{J_1}_{,\alpha}\|_0^2\leq \tilde{M}_0 +\delta\sup_{[0,T]}\tilde{E}+C\sqrt{T}P(\sup_{t\in[0,T]}\tilde{E}).
\end{equation*}
\textbf{Bounds for ${J_2}_{,\alpha}$.} From \eqref{gest}, we have that
\begin{equation*}
\|{J_2}_{,\alpha}\|_0^2\leq \tilde{M}_0 +\delta\sup_{[0,T]}\tilde{E}+C\sqrt{T}P(\sup_{t\in[0,T]}\tilde{E}).
\end{equation*}
\textbf{Bounds for ${J_3}_{,\alpha}$.} First, Proposition \ref{prop6} provides the estimate
\begin{equation}
\label{90394}
\sup_{t\in[0,T]}(\|\bar{\partial}^2\partial_t^3v\|_0^2+\|\rho_0\bar{\partial}^2D\partial_t^3v\|_0^2)\leq \tilde{M}_0 +\delta\sup_{[0,T]}\tilde{E}+C\sqrt{T}P(\sup_{t\in[0,T]}\tilde{E}).
\end{equation}
Then we expand ${J_3}_{,\alpha}$ as
\begin{equation*}
{J_3}_{,\alpha}=\rho_0\partial_t^4({{F^{*}}_i^{\beta}}_{,\alpha}{J^{-2}}_{,\beta}+{F^{*}}_i^{\beta}{J^{-2}}_{,\beta\alpha}+{\rho_0}_{,\alpha}\partial_t^4({F^{*}}_i^{\beta}{J^{-2}}_{,\beta}).
\end{equation*}
Using \eqref{ini}, for $\alpha=1,2,$ the highest-order term in $\rho_0\partial_t^4({F^{*}}_i^{\beta}{J^{-2}}_{,\beta\alpha})$ satisfies the inequality
\begin{equation*}
\|\rho_0{F^{*}}_i^{\beta}\partial_t^4{J^{-2}}_{,\beta\alpha}\|_0^2 \leq C\|\rho_0\bar{\partial}^2D\partial_t^3v\|_0^2,
\end{equation*}
which has the bound \eqref{90394}, and the lower-order terms have the same bound by using the fundamental theorem of calculus. For instance,
\begin{multline*}
\|\rho_0{J^{-2}}_{,\beta\alpha}\partial_t^4({F^{*}}_i^{\beta}){J^{-2}}_{,\beta\alpha}\|_0^2\leq\|\rho_0{J^{-2}}_{,\beta\alpha}\|_{L^6{\Omega}}\|\partial_t^4({F^{*}}_i^{\beta})\|_{L^3(\Omega)}\\
\leq C\|\rho_0{J^{-2}}_{,\alpha\beta}\|_1\|\partial_t^4({F^{*}}_i^{\beta})\|_{0.5}\leq \tilde{M}_0,
\end{multline*}
where we have used H\"older's inequality, the Sobolev embedding theorem, and \eqref{ini} for the final inequality. On the other hand, ${\rho_0}_{,\alpha}\partial_t^4({F^{*}}_i^{\beta}{J^{-2}}_{,\beta})$ is estimated in the same manner as \eqref{hav}, which shows that
\begin{equation*}
\|{J_3}_{,\alpha}\|_0^2 \leq\tilde{M}_0 +\delta\sup_{[0,T]}\tilde{E}+C\sqrt{T}P(\sup_{t\in[0,T]}\tilde{E}).
\end{equation*}
\textbf{Bounds for ${J_4}_{,\alpha}$.} By using the fact that $\|\partial_t{J_3}_{,\alpha}\|_0^2$ can be bounded by the energy function and applying the fundamental theorem of calculus, we have that
\begin{equation*}
\|{J_4}_{,\alpha}\|_0^2 \leq\tilde{M}_0 +\delta\sup_{[0,T]}\tilde{E}+C\sqrt{T}P(\sup_{t\in[0,T]}\tilde{E}).
\end{equation*}
\textbf{Bounds for ${J_5}_{,\alpha}$.} Again, by using the fact that from \eqref{ai3}, the vector ${F^{*}}_i^3$ only contains tangential derivatives of $\eta^i$, and the inequalities \eqref{ini}, we have that for $\alpha=1,2$,
\begin{align*}
\|(\partial_t^4({F^{*}}_i^3)[\rho_0{J^{-2}}_{,3}+2{\rho_0}_{,3}J^{-2}])_{,\alpha}\|_0^2&\leq C\|\bar{\partial}^2\partial_t^3v\|_0^2+\tilde{M}_0\\&\leq\tilde{M}_0 +\delta\sup_{[0,T]}\tilde{E}+C\sqrt{T}P(\sup_{t\in[0,T]}\tilde{E}),
\end{align*}
where the last inequality is followed from \eqref{90394}. Then we obtain that
\begin{equation*}
\|{J_5}_{,\alpha}\|_0^2 \leq\tilde{M}_0 +\delta\sup_{[0,T]}\tilde{E}+C\sqrt{T}P(\sup_{t\in[0,T]}\tilde{E}).
\end{equation*}
\textbf{Bounds for ${J_6}_{,\alpha}$.} These are lower-order terms, which can be estimated with the fundamental theorem of calculus and \eqref{ini}, and have the following bound
\begin{equation*}
\|{J_6}_{,\alpha}\|_0^2 \leq\tilde{M}_0 +\delta\sup_{[0,T]}\tilde{E}+C\sqrt{T}P(\sup_{t\in[0,T]}\tilde{E}).
\end{equation*}
\textbf{Bounds for $-(\rho_0{F^{*}}_i^3)_{,\alpha}\partial_t^4{J^{-2}}_{,3}-2({\rho_0}_{,3}{F^{*}}_i^3)_{,\alpha}\partial_t^4J^{-2}$.} The bounds for these terms can be estimated by using the same fashion as we used for ${J_3}_{,\alpha}$ and we have that
\begin{equation*}
\|-(\rho_0{F^{*}}_i^3)_{,\alpha}\partial_t^4{J^{-2}}_{,3}-2({\rho_0}_{,3}{F^{*}}_i^3)_{,\alpha}\partial_t^4J^{-2}\|_0^2 \leq\tilde{M}_0 +\delta\sup_{[0,T]}\tilde{E}+C\sqrt{T}P(\sup_{t\in[0,T]}\tilde{E}).
\end{equation*}
We have hence bounded the $L^2(\Omega)$-norm of the right-hand side of \eqref{vnzl} by $\tilde{M}_0 +\delta\sup_{[0,T]}\tilde{E}+C\sqrt{T}P(\sup_{t\in[0,T]}\tilde{E}).$ Using the same integration-by-parts argument just given in the proof of Proposition \ref{6tp}, we conclude that for $\alpha=1,2,$
\begin{equation}
\sup_{t\in[0,T]}(\|\partial_t^4{J^{-2}}_{,\alpha}\|_0^2+\|\rho_0\partial_t^4{J^{-2}}_{,\alpha}\|_1^2)\leq\tilde{M}_0 +\delta\sup_{[0,T]}\tilde{E}+C\sqrt{T}P(\sup_{t\in[0,T]}\tilde{E}).
\label{znm}
\end{equation}
From the inequality \eqref{znm}, we can infer that for $\alpha=1,2,$
\begin{equation}
\label{m1}
\sup_{t\in[0,T]}\|\Div \partial_t^3v_{,\alpha}\|_0^2\leq \tilde{M}_0 +\delta\sup_{[0,T]}\tilde{E}+C\sqrt{T}P(\sup_{t\in[0,T]}\tilde{E}),
\end{equation}
and according to Proposition \ref{propc}, for $\alpha=1,2,$
\begin{equation}
\label{m2}
\sup_{t\in[0,T]}\|\curl \partial_t^3v_{,\alpha}\|_0^2\leq \tilde{M}_0 +\delta\sup_{[0,T]}\tilde{E}+C\sqrt{T}P(\sup_{t\in[0,T]}\tilde{E}).
\end{equation}
The boundary regularity of $\partial_t^3v_{,\alpha}, \alpha=1,2$, follows from Proposition \ref{prop6}:
\begin{equation}
\label{m3}
\sup_{t\in[0,T]}|\partial_t^3v_{,\alpha}|_{0.5}^2\leq \tilde{M}_0 +\delta\sup_{[0,T]}\tilde{E}+C\sqrt{T}P(\sup_{t\in[0,T]}\tilde{E}),
\end{equation}
Thus, the inequalities \eqref{m1}, \eqref{m2}, and \eqref{m3} together with \eqref{curlineq} and \eqref{znm} show that
\begin{equation}
\sup_{t\in[0,T]}(\|\partial_t^3v_{,\alpha}\|_1^2+\|\rho_0\partial_t^4{J^{-2}}_{,\alpha}\|_1^2)\leq  \tilde{M}_0 +\delta\sup_{[0,T]}\tilde{E}+C\sqrt{T}P(\sup_{t\in[0,T]}\tilde{E}),
\label{dff}
\end{equation}
In order to estimate $\|\partial_t^4{J^{-2}}_{,3}\|_0^2$, we differentiate \eqref{zcvv} in the normal direction $x_3$ to obtain
\begin{multline}
\label{vnzl2}
\rho_0{F^{*}}_i^3\partial_t^4{J^{-2}}_{,33}+2{\rho_0}_{,3}{F^{*}}_i^3\partial_t^4{J^{-2}}_{,3}\\
=\sum_{l=1}^6J_{l,3}-(\rho_0{F^{*}}_{i,3}^3\partial_t^4{J^{-2}}_{,3})-2({\rho_0}_{,3}{F^{*}}_i^3)_{,3}\partial_t^4J^{-2}.
\end{multline}
For ${J_2}_{,3}$, we have
\begin{align*}
&\quad\partial_3\partial_t^4G^i\\&=\sum_{a=0}^{4}\int_{\Omega}\partial_3(\partial_t^a\dfrac{1}{|\eta(x,t)-\eta(z,t)|}\partial_k(\rho_0\partial_t^{4-a}{F^{-1}}_i^k)\,dx
\\&=C\int_{\Omega}\dfrac{(\eta(x,t)-\eta(z,t))\cdot\partial_3\eta(x,t)}{|\eta(x,t)-\eta(z,t)|^3}\partial_k(\rho_0\partial_t^4{F^{-1}}_i^k)\,dz\\&\quad+\int_{\Omega}\dfrac{(\partial_t^3v(x,t)-\partial_t^3v(z,t))\cdot\partial_3\eta(x,t)}{|\eta(x,t)-\eta(z,t)|^3}\partial_k(\rho_0{F^{-1}}_i^k)\,dz\\
&\quad+C\int_{\Omega}\dfrac{(\eta(x,t)-\eta(z,t))\cdot(\partial_t^3v(x,t)-\partial_t^3v(z,t))((\eta(x,t)-\eta(z,t))\cdot\partial_3\eta(x,t))}{|\eta(x,t)-\eta(z,t)|^3}\\&\quad\quad\quad\quad\quad\times\partial_k(\rho_0\partial_t^4{F^{-1}}_i^k)\,dz+R.
\end{align*}
By using a similar argument we used in the proof of Proposition \eqref{ineq:g}, for instance, from \eqref{calpha} to \eqref{tio}, and combining with \eqref{ini2}, we can bound $\|{J_2}_{,3}\|_0^2$ by $\tilde{M}_0+CTP(\sup_{t\in[0,T]}\tilde{E})$.

Then following our estimates for the tangential derivatives, inequality \eqref{dff} together with Proposition \ref{prop6} and \ref{6tp} show that the other terms on the right-hand side of \eqref{vnzl2} are bounded in $L^2(\Omega)$ by $\tilde{M}_0 +\delta\sup\limits_{[0,T]}\tilde{E}+C\sqrt{T}P(\sup\limits_{t\in[0,T]}\tilde{E})$.

\noindent It follows that for $k=1,2,3,$
\begin{equation*}
\|\rho_0{F^{*}}_i^3\partial_t^4{J^{-2}}_{,k3}+3{\rho_0}_{,3}{F^{*}}_i^3\partial_t^4{J^{-2}}_{,k}\|\leq \tilde{M}_0 +\delta\sup\limits_{[0,T]}\tilde{E}+C\sqrt{T}P(\sup\limits_{t\in[0,T]}\tilde{E}).
\end{equation*}
Note that the coefficient in front of ${\rho_0}_{,3}\rho_0{F^{*}}_i^3\partial_t^4{J^{-2}}_{,k}$ has changed from 2 to 3, but the identical integration-by-parts argument which we used in the proof of Proposition \ref{6tp} can be employed again, and shows that
\begin{equation*}
\|\rho_0\partial_t^4J^{-2}\|_2^2+\|\partial_t^4J^{-2}\|_1^2\leq \tilde{M}_0 +\delta\sup_{[0,T]}\tilde{E}+C\sqrt{T}P(\sup_{t\in[0,T]}\tilde{E}).
\end{equation*}
Thus, $\|\Div\partial_t^3v\|_1^2\leq\tilde{M}_0 +\delta\sup_{[0,T]}\tilde{E}+C\sqrt{T}P(\sup_{t\in[0,T]}\tilde{E})$. From Proposition \ref{propc}, we have $\|\curl\partial_t^3v\|_1^2\leq\tilde{M}_0 +\delta\sup_{[0,T]}\tilde{E}+C\sqrt{T}P(\sup_{t\in[0,T]}\tilde{E})$. Thus, combining these two estimates with the bound on $\partial_t^3v^{\alpha}$ given by Proposition \ref{prop6}, we can get the following estimate by Proposition \ref{curllemma}:
\begin{equation*}
\|\partial_t^3v\|_2^2\leq\tilde{M}_0 +\delta\sup_{[0,T]}\tilde{E}+C\sqrt{T}P(\sup_{t\in[0,T]}\tilde{E}).
\end{equation*}
\end{proof}
\begin{proposition}
For $t\in[0,T]$, $\partial_tv\in H^3(\Omega)$, $\rho_0\partial_t^2J^{-2}(t)\in H^3(\Omega)$ and
\begin{equation}
\sup_{t\in[0,T]}(\|\partial_tv\|_3^2+\|\rho_0\partial_t^2J^{-2}\|_3^2)\leq\tilde{M}_0 +\delta\sup_{[0,T]}\tilde{E}+C\sqrt{T}P(\sup_{t\in[0,T]}\tilde{E}).
\end{equation}
\end{proposition}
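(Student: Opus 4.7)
The plan is to push the bootstrapping in Propositions \ref{6tp} and \ref{0908} one step further, exchanging two more time derivatives for two more spatial derivatives. The scheme is entirely parallel, but the main new difficulty will be handling the third spatial derivative of the nonlocal potential force term $G$ (present in $J_2$ below) and keeping track of commutators produced by two tangential or normal differentiations of the singular identity \eqref{akl}.

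First I would time-differentiate \eqref{lagc} twice and apply Lemma \ref{elli} with the $H^2$-bound on $\partial_t^3 v$ coming from Proposition \ref{0908}. This yields, independently of $\kappa$,
\begin{equation*}
\sup_{t\in[0,T]}\|\partial_t^2[{F^{-1}}_i^k(2\rho_0J^{-1}-\Phi)_{,k}]\|_2^2\leq \tilde{M}_0+\delta\sup_{[0,T]}\tilde E+C\sqrt T P(\sup_{[0,T]}\tilde E).
\end{equation*}
Next, I would apply $\partial_t^2$ to the algebraic identity \eqref{akl}, obtaining
\begin{equation*}
\rho_0{F^{*}}_i^3\partial_t^2{J^{-2}}_{,3}+2\rho_{0,3}{F^{*}}_i^3\partial_t^2J^{-2}=J_1+J_2-J_3-2J_4-J_5+J_6,
\end{equation*}
with $J_k$ analogous to those in Proposition \ref{0908}. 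Since we now want $\|\rho_0\partial_t^2J^{-2}\|_3$, I would differentiate this identity by $\bar\partial^{a}\partial_3^{b}$ with $a+b\leq 2$. In each case the left-hand side acquires the key quadratic-form structure $\rho_0 F^*{}_i^3 \partial_t^2 J^{-2}_{,k\ell}+2\rho_{0,3}F^*{}_i^3\partial_t^2 J^{-2}_{,\ell}$ (plus controllable commutators from hitting the $\rho_0 F^*{}_i^3$ factor), and applying the by-now standard integration-by-parts trick from the proof of Proposition \ref{6tp}---multiplying across, using Young's inequality, and exploiting the physical vacuum sign condition \eqref{vacuum2} together with the lower bound on $|F^*{}_i^3|$ from \eqref{ini2}---separates the $L^2$-norms of $\rho_0\partial_t^2 J^{-2}_{,k\ell}$ from those of $\partial_t^2 J^{-2}_{,\ell}$, yielding
\begin{equation*}
\|\rho_0\partial_t^2J^{-2}\|_3^2+\|\partial_t^2 J^{-2}\|_2^2\leq \tilde M_0+\delta\sup_{[0,T]}\tilde E+C\sqrt T P(\sup_{[0,T]}\tilde E).
\end{equation*}

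The routine pieces $J_3, J_4, J_5, J_6$ are estimated exactly as in Proposition \ref{0908}: one uses \eqref{ini}, the Sobolev embedding, the higher-order Hardy inequality (for the $\rho_{0,\beta}/\rho_0$ factors in $J_4$), and the fact that $\bar\partial$-derivatives of $F^*{}_i^3$ only involve tangential derivatives of $\eta$. All lower-order time-differentiated factors are recovered from the initial data plus $\int_0^t\partial_t(\cdot)$, paying a factor of $\sqrt T$. The real obstacle is the term with $J_2=\partial_t^2 G$ when up to two spatial derivatives fall on it: for the highest-order contribution one gets the kernel $(\eta(x)-\eta(z))\cdot\partial_t v$ divided by $|\eta(x)-\eta(z)|^3$, possibly multiplied by a second derivative. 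As in Section \ref{sg1}--\ref{sg2}, the fix is Taylor's formula combined with the Sobolev $C^{1/2}$-embedding $|\partial_t v(x)-\partial_t v(z)|\leq C\|\partial_t v\|_{2}|x-z|^{1/2}$ (so $\|\partial_t v\|_2$ is already controlled by Proposition \ref{0908}) and \eqref{eta}, reducing the singularity to an integrable $|x-z|^{-5/2}$ kernel and producing the bound $\|D^2\partial_t^2 G\|_0^2\leq \tilde M_0+C\sqrt T P(\sup\tilde E)$ together with its lower-derivative versions.

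Once $\|\rho_0\partial_t^2J^{-2}\|_3^2$ is controlled, I would conclude for $\partial_t v$ as before. Time-differentiating $J_t=F^*{}_i^j v^i_{,j}$ and rearranging gives $F^*{}_i^j\partial_t v^i_{,j}$ in terms of $\partial_t^2 J$ and lower-order time factors; combined with \eqref{ini2} and the fundamental theorem of calculus in $t$ this produces $\|\Div \partial_t v\|_2^2\leq\tilde M_0+\delta\sup\tilde E+C\sqrt T P(\sup\tilde E)$. The curl estimate $\|\curl\partial_t v\|_2^2$ is provided by Proposition \ref{propc}, and the boundary trace estimate $|\partial_t v^\alpha|_{2.5}^2$ for $\alpha=1,2$ follows from the $\bar\partial^3\partial_t^2\eta$ bound in Proposition \ref{prop6} via the tangential trace inequality \eqref{trineq}. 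Applying the Hodge-type elliptic estimate \eqref{curlineq} of Proposition \ref{curllemma} with $s=3$ then yields
\begin{equation*}
\sup_{t\in[0,T]}\|\partial_t v\|_3^2\leq \tilde M_0+\delta\sup_{[0,T]}\tilde E+C\sqrt T P(\sup_{[0,T]}\tilde E),
\end{equation*}
which, combined with the bound on $\|\rho_0\partial_t^2 J^{-2}\|_3^2$, completes the proof.
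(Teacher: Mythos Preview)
Your proposal is correct and follows essentially the same approach as the paper, which simply states that one takes two time-derivatives of \eqref{lagc} and repeats verbatim the argument of Proposition \ref{0908}. You have correctly spelled out the details of that bootstrapping step, including the $\kappa$-independent elliptic estimate via Lemma \ref{elli}, the differentiation of \eqref{akl}, the integration-by-parts separation argument from Proposition \ref{6tp}, the treatment of the $J_2$ term via the kernel-reduction trick of Section \ref{sg2}, and the concluding Hodge estimate from Proposition \ref{curllemma}.
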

\begin{proof}
By taking two time-derivatives of \eqref{lagc} and applying the same argument that we used in the proof of Proposition \ref{0908}, the proposition is proved.
\end{proof}

\begin{proposition}
\label{notp}
For $t\in[0,T]$, $\eta\in H^4(\Omega)$, $\rho_0J^{-2}(t)\in H^4(\Omega)$ and
\begin{equation}
\sup_{t\in[0,T]}(\|\eta\|_4^2+\|\rho_0J^{-2}\|_4^2)\leq\tilde{M}_0 +\delta\sup_{[0,T]}\tilde{E}+C\sqrt{T}P(\sup_{t\in[0,T]}\tilde{E}).
\end{equation}
\end{proposition}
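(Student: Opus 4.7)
The proof follows the same bootstrap template as Propositions \ref{6tp} and \ref{0908}, now with zero time derivatives applied to \eqref{lagc}. Setting $f^i = [{F^{-1}}_i^k(2\rho_0 J^{-1} - \Phi)_{,k}]$, the equation reads $f + \kappa f_t = -v_t$, so Lemma \ref{elli} with $s = 3$ combined with the bound on $\sup_t\|v_t\|_3^2$ supplied by the previous proposition yields
$$\sup_{t\in[0,T]} \|f\|_3^2 \leq \tilde M_0 + \delta \sup_{[0,T]} \tilde E + C\sqrt T\,P(\sup_{[0,T]}\tilde E).$$
Writing $[{F^{-1}}_i^k(2\rho_0 J^{-1})_{,k}] = f^i + G^i$ and isolating the normal-derivative block on the left, identity \eqref{akl} becomes
$$\rho_0 {F^{*}}_i^3 {J^{-2}}_{,3} + 2\rho_{0,3}{F^{*}}_i^3 J^{-2} = f^i + G^i - \rho_0 {F^{*}}_i^\beta {J^{-2}}_{,\beta} - 2\rho_{0,\beta}{F^{*}}_i^\beta J^{-2}.$$

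I will then apply $\bar\partial^\alpha$ for $|\alpha| = 0,1,2,3$ to this identity and estimate each right-hand side term in $L^2(\Omega)$ exactly as in the proofs of Propositions \ref{6tp} and \ref{0908}. The $f$-contribution is handled by the previous step; the nonlocal $G^i$ is controlled by an unweighted $H^3$-analogue of Proposition \ref{ineq:g}, obtained by running the same Taylor-expansion and $C^\alpha$-estimates of Section \ref{sg2} without the $\sqrt{\rho_0}$ weight; the purely tangential pieces $\rho_0 {F^{*}}_i^\beta {J^{-2}}_{,\beta}$ contain $\rho_0\bar\partial^4 J^{-2}$ at top order, so they are absorbed into the left-hand side via the small parameter $\delta$ (the key point being that only $\bar\partial$-derivatives of $J^{-2}$ appear here, never $\partial_3$); and the remaining lower-order products are bounded via \eqref{ini}, the Sobolev product rule, and the fundamental theorem of calculus from $t=0$, producing factors of $\sqrt T$. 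Normal-derivative extraction then proceeds by iterating the integration-by-parts trick in $x_3$ first used in Proposition \ref{6tp}, incorporating successively $\partial_3$, $\partial_3^2$, $\partial_3^3$, using the physical vacuum lower bound $|\rho_{0,3}|\geq \theta_1$ near $\Gamma$, $\rho_0\geq \theta_2$ away from $\Gamma$, and the strict positivity of $|{F^{*}}_i^3|$ guaranteed by \eqref{ini2}. The outcome is
$$\sup_{t\in[0,T]}\bigl(\|\rho_0 J^{-2}\|_4^2 + \|J^{-2}\|_3^2\bigr) \leq \tilde M_0 + \delta \sup_{[0,T]}\tilde E + C\sqrt T\, P(\sup_{[0,T]}\tilde E).$$

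For the $\|\eta\|_4$-estimate, I will invoke the Hodge decomposition of Proposition \ref{curllemma}:
$$\|\eta\|_4 \leq \bar C\bigl(\|\eta\|_0 + \|\curl\eta\|_3 + \|\Div\eta\|_3 + \textstyle\sum_{\alpha=1}^2 |\bar\partial \eta^\alpha|_{5/2}\bigr).$$
The curl term is bounded by Proposition \ref{propc} with $a=0$, the tangential trace by the Corollary following the $\bar\partial^4$-problem (since $|\bar\partial\eta^\alpha|_{5/2}\leq |\eta^\alpha|_{7/2}$), and $\|\eta\|_0$ follows from $\eta = x + \int_0^t v$. For the divergence, the algebraic expansion of the Jacobian yields $J = 1 + \Div(\eta - x) + Q(D(\eta - x))$ with $Q$ a polynomial of degree two and three, so $\|\Div\eta\|_3 \leq C + \|J\|_3 + \|Q\|_3$; the $H^3$-product rule combined with $\|D(\eta - x)\|_2 \leq T\sup\|v\|_3$ (which is bounded because $\|v\|_3\leq \|u_0\|_3 + T\sup\|v_t\|_3$ and $\|v_t\|_3$ is controlled by the previous proposition) gives $\|Q\|_3 \leq CT\|\eta\|_4$, absorbed into the left-hand side for $T$ sufficiently small. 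The factor $\|J\|_3$ is recovered from the $\|J^{-2}\|_3$-bound just proved by composition with the smooth map $u\mapsto u^{-1/2}$, whose argument is kept away from its singularity by \eqref{ini2}. The main obstacle is the tangential bookkeeping of the middle paragraph: the interplay between the $\bar\partial^3$-derivatives falling on $\rho_0$, on the cofactor ${F^{*}}_i^3$ (a purely tangential object in $\eta$), and on the nonlocal $G$ requires pushing the Taylor/Sobolev-$C^\alpha$ machinery of Section \ref{sg2} to top order while keeping track of which $\bar\partial^4 J^{-2}$ contributions can and cannot be absorbed via $\delta$.
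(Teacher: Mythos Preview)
Your proposal is correct and follows essentially the same bootstrap argument the paper invokes (the paper's proof is a single sentence referring back to Proposition \ref{0908}). One small correction of mechanism: the top-order tangential pieces $\rho_0\bar\partial^3({F^{*}}_i^{\beta}{J^{-2}}_{,\beta})$ are not absorbed into the left-hand side via $\delta$ but are bounded directly by the already-established estimate $\|\rho_0\bar\partial^4 D\eta\|_0^2$ from the $\bar\partial^4$-problem \eqref{p4prop}, exactly as the analogous term $J_3$ in Proposition \ref{0908} is handled via \eqref{90394}; this does not affect the conclusion.
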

\begin{proof}
From \eqref{lagc}, we can use the same argument that we used in the proof of Proposition \ref{0908} again to conclude the proof.
\end{proof}

Now we only have the last two terms of $\tilde{E}$ to estimate.
\begin{proposition}
\label{lastp}
\begin{equation*}
\sup_{t\in[0,T]}(\|\curl_{\eta}v\|_3^2+\|\rho_0\bar{\partial}^4\curl_{\eta}v\|_0^2)\leq \tilde{M}_0 +\delta\sup_{[0,T]}\tilde{E}+C\sqrt{T}P(\sup_{t\in[0,T]}\tilde{E}).
\end{equation*}
\end{proposition}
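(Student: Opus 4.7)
The plan is to combine the two fundamental identities \eqref{c2} (for $\curl_\eta v$ written in Lagrangian form) and \eqref{lagc} (the momentum equation for the $\kappa$-problem), following the template that was used in Steps 1, 4, and 7 of the proof of Proposition \ref{propc} to estimate $\|\curl\eta\|_3^2$ and $\|\rho_0\bar\partial^4\curl\eta\|_0^2$. Starting from
\begin{equation*}
\curl_\eta v(t)=\curl u_0+\int_0^t[B(F^{-1},Dv)+Q]\,d\tau,
\end{equation*}
the two quantities of interest reduce to estimating $\|\int_0^t D^3[B+Q]\,d\tau\|_0^2$ and $\|\rho_0\bar\partial^4\int_0^t[B+Q]\,d\tau\|_0^2$ respectively, modulo initial data contributions which are absorbed into $\tilde M_0$.

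For the $B$-contributions, the genuinely dangerous terms carry the maximum number of spatial derivatives on $v$, namely $D^4v$ (for the $H^3$ estimate) or $\rho_0\bar\partial^4 Dv$ (for the weighted estimate). Exploiting $v=\eta_t$, I will integrate by parts in time so that the top derivative lands on $\eta$: for example,
\begin{equation*}
\int_0^t D^4v\cdot Dv\cdot F^{-1}F^{-1}\,d\tau=D^4\eta(t)\cdot Dv(t)\cdot F^{-1}(t)^2-\int_0^t D^4\eta\cdot\partial_t(DvF^{-1}F^{-1})\,d\tau,
\end{equation*}
using $\eta(0)=e$. The boundary term is bounded by Young's inequality as $\delta\|\eta\|_4^2+C\|Dv\|_\infty^2$, absorbed into $\delta\sup\tilde E+C\sqrt{T}P(\sup\tilde E)$ via Proposition \ref{notp}, while the time integral is bounded by $\|\eta\|_4$ paired with the $L^\infty$-norm of $\partial_t(DvF^{-1}F^{-1})$, controlled through $\|v_t\|_3$, $\|v\|_3$ and Sobolev embedding. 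The weighted version is identical in spirit, with $\|\rho_0\bar\partial^4 D\eta\|_0^2$ (which is contained in $\tilde E$) playing the role of $\|D^4\eta\|_0^2$.

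For the $Q$-contributions, the highest order pieces involve $\kappa D^4[D_\eta(2\rho_0 J^{-1}-\Phi)]$ and, in the weighted estimate, $\kappa\rho_0\bar\partial^4 D[D_\eta(2\rho_0 J^{-1}-\Phi)]$. To extract $\kappa$-independent control I invoke Lemma \ref{elli} applied to the spatial/tangential derivatives of \eqref{lagc},
\begin{equation*}
v_t+D_\eta(2\rho_0J^{-1}-\Phi)+\kappa[D_\eta(2\rho_0J^{-1}-\Phi)]_t=0,
\end{equation*}
exactly as was done in Steps 2, 5, and 7 of the proof of Proposition \ref{propc}. This yields bounds of the form $\|D[D_\eta(2\rho_0J^{-1}-\Phi)]\|_2^2\leq\tilde M_0+\|v_t\|_3^2$, $\|\kappa\partial_tD[D_\eta(2\rho_0J^{-1}-\Phi)]\|_2^2\leq\tilde M_0+\|v_t\|_3^2$, and the weighted analogue $\|\kappa\rho_0\bar\partial^4 D[D_\eta(2\rho_0J^{-1}-\Phi)]\|_0^2\leq\tilde M_0+C\tilde E$, with $\|v_t\|_3$ supplied by Proposition \ref{0908}. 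After another time integration by parts, exactly as for the $B$-pieces, all such terms are controlled by $\tilde M_0+\delta\sup\tilde E+C\sqrt{T}P(\sup\tilde E)$. Along the way I will also use the bound $\|\partial_t^3DG\|_0^2+\|\partial_t^2 D^2G\|_0^2+\|\partial_tD^3G\|_0^2\leq P(\|v\|_{Z_T}^2)$ from \eqref{est:G} (which carries over to this setting through the same Taylor-type reduction of the singular kernel as in Proposition \ref{ineq:g}) to handle the potential force terms appearing in $Q$.

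The main obstacle is that neither $\|v\|_4$ nor $\|D_\eta(2\rho_0J^{-1}-\Phi)\|_4$ is available at this stage of the argument, so every top-derivative term must be handled indirectly: the trick is either to trade one spatial derivative for a time derivative via the identity $Dv=(D\eta)_t$ (converting $D^4 v$ into $D^4\eta$, which is controlled by $\tilde E$) or else to absorb the $\kappa$-factor using the dissipative relation provided by Lemma \ref{elli} applied to \eqref{lagc}. Once these two devices are in place, the remaining contributions are lower-order and are dispatched by H\"older's inequality, the fundamental theorem of calculus in time, the Sobolev embedding theorem, and the estimates already established in Propositions \ref{propc}--\ref{notp}.
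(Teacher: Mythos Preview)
Your proposal is essentially correct and follows the paper's approach: differentiate identity \eqref{c2}, integrate by parts in time to trade $D^4v$ (resp.\ $\rho_0\bar\partial^4Dv$) for $D^4\eta$ (resp.\ $\rho_0\bar\partial^4D\eta$), and close via Proposition \ref{notp} and inequality \eqref{p4prop}. The paper's own proof is in fact much terser than yours---it treats only the $B$-contribution explicitly and never mentions $Q$, leaving those terms to the reader with an implicit pointer back to Proposition \ref{propc}.

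One imprecision in your $Q$-analysis is worth flagging. The pointwise bound you assert,
\[
\|\kappa\rho_0\bar\partial^4 D[D_\eta(2\rho_0J^{-1}-\Phi)]\|_0^2\leq\tilde M_0+C\tilde E,
\]
does \emph{not} follow from Lemma \ref{elli} applied to $\rho_0\bar\partial^4D$ of \eqref{lagc}: the right-hand side would be $\|\rho_0\bar\partial^4Dv_t\|_0$, which is not part of $\tilde E$ (the energy carries only $\|\rho_0\partial_t^2\bar\partial^{3}D\eta\|_0$ at that level). The bound that \emph{is} available---and is exactly what was established in Steps 1 and 4 of Proposition \ref{propc} via Lemma \ref{elli} applied to the \emph{time-integrated} version of \eqref{lagc}---is
\[
\Big\|\kappa\int_0^t\rho_0\bar\partial^4 D[D_\eta(2\rho_0J^{-1}-\Phi)]\Big\|_0^2\leq\tilde M_0+C\tilde E,
\qquad
\Big\|\kappa\int_0^t D^4[D_\eta(2\rho_0J^{-1}-\Phi)]\Big\|_0^2\leq\tilde M_0+C\tilde E.
\]
Accordingly, the integration by parts in time for the $Q$-pieces should shift the time integral onto the high-derivative factor $\kappa D^4\mathcal F$ (not onto $Dv$ ``as for the $B$-pieces''), after which these time-integrated bounds close the estimate. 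This is a detail of execution; the strategy you outline is the right one.
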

\begin{proof}
Acting $D^3$ on the identity \eqref{c2} for $\curl_{\eta}v$, we see that the highest-order term scales like
\begin{equation*}
D^3\curl u_0+\int_0^t D^4vDv F^{-1}F^{-1}\,dt'.
\end{equation*}
Integration-by-parts with $t$ shows that the highest-order contribution to the term $D^3\curl_{\eta}v$ can be written as
\begin{equation*}
D^3\curl u_0+\int_0^t D^4\eta [Dv F^{-1}F^{-1}]_t\,dt'+ D^4\eta(t)Dv(t)F^{-1}(t)F^{-1}(t),
\end{equation*}
which, according to Proposition \ref{notp}, has $L^2(\Omega)$-norm bounded by $\tilde{M}_0 +\delta\sup\limits_{[0,T]}\tilde{E}+C\sqrt{T}P(\sup\limits_{t\in[0,T]}\tilde{E})$, after readjusting the constants; thus the inequality for the $H^3(\Omega)$-norm of $\curl_{\eta}v$ is proved.

The same type of analysis works for the weighted estimate. After integration by parts with $t$, the highest-order term in the expression for $\rho_0\bar{\partial}^4\curl_{\eta}v$ scales like
\begin{equation*}
\rho_0\bar{\partial}^4\curl u_0+\int_0^t \rho_0\bar{\partial}^4D\eta [Dv F^{-1}F^{-1}]_t\,dt'+ \rho_0\bar{\partial}^4D\eta(t)Dv(t)F^{-1}(t)F^{-1}(t).
\end{equation*}
Hence, the inequality \eqref{p4prop} shows that the weighted estimate holds as well.

\end{proof}

\section{Proof of Theorem \ref{theorem}}
In this section, we prove the Theorem \ref{theorem}, the arguments are similar to \cite{DS_2010}, we include them for self-contained presentation and completeness.
\label{s7}
\subsection{Time internal of existence and bounds independent of $\kappa$.}
Combining the estimates from Proposition \ref{propc} to \ref{lastp} and Corollary \ref{et8}, we obtain the following inequality on $(0,T_{\kappa})$
\begin{equation*}
\sup_{t\in[0,T]}\tilde{E}(t)\leq \tilde{M}_0 +\delta\sup_{[0,T]}\tilde{E}+C\sqrt{T}P(\sup_{t\in[0,T]}\tilde{E}).
\end{equation*}
By choosing $\delta$ sufficiently small, we have
\begin{equation*}
\sup_{t\in[0,T]}\tilde{E}(t)\leq \tilde{M}_0 +C\sqrt{T}P(\sup_{t\in[0,T]}\tilde{E}).
\end{equation*}
By using the continuation argument, this inequality provides us with a time of existence $T_1$ independent of $\kappa$ and an estimate on $(0,T_1)$ independent of $\kappa$ of the type:
\begin{equation}
\label{bound}
\sup_{t\in[0,T_1]}\tilde{E}(t)\leq 2\tilde{M}_0,
\end{equation}
as long as conditions \eqref{ini2} and \eqref{ini} hold. These conditions now can be verified by using the fundamental theorem of calculus and further shrinking the time interval, if necessary. In particular, our sequence of solutions $\{\eta^{\kappa}\}_{\kappa>0}$ to our approximate $\kappa$-problem \eqref{approeq} satisfy the $\kappa$-independent bound \eqref{bound} on the $\kappa$-independent time interval $(0,T_1)$.
\subsection{The limit as $\kappa\rightarrow 0$ and Existence of solutions to equations (\ref{eq:lag})}
By the $\kappa$-independent bound \eqref{bound}, standard compactness arguments provide the existence of a strongly convergent subsequences for $\kappa'>0$
\begin{align*}
&\eta^{\kappa'}\rightarrow \eta \quad\,\,\text{in}\quad L^2((0,T_1);H^3(\Omega)),\\
&v_t^{\kappa'}\rightarrow v_t \quad\text{in}\quad L^2((0,T_1);H^2(\Omega)).
\end{align*}
Consider the variational form of \eqref{lagappro}: for all $\psi\in L^2(0,T_1;H^1(\Omega))$,
\begin{multline*}
\int_0^{T_1}\bigg[\int_{\Omega}\rho_0(v^{\kappa'})_t^i\psi^i\,dx-\int_{\Omega}\rho_0^2(J^{\kappa'})^{-2}({{F^{-1}}^{\kappa'}}_i^k)\psi^i_{,k}\,dx\\
-\kappa\int_{\Omega}\rho_0^2\partial_t[(J^{\kappa'})^{-2}({{F^{-1}}^{\kappa'}}_i^k)]\psi^i_{,k}\,dx-\int_{\Omega}\rho_0[(G^{\kappa'})^i+\kappa\partial_t(G^{\kappa'})^i]\psi^i\,dx\bigg]=0.
\end{multline*}
The strong convergence of the sequences $(\eta^{\kappa'},v_t^{\kappa'})$ shows that the limit $(\eta,v_t)$ satisfies
\begin{equation*}
\int_0^{T_1}\bigg[\int_{\Omega}\rho_0v_t^i\psi^i\,dx-\int_{\Omega}\rho_0^2J^{-2}{F^{-1}}_i^k\psi^i_{,k}\,dx
-\int_{\Omega}\rho_0G^i\psi^i\,dx\bigg]=0.
\end{equation*}
This means that $\eta$ is a solution to \eqref{lag} on the $\kappa$-independent time interval $(0,T_1)$. A standard arguments also shows that $v(0)=u_0$ and $\eta(0)=e$.

For the uniqueness of solutions,  we can prove it by a similar fashion as we did in Section \ref{ee}. The reader can see \cite{DS_2010} for details. Moreover, in \cite{Xin_2013}, the authors proved a novelty uniqueness result for $W^{1,\infty}$ class solutions to both Euler and Euler--Poisson equations under the Eulerian coordinates. They used the relative entropy method to avoid complicated high order energy estimate. However, by using this method, the uniqueness for the Euler--Poisson equations was obtained only for the case $1<\gamma\leq 2$.

\subsection{Optimal Regularity for initial data}
\label{s6}
For the purposes of constructing solutions to our degenerate parabolic $\kappa$-problem \eqref{approeq}, we smoothed our initial data such that both our initial velocity field $u_0^{\kappa}$ is smooth, and our initial density $\rho_0^{\kappa}$ is smooth, positive in the interior, and vanishing on the boundary with \eqref{vacuum2}. Then our a prior estimates allow us to pass the limit $\lim\limits_{\kappa\rightarrow 0}u_0^{\kappa}$ and $\lim\limits_{\kappa\rightarrow 0}\rho_0^{\kappa}=\rho_0$. Recall that our smooth parameter is small compared to viscosity, then by our smoothing construction, $\rho_0\in H^4(\Omega)$, satisfies $\rho_0>0$ in $\Omega$ and the physical vacuum condition \eqref{vacuum2} on the boundary $\Gamma$. Similarly, the initial velocity field need only satisfy $E(0)<\infty$.

\section{The general case for $1 <\gamma< 3$}
\label{ga3}
The general Euler--Poisson equations can be written in the Lagrangian coordinates similarly as \eqref{eq:lag}:
\begin{subequations}
\label{eq:laggamma}
\begin{align}
  \label{laggamma}
  \rho_0 v_t + {F^{*}}_i^k\partial_k(\rho_0^{\gamma}J^{-\gamma})  &= \rho_0 {F^{-1}}_i^k\partial_k(\Phi) &\text{in}&\ \  \Omega\times (0,T],\\ \label{cffffffgamma}
  - {F^{*}}_i^j\partial_j({F^{-1}}_i^k\partial_k(\Phi))  &=  \rho_0 &\text{in}&\ \  \Omega \times (0,T], \\ \label{fieldgamma}
(v, \eta) &= (u_0, e) &\text{in}&\ \ \Omega\times \{t=0\},\\
  \rho_0 &= 0 &\text{on}&\ \  \Gamma,
\end{align}
\end{subequations}
If $\gamma \neq 2$, we set $\omega_0=\rho_0^{\gamma-1}$, then physical vacuum condition shows that
\begin{equation}
  \omega_0 \geq C \text{dist}(x, \partial \Omega)
\label{w0000}
\end{equation}
when $x \in \Omega$ is close to the vacuum boundary $\Gamma$, and we also have
\begin{align}
  \bigg |\dfrac{\partial\omega_0}{\partial N}(x)\bigg |\geq C\ \ \text{when}  \ d(x,\partial \Omega) \leq \alpha,
  \label{cond_2}\\
  \omega_0 \geq C_{\alpha} > 0\ \ \text{when}  \ d(x,\partial \Omega) \geq \alpha.
  \label{cond_3}
\end{align}
Now it is reasonable to suppose that $\omega_0$ is smooth and define $\omega_0J^{1-\gamma}\Div_{\eta} v$ as intermediate variable $X$. In this case, the approximate parabolic equations can be written as
\begin{equation*}
\rho_0 v_t + {F^{*}}_i^k\partial_k(\rho_0\omega_0J^{-\gamma})+\kappa\partial_t[{F^{*}}_i^k\partial_k(\rho_0\omega_0J^{-\gamma}) ]= \rho_0 G^i+\kappa\rho_0\partial_tG^i \,\,\text{in}\,\,  \Omega\times (0,T],
\end{equation*}
and the corresponding equation for $X$ is
\begin{align*}
&\dfrac{J^{\gamma}X_t}{\omega_0}-\gamma\kappa[{F^{*}}_i^j{F^{-1}}_i^k X_{,k}]_{,j}+\kappa \dfrac{\rho_0}{\omega_0}J^{\gamma-1} X\\=-&\dfrac{\gamma}{\gamma-1}\kappa[{F^{*}}_i^j\partial_t{F^{-1}}_i^k(\omega_0J^{1-\gamma})_{,k}]_{,j}-\gamma J^{-1}(J_t)^2\\
 &+\partial_t{F^{*}}_i^j{v^i}_{,j}-\dfrac{\gamma}{\gamma-1}[{F^{*}}_i^j{F^{-1}}_i^k(\omega_0J^{1-\gamma})_{,k}]_{,j}+\rho_0\\&-\kappa J\partial_t({F^{-1}}_i^j)G^i_{,j}.
\end{align*}
then we can construct approximated solutions to this degenerate parabolic regularization just in a similar way to that used in Section \ref{s5}.
Noticing that $\rho_0=\omega_0^{\frac{1}{\gamma-1}}$ will not be smooth now, as a result, the regularity of the potential force term $G$ will also be different. Thus, we need to require a certain high space regularity for $G$ to guarantee the construction of approximated solutions in Section \ref{s5} and a prior estimates in Section \ref{ee} still workable.

First, since in our fixed-point framework, we want to get the regularity that $X\in H^4(\Omega)$, then from \eqref{appro1} and \eqref{initialdata}, we will need the regularity that $J\partial_t({F^{-1}}_i^j)G^i_{,j} \in H^2(\Omega)$. From \eqref{ineq:est0}, $D^2G$ contains the following kind integral,
\begin{align*}
\int_{\Omega}\dfrac{1}{|\eta(x,t)-\eta(z,t)|}J(D_{\eta})^a(\rho_0J^{-1})\,dz,\,\, a=1,2,3,
\end{align*}
then the highest order term of $D^3G$ scales like
\begin{align*}\int_{\Omega}\dfrac{(\eta(x,t)-\eta(z,t))\cdot D\eta(x,t)}{|\eta(x,t)-\eta(z,t)|^3}[D^3\eta D^2(\omega_0^{\frac{1}{\gamma-1}}J^{-1})+D^3(\omega_0^{\frac{1}{\gamma-1}}J^{-1})]\,dz.
\end{align*}
With Young's inequality, we have that, for $\frac{1}{p}+\frac{1}{q}-1=\frac{1}{2}$,
\begin{multline*}
\|\int_{\Omega}\dfrac{(\eta(x,t)-\eta(z,t))\cdot D\eta(x,t)}{|\eta(x,t)-\eta(z,t)|^3}D^3(\omega_0^{\frac{1}{\gamma-1}}J^{-1})\,dz\|_0\\\leq C\|\dfrac{1}{|x|^2}\|_{L^p(\Omega)}\|D^3(\omega_0^{\frac{1}{\gamma-1}}J^{-1})\|_{L^q(\Omega)}.
\end{multline*}
We also have that
\begin{align*}
D^3(\omega_0^{\frac{1}{\gamma-1}}J^{-1})=\sum_{b=0}^3c_b\omega_0^{\frac{1}{\gamma-1}-b}D^{3-b}J^{-1}.
\end{align*}
Then we choose suitable $p,q$ for every $a$ such that
\begin{equation*}\|\dfrac{(\eta(x,t)-\eta(z,t))\cdot D\eta(x,t)}{|\eta(x,t)-\eta(z,t)|^3}D^3(\omega_0^{\frac{1}{\gamma-1}}J^{-1})\,dz\|_0
\end{equation*} can be bounded. Recall \eqref{w0000}, $\omega_0^{3(\frac{1}{\gamma-1}-1)}$ is integrable in $\Omega$ when $\gamma >1$, we can take $p=1,q=2$ for $b=0,1$. On the other hand, if $b=3$, we will require that $\omega_0^{q(\frac{1}{\gamma-1}-3)}$ is integrable in $\Omega$, which implies that
\begin{equation}
\label{opoperoe}
q(\frac{1}{\gamma-1}-3)>-3.
\end{equation}
However, we also need
\begin{equation}
\label{pcondition}
1\leq p <\frac{3}{2}
\end{equation}
to guarantee that $\dfrac{1}{|x|^{2p}}$ is integrable in $\Omega$. So we require that $1<\gamma<3$, which will enable us to find suitable $p,q$ satisfying the condition \eqref{opoperoe} and \eqref{pcondition}.

\noindent
Following the same analysis we used above, we can have that $\|D^3G\|_0$ is bounded when $1<\gamma<3$, which implies that we can construct our approximated solutions by a similar fixed-point methodology we used for $\gamma=2$.

Furthermore, the a prior estimates in Section \ref{ee} are correct under the condition $1<\gamma<3$. For instance, since we have
\begin{equation*}
\bar\partial\rho_0=\rho_0\dfrac{\bar\partial\omega_0}{\omega_0},
\end{equation*}
then with the higher-order Hardy's inequality, $\kappa$-independent energy estimates for time and tangential derivatives still hold true.

So for $1 < \gamma < 3$, we can get the local well-posedness by doing the similar proof as $\gamma=2$ in Section \ref{s5} -- \ref{s7}. Details can be seen in \cite{DS_2010}.

\section*{Acknowledgments}The author would like to thank Prof. Zhouping Xin for suggesting us the project
and many stimulating discussions. The author was in part supported by NSFC (grant No.11171072,
11121101 and 11222107), NCET-12-0120, Innovation Program of Shanghai Municipal Education Commission (grant
No.12ZZ012), Shanghai Talent Development Fund and SGST 09DZ2272900.

\end{document}